\DeclareMathOperator*{\lip}{Lip}
\newcommand{\graph}{\mathrm{Graph}}
\newcommand{\id}{\mathrm{id}}
\numberwithin{equation}{section}
\begin{document}

\title[Exponential dichotomy and Invariant manifolds]{The exponential dichotomy and invariant manifolds for some classes of differential equations}

\author{DeLiang Chen}
\address{Department of Mathematics, Shanghai Jiao Tong University, Shanghai 200240, People's Republic of China}
\email{chernde@sjtu.edu.cn}

\thanks{The author is very indebted to Prof. Shigui Ruan and Prof. Dongmei Xiao for their useful discussions. The author would also like to thank Lianwang Deng for his useful discussions and particularly drawing the author's attention to the references \cite{MY90, vdMee08}.}

\subjclass[2000]{Primary 37D10, 37L05, 37L50; Secondary 37L25, 57R30, 37D30}

\keywords{invariant manifold, partial hyperbolicity, normal hyperbolicity, cocycle, ill-posed differential equation, $ C_0 $ bi-semigroup, Hille-Yosida operator, infinite-dimensional dynamical system}

\begin{abstract}
We study some classes of semi-linear differential equations including both well-posed and ill-posed cases that can generate cocycles (or cocycle correspondences with generating cocycles).
Under exponential dichotomy condition with other mild assumptions, we investigate the existence, persistence and regularity of different types of invariant manifolds for these differential equations based on our previous works about invariant manifold theory for abstract `generalized dynamical systems': invariant graphs (global version) and normally hyperbolic invariant manifolds (local version); brief summaries of those works are also given.
\end{abstract}

\maketitle

\titlecontents{section}[0pt]{\vspace{0\baselineskip}\bfseries}
{\thecontentslabel\quad}{}
{\hspace{.5em}\titlerule*[10pt]{$\cdot$}\contentspage}

\titlecontents{subsection}[10pt]{\vspace{0\baselineskip}}
{\thecontentslabel\quad}{}
{\hspace{.5em}\titlerule*[10pt]{$\cdot$}\contentspage}

\setcounter{tocdepth}{2}

\section{Introduction}

\subsection{motivation}

In this paper, a sequel to our previous papers \cite{Che18a, Che18b}, we try to apply our new developed invariant manifold theory to some classes of differential equations; that is the existence and regularity of invariant graphs for cocycle correspondences with (relatively) partial hyperbolicity \cite{Che18a} and approximately normal hyperbolicity theory \cite{Che18b}, see \autoref{continuous case} and \autoref{normalH} for brief summaries. This is a partial program of giving a \emph{general} procedure to deal with invariant manifold theory for both \emph{well-posed} and \emph{ill-posed} differential equations in Banach spaces.

Invariant manifold theory provides an extremely useful tool to understand the dynamics of the nonlinear differential equations. As an illustration, we list some applications of this theory.
\begin{asparaenum}[(i)]
	\item It gives a finite-dimensional reduction for differential equations. This is a way to address one of the central topics in the theory of dissipative dynamical systems generated by PDEs, namely, whether or not the underlying dynamics can be described by finite-dimensional systems (i.e. ODEs). The notion of \emph{inertial manifold} (or equivalently \emph{pseudo-unstable manifold}) gives a perfect description of finite-dimensionality of dissipative differential equations; see \cite{MS88, Zel14, Tem97} for more details. Similarly, the center manifold provides a local reduction principle, namely, the differential equation can be restricted in a `low' dimensional space, i.e. a center manifold, such that it may be simpler than the original one but also can reflect some properties of itself in the whole space.
	\item The (center-) (un)stable manifolds with strong (un)stable foliations characterize very clearly the asymptotic behaviors of a dynamic around its invariant manifold and so the stability of the invariant manifold; see \cite{BLZ00, BLZ08, KNS15, NS12, NS11}. Furthermore, through using different types of invariant foliations and invariant manifolds, one could decouple the system into a simple form (see e.g. \cite[Corollary 4.19]{Che18a} and \cite{Lu91, PS70}).
	\item As is well known, invariant manifold theory with other tools is powerful for finding special interesting orbits such as periodic orbit, homoclinic orbit and heteroclinic orbit. For example, if a periodic orbit of a dynamic is normally hyperbolic, then the small $ C^1 $ perturbed dynamic persists a periodic orbit; if a center manifold of an equilibrium for a differential equation, no matter if it is well-posed or ill-posed, is finite-dimensional, then the classical Hopf bifurcation theorem can be applied under some other conditions to obtain periodic orbits. See also \cite{SS99, MR09a, LMSW96, Zen00}.
	\item Invariant manifold provides some true solutions of ill-posed differential equations. In general, an ill-posed differential equation might not exist a solution for given an initial value. However, such equation will be well-posed in the so called center manifold, meaning that there always exist a solution in this manifold for all time $ t \in \mathbb{R} $ if initial value belongs to this manifold. The so named center-(un)stable manifolds play similar roles. See \cite{EW91,Gal93,dlLla09,ElB12}.
\end{asparaenum}

Invariant manifold theory has being extensively developed in infinite-dimensional dynamical systems. \textbf{(I)} In \cite{Rue82, Man83, LL10}, the authors investigated the invariant manifolds in the non-uniform hyperbolicity case which can be applied to random dynamical systems in Banach spaces. Also, in \cite{CL97}, Chicone and Latushkin studied existence of the Lipschitz center manifold for a semi-linear cocycle (i.e. skew-product flow).

\textbf{(II)} There are many authors devoted to develop the theory of invariant manifold around an equilibrium for different types of well-posed differential equations such as the semi-linear and quasi-linear parabolic or hyperbolic PDEs, see e.g. \cite{Hen81, CL88, MS88, BJ89, DPL88, Tem97, MR09a, Zel14} in abstract settings, where those papers also contained some examples with detailed analysis, too numerous to list here. In \cite{CL88}, Chow and Lu considered the case when the linear operator is densely-defined sectorial operator with an unbounded nonlinear perturbation (see also \cite{Hen81}). The restriction `densely-defined' of the linear operator was removed in Da Prato and Lunardi's work \cite{DPL88} (but with an additional unnecessary `compact' assumption for the linear operator). In \cite{BJ89}, Bates and Jones studied the case when the linear operator is a generator of $ C_0 $ group with additional restriction that the stable and unstable subspaces are finite dimensional. Note that this restriction can also be satisfied by many Hamiltonian PDEs (see e.g. \cite{LZ17}). More recently, by introducing more general operators in \cite{MR07} than Hille-Yosida operators, Magal and Ruan in \cite{MR09a} investigated the smooth center manifolds of more general semi-linear differential equations (see also \autoref{HYccc}) under an unnecessary `compact' assumption for the linear operator which can be replaced by the \emph{uniform trichotomy condition} (see \autoref{def:ud+} and \autopageref{def:ut}) which was characterized detailedly in \cite{CL99}. The list we give is by no means exhaustive, and we refer to the introduction of \cite{BLZ98} for more details.

Among previous results on invariant manifolds, the existence of exponential dichotomy or sometimes the exponential trichotomy, is important for it provides a framework to analyze the local nonlinear dynamics, is technically assumed to be hold. To verify this, one usually hopes that the spectral mapping theorem holds: $ \exp(\sigma(A))  = \sigma (\exp A) \backslash \{0\} $ for the linear operator $ A $; or the weak form $ \overline{\exp (\sigma(A))} = \sigma (\exp A) $.
So some additional compactness condition on the semigroup was assumed to be hold; see e.g. \cite{DPL88,MR09a}. For more details about this issue see \cite[Chapter IV]{EN00} and \cite{NP00}. See also \cite{LZ17} for a beautiful characterization of the exponential trichotomy for some particular operators which does not induce from spectral mapping theorem. However, beyond the spectral gap condition, invariant manifolds might also exist by using other conditions. This was done in \cite{MS88} where the authors introduced a condition which we call \emph{(A) (B) condition} in the non-linear version (see \autoref{defAB}); for relevant results by using (A) (B) condition to obtain the invariant manifolds, see also \cite{Zel14, LYZ13} and our work about invariant manifold theory. In the present paper, we only focus on the uniform dichotomy condition plus a `small' Lipschitz perturbation to verify the (A) (B) condition, but referring the reader to see \cite{MS88, Zel14} where the perturbation can be more general than `small' Lipschitz (namely \emph{spatial averaging}) which makes (A) (B) condition also hold.

\textbf{(III)} The invariant manifolds also exist around an equilibrium for some ill-posed differential equations which even can not generate semiflows such as the good Boussinesq equation, the elliptic problem on the cylinder, the \emph{spatial} dynamics induced by the reaction-diffusion equations, etc; see e.g. the works of Eckmann and Wayne \cite{EW91}, Gally \cite{Gal93}, de la Llave \cite{dlLla09}, and ElBialy \cite{ElB12} but the results are further less. Such equations have a common feature: for most initial conditions, there does not exist a local solution. The setting for establishing the (stable, center-stable, pseudo-stable, center, etc) invariant manifolds in \cite{EW91,Gal93,ElB12} are essentially the same which we now use the notion of a generator of a \emph{bi-semigroup} under uniform dichotomy condition based on the work of Latushkin and Pogan \cite{LP08} (or uniform trichotomy condition for establishing the center manifolds). See the very interesting proof in \cite{ElB12} beyond the Lyapunov-Perron method or Hadamard graph transform method. In \cite{SS99}, Sandstede and Scheel also obtained invariant manifolds both for equilibrium and periodic orbit of the spatial dynamic generated by \autoref{ex:spatial}.

In \cite{Che18a}, we gave a unified study on the global version of invariant manifolds, i.e. the invariant graphs for bundles or \emph{bundle correspondences with generating bundle maps} (see \autoref{basicCorr}) in non-trivial bundles, which can be applied to different settings; see also \autoref{continuous case} and \autoref{generalA}.

\textbf{(IV)} Beside the previous results on invariant manifolds of an equilibrium, the existence and persistence of invariant manifolds around an invariant manifold are also evidently important where the invariant manifold are usually taken as equilibriums, (a-)periodic orbit, several orbits with their closure (including e.g. homoclinic orbits, heteroclinic orbits, etc), or the global compact attractor. The notion of \emph{normal hyperbolicity} plays a crucial role as the hyperbolicity of equilibrium which is the right condition for persistence. For a normally hyperbolic invariant manifold, loosely speaking, it means the linearized dynamic along this manifold contracts or expands along the normal direction and does so to a greater degree than it does along the tangential direction. The theory of normal hyperbolicity was investigated at length by Hirsch, Pugh and Shub \cite{HPS77} and Fenichel \cite{Fen72, Fen74, Fen77}. Further developments were given by Li and Wiggins \cite{LW97} and Pliss and Sell \cite{PS01} with an aim to make it applicable to partial differential equations in Banach spaces. A more significant generalization was made by Bates, Lu and Zeng \cite{BLZ98, BLZ99, BLZ00, BLZ08} where the authors extended the classical theory to the abstract infinite-dimensional dynamical systems with allowing the invariant manifold to be immersed and non-compact. In \cite{Che18b}, we expanded the scope of normal hyperbolicity theory to more general settings (than \cite{BLZ08}) in order to deal with non-smooth and non-Lipschitz dynamical systems and ill-posed (as well as well-posed) differential equations; see also \autoref{normalH}.

\textbf{(V)} In some cases, the invariant manifold might be not normally hyperbolic but there may exist center-(un)stable manifolds around this manifold which also give some detailed characterizations of the dynamical behaviors; for example for a periodic orbit with period $ T $, if the associated time-$ T $ solution map of its linearized dynamic along this orbit has a non-simple spectrum $ 1 $, then this periodic orbit is not normally hyperbolic. A notion of \emph{partially normal hyperbolicity} which we used in \cite{Che18} can give a way to deal with this situation. In finite-dimensional dynamical systems, this was also settled in \cite{CLY00, CLY00a, BC16}. The corresponding results for abstract dynamical systems in infinite-dimension was addressed in \cite{Che18}. See also \cite{NS12, KNS15, JLZ17} as well as \cite{HVL08, SS99} where the results were obtained for some concrete PDEs. In the present paper, we do not consider this situation, referring to see \cite{Che18}.

\subsection{nontechnical overviews of main results}\label{overview}

We only focus on some classes of \emph{abstract} differential equations. Roughly, the differential equations are written as two parts: the linear part with a `small Lipschitz' non-linear perturbation; the linear part usually is written as a closed linear operator plus a linear perturbation in a `cocycle' form. To be more precisely, consider
\[
\dot{z}(t) = \mathcal{C}(t\omega)z(t) + f(t\omega)z(t), \tag{$ \clubsuit $}
\]
where $ \mathcal{C}(\omega): D(\mathcal{C}(\omega)) \subset Z \to Z $, $ \omega \in M $, are closed linear operators, $ M $ is a topology space, $ Z $ is a Banach space, $ t: M \to M $ is a $ C_0 $ semiflow, and $ f: M \times Z \to Z $ is a nonlinear operator.
We discuss the following three situations about $ \{ \mathcal{C}(\omega) \} $ and $ f $.
\begin{enumerate}
	\item[(type I)] $ \mathcal{C}(\omega) = C + L(\omega), ~L: M \to L(Z,Z), ~f: M \times Z \to Z, $

	\noindent where $ C $ is a generator of a \emph{$ C_0 $ bi-semigroup} (see e.g. \cite{vdMee08, LP08} and \autoref{illustration}) or a \emph{$ C_0 $ semigroup} (see e.g. \cite{EN00,Paz83}) in $ Z $.

	\item[(type II)] $ \mathcal{C}(\omega) = C + L(\omega), ~L: M \to L(\overline{D(A)},Z), ~f: M \times \overline{D(A)} \to Z, $

	\noindent where $ A: D(A) \subset Z \to Z $ is a \emph{Hille-Yosida operator} (see \cite{DPS87, EN00, ABHN11} and \autoref{operator}), or more generally an \emph{MR operator} (see the assumption (MR) in \autopageref{MR} which was studied in \cite{MR07, MR09, MR09a}). This class of $ A $ includes many concrete different equations (see e.g. \autoref{examples} and the references we list before).

	\item[(type II$ _1 $)] $ \mathcal{C}(\omega) = C + L(\omega), ~L: M \to L(\overline{D(A)},D(A^{-\alpha})), ~f: M \times \overline{D(A)} \to D(A^{-\alpha}), $

	\noindent where $ A: D(A) \subset Z \to Z $ is a Hille-Yosida operator (or MR operator) with additional assumption that $ A_{\overline{D(A)}} $, the part of $ A $ in $ \overline{D(A)} $ (see \autoref{operator} for a definition), is a densely-defined \emph{sectorial} operator for some suitable $ \alpha > 0 $. For example $ A $ is a sectorial operator, i.e. a generator of a holomorphic semigroup (see e.g. \cite{Ama95,Hen81}) and $ 0 < \alpha < 1 $. Here we assume without loss of generality spectral bound $ s(A) < 0 $. \emph{This case is very similar as (type II), so the details are omitted.}

	\item[(type III)] Equation ($ \clubsuit $) generates a $ C_0 $ cocycle or a $ C_0 $ cocycle correspondence (see \autoref{def:diffAndCocycle}) in $ Z $ and $ f: M \times Z \to Z $; in fact, in this case, we consider the integral equation \eqref{equ:IE}. This case is essentially the same as (type I).
\end{enumerate}

Note that (type I) and (type II) (or (type II$ _1 $)) are the most important cases to study the following autonomous different equation around some invariant set $ M $,
\[
\dot{z}(t) = Az(t) + g(z(t)), \tag{$ \spadesuit $}
\]
where $ g \in C^{1}(Z_0, Z_{-1}) $, and $ A: D(A) \subset Z \to Z $ and $ Z_0, Z_{-1} $ are one of the following cases.
\begin{enumerate}[({type} $ \bullet $a)]
	\item $ A $ is a generator of a $ C_0 $ semigroup (\cite{EN00}) or a $ C_0 $ bi-semigroup (\cite{vdMee08, LP08}); $ Z_0 = Z_{-1} = Z $.
	\item $ A $ is a sectorial operator (\cite{ABHN11}) or more generally $ A $ is a \emph{MR operator} (see the assumption (MR) in \autopageref{MR}) with additional assumption that $ A_{\overline{D(A)}} $ is a densely-defined \emph{sectorial} operator; $ Z_0 = \overline{D(A)} $ and $ Z_{-1} = D(A^{-\alpha}) $ for some suitable $ \alpha > 0 $. Here we assume without loss of generality the spectral bound $ s(A) < 0 $.
	\item $ A $ is a Hille-Yosida operator (\cite{ABHN11}) or more generally an \emph{MR operator} (see the assumption (MR) in \autopageref{MR}); $ Z_0 = \overline{D(A)} $ and $ Z_{-1} = Z $. Note that sectorial operators are Hille-Yosida operators.
\end{enumerate}
For some concrete examples of ($ \spadesuit $), see \autoref{examples}.

\vspace{.5em}
\noindent{\textbf{dichotomy and (A) (B) condition.}}
In order to deal with the ill-posed differential equations like (type I) and (type III), in \autoref{basicCorr}, a class of `generalized hyperbolic dynamical systems' are introduced, named cocycle correspondence over a semiflow and continuous correspondence by using the notion of \emph{correspondence} originally due to \cite{Cha08}. This is necessary since the ill-posed differential equations in general can not generate semiflows or cocycles but induce cocycle correspondences (see \autoref{diffCocycle}). In addition, by using the notion of \emph{dual correspondence} (see \autoref{dual}), one can give a unified approach to obtain the `stable results' and `unstable results' for cocycles, which is different with classical methods e.g. \cite{BLZ98}. Unlike the classical way, we adopt the notion of \emph{(A) (B) condition} to describe the hyperbolicity motivated by \cite{MS88, LYZ13, Zel14} which is close to invariant cone condition but in the non-linearity version; see \autoref{defAB}. There is another purpose we introduce these conceptions, that is, by doing so, our results can be applied to non-smooth and non-Lipschitz dynamical systems. We refer the readers to see \cite{Che18a, Che18d, Che18b, Che18} for more results about this `generalized dynamical system' with some hyperbolicity described by (A) (B) condition.

To apply our existence and regularity results in \autoref{continuous case} and \autoref{normalH} (or see \cite{Che18a, Che18b} in detail) to semi-linear abstract differential equations ($ \clubsuit $) or ($ \spadesuit $), from the abstract view, one needs to show \emph{the differential equations can generate cocycles (for the well-posed case) or cocycle correspondences with generating cocycles (for the ill-posed case) satisfying (A) (B) condition}. In \autoref{diffCocycle}, we will deal with with relationship between the \emph{dichotomy} (or more precisely the exponential dichotomy of (linear) differential equations) and (A) (B) condition. See the main results \autoref{thm:biAB}, \autoref{thm:gencocycleAB} and \autoref{them:spec} in \autoref{diffCocycle}. For a comprehensive study of uniform dichotomy for $ C_0 $ linear cocycles, see \cite{CL99} and the references therein.

A first goal in \autoref{diffCocycle} is to rewrite some equations satisfied by the \emph{(mild) solutions} of the different equations in different forms, i.e. the different `variant of constant formulas' satisfied by the solutions. And then in some appropriate forms, one can verify the (A) (B) condition under uniform dichotomy condition (see \autoref{def:ud+}). As an illustration, see \autoref{illustration} (particularly \autoref{lem:simpleAB}); for a more special case, see also \cite[Lemma 4]{LYZ13}. For other conditions verifying the (A) (B) condition which are far away with `small Lipschitz' perturbations, see \cite{MS88} and \cite[Section 2.8]{Zel14}.

In \autoref{HYccc}, we consider the well-posed case for (type II). Due to the \emph{difficulty} that the linear part of the differential equations does not generate $ C_0 $ cocycle in the whole space but the ranges of the non-linear perturbations are taken in the whole space, the link of the different `variant of constant formulas' is not so clear. There are some classical ways e.g. Yosida approximations, extrapolation spaces to deal with this difficulty under some special contexts. In the present paper, we use a very effective tool, namely the \emph{integrated semigroup} theory (see \cite{ABHN11}), to handle the general case; see \autoref{lem:vcf1} (and \autoref{lem:coycle0} \eqref{integral}).

A very analogous argument which we do not give details in this paper, can be applied to settle equations (type II$ _1 $), i.e. the linear operator is a Hille-Yosida operator (or MR operator) with some analytic properties of its `$ C_0 $ semigroup' in the closure of its domain, and the linear and non-linear perturbations are allowed to be in some sense `unbounded' (see also \cite{Ama95}).

In \autoref{bi-semigroup}, we study the ill-posed case (type I). The closed linear operator is assumed to be a generator of a \emph{$ C_0 $ bi-semigroup} (or an \emph{exponentially dichotomous operator} `in some particular situation' which was studied comprehensively in \cite{vdMee08}) and the perturbations are required to be bounded. The uniform dichotomy condition for this case is taken from \cite{LP08} where the authors first studied this exponential dichotomy for the ill-posed differential equations in an abstract way (but in a special setting); see also \cite{SS01}. We mention that the spectral theory for the ill-posed differential equation is not well developed yet. In \autoref{genccc}, we also give a sketch discussion about a light general case that the linear part of the differential equation generates a $ C_0 $ cocycle or $ C_0 $ cocycle correspondence on the \emph{whole} space. 

There is an interesting thing that by our argument we also obtain the sharpness of the spectral gap for the $ C_0 $ bi-semigroup case (but not the Hille-Yosida operators or general MR operators) case (and also in the `cocycle' case) in the spirit of \cite{Mcc91, Rom93} (see also \cite{Zel14}); see \autoref{lem:simpleAB} (\autoref{rmk:sharp}) and \autoref{thm:gencocycleAB}. 

\vspace{.5em}
\noindent{\textbf{invariant manifold.}}
In \cite{Che18a} and \cite{Che18b}, we investigated extensively about the existence and regularity of the invariant graphs (for bundles or bundle correspondences) and normally hyperbolic manifolds (for maps or correspondences) in the discrete context, respectively; see \autoref{continuous case} and \autoref{normalH} for brief summaries in the corresponding \emph{continuous} circumstance. A number of applications of the main results in the \cite{Che18a} like decoupling theorem, different types of invariant foliations (laminations) including strong stable laminations and fake invariant foliations, and holonomies for cocycles, which can be used to derive more properties of ($ \clubsuit $), were also given in that paper but not included in this paper. As a simple application of the results in \autoref{continuous case}, we give a global invariant manifold result concerning equation ($ \clubsuit $), which, heuristically, can be summarized as follows.
\begin{thmA}\label{A}
	Assume for all $ \omega \in M $, ($ \circ $1) $ f(\omega)(0) = 0 $, or ($ \circ $2) $ \sup_{t \geq 0} \sup_{z} |f(t\omega)(z)| < \infty $.
	Under uniform dichotomy condition (so there are two bundles $ X, Y $ over $ M $ such that $ M \times Z = X \times Y $), Lipschitz continuity of $ f(\omega)(\cdot) $ with `suitable' Lipschitz constant, and certain spectral gap condition according to case ($ \circ $1) or ($ \circ $2), there is a set $ \mathcal{M} = \bigcup_{\omega \in M} (\omega, \mathcal{M}_{\omega}) \subset M \times Z $ such that
	\begin{enumerate}[(1)]
		\item $ \mathcal{M}_{\omega} = \graph \Psi_{\omega} $, a Lipschitz graph of $ \Psi_{\omega}: X_{\omega} \to Y_{\omega} $;
		\item if ($ \circ $1) holds, then $ 0 \in \mathcal{M}_{\omega} $; if ($ \circ $2) holds, then $ \sup_{t \geq 0} |\Psi_{t\omega}(0)| < \infty $;
		\item $ \mathcal{M} $ is positively invariant under equation ($ \clubsuit $), meaning for each $ (\omega,z) \in \mathcal{M} $, there is a (mild) solution $ u(t) $ ($ t\geq 0 $) of equation ($ \clubsuit $) with $ u(0) = z $ such that $ u(t) \in \mathcal{M}_{t\omega} $ for all $ t \geq 0 $.
		\item If $ z \mapsto f(\omega)z $ is $ C^1 $ for each $ \omega \in M $, so is $ x \mapsto \Psi_{\omega}(x) $.
	\end{enumerate}
\end{thmA}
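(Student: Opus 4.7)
The plan is to follow the three-step reduction that the overview itself foreshadows: first, use the uniform dichotomy to cast equation~($\clubsuit$) as a cocycle correspondence with a natural block structure; second, verify the \emph{(A) (B) condition} for this correspondence via the results of \autoref{diffCocycle}; third, invoke the abstract invariant-graph theorem summarized in \autoref{continuous case} and read off properties (1)--(4).

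\textbf{Step 1: splitting and variation-of-constants.} By the uniform dichotomy hypothesis (\autoref{def:ud+}) applied to the linear equation $\dot z = \mathcal{C}(t\omega)z$, there exist continuous invariant subbundles $X, Y \to M$ with $M \times Z = X \oplus Y$ and exponential rates $\lambda_s < \lambda_u$ on $X$ and $Y$, respectively. Writing $z = x + y$ along this splitting, a mild solution of~($\clubsuit$) satisfies a variation-of-constants pair: the stable component $x(t)$ is recovered from its initial datum by forward evolution, while the unstable component $y(t)$ is parameterized in the usual Lyapunov--Perron fashion, which for the ill-posed case (type I) is only possible on the dichotomy subspaces via the bi-semigroup generated by $C$. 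This produces a cocycle correspondence in the sense of \autoref{def:diffAndCocycle}; if the equation is well-posed then it is an honest cocycle.

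\textbf{Step 2: verifying (A) (B).} The main results \autoref{thm:biAB}, \autoref{thm:gencocycleAB} and \autoref{them:spec} tell us that, provided $\mathrm{Lip}\, f(\omega)$ is small relative to the spectral gap $\lambda_u - \lambda_s$, the cocycle correspondence constructed in Step~1 satisfies the (A) (B) condition (\autoref{defAB}); the prototype calculation is \autoref{lem:simpleAB}. The two scenarios $(\circ 1)$ and $(\circ 2)$ in the theorem correspond to choosing different weighted trajectory spaces: in $(\circ 1)$ one works in exponentially weighted spaces of trajectories decaying at a rate between $\lambda_s$ and $\lambda_u$, so the spectral gap need only dominate $\mathrm{Lip}\, f$ in a one-sided way; in $(\circ 2)$ one works in bounded trajectory spaces, and the gap must additionally dominate the amplitude of the inhomogeneity so that bounded orbits exist in the unstable direction.

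\textbf{Step 3: applying the abstract graph theorem and reading off (1)--(4).} With (A) (B) in hand, the invariant-graph theorem of \autoref{continuous case} (see \cite{Che18a}) produces a family $\Psi_\omega : X_\omega \to Y_\omega$ of Lipschitz graphs whose union $\mathcal{M}$ is positively invariant under the cocycle correspondence; translating back to solutions of~($\clubsuit$) via Step~1 gives exactly statement~(3), and (1) is immediate. For (2), case~$(\circ 1)$ uses $f(\omega)(0)=0$ so that $z\equiv 0$ is a solution and the uniqueness of the invariant graph containing the zero trajectory forces $\Psi_\omega(0) = 0$; case~$(\circ 2)$ applies the bounded-perturbation version of the graph theorem, which yields a bound on $\Psi_{t\omega}(0)$ uniform in $t \geq 0$ in terms of $\sup |f|$ and the spectral gap. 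For (4), the $C^1$ part of \autoref{continuous case} upgrades $\Psi_\omega$ to $C^1$ once $f(\omega)(\cdot)$ is $C^1$ and the standard one-sided spectral gap condition is in force; this is a fiber-contraction argument on the tangent bundle of the graph.

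\textbf{Main obstacle.} The routine step is invoking the abstract black box of \cite{Che18a}; the genuine work is Step~2 in the ill-posed case, because the linear part is only a bi-semigroup on $Z$ and the correspondence is not a genuine flow. Setting up the variation-of-constants equation so that it is a contraction on the correct weighted or bounded trajectory space, and then matching the resulting constants to the quantitative form of (A) (B), is where the spectral gap and ``suitable Lipschitz constant'' quantifiers from the statement are actually pinned down, and where the distinction between $(\circ 1)$ and $(\circ 2)$ manifests as different spectral gap inequalities.
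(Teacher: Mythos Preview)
Your proposal is essentially correct and mirrors the paper's own argument: build the cocycle correspondence from the variation-of-constants formulas, verify the (A)(B) condition via \autoref{thm:biAB}, \autoref{thm:gencocycleAB} or \autoref{them:spec}, and then invoke \autoref{thmAc} (with the $C^1$ upgrade coming from \autoref{lem:leaf1}); the paper phrases the ($\circ 1$)/($\circ 2$) dichotomy not in Lyapunov--Perron trajectory-space language but as choosing the zero section $i=0$ to be an \emph{invariant} section ($\eta\equiv 0$) versus a merely \emph{bounded} pseudo-stable section ($\sup_{s\ge 0}\eta(t,s\omega)<\infty$) in \autoref{thmAc}. One correction: the extra spectral hypothesis in case ($\circ 2$) is not that the gap dominates the \emph{amplitude} of $f$ but simply $\sup_\omega \lambda_u(\omega)<1$ (equivalently $\inf_\omega\{\mu_u(\omega)-\varepsilon'(\omega)\}>0$), i.e.\ the unstable bundle is genuinely expanding; the supremum $\sup|f|$ enters only the bound on $|\Psi_{t\omega}(0)|$, not the existence condition.
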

See \autoref{thm:equI} and \autoref{thm:equII} for detailed statements. This theorem gives different types of invariant foliations of autonomous different equation ($ \spadesuit $) around an equilibrium, different types of invariant manifolds of equation ($ \spadesuit $) around an equilibrium (when $ M $ reduces as a one point set), or the strong (un)stable lamination of equation ($ \spadesuit $) around the invariant set $ M $. So \autoref{A} as well as the results in \autoref{continuous case} recover many classical results about the existence and regularity of invariant manifolds and invariant foliations obtained in e.g. \cite{CL88, MS88, DPL88, BJ89, CY94, CHT97, MR09a, LYZ13} (for the well-posed case), \cite{EW91,Gal93,SS99,ElB12} (for the ill-posed case), \cite{CL97} (for the $ C_0 $ cocycle case), and \cite{CLL91}; and in some cases they are even new, for instance, (i) the invariant foliations of equation ($ \spadesuit $) for the case that $ A $ is a Hille-Yosida operator (or MR operator) which can be seen as a supplement of \cite{MR09a}, (ii) the invariant manifolds of equation ($ \clubsuit $) in (type II), and (iii) the more precise spectral gap condition when $ A $ is a Hille-Yosida operator or for the equation ($ \clubsuit $) in (type III), etc. Also, there are many other results in \cite{Che18a} can be applied to equation ($ \clubsuit $) or ($ \spadesuit $) with the help of \autoref{thm:biAB}, \autoref{thm:gencocycleAB} and \autoref{them:spec} in \autoref{diffCocycle}.

Turn to consider the normal hyperbolicity case.

\begin{thmA}\label{B}
	Let $ M $ be a uniformly Lipschitz immersed submanifold of $ Z_0 $ (assumption (B1) in \autoref{normalH}). Assume $ M $ is invariant and normally hyperbolic with respect to equation ($ \spadesuit $) and the amplitude of $ g|_{\mathbb{B}_{\epsilon}(M)} $ is small as $ \epsilon \to 0 $, then the following hold for some $ r > 0 $.
	\begin{enumerate}[(1)]
		\item (Center-(un)stable manifolds) There are center-stable and center-unstable manifolds $ W^{cs}_{loc}(M) $, $ W^{cu}_{loc}(M) $ of $ M $ in $ \mathbb{B}_{r}(M) $, which are $ C^1 $ immersed submanifolds of $ Z_0 $ and $ W^{cs}_{loc}(M) \cap W^{cu}_{loc}(M) = M $. There is a positive constant $ r' < r $ such that for any $ z_0 \in W^{cs}_{loc}(M) \cap \mathbb{B}_{r'}(M) $ (resp. $ z_0 \in W^{cu}_{loc}(M) \cap \mathbb{B}_{r'}(M) $), there is a mild solution $ \{ u(t) \}_{t \geq 0} \subset W^{cs}_{loc}(M) $ (resp. $ \{ u(t) \}_{t \leq 0} \subset W^{cu}_{loc}(M) $) of equation ($ \spadesuit $) with $ u(0) = z_0 $.

		\item (Exponential tracking) If a mild solution $ \{u(t)\}_{t \geq 0} $ (resp. $ \{u(t)\}_{t \leq 0} $) of equation ($ \spadesuit $) always `stays' in $ \mathbb{B}_{r}(M) $, then it must belong to $ W^{cs}_{loc}(M) $ (resp. $ W^{cu}_{loc}(M) $), and there is certain $ \omega \in M $ such that $ |u(t) - t\omega| \to 0 $ (resp. $ |u(-t) - (-t)(\omega)| \to 0 $) exponentially as $ t \to \infty $.

		\item (Strong (un)stable foliations) $ W^{c\kappa}_{loc}(M) $ is foliated by $ \mathcal{W}^{\kappa\kappa} $ with leaves $ W^{\kappa\kappa}(\omega) $, $ \omega \in M $, $ \kappa = s, u $. Each leaf $ W^{\kappa\kappa}(\omega) $ is a Lipschitz immersed submanifold of $ X $. In fact, $ \mathcal{W}^{ss} $, $ \mathcal{W}^{uu} $ are H\"older bundles over $ M $.
		The foliations $ \mathcal{W}^{\kappa\kappa} $ are invariant with respect to equation ($ \spadesuit $), i.e. if $ z \in {W}^{ss}(\omega) \cap \mathbb{B}_{r'}(M) $, then there is a mild solution $ \{ u(t) \}_{t \geq 0} $ (resp. $ \{ u(t) \}_{t \leq 0} $) of equation ($ \spadesuit $) with $ z(0) = z $ satisfying $ u(t) \in {W}^{ss}(t\omega) $ for all $ t \geq 0 $ (resp. $ t \leq 0 $).

		\item (Smoothness) (i) Under the smooth condition (assumption (B4)), $ W^{cs}_{loc}(M) $, $ W^{cu}_{loc}(M) $, $ M $, and $ W^{ss}(\omega) $, $ W^{uu}(\omega) $, $ \omega \in \Sigma $, are all $ C^1 $ immersed submanifolds. So particularly, the two immersed submanifolds $ W^{cs}_{loc}(\Sigma) $, $ W^{cu}_{loc}(\Sigma) $ are transverse. Moreover, under higher smooth condition and spectral gap condition, these immersed submanifolds would be higher smooth.

		\noindent (ii) Under more restrictive smooth conditions and center bunching conditions (see \autoref{cor:tri} \eqref{regF}), $ \mathcal{W}^{ss} $, $ \mathcal{W}^{uu} $ are $ C^1 $ (in some cases even $ C^{1,\zeta} $) foliations.

		\item (Persistence) The above results are persistent under small $ C^1 $ perturbation of equation ($ \spadesuit $). Moreover, there is a true center manifold $ \widetilde{M} $ which is $ C^1 $ immersed in $ \mathbb{B}_{r}(M) $, homeomorphic (in fact $ C^1 $ diffeomorphic) to $ M $ and invariant with respect to the perturbed equation of equation ($ \spadesuit $) (i.e. $ g $ is replaced by $ \widetilde{g} $ in ($ \spadesuit $) with $ |\widetilde{g} - g|_{ C^1(\mathbb{B}_r(M)) } $ being small when $ r $ is small); also $ W^{cs}_{loc}(\widetilde{M}) \cap W^{cu}_{loc}(\widetilde{M}) = \widetilde{M} $, and $ \widetilde{M} \to M $, $ T\widetilde{M} \to TM $ as $ |\widetilde{g} - g|_{ C^1(\mathbb{B}_r(M)) } $ and the amplitude of $ \widetilde{g}|_{\mathbb{B}_{r}(M)} $ (with $ r $) approach $ 0 $. Here $ \widetilde{g} $ can be some `large' perturbations (see \autoref{rmk:largeP}).
	\end{enumerate}
\end{thmA}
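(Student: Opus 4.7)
The plan is to reduce \autoref{B} to the abstract normal hyperbolicity results summarised in \autoref{normalH} (the continuous-time analogue of the main results of \cite{Che18b}), combined with the dichotomy-to-(A)(B) translation established in \autoref{diffCocycle}. Because \autoref{normalH} is phrased for continuous correspondences satisfying the (A)(B) condition with a normally hyperbolic invariant set, the work splits into two parts: (i) exhibit the dynamics generated by ($\spadesuit$) as such a correspondence, and (ii) verify the (A)(B) hypothesis from the normal hyperbolicity of $M$ together with the smallness of $g$ on $\mathbb{B}_\epsilon(M)$.

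First I would set up the cocycle/cocycle correspondence. In each of the three cases (type~$\bullet$a,b,c) the mild formulation of ($\spadesuit$) has been recast in \autoref{bi-semigroup} and \autoref{HYccc} as an integral equation to which \autoref{thm:biAB}, \autoref{thm:gencocycleAB} and \autoref{them:spec} apply. Linearising ($\spadesuit$) along $M$ produces a $C_0$ linear cocycle correspondence over the (semi)flow on $M$; normal hyperbolicity of $M$ in the sense of \autoref{normalH} is precisely the statement that this linear cocycle admits a uniform trichotomy with center direction $TM$ and stable/unstable directions separated by the required spectral gaps from the tangential exponents. With this in hand, the results in \autoref{diffCocycle} upgrade the trichotomy of the linear part to the (A)(B) condition for the full non-linear correspondence in a tubular neighbourhood of $M$, provided the Lipschitz constant of $g$ restricted to $\mathbb{B}_\epsilon(M)$ is small; the latter follows from the hypothesised smallness of the amplitude of $g|_{\mathbb{B}_\epsilon(M)}$ together with the $C^1$ regularity of $g$ (shrinking $\epsilon$ as needed). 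Here the uniformly Lipschitz immersed structure from assumption (B1) is what provides the local model in which $TZ_0|_M$ trivialises as a bundle and the trichotomy becomes fibrewise.

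Second, I would feed this (A)(B)-satisfying correspondence directly into the abstract normal hyperbolicity theorem of \autoref{normalH}. That theorem delivers, in one shot, all items (1)--(5): the center-(un)stable manifolds $W^{cs}_{loc}(M)$, $W^{cu}_{loc}(M)$ and the identity $W^{cs}_{loc}(M)\cap W^{cu}_{loc}(M)=M$; the exponential tracking of mild orbits trapped in $\mathbb{B}_r(M)$; the strong stable/unstable foliations $\mathcal{W}^{ss},\mathcal{W}^{uu}$ together with their H\"older bundle structure and invariance under the correspondence; the $C^1$ (and, under the bunching hypotheses referenced in \autoref{cor:tri}, $C^{1,\zeta}$) regularity of the leaves and manifolds; and the persistence under $C^1$-small perturbation together with a genuine perturbed center manifold $\widetilde{M}$ diffeomorphic to $M$ and satisfying $W^{cs}_{loc}(\widetilde{M})\cap W^{cu}_{loc}(\widetilde{M})=\widetilde{M}$. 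The translation back to mild solutions of ($\spadesuit$) is essentially tautological because the correspondence was constructed from the mild formulation to begin with.

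The main obstacle will be honest bookkeeping in the ill-posed cases (type~$\bullet$a with a bi-semigroup and type~$\bullet$c with a Hille-Yosida/MR operator). There the mild evolution is genuinely multi-valued, so one must confirm that each invariant object produced abstractly is realised by an actual mild solution — in particular, that the tracking orbit in item~(2) and the orbits of points on $W^{ss}(\omega)$ are \emph{bona fide} mild solutions of ($\spadesuit$) rather than mere fixed points of a graph transform. This is precisely what the correspondence framework of \cite{Che18b} is designed to handle, and the verification reduces to checking that the Lyapunov--Perron-type selection produced by the abstract theorem satisfies the integral equation encoded in \autoref{lem:vcf1} (resp.\ the bi-semigroup variant of \autoref{bi-semigroup}). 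A second delicate point is statement~(4)(ii): the $C^1$ (or $C^{1,\zeta}$) regularity of $\mathcal{W}^{ss},\mathcal{W}^{uu}$ requires the full strength of the center bunching inequalities and appeals to the $C^{r,\zeta}$ section theorem from \cite{Che18a}, whose application in the present continuous, infinite-dimensional, possibly ill-posed context is where the smoothness hypothesis (B4) is used most essentially.
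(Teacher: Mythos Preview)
Your proposal is correct and follows the same route as the paper: Theorem~B is not proved in isolation but is a summary of the results in \autoref{generalB} (specifically \autoref{thm:app1}, \autoref{thm:app2}, \autoref{thm:triH}), which in turn are obtained exactly as you describe---by converting the uniform trichotomy of the linearisation along $M$ into the (A)(B) condition for the nonlinear cocycle correspondence via \autoref{thm:biAB}/\autoref{them:spec}/\autoref{thm:gencocycleAB}, and then feeding this into the abstract normal hyperbolicity package \autoref{cor:tri}.

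Two small points you skate over but which the paper handles explicitly: first, the abstract hypothesis (B3)(c) ($s$-contraction and $u$-expansion) is \emph{not} a consequence of the (A)(B) condition in the $cs$/$cu$ directions and must be verified separately---the paper does this by a short direct estimate in the proof of \autoref{thm:app2}; second, to make the Lipschitz constant of $f(\omega)(\cdot)$ globally small the paper first truncates the nonlinearity with a radial retraction (proof of \autoref{thm:app1}(2)), or alternatively works directly with the local integral equation and tracks the domain sizes carefully (proof of \autoref{thm:app1}(1)). Neither point changes your outline, but both are where the honest work sits.
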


See \autoref{generalB} and \autoref{normalH} for precise statements and more general results.
In \cite{LW97, PS01}, the authors considered the above corresponding results for the special (type $ \bullet $b) of PDEs with $ M $ being $ C^2 $ compact embedding submanifold. In a series of papers \cite{BLZ98, BLZ99, BLZ00, BLZ08}, the authors also obtained the theory of the normal hyperbolicity for abstract infinite-dimensional dynamical systems with $ M $ unnecessarily being compact or embedding. Our setting for the submanifold $ M $ (see \autoref{normalH}) is essentially the same as \cite{BLZ08} where the only difference is that $ M $ is not assumed to be $ C^1 $. If we further assume $ g \in \lip(\mathbb{B}_{r}(M), Z_{-1}) $ and equation ($ \spadesuit $) is well-posed, then one can apply the results in \cite{BLZ08} to obtain \autoref{B} as well. For this case, the almost uniform Lipschitz condition on the semiflow $ t: M \to M $ (see Settings B (BII) in \autoref{generalB}) can be removed which was implied by the Lipschitz continuity of $ g $; but also note that $ t $ being $ C^0 $ in the immersed topology of $ M $ is essential. The smoothness of strong (un-)stable foliations was not discussed in \cite{BLZ08}, which is almost the consequence of \cite{Che18a}. However, it is obvious that the results in \cite{BLZ08} can not be applied to the ill-posed differential equation ($ \spadesuit $) when $ A $ is a generator of a $ C_0 $ bi-semigroup since this equation in general does not generate a semiflow. \autoref{B} as well as the results in \autoref{generalB} and \autoref{normalH} are the first time to address the problem of the existence and persistence of the normally hyperbolic invariant manifolds for ill-posed differential equations. Also, our results in \autoref{normalH} can be applied to the non-Lipschitz and non-smooth dynamical systems. New ideas and techniques should be developed to tackle the difficulties arising in our general settings, although some basic methods are due to \cite{HPS77, Fen72, BLZ08}; for detailed proofs of the results in \autoref{normalH}, see \cite{Che18b}.

This is a paper that aims to give an application of our abstract results in \cite{Che18a, Che18b} to both well-posed and ill-posed abstract differential equations like ($ \clubsuit $) or ($ \spadesuit $), but not to give a detailed analysis of some concrete differential equations. Also, it is not a purpose of this paper to develop a unified spectral theory for the well-posed and ill-posed linear differential equations.

\subsection{structure of this paper}

\autoref{correspondence} contains some basic notions we will use throughout this paper. The relation between the dichotomy and (A) (B) condition for some classes of differential equation is given in \autoref{diffCocycle}. A quick review of the main results about invariant manifold theory in \cite{Che18a, Che18b} with an application to different equations is contained in \autoref{ManifoldApp}.

\vspace{.5em}
\noindent{\textbf{Guide to Notation:}}
\begin{enumerate}[$ \bullet $]
	\item $\lip f$: the Lipschitz constant of $f$.

	\item $\mathbb{R}_{+} \triangleq \{x\in \mathbb{R}: x \geq 0\}$.

	\item $X(r) \triangleq \mathbb{B}_r = \{ x \in X: |x| <  r\}$, if $X$ is a Banach space.

	\item For a correspondence $ H: X \to Y $ (defined in \autoref{basicCorr}),
	\begin{asparaitem}
		\item $ H(x) \triangleq \{ y: \exists (x, y) \in \graph H \} $,
		\item $A \subset H^{-1}(B)$, if $ \forall x \in A $, $ \exists y \in B $ such that $ y \in H(x) $,
		\item $ \graph H $, the graph of the correspondence.
		\item $ H^{-1}: Y \to X $, the inversion of $ H $ defined by $ (y,x) \in \graph H^{-1} \Leftrightarrow (x, y) \in \graph H $.
	\end{asparaitem}

	\item $f(A) \triangleq \{ f(x): x \in A \}$, if $f$ is a map.

	\item $\graph f \triangleq \{ (x, f(x)): x \in X \}$, the graph of the map $f: X \rightarrow Y$.

	\item $ Df_m(x) = D_x f_m(x) $: the derivative of $ f_m(x) $ with respect to $ x $; $ D_1F_m(x,y) = D_x F_m(x,y) $, $ D_2F_m(x,y) = D_y F_m(x,y) $: the derivatives of $ F_m(x,y) $ with respect to $ x $, $ y $, respectively.

	\item $\widetilde{d} (A, z) \triangleq \sup_{\tilde{z} \in A} d(\tilde{z}, z) $, if $A$ is a subset of a metric space, defined in \autopageref{notationDD}.

	\item $ a_n \lesssim b_n $, $ n \to \infty $ ($ a_n \geq 0, b_n > 0 $) means that $ \sup_{n\geq 0}b^{-1}_n a_n <\infty $, defined in \autopageref{notation1}.

	\item $ (T * g)(t) \triangleq \int_{0}^{t} T(t-s)g(s) ~\mathrm{ d } s $: the convolution of $ T $ and $ g $ (see \eqref{convol}).

	\item $ (S\Diamond f)(t) \triangleq \frac{\mathrm{d}}{\mathrm{d} t} \int_{0}^{t} S(t-s)f(s) ~\mathrm{d} s $ defined in \autoref{preHYccc}.
	
	\item $ (S_0 \Diamond f)(\omega)(t) = \frac{\mathrm{d}}{\mathrm{d} t} \int_{0}^{t} S_0(t-s, s\omega) f(s) ~\mathrm{d} s $ defined in \autoref{ss11}.

	\item $ |f|_{[0,t]} \triangleq \sup_{s \in [0,t]} |f(s)| $, if $ f \in C([0,t], X) $ defined in \autoref{preHYccc}.

	\item $ D(A) $: the domain of a linear operator $ A $.

	\item $ A_Y $: the part of linear operator $ A $ in $ Y $ (see \autoref{operator}).

	\item $ \mathcal{E}_1(t) $: defined in \autopageref{eee}.

\end{enumerate}

\section{Correspondence with generating map and (A) (B) condition}\label{correspondence}

In this section, we list some notions in order to deal with the differential equations in Banach spaces for both \emph{ill-posed} and well-posed case.
All the mathematical materials appeared in this section are taken from \cite{Che18a}, where the readers can find more details in that paper.

\subsection{some notions about bundle}

$ (X, M, \pi_1) $ (or for short $ X $) is called a (set) bundle over $ M $ if $ \pi_1: M \to X $ is a surjection. Call $ X_{m} = \pi_1^{-1}(m) $, $ m \in M $, the fibers of $ X $, $ M $ the base space of $ X $ and $ \pi_1 $ the projection.
The elements of $ X $ are usually written as $ (m,x) $ where $ x \in X_{m} $, $ m \in M $.
If $ X $ and $ Y $ are bundles over $ M $, the Whitney sum $ X \times Y $ of $ X, Y $ is defined by
\[
X \times Y = \{ (m,x,y): x \in X_{m}, y \in Y_{m},m \in M \}.
\]

Let $ (X, M_1, \pi_1), (Y, M_2, \pi_2) $ be two bundles and $ u: M_1 \to M_2 $ a map. We say a map $ f: X \to Y $ is a bundle map over $ u $ if $ f(X_{m}) \subset Y_{u(m)} $ for all $ m \in M_1 $; in this case, we write $ f(m,x) = (u(m), f_{m}(x)) $ and call $ f_{m}: X_{m} \to Y_{u(m)} $ a fiber map of $ f $.

\subsection{correspondence with generating map} \label{basicCorr}

Let $X, Y$ be sets. $H: X \rightarrow Y$ is said to be a \textbf{correspondence} (see \cite{Cha08}), if there is a non-empty subset of $X \times Y$ called the graph of $ H $ and denoted by $\graph H$.
There are some standard operations between the correspondences.
\begin{enumerate}[$ \bullet $]
	\item (inversion) For a correspondence $ H: X \rightarrow Y $, define its inversion $ H^{-1} : Y \rightarrow X $ by $ (y,x) \in \graph H^{-1} $ if only if $ (x,y) \in \graph H $.
	\item (composition) For two correspondences $H_1: X \rightarrow Y$, $H_2: Y \rightarrow Z$, define $H_2 \circ H_1: X \rightarrow Z$ by 
	\[
	\graph H_2 \circ H_1 = \{ (x,z): \exists y \in Y, ~\text{such that}~ (x,y) \in \graph H_1, (y,z) \in \graph H_2 \}.
	\]
	\item (linear operation) Let $X, Y$ be vector spaces. For correspondences $H_1,~ H_2: X \rightarrow Y $, $H_1 - H_2: X \rightarrow Y$ is defined by 
	\[
	\graph (H_1 - H_2) = \{ (x,y): \exists (x, y_i) \in \graph H_i, ~\text{such that}~ y = y_1-y_2 \}.
	\]
	In particular, if $H: X \rightarrow Y$ is a correspondence, then $H_m \triangleq H(m+\cdot) - \widehat{m}: X \rightarrow Y$ means $\graph H_m = \{ (x, y - \widehat{m}) : \exists (x+m, y) \in \graph H \}$.
\end{enumerate}
The following notations for a correspondence $ H: X \to Y $ will be used frequently: for $ x \in X $, $ A \subset X $,
\[
H(x) \triangleq \{ y \in Y: \exists (x, y) \in \graph H \},~H(A) \triangleq \bigcup_{x \in A} H(x);
\]
allow $ H(x) = \emptyset $; if $ H(x) = \{y\} $, write $ H(x) = y $.
So by $ A \subset H^{-1}(B) $ we mean $ \forall x \in A $, $ \exists y \in B $ such that $ y \in H(x) $ (i.e. $ x \in H^{-1}(y) $). If $ X = Y $, we say $ A \subset X $ is \emph{invariant} under $ H $ if $ A \subset H^{-1}(A) $.
Evidently, $ x \mapsto H(x) $ can be regarded as a `multiple-valued map', but it is useless from our purpose as we only concern the description of $ \graph H $.

We say a correspondence $H: X_1 \times Y_1 \rightarrow X_2 \times Y_2$ has a \textbf{generating map} $(F,G)$, denoted by $H\sim(F,G)$, if there are maps $F: X_1 \times Y_2 \rightarrow X_2$, $G: X_1 \times Y_2 \rightarrow Y_1$, such that 
\[
(x_2,y_2) \in H(x_1,y_1) \Leftrightarrow y_1 = G(x_1,y_2), ~x_2 = F(x_1, y_2).
\]

Let $X, Y$ be sets. $H:\mathbb{R}_+ \times X \rightarrow X$ is called a \textbf{continuous (semi-)correspondence}, if
\begin{enumerate}[(a)]
	\item $\forall t \in \mathbb{R}_+$, $H(t): X \rightarrow X$ is a correspondence;
	\item $H(0) = \id$, $H(t+s) = H(t)\circ H(s)$, $\forall t, s \in \mathbb{R}_+$.
\end{enumerate}
Furthermore, a continuous semi-correspondence $H:\mathbb{R}_+ \times X \times Y \rightarrow X \times Y$ is said has a generating map $(F,G)$, if every $H(t) \sim (F_t, G_t)$. If $X,Y$ are topology spaces, and $(t, x, y) \mapsto F_{t}(x,y), ~(t, x, y) \mapsto G_t(x,y)$ are continuous, we say $H$ has a \emph{continuous generating map}. In analogy, if $X,Y$ are Banach spaces, or more generally, Banach manifolds, and $(t,x,y) \mapsto D^{i}F_t(x,y) $, $(t,x,y) \mapsto D^{i}G_t(x,y)$, $ i = 0, 1, \ldots,k $, are continuous, we say $H$ has a \emph{$ C^k $ smooth generating map}.

Let $(X, M, \pi_1), (Y, M, \pi_2)$ be bundles, and $t: M \rightarrow M,~ \omega \mapsto t\omega$ a semiflow. $H:\mathbb{R}_+ \times X \rightarrow X$ is called a \textbf{(semi-)cocycle correspondence over} $t$, if
\begin{enumerate}[(a)]
	\item $\forall t \in \mathbb{R}_+$, $H(t): X \rightarrow X$ is a bundle correspondence over $t$;
	\item $H(0, \omega) = \id$, $H(t+s, \omega) = H(t, s\omega)\circ H(s,\omega)$, $\forall t, s \in \mathbb{R}_+$, where $H(t,\omega) \triangleq H(t)(\omega, \cdot): X_{\omega} \rightarrow X_{t\omega}$ is a correspondence.
\end{enumerate}
In this case, $ H^{-1} $, the inversion of $ H $, means $ H^{-1}(t,\omega) \triangleq H(t,\omega)^{-1}: X_{t\omega} \to X_{\omega} $.
Furthermore, a cocycle correspondence $H:\mathbb{R}_+ \times X \times Y \rightarrow X \times Y$ is said has a \textbf{generating cocycle} $(F,G)$, if every $H(t, \omega) \sim (F_{t,\omega}, G_{t,\omega})$, where $F_{t,\omega}: X_{\omega} \times Y_{t\omega} \rightarrow X_{t\omega}, ~G_{t,\omega}: X_{\omega} \times Y_{t\omega} \rightarrow Y_{\omega}$ are maps.
See examples in \autoref{diffCocycle} (abstract differential equations) and \autoref{examples} (concrete differential equations).

Let $u: M \rightarrow N$ be a map. Suppose $H_m: X_m \times Y_m \rightarrow X_{u(m)} \times Y_{u(m)}$ is a correspondence for every $m \in M$. Using $H_m$, one can determine a correspondence $H: X \times Y \rightarrow X \times Y$, by $\graph H \triangleq \bigcup_{m \in M}(m, \graph H_m$), i.e. $(u(m), x_{u(m)}, y_{u(m)}) \in H(m, x_m, y_m) \Leftrightarrow (x_{u(m)}, y_{u(m)}) \in H_m(x_m, y_m)$. We call $H$ a \textbf{bundle correspondence over a map $u$}.
If $H_m$ has a generating map $(F_m, G_m)$ for every $m \in M$, where $F_m: X_m \times Y_{u(m)} \rightarrow X_{u(m)}, ~G_m: X_m \times Y_{u(m)} \rightarrow Y_m$ are maps, then we say $H$ has a \textbf{generating bundle map} $(F,G)$ over $u$, which is denoted by $H \sim (F,G)$.

\subsection{dual correspondence}\label{dual}

Let $ H: X_1 \times Y_1 \to X_2 \times Y_2 $ be a correspondence with a generating map $ (F, G) $. The \textbf{dual correspondence} $ \widetilde{H} $ of $ H $ is defined by the following. Set $ \widetilde{X}_1 = Y_2 $, $ \widetilde{X}_2 = Y_1 $, $ \widetilde{Y}_1 = X_2 $, $ \widetilde{Y}_2 = X_1 $ and
\[
\widetilde{F}(\widetilde{x}_1, \widetilde{y}_2) = G(\widetilde{y}_2, \widetilde{x}_1), ~ \widetilde{G}(\widetilde{x}_1, \widetilde{y}_2) = F(\widetilde{y}_2, \widetilde{x}_1).
\]
Now $ \widetilde{H} \sim (\widetilde{F}, \widetilde{G}) : \widetilde{X}_1 \times \widetilde{Y}_1 \to \widetilde{X}_2 \times \widetilde{Y}_2 $, i.e.
\[
(\widetilde{x}_2,\widetilde{y}_2) \in \widetilde{H}(\widetilde{x}_1,\widetilde{y}_1) \Leftrightarrow \widetilde{y}_1 = \widetilde{G}(\widetilde{x}_1,\widetilde{y}_2), ~\widetilde{x}_2 = \widetilde{F}(\widetilde{x}_1, \widetilde{y}_2).
\]
One can similarly define the \textbf{dual bundle correspondence} $ \widetilde{H} $ of bundle correspondence $ H $ over $ u $ if $ u $ is invertible; $ \widetilde{H} $ now is over $ u^{-1} $. Also, the \textbf{dual cocycle correspondence} of cocycle correspondence over $ t $ can be defined analogously if $ t $ is a flow.

$ \widetilde{H} $ and $ H $ have some duality in the sense that $ \widetilde{H} $ can reflect some properties of  `$ H^{-1} $'. For instance, if $ H $ satisfies (A)$(\alpha; \alpha', \lambda_u)$ condition (see \autoref{AB} below), then $ \widetilde{H} $ satisfies (B)$(\alpha; \alpha', \lambda_u)$ condition. So one can get the `unstable results' of $ H $ through the `stable results' of $ \widetilde{H} $. This approach, which we learned from \cite{Cha08}, is important when we deal with invariant manifold theory for non-invertible dynamics.

\subsection{(A) (B) condition} \label{defAB}

Let $X_i, Y_i, ~ i=1,2$ be metric spaces. \emph{For the convenience of writing, we write the metrics $d(x,y) \triangleq |x-y|$}.

\begin{defi}\label{AB}
	We say a correspondence $H: X_1 \times Y_1 \rightarrow X_2 \times Y_2$ satisfies \textbf{(A) (B) condition, or (A)$(\bm{\alpha; \alpha', \lambda_u})$ (B)$(\bm{\beta; \beta', \lambda_s})$ condition}, if the following conditions hold.
	$\forall ~(x_1, y_1) \times (x_2, y_2),~ (x'_1, y'_1) \times (x'_2, y'_2) \in \graph H$,
	\begin{enumerate}[(A)]
		\item (A1) if $|x_1 - x'_1| \leq \alpha |y_1 - y'_1|$, then $|x_2 - x'_2| \leq \alpha' |y_2 - y'_2|$;

		\noindent(A2) if $|x_1 - x'_1| \leq \alpha |y_1 - y'_1|$, then $ |y_1 - y'_1| \leq \lambda_u |y_2 - y'_2|$;

		\item (B1) if $|y_2 - y'_2| \leq \beta |x_2 - x'_2| $, then $ |y_1 - y'_1| \leq \beta' |x_1 - x'_1|$;

		\noindent(B2) if $|y_2 - y'_2| \leq \beta |x_2 - x'_2| $, then $ |x_2 - x'_2| \leq \lambda_s |x_1 - x'_1|$.
	\end{enumerate}
	If $\alpha = \alpha', ~\beta = \beta'$, we also use notation \textbf{(A)$(\bm{\alpha, \lambda_u})$ (B)$(\bm{\beta, \lambda_s})$ condition}.
\end{defi}

In particular, if $H \sim (F,G)$, then the maps $F,G$ satisfy the following Lipschitz conditions.
\begin{enumerate}[(A$'$)]
	\item (A1$'$) $\sup_{x}\lip F(x,\cdot) \leq \alpha'$,  (A2$'$) $\sup_{x}\lip G(x,\cdot) \leq \lambda_u$.
	\item (B1$'$) $\sup_{y}\lip G(\cdot,y) \leq \beta'$,  (B2$'$) $\sup_{y}\lip F(\cdot,y) \leq \lambda_s$.
\end{enumerate}
If $F,G$ satisfy the above Lipschitz conditions, then we say $H$ satisfies \textbf{(A$\bm{'}$)($\bm{\alpha', \lambda_u}$) (B$\bm{'}$)($\bm{\beta', \lambda_s}$) condition}, or \textbf{(A$\bm{'}$) (B$\bm{'}$) condition}. Similarly, we can define (A$'$) (B) condition, or (A) (B$'$) condition; or (A) condition, (A$'$) condition, etc, if $H$ only satisfies (A), (A$'$), etc, respectively.

Our definition of (A)(B) condition is associated with the hyperbolicity. Roughly, the numbers $ \lambda_{s}, \lambda_{u} $ are related with the \emph{Lyapunov numbers}, the spaces $ X_i, Y_i $, $ i = 1,2 $, play a similar role of \emph{spectral spaces}, and the numbers $ \alpha, \alpha' $, $ \beta, \beta' $ describe how the spaces $ X_i, Y_i $ ($ i = 1,2 $) are approximately invariant.
It might be intuitive to see this in \autoref{thm:biAB} or \autoref{thm:gencocycleAB}, a relation between the (exponential) dichotomy and (A) (B) condition, which is a main issue of this paper addressed in \autoref{diffCocycle}. We refer the readers to see \cite[Section 3.2 and 3.3]{Che18a} as well as \cite{MS88, Zel14} for more results about the verification of (A) (B) condition.

\section{Relation between dichotomy and (A) (B) condition} \label{diffCocycle}

In this section, we will give some classes of abstract differential equations that generate cocycle correspondences with generating cocycles, including both well-posed and ill-posed case. For some concrete examples, see \autoref{examples}. We focus on the relationship between the dichotomy and (A) (B) condition, which is important for us to apply our results in \autoref{continuous case} and \autoref{normalH} (as well as \cite{Che18a, Che18}) to some differential equations.

The dichotomy or more precisely the exponential dichotomy of differential equations is related with spectral theory, which is well developed for $ C_0 $ cocycle (especially the well-posed linear differential equations); see e.g. \cite{CL99} and the literatures therein for a comprehensive study, as well as \cite{LZ17,LL10} for further developments.
It is worth pointing out that the existing spectral theory for the ill-posed differential equations is not so well developed even in `equilibrium' case, not mention that in general `cocycle' case. No attempt has been made here to develop such theory. We refer to \cite{LP08} (and also \cite{SS01}) and the references therein for some general results in this direction and detailed spectral analysis of some particular concrete differential equations.

Throughout this section, we make the following settings.

\begin{enumerate}[$ \bullet $]
	\item  $ M $ is a Hausdorff topology space. $ t: M \to M $ is a continuous semiflow, i.e. $ \mathbb{R}_+ \times M \to M $, $ (t, \omega) \mapsto t\omega $ is continuous and $ 0\omega = \omega $, $ (t+s)\omega = t(s\omega) $ for all $ t, s \in \mathbb{R}_+ $, $ \omega \in M $.
	\item Let $ Z $ be a Banach space. Assume $ \mathcal{C}(\omega): D(\mathcal{C}(\omega)) \subset Z \to Z $, $ \omega \in M $, are closed linear operators.
\end{enumerate}

In the following, we will consider the following two differential equations in different settings:
\begin{equation}\label{equ:linear}
\dot{z}(t) = \mathcal{C}(t\omega)z(t),
\end{equation}
and
\begin{equation}\label{equ:main}
\dot{z}(t) = \mathcal{C}(t\omega)z(t) + f(t\omega)z(t),
\end{equation}
where $ f: M \times Z \to Z $ is continuous and for every $ \omega \in M $, $ \sup_{t\geq 0 } \lip f(t\omega)(\cdot) = \varepsilon(\omega) < \infty $.

In \autoref{bi-semigroup} and \ref{HYccc}, we concentrate on the following the special form of $ \{\mathcal{C}(\omega)\} $:
\[\label{equ:cc}\tag{$ \odot $}
\mathcal{C}(\omega) = A + L(\omega), ~\omega \in M,
\]
where $ A: D(A) \subset Z \to Z $ is a closed linear operator and $ L: M \to L(\overline{D(A)}, Z) $ satisfies the following assumption. Note that $ D(\mathcal{C}(\omega)) = D(A) $.

\begin{enumerate}[(D1)] \label{d1L}
	\item  Suppose $ L $ is strongly continuous, i.e. $ (\omega, z) \mapsto L(\omega)z $ is continuous. Moreover, assume that (i) for every $ \omega \in M $, $ \sup_{t \geq 0}|L(t\omega)| = \tau(\omega) < \infty $; and (ii) $ \omega \mapsto \tau(\omega) $ is locally bounded.
\end{enumerate}

\begin{defi}[mild solution]\label{def:mild}
	Let $ \{\mathcal{C}(\omega)\} $ be as \eqref{equ:cc}.
	A function $ u \in C([a,b], Z) $ is called a \textbf{(mild) solution} of \eqref{equ:main} if it satisfies (i) $ \int_{a}^{t} u(s) ~\mathrm{ d } s \in D(A) $ for all $ t \in [a,b] $ and (ii) the following
	\[
	u(t) = u(a) + A \int_{a}^{t} u(s) ~\mathrm{ d } s + \int_{a}^{t} (L(s\omega)u(s) + f(s\omega)u(s)) ~\mathrm{ d } s, ~t \in [a,b].
	\]
	Similarly, a function $ u \in C([a,b), Z) $ (resp. $ u \in C((a,b], Z) $) is called a (mild) solution of \eqref{equ:main} if for any $ r \in (a, b) $, $ u|_{[a, r]} $ (resp. $ u|_{[r, b]} $) is a mild solution of \eqref{equ:main}.
\end{defi}
\begin{defi}
	Let $ \{\mathcal{C}(\omega)\} $ be as \eqref{equ:cc}. We say equation \eqref{equ:main} is \textbf{well-posed}, if for every $ \omega \in M $ and every $ x \in \overline{D(\mathcal{C}(\omega))} = \overline{D(A)} $, equation \eqref{equ:main} has a mild solution $ u \in C([0,\chi(\omega,x)], Z) $ with $ u(0) = x $, where $ \chi(\omega,x) > 0 $ depending on choice of $ \omega, x $; otherwise, we say equation \eqref{equ:main} is \textbf{ill-posed}.
\end{defi}
That the differential equation is well-posed or ill-posed depends on how we define the solution of the equation. In this paper, we only focus on the \emph{mild} solutions. 

We will consider the three types of equations \eqref{equ:main}, i.e. (type I) $ \sim $ (type III) listed in \autoref{overview}, which are important for applications.
We will show that equation \eqref{equ:main} gives a cocycle correspondence $ H $ with generating cocycle through the mild solutions under additional mild conditions. The cocycle correspondence $ H $ will satisfy (A) (B) condition, roughly speaking, if some uniform dichotomy of \eqref{equ:linear} is assumed and the Lipschitz constants of $ f(\omega)(\cdot) $ (i.e. $ \varepsilon(\cdot) $) are `small'; see \autoref{thm:biAB}, \autoref{thm:gencocycleAB} and \autoref{them:spec}. So our results in \autoref{continuous case} and \autoref{normalH} can be applied to give some dynamical results of the equation \eqref{equ:main}, as well as the results in \cite{Che18a, Che18}.

See \cite{EN00,ABHN11,vdMee08} for some basic backgrounds from operator semigroup theory. In \autoref{operator}, we give some basic definitions and notations taken from operator semigroup theory for readers' convenience. We deal with (type I) in \autoref{bi-semigroup} and (type II) in \autoref{HYccc}. A light more general case (type III) is also discussed in \autoref{genccc}.

\subsection{$ C_0 $ (bi-)semigroup case}\label{bi-semigroup}

\subsubsection{an illustration: autonomous system case}\label{illustration}

Let $ X, Y $ be two Banach spaces. Assume $ T(t): X \to X $, $ S(-t): Y \to Y $, $ t \geq 0 $, are $ C_0 $ semigroups, and
\begin{equation}\label{tsspectral}
|T(t)| \leq e^{\mu_s t}, ~|S(-t)| \leq e^{-\mu_u t}, ~ t \geq 0.
\end{equation}

\begin{rmk}
	In general, for a $ C_0 $ semigroup $ T $, it must have $ |T(t)| \leq Ce^{\mu t} $ for some $ C \geq 1 $, $ \mu $. The constant $ C $ might not equal $ 1 $. But we can always choose an equivalent norm $ \| \cdot \| $ such that $ \|T(t)\| \leq e^{\mu t} $; see \cite{EN00}. That $ C = 1 $ is a key in our argument.
\end{rmk}

Consider
\begin{equation}\label{*1}
\begin{cases}
	x(t) = T(t - t_1)x_1 + \int_{t_1}^{t} T(t-s)B_1(x(s),y(s)) ~\mathrm{d} s, \\
	y(t) = S(t - t_2)y_2 - \int_{t}^{t_2} S(t-s)B_2(x(s),y(s)) ~\mathrm{d} s,
\end{cases}
t_1 \leq t \leq t_2,
\end{equation}
where $ B_1 : X \times Y \to X $, $ B_2 : X \times Y \to Y $ are Lipschitz with $ \lip B_i \leq \varepsilon $.

Denote by $ A, -B $ the generators of $ T, S $, respectively. Then \eqref{*1} can be considered as the mild solutions of the following differential equation,
\begin{equation}\label{bi-diff0}
\begin{cases}
\dot{x} = A x + B_1 (x, y), \\
\dot{y} = B y + B_2 (x, y),
\end{cases}
\end{equation}
i.e. $ z(\cdot) = (x(\cdot),y(\cdot)) $ satisfies $ \int_{t_1}^{t} x(s) \mathrm{d} s \in D(A) $, $ \int_{t_1}^{t} y(s) \mathrm{d} s \in D(B) $, and
\begin{equation}\label{bi-mild0}
\begin{cases}
x(t) = x_1 + A \int_{t_1}^{t} x(s) ~\mathrm{d} s + \int_{t_1}^{t} B_1(x(s), y(s)) ~\mathrm{d} s ,\\
y(t) = y_1 + B \int_{t_1}^{t} y(s) ~\mathrm{d} s + \int_{t_1}^{t} B_2(x(s), y(s)) ~\mathrm{d} s,
\end{cases}
\end{equation}
for all $ t_1 \leq t \leq t_2 $, where $ (x_1, y_1) \in X \times Y $, $ y_1 = y(t_1) $. As usual, \eqref{*1} is called a \textbf{variant of constant formula} of \eqref{bi-diff0} (or \eqref{bi-mild0}).

Set
\[
C = \left(\begin{matrix}
A	&  \\
& B
\end{matrix} \right) : D(A) \times D(B) \subset X \times Y \to X \times Y.
\]
$ C $ is called a generator of a \textbf{$ C_0 $ bi-semigroup}. See \cite{vdMee08} for more characterizations and \autoref{operator}.

Note that the existence of the solutions of \eqref{*1} is a standard application of Banach Fixed Point Theorem; the detail is omitted here (see also \cite{ElB12}). Any solution of \eqref{*1} satisfies \eqref{bi-mild0}, and if \eqref{bi-mild0} exists a solution with $ (x(t_1), y(t_1)) = (x_1, y_1) $, then it must satisfy \eqref{*1} with $ y(t_2) = y_2 $; this is a standard consequence of linear $ C_0 $ semigroup theory (see e.g. \cite{EN00,ABHN11} for details) by setting $ f_i(s) = B_i(x(s),y(s)) $, $ i = 1,2 $. Using the parameter-dependent fixed point theorem (see e.g. \cite[Appendix D.1]{Che18a}), one can easily show the continuous and smooth dependence of the solution of \eqref{*1} about $ (x_1, y_2) $ when $ B_i $, $ i = 1,2 $, have higher regularity. We emphasis that \eqref{bi-mild0}, unlike the classical case, might not have a solution for $ (x(t_1), y(t_1)) = (x_1, y_1) $, i.e. \eqref{bi-diff0} is ill-posed. In contrast, \eqref{*1} always has a (unique) solution for $ (x(t_1), y(t_2)) = (x_1, y_2) $.

Define a correspondence $ H(s) : X \times Y \to X \times Y $ as follows. Let $ t_1 = 0, t_2 = s $. $ (x_2, y_2) \in H(s)(x_1, y_1) $ if and only if there is a continuous $ (x(t), y(t)) $, $ 0 \leq t \leq s $, satisfying \eqref{*1} with $ (x(t_i), y(t_i)) = (x_i, y_i) $, $ i = 1,2 $. $ H(s) $ has a natural generating map $ (F_{s}, G_{s}) $, which is defined by $ F_{s}(x_1, y_2) = x(s) $, $ G_{s}(x_1, y_2) = y(0) $, where $ (x(t), y(t)) $ satisfies \eqref{*1} with $ x(0) = x_1, y(s) = y_2 $.
By verifying directly, we have $ H(t+s) = H(t) \circ H(s) $, i.e. $ H(\cdot) $ is a \emph{continuous correspondence} defined in \autoref{basicCorr}. Note that in general, $ H $ is not a flow or semiflow. In \cite{ElB12}, $ H $ was also called the \emph{dichotomous flow} induced by \eqref{*1} (or \eqref{bi-diff0} \eqref{bi-mild0}).

\begin{lem}\label{lem:simpleAB}
	Let $ T(\cdot), S(-\cdot) $ be two $ C_0 $ semigroups satisfying \eqref{tsspectral}. Assume $ \lip B_i \leq \varepsilon $, where $ X \times Y $ equips with the max norm defined by $ |(x,y)| = \max\{ |x|, |y| \} $, $ (x, y) \in X \times Y $. Let $ H $ be the continuous correspondence induced by \eqref{*1}. Assume $ \mu_u - \mu_s - 2\varepsilon > 0 $. Take $ \alpha, \beta $ such that $ \frac{\varepsilon}{\mu_u - \mu_s - \varepsilon} \leq \alpha, \beta < 1 $, and $ \lambda_u = e^{-\mu_u + \varepsilon} $, $ \lambda_s = e^{\mu_s + \varepsilon} $. Then $ H(t) $ satisfies \textnormal{(A)}$(\alpha, \lambda^t_u)$ \textnormal{(B)}$(\beta, \lambda^t_s)$ condition.

	In fact, if $ \alpha, \beta \in (\frac{\varepsilon}{\mu_{u} - \mu_s - \varepsilon}, 1) $ and $ t \geq \epsilon_1 > 0 $, then $ H(t) $ satisfies (A)($ \alpha; k_{\alpha}\alpha, \lambda^t_{u} $) (B)($ \beta; k_{\beta}\beta, \lambda^t_{s} $), where
	\[
	k_{h} = \frac{(\mu_{u} - \mu_s - \varepsilon - \frac{\varepsilon}{h})e^{-(\mu_{u} - \mu_s - \varepsilon)\epsilon_1} + \frac{\varepsilon}{h}}{\mu_{u} - \mu_s - \varepsilon} < 1, ~ h = \alpha,\beta.
	\]
\end{lem}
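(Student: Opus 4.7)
The plan is to analyze the difference $(\tilde x, \tilde y) = (x - x', y - y')$ of two solutions of \eqref{*1} and derive Gronwall-type estimates. Subtracting the integral equations and using that $B_1, B_2$ are $\varepsilon$-Lipschitz with respect to the max norm yields, writing $U = |\tilde x|$, $V = |\tilde y|$, $M = \max(U, V)$, $U_i = U(t_i)$, $V_i = V(t_i)$, $\kappa \triangleq \mu_u - \mu_s - \varepsilon$ (which exceeds $\varepsilon$ by hypothesis), and $\tau = t_2 - t_1$,
\[
U(t) \leq e^{\mu_s(t-t_1)} U_1 + \varepsilon \int_{t_1}^{t} e^{\mu_s(t-s)} M(s)\, \mathrm{d} s, \qquad V(t) \leq e^{-\mu_u(t_2-t)} V_2 + \varepsilon \int_{t}^{t_2} e^{-\mu_u(s-t)} M(s)\, \mathrm{d} s.
\]
For (A1), (A2) I assume $U_1 \leq \alpha V_1$; the crucial step will be to establish cone invariance $U(t) \leq \alpha V(t)$ on $[t_1, t_2]$, which forces $M = V$ throughout.

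I would argue the cone invariance by contradiction. Taking first $\alpha > \varepsilon/\kappa$ strictly (the boundary case $\alpha = \varepsilon/\kappa$ then follows by approximation), let $t^* \in (t_1, t_2]$ be the first instant where $U(t^*) = \alpha V(t^*)$. On $[t_1, t^*]$ one has $U \leq \alpha V \leq V$, so $M = V$ there. Using the semigroup identity $\tilde y(t) = S(t-t^*)\tilde y(t^*) - \int_t^{t^*} S(t-s)\Delta B_2(s)\,\mathrm{d} s$ on this subinterval, a backward Gronwall yields $V(t) \leq e^{-(\mu_u-\varepsilon)(t^*-t)}V(t^*)$, whence $V_1 \leq e^{-(\mu_u-\varepsilon)(t^*-t_1)}V(t^*)$ and therefore $U_1 \leq \alpha e^{-(\mu_u-\varepsilon)(t^*-t_1)}V(t^*)$. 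Inserting these bounds into the forward integral for $U$ at $t = t^*$ and using $\mu_s + \kappa = \mu_u - \varepsilon$ gives
\[
U(t^*) \leq V(t^*)\Bigl[\alpha e^{-\kappa(t^*-t_1)} + \tfrac{\varepsilon}{\kappa}\bigl(1 - e^{-\kappa(t^*-t_1)}\bigr)\Bigr] < \alpha V(t^*),
\]
since the bracketed expression is a convex combination of $\alpha$ and $\varepsilon/\kappa < \alpha$. This contradicts $U(t^*) = \alpha V(t^*)$, so the cone is preserved on all of $[t_1, t_2]$.

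With cone invariance in hand, $M = V$ globally, and the same backward Gronwall applied on the full interval gives $V_1 \leq e^{-(\mu_u-\varepsilon)\tau}V_2 = \lambda_u^\tau V_2$, which is (A2); substituting the resulting bound on $V(s)$ into the forward integral for $U$ at $t = t_2$ produces $U_2 \leq V_2[\alpha e^{-\kappa\tau} + \tfrac{\varepsilon}{\kappa}(1-e^{-\kappa\tau})]$, which is $\leq \alpha V_2$ by the same convex-combination argument, giving (A1) with $\alpha' = \alpha$. Since the bracket has derivative $e^{-\kappa\tau}(\varepsilon - \alpha\kappa) \leq 0$, it is monotonically decreasing in $\tau$; evaluating at $\tau = \epsilon_1$ produces precisely $k_\alpha \alpha$, which is the sharper form of (A1) for $\tau \geq \epsilon_1$.

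The (B) assertions will follow by a symmetric argument. Assuming $V_2 \leq \beta U_2$, I would bootstrap in the reverse direction (using the semigroup identity for $\tilde x$ restarted at an analogous $t^*$) to show $V \leq \beta U$ throughout, so $M = U$; then forward Gronwall on $U$ alone gives $U_2 \leq e^{(\mu_s+\varepsilon)\tau}U_1 = \lambda_s^\tau U_1$, which is (B2), and inserting into the backward integral for $V$ at $t = t_1$ yields the (B1) bound $V_1 \leq U_1[\beta e^{-\kappa\tau} + \tfrac{\varepsilon}{\kappa}(1-e^{-\kappa\tau})]$, the same monotonicity in $\tau$ providing the sharper form with $\beta' = k_\beta \beta$ when $\tau \geq \epsilon_1$. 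The principal obstacle throughout is the cone-invariance bootstrap; the essential device is to exploit the $C_0$ semigroup property to restart the integral equation on the subinterval $[t_1, t^*]$ (respectively $[t^*, t_2]$), which decouples the forward and backward Gronwall estimates and allows each to be used in its own direction.
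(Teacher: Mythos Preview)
Your argument is essentially the paper's: derive the forward/backward integral inequalities for the difference of two solutions, establish cone invariance $U \leq \alpha V$ by a first-exit-time contradiction, then read off (A2) via backward Gronwall and (A1) from the convex-combination bound (your bracketed expression is exactly the paper's $k_\alpha$ computation). The (B) part is handled symmetrically in both.

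There is one small gap in your contradiction step. You need $t^* > t_1$ for the bracket to be \emph{strictly} less than $\alpha$, but the hypothesis $U_1 \leq \alpha V_1$ permits equality, and in that case the cone could fail immediately after $t_1$, so the natural first-exit time is $t^* = t_1$, where your bracket equals $\alpha$ and no contradiction results. Your approximation remark addresses only the lower endpoint $\alpha = \varepsilon/\kappa$ of the admissible $\alpha$-range, not this initial-value edge case. The paper's device is to run the contradiction with an auxiliary $\alpha' \in (\alpha,1)$ in place of $\alpha$: then $U_1 \leq \alpha V_1 < \alpha' V_1$ (when $V_1 > 0$) forces $t^* > t_1$ by continuity, the identical computation gives $U(t^*) < \alpha' V(t^*)$, and one lets $\alpha' \downarrow \alpha$ at the end.
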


\begin{rmk}\label{rmk:sharp}
	\begin{enumerate}[(a)]
		\item The condition $ \mu_u - \mu_s - 2\varepsilon > 0 $ in some sense is sharp, which has been obtained independently in \cite{Mcc91} and \cite{Rom93}. See also \cite{Zel14} and the references therein more details. The proof given here is quite different from previous literatures we list.
		\item Note that the Lipschitz constants of $ B_i $ are computed with respect to the max norm of $ X \times Y $.
		If we employ the $ p $-norm in $ X \times Y $ ($ 1 \leq p < \infty $), i.e. $ |(x, y)|_{p} = \{ |x|^{p} + |y|^{p} \}^{1/p} $, we have another estimate (which in some cases is useful). Assume $ \mu_u - \mu_s - 4\varepsilon > 0 $. Take $ \alpha, \beta $ such that $ \Delta_1 \leq \alpha, \beta < 1 $, where $ \Delta_1 \triangleq \frac{2\varepsilon}{\mu_u - \mu_s - 2\varepsilon + \sqrt{\Delta}} $, $ \Delta = (2\varepsilon - (\mu_u - \mu_s))^2 - 4 \varepsilon^2 > 0 $. Then $ H(t) $ also satisfies \textnormal{(A)}$(\alpha, \lambda^t_u)$ \textnormal{(B)}$(\beta, \lambda^t_s)$ condition and $ \alpha\beta<1 $, $ \lambda_s \lambda_u < 1 $. The proof is essentially the same as using the max norm, so we leave it to readers.
		\item There is a special case for $ T, S $. Let $ \widehat{T}: X \times Y \to X \times Y $ be a $ C_0 $ semigroup. Suppose $ \widehat{T}(t) X \subset X $, $ \widehat{T}(t) Y \subset Y $, for $ t \geq 0 $, and $ \widehat{T}|_{Y} $ is a $ C_0 $ group. Now take $ T(t) = \widehat{T}(t)|_{X} $, $ S(-t) = (\widehat{T}(t)|_{Y})^{-1} $. For this case, the result is more or less classical. See also \cite[Lamma 4]{LYZ13} for essentially the same result where the estimate thereof is not optimal.
	\end{enumerate}
\end{rmk}

\begin{rmk}\label{rmk:detail}
	If we distinguish different Lipschitz constants of $ \lip B_1 $, $ \lip B_2 $, then the result can be a little bit more detailed. Let $ \lip B_1 \leq \varepsilon_{s} $, $ \lip B_2 \leq \varepsilon_{u} $. Assume $ \mu_u - \mu_s - \varepsilon_{s} - \varepsilon_{u} > 0 $. Then we can take $ \alpha \in [\frac{\varepsilon_s}{\mu_u - \mu_s - \varepsilon_{u}} , 1) $, $ \beta \in [\frac{\varepsilon_u}{\mu_u - \mu_s - \varepsilon_{s}} , 1) $, and $ \lambda_u = e^{-\mu_u + \varepsilon_{u}} $, $ \lambda_s = e^{\mu_s + \varepsilon_{s}} $; particularly if $ \varepsilon_{s} \to 0 $, then we can take $ \alpha \to 0 $. If $ \alpha \in (\frac{\varepsilon_s}{\mu_u - \mu_s - \varepsilon_{u}} , 1) $, $ \beta \in (\frac{\varepsilon_u}{\mu_u - \mu_s - \varepsilon_{s}} , 1) $ and $ t \geq \epsilon_1 > 0 $, then we can take
	\begin{gather*}
	k_{\alpha} = \frac{(\mu_{u} - \mu_s - \varepsilon_u - \frac{\varepsilon_s}{\alpha})e^{-(\mu_{u} - \mu_s - \varepsilon_u)\epsilon_1} + \frac{\varepsilon_s}{\alpha}}{\mu_{u} - \mu_s - \varepsilon_{u}} < 1,\\
	k_{\beta} = \frac{(\mu_{u} - \mu_s - \varepsilon_s - \frac{\varepsilon_u}{\beta})e^{-(\mu_{u} - \mu_s - \varepsilon_s)\epsilon_1} + \frac{\varepsilon_u}{\beta}}{\mu_{u} - \mu_s - \varepsilon_{s}} < 1.
	\end{gather*}
\end{rmk}

\begin{proof}[Proof of \autoref{lem:simpleAB}]
	Let $ (x(t),y(t)) $, $ (x'(t),y'(t)) $, $ t_1 \leq t \leq t_2 $, satisfy \eqref{*1} with $ (x_1,y_2) $ being equal to $ (x(t_1),y(t_2)) $, $ (x'(t_1),y'(t_2)) $, respectively. Set $ \hat{x}(t) = x(t) - x'(t) $, $ \hat{y}(t) = y(t) - y'(t) $. It suffices to show if $ |\hat{x}(t_1)| \leq \alpha |\hat{y}(t_1)| $, then $ |\hat{x}(t_2)| \leq \alpha |\hat{y}(t_2)| $  and $ |\hat{y}(t_1)| \leq \lambda^{t_2 - t_1}_u |\hat{y}(t_2)| $. (B) condition can be proved similarly.

	By \eqref{*1}, \eqref{tsspectral}, and $ \lip B_i \leq \varepsilon $, we have
	\begin{gather*}
	|\hat{x}(t)| \leq e^{\mu_s (t - t_1)}|\hat{x}(t_1)| + \varepsilon \int_{t_1}^{t} e^{\mu_s (t -s)} |(\hat{x}(s), \hat{y}(s))| ~\mathrm{d} s, \\
	|\hat{y}(t)| \leq  e^{\mu_u (t - t_2)}|\hat{y}(t_2)| + \varepsilon \int_{t}^{t_2} e^{\mu_u (t -s)} |(\hat{x}(s), \hat{y}(s))| ~\mathrm{d} s,
	\end{gather*}
	for all $ t_1 \leq t \leq t_2 $.
	\begin{slem} \label{slem000}
		For any $ \alpha \in (\frac{\varepsilon}{\mu_u - \mu_s - \varepsilon}, 1) $, if $ |\hat{x}(t)| \leq \alpha |\hat{y}(t)| $ for $ t \in [t_1, t'_2] $, $ t_1 < t'_2 \leq t_2 $, then $ |\hat{x}(t'_2)| <  \alpha |\hat{y}(t'_2)| $ and $ |\hat{y}(t_1)| \leq \lambda_u^{t'_2 - t_1} |\hat{y}(t'_2)| $.
	\end{slem}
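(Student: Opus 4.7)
The plan is a Gronwall argument on $[t_1, t'_2]$ that exploits two facts: (i) on this sub-interval the hypothesis $|\hat x| \le \alpha |\hat y|$ with $\alpha<1$ collapses the max-norm via $|(\hat x, \hat y)| = |\hat y|$; and (ii) by the semigroup property, the variant-of-constants formula for $\hat y$ can be rewritten with right endpoint $t'_2$ instead of $t_2$, so only values on $[t_1,t'_2]$ appear.

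\textbf{Step 1 (shifting the endpoint).} Using $S(t - t'_2) = S(t - t_2)S(t_2 - t'_2)^{-1}$ (rigorously, just applying the formula \eqref{*1} at time $t'_2$ and at time $t$, then subtracting) one derives
\[
|\hat y(t)| \le e^{\mu_u (t - t'_2)} |\hat y(t'_2)| + \varepsilon \int_t^{t'_2} e^{\mu_u(t-s)} |(\hat x(s), \hat y(s))|\, \mathrm{d}s, \qquad t \in [t_1, t'_2].
\]
On $[t_1, t'_2]$ the hypothesis and $\alpha<1$ give $|(\hat x(s), \hat y(s))|=|\hat y(s)|$, so setting $\phi(t) = e^{-\mu_u t}|\hat y(t)|$ yields $\phi(t) \le \phi(t'_2) + \varepsilon \int_t^{t'_2}\phi(s)\,\mathrm{d}s$. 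Backward Gronwall gives $\phi(t) \le \phi(t'_2) e^{\varepsilon(t'_2 - t)}$, i.e.
\[
|\hat y(t)| \le \lambda_u^{t'_2 - t} |\hat y(t'_2)|, \qquad t \in [t_1, t'_2],
\]
where $\lambda_u = e^{-\mu_u + \varepsilon}$. Taking $t = t_1$ yields the second conclusion.

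\textbf{Step 2 (estimating $\hat x(t'_2)$).} Evaluating the first inequality at $t = t'_2$ and inserting the decay of $|\hat y(s)|$ from Step~1,
\[
|\hat x(t'_2)| \le e^{\mu_s \Delta} |\hat x(t_1)| + \varepsilon |\hat y(t'_2)| \int_{t_1}^{t'_2} e^{\mu_s(t'_2 - s)} e^{(-\mu_u + \varepsilon)(t'_2 - s)}\, \mathrm{d}s,
\]
with $\Delta = t'_2 - t_1$ and $\mu \triangleq \mu_u - \mu_s - \varepsilon > 0$. Using $|\hat x(t_1)| \le \alpha|\hat y(t_1)| \le \alpha e^{(-\mu_u + \varepsilon)\Delta}|\hat y(t'_2)|$ and computing the integral as $(1 - e^{-\mu \Delta})/\mu$, this simplifies to
\[
|\hat x(t'_2)| \le \left[\alpha e^{-\mu \Delta} + \frac{\varepsilon}{\mu}\bigl(1 - e^{-\mu \Delta}\bigr)\right] |\hat y(t'_2)|.
\]

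\textbf{Step 3 (closing the bound).} The bracket is a convex combination of $\alpha$ and $\varepsilon/\mu$, so it is strictly less than $\alpha$ iff $\varepsilon/\mu < \alpha$, i.e. iff $\alpha > \varepsilon/(\mu_u - \mu_s - \varepsilon)$, which is precisely the standing hypothesis. Therefore $|\hat x(t'_2)| < \alpha |\hat y(t'_2)|$, finishing the proof.

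\textbf{Main obstacle.} The only delicate point is Step~1: writing the integral inequality for $\hat y$ with right endpoint $t'_2$ rather than $t_2$. Without this reformulation, the integrand involves $|(\hat x(s), \hat y(s))|$ on $[t'_2, t_2]$, where the hypothesis $|\hat x| \le \alpha |\hat y|$ is \emph{not} assumed, and the max-norm collapse used to set up Gronwall would fail. Everything else is routine Gronwall bookkeeping, and the strict inequality in the conclusion is a direct consequence of strict inequality in the standing hypothesis on $\alpha$.
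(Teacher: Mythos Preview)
Your proof is correct and follows essentially the same approach as the paper: shift the right endpoint of the $\hat y$-inequality from $t_2$ to $t'_2$ (via the semigroup property of $S$), apply backward Gronwall to get $|\hat y(t)|\le \lambda_u^{\,t'_2-t}|\hat y(t'_2)|$, then feed this into the $\hat x$-inequality and observe that the resulting coefficient is a convex combination of $\alpha$ and $\varepsilon/(\mu_u-\mu_s-\varepsilon)$, hence strictly below $\alpha$. Your explicit identification of the endpoint shift as the only nontrivial step is exactly the point the paper handles tacitly.
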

	\begin{proof}
		Since $ |\hat{x}(t)| \leq \alpha |\hat{y}(t)| $ for $ t \in [t_1, t'_2] $, and $ \alpha < 1 $, we have
		\begin{align*}
		|\hat{x}(t)| & \leq e^{\mu_s (t - t_1)}|\hat{x}(t_1)| + \varepsilon \int_{t_1}^{t} e^{\mu_s (t -s)} |\hat{y}(s)| ~\mathrm{d} s, \\
		|\hat{y}(t)| & \leq e^{\mu_u (t - t'_2)}|\hat{y}(t'_2)| + \varepsilon \int_{t}^{t'_2} e^{\mu_u (t -s)} |\hat{y}(s)| ~\mathrm{d} s.
		\end{align*}
		By Gronwall inequality, $ |\hat{y}(t)| \leq e^{ (-\mu_u + \varepsilon)(t'_2 - t) } |\hat{y}(t'_2)| = \lambda_u^{t'_2 - t} |\hat{y}(t'_2)| $. So
		\begin{align*}
		|\hat{x}(t'_2)| & \leq \alpha \left\{  e^{(\mu_s - \mu_u + \varepsilon)(t'_2 - t_1)} + \frac{\varepsilon}{\alpha} \int_{t_1}^{t'_2} e^{(\mu_s - \mu_u + \varepsilon)(t'_2 - s)} ~\mathrm{d} s \right\} |\hat{y}(t'_2)| \\
		& = \alpha \left\{ e^{(\mu_s - \mu_u + \varepsilon)(t'_2 - t_1)} + \frac{\varepsilon}{\alpha} \frac{1-e^{(\mu_s - \mu_u + \varepsilon)(t'_2 - t_1)}}{\mu_u - \mu_s - \varepsilon} \right\} |\hat{y}(t'_2)|.
		\end{align*}
		Since $ \mu_u - \mu_s - \varepsilon > \mu_u - \mu_s - 2\varepsilon > \mu_u - \mu_s - \varepsilon - \frac{\varepsilon}{\alpha} > 0 $ and $ \frac{\varepsilon}{\mu_u - \mu_s - \varepsilon} < \alpha $, we have
		\[
		e^{(\mu_s - \mu_u + \varepsilon)(t'_2 - t_1)} + \frac{\varepsilon}{\alpha} \frac{1-e^{(\mu_s - \mu_u + \varepsilon)(t'_2 - t_1)}}{\mu_u - \mu_s - \varepsilon} = \frac{ (\mu_u - \mu_s - \varepsilon - \frac{\varepsilon}{\alpha})e^{-(\mu_u - \mu_s - \varepsilon)(t'_2 - t_1)} + \frac{\varepsilon}{\alpha} }{\mu_u - \mu_s - \varepsilon} < 1,
		\]
		completing the proof.
	\end{proof}
	\begin{slem}\label{slem111}
		Let $ \alpha \in [\frac{\varepsilon}{\mu_u - \mu_s - \varepsilon}, 1) $. If $ |\hat{x}(t_1)| \leq \alpha |\hat{y}(t_1)| $, then $ |\hat{x}(t)| \leq \alpha |\hat{y}(t)| $ for all $ t > t_1 $.
	\end{slem}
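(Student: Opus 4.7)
The plan is a bootstrap argument driven by the strict-contraction conclusion of \autoref{slem000}. Define
\[
T^{*} \;=\; \sup\{T \in [t_1, t_2] : |\hat{x}(t)| \leq \alpha |\hat{y}(t)| \text{ for all } t \in [t_1, T]\}.
\]
The hypothesis gives $T^{*} \geq t_1$, and continuity of $\hat{x}, \hat{y}$ ensures that $T^{*}$ itself lies in the defining set. The goal is to rule out $T^{*} < t_2$.

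Suppose first $\alpha \in (\frac{\varepsilon}{\mu_u - \mu_s - \varepsilon}, 1)$, so that \autoref{slem000} applies directly. If $T^{*} < t_2$, that sublemma used on $[t_1, T^{*}]$ (with $t'_2 = T^{*}$) delivers the quantitative bound $|\hat{x}(T^{*})| \leq c\,\alpha |\hat{y}(T^{*})|$ for a factor $c = c(T^{*} - t_1) < 1$ read off from its proof. When $\hat{y}(T^{*}) \neq 0$ this inequality is strict, and continuity of $|\hat{x}|, |\hat{y}|$ extends it to a right-neighborhood $[T^{*}, T^{*} + \eta]$, contradicting the supremum. The residual subcase $\hat{y}(T^{*}) = 0$ (which forces $\hat{x}(T^{*}) = 0$ as well) I would resolve by perturbing the terminal datum: since $(x, y)$ and $(x', y')$ depend continuously on their endpoint data via the Banach fixed-point argument for \eqref{*1}, a small shift $y(t_2) \mapsto y(t_2) + \delta$ produces pairs $(\hat{x}^{\delta}, \hat{y}^{\delta})$ with $\hat{y}^{\delta}(T^{*}) \neq 0$ for generic $\delta$ and with $|\hat{x}^{\delta}(t_1)| \leq \alpha |\hat{y}^{\delta}(t_1)|$ preserved (trivially when $\hat{y}(t_1) = 0$, and by continuity otherwise); running the nondegenerate argument and sending $\delta \to 0$ recovers the cone condition.

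For the left endpoint $\alpha = \frac{\varepsilon}{\mu_u - \mu_s - \varepsilon}$ I would argue by approximation from above: for every $\alpha' \in (\alpha, 1)$ the hypothesis still gives $|\hat{x}(t_1)| \leq \alpha' |\hat{y}(t_1)|$, so the previous paragraph yields $|\hat{x}(t)| \leq \alpha' |\hat{y}(t)|$ throughout $[t_1, t_2]$, and letting $\alpha' \searrow \alpha$ finishes the proof. The main delicacy is precisely the degenerate point $\hat{y}(T^{*}) = 0$; there the strict inequality furnished by \autoref{slem000} collapses to $0 \leq 0$ and naive continuity propagation fails, which is why the perturbation device above (or, equivalently, a direct combined Gronwall estimate on the two integral inequalities used in the proof of \autoref{slem000}, comparing $|\hat{x}|$ against $\alpha |\hat{y}|$ without bootstrapping) is required to close the argument.
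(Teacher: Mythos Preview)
Your bootstrap via $T^*=\sup\{\ldots\}$ is precisely the paper's argument. The organizational difference is that the paper passes to an auxiliary $\alpha'\in(\alpha,1)$ at the outset---for \emph{every} admissible $\alpha$, not just the left endpoint---runs the entire continuity argument with $\alpha'$, and lets $\alpha'\searrow\alpha$ only in the last line. That single move absorbs your case split and renders the perturbation device unnecessary: with $\alpha'$ in place of $\alpha$ the initial hypothesis is strict, $|\hat x(t_1)|\le\alpha|\hat y(t_1)|<\alpha'|\hat y(t_1)|$, so continuity gives $t_0>t_1$ directly (a point your version never checks---you need $T^*>t_1$ before \autoref{slem000} can be invoked, and this is not automatic when the cone condition holds with equality at $t_1$); at the supremum one then contradicts the forced equality $|\hat x(t_0)|=\alpha'|\hat y(t_0)|$ against the strict conclusion of \autoref{slem000}, with no continuity extension past $t_0$ required. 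Your perturbation argument for the $\hat y(T^*)=0$ subcase is also shaky as written: shifting $y(t_2)$ moves $\hat y(t_1)$ while leaving $\hat x(t_1)$ fixed, so if the initial cone condition held with equality and $\hat y(t_1)\neq 0$ the perturbed condition may fail. In short, the approximation-in-$\alpha$ you reserve for the endpoint is exactly the paper's main trick; performing it first is what makes the degeneracies vanish.
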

	\begin{proof}
		Take any $ \alpha' $ such that $ \alpha < \alpha' < 1 $. We show $ |\hat{x}(t)| < \alpha' |\hat{y}(t)| $ for all $ t > t_1 $. Consider
		\[
		D = \{ t'_2 \in [t_1, t_2]: |\hat{x}(t)| \leq \alpha' |\hat{y}(t)|, \forall t \in [t_1, t'_2]  \}.
		\]
		Let $ t_0 = \sup D $. Note that $ t_0 \in D $ and $ t_0 > t_1 $ (since $ |\hat{x}(t_1)| \leq \alpha |\hat{y}(t_1)| < \alpha' |\hat{y}(t_1)| $).
		If $ t_0 < t_2 $, then $ |\hat{x}(t_0)| = \alpha' |\hat{y}(t_0)| $ and $ |\hat{x}(t)| \leq \alpha' |\hat{y}(t)| $ for all $ t_1 \leq t \leq t_0 $. By the above sublemma, we know $ |\hat{x}(t_0)| < \alpha' |\hat{y}(t_0)| $, which yields a contradiction. So $ t_0 = t_2 $, i.e.  $ |\hat{x}(t)| \leq \alpha' |\hat{y}(t)| $ for all $ t_1 \leq t \leq t_2 $. Finally, let $ \alpha' \to \alpha $, then $ |\hat{x}(t)| \leq \alpha |\hat{y}(t)| $.
	\end{proof}

	Now, combine the above two sublemmas to complete the proof of the first conclusion. For the second conclusion, this in fact has been proved in \autoref{slem000}.
\end{proof}

\subsubsection{uniform dichotomy on $ \mathbb{R}_+ $}\label{bundleCorres}

\begin{defi}[uniform dichotomy]\label{def:ud+}
	We say a $ C_0 $ cocycle correspondence $ H_1 $ (or $ \{H_1(t,\omega)\} $, i.e. $ H_1(t,\omega) \sim (T_1(t,\omega), S_1(-t,t\omega)): X_{\omega} \oplus Y_{\omega} \to X_{t\omega} \oplus Y_{t\omega} $) on $ M \times Z $ satisfies \textbf{uniform dichotomy on $ \mathbb{R}_+ $} if the following hold.
	\begin{enumerate}[(a)]
		\item Assume $ Z = X_{\omega} \oplus Y_{\omega} $, $ \omega \in M $, associated with projections $ P_{\omega} $, $ P^c_{\omega} = I - P_{\omega} $ such that $ R(P_{\omega}) = X_{\omega} $, $ R(P^c_{\omega}) = Y_{\omega} $. $ (\omega, z) \mapsto P_{\omega}z $ is continuous.
		Usually, we call $ X_{\omega}, Y_{\omega} $, $ \omega \in M $, the spectral spaces, $ P_{\omega}, P^{c}_{\omega} $, $ \omega \in M $, the spectral projections, and also $ \bigsqcup_{\omega \in M} X_{\omega}, \bigsqcup_{\omega \in M} Y_{\omega} $ , the spectral subbundles.

		\item \label{udd} There are two $ C_0 $ linear cocycles $ T_1 $, $ S_1 $ such that $ T_1(t,\omega): X_{\omega} \to X_{t\omega} $, $ S_1(-t,t\omega): Y_{t\omega} \to Y_{\omega} $, for all $ (t, \omega) \in \mathbb{R}_+ \times M $. 

		\item There is a constant $ C_1 > 0 $ such that $ \sup_{\omega}|P_{\omega}| \leq C_1 $, $ \sup_{\omega}|P^c_{\omega}| \leq C_1 $.

		\item There are functions $ \mu_s, \mu_u $ of $ M \to \mathbb{R} $, such that
		\begin{equation*}
		|T_1(t,r\omega)| \leq e^{\mu_s(\omega) t},~|S_1(-t,t(r\omega))| \leq e^{-\mu_u(\omega) t},
		\end{equation*}
		for all $ t,r \geq 0 $ and $ \omega \in M $. See also \autoref{rmk:asymptotic} for a reason why we only consider the such estimates about $ T_1, S_1 $.
	\end{enumerate}
\end{defi}

\begin{rmk}[$ C_0 $ cocycle]\label{rmk:cocycle}
	We say $ U $ or $ \{ U(t,\omega) \} $ is a \emph{$ C_{0} $ cocycle} over $ t $ on a bundle $ \widehat{X} $ which is a topology space and over $ M $, if the following properties hold.
	\begin{enumerate}[(i)]
		\item $ U(t,\omega): \widehat{X}_{\omega} \to \widehat{X}_{t\omega} $ for each $ t \geq 0 $, $ \omega \in M $, i.e. $ U(t,\cdot)(\cdot) $ can be considered as a bundle map over a fixed map $ t $;
		\item ($ C_0 $ property) $ (t, \omega, x) \mapsto U(t,\omega)x : \mathbb{R}_+ \times \widehat{X} \to \widehat{X} $ is continuous;
		\item (cocycle property) $ U(0, \omega) = \id $, $ U(t+s,\omega) = U(t,s\omega)U(s,\omega) $ for all $ t,s \geq 0 $, $ \omega \in M $.
	\end{enumerate}
	When each fiber $ \widehat{X}_{\omega} $ of $ \widehat{X} $ is a normed space, we say $ U $ is a $ C_0 $ linear cocycle if $ U(t,\omega) \in L(\widehat{X}_{\omega}, \widehat{X}_{t\omega}) $ for each $ t \geq 0 $, $ \omega \in M $; in this case, sometimes we also say $ U $ is a strongly continuous (linear) cocycle. In \autoref{def:ud+} \eqref{udd}, $ T_1 $ being $ C^0 $ is in this sense when $ \bigsqcup_{\omega \in M}X_{\omega} $ is endowed with sub-topology of $ M \times Z $; note that $ \bigsqcup_{\omega \in M}X_{\omega} $ in general is not a $ C^0 $ vector bundle unless $ \omega \mapsto P_{\omega} \in L(Z,Z) $ is continuous (not just strongly continuous).

	However, as $ t $ might not be a flow, we need to explain more about $ S_1 $ (in \autoref{def:ud+} \eqref{udd}). \textbf{\emph{First}}, we mention that $ S_1(-t,t\omega) $ should be written as $ S'_1(-t,\omega): Y_{t\omega} \to Y_{\omega} $ in a more strict sense; that is the second variable $ t\omega $ in $ S_1(-t,t\omega) $ only means $ \omega $, and so $ S_1(t-s,s\omega) $ $ (=S_1(t-s,(s-t)(t\omega))) $ ($ t \leq s $) means $ S'_1(t-s,t\omega) $. We write it in this form only for an intuitive sense when $ t $ indeed is a flow. \textbf{\emph{Second}}, except we can not say $ S_1 $ (or $ S'_1 $) is over $ -t $, properties (ii) (iii) can make sense when $ U(t,\omega) = S_1(-t,t\omega) = S'_1(-t,\omega) $; this is what we mean for $ S_1 $ being a $ C_0 $ linear cocycle.
	The continuity of $ T_1, S_1 $ is to give the continuity of $ z(\cdot) $ (in \autoref{def:ud+} \eqref{udd}) and to make sense of the following `variant of constant formulas'.
\end{rmk}

Set $ Z \triangleq X \times Y $, $ P_X: (x,y) \mapsto x $, $ P_Y: (x,y) \mapsto y $. Consider the following differential equation
\begin{equation}\label{*02}
\dot{z}(t) = Cz(t) + L(t\omega)z(t),
\end{equation}
or its variant of constant formula
\begin{equation}\label{*03}
\begin{cases}
x(t) = T(t - t_1)x_1 + \int_{t_1}^{t} T(t-s)P_XL(s\omega)(x(s),y(s)) ~\mathrm{d} s, \\
y(t) = S(t - t_2)y_2 - \int_{t}^{t_2} S(t-s)P_YL(s\omega)(x(s),y(s)) ~\mathrm{d} s,
\end{cases}
~t_1 \leq t \leq t_2,
\end{equation}
where $ C $ is a generator of a $ C_0 $ bi-semigroup and $ L: M \to L(Z,Z) $ satisfies assumption (D1) in \autopageref{d1L}.

\begin{enumerate}\label{UD+}
	\item [$ (\mathrm{UD}_+) $]
	Let \eqref{*02} satisfy \textbf{uniform dichotomy on $ \mathbb{R}_+ $} (see \cite{LP08} in a special setting). That is, there is a $ C_0 $ cocycle correspondence $ H_1 $ (i.e. $ H_1(t,\omega) \sim (T_1(t,\omega), S_1(-t,t\omega)): X_{\omega} \oplus Y_{\omega} \to X_{t\omega} \oplus Y_{t\omega} $) on $ M \times Z $ satisfies satisfies \emph{uniform dichotomy on $ \mathbb{R}_+ $} (see \autoref{def:ud+}); moreover, if $ z(t) \triangleq ( T_1(t-t_1, t_1 \omega)x_1, S_1(t-t_2,t_2 \omega)y_2 ) \in X_{t\omega} \oplus Y_{t\omega} $, $ t_1 \leq t \leq t_2 $, then $ z(\cdot) $ is the mild solution of \eqref{equ:linear} with $ P_{t_1\omega} z(t_1) = x_1 $ and $ P^c_{t_2\omega} z(t_2) = y_2 $. 
\end{enumerate}

\begin{rmk}
	The existing literatures on the characterization of uniform dichotomy on $ \mathbb{R}_+ $ (or $ \mathbb{R} $) in the case of $ C_0 $ bi-semigroup are far less. For a theoretical result see \cite{LP08} (for the case when $ Z $ is a Hilbert space, $ M = \mathbb{R} $, and $ t(s) = t + s $), where the notion of the uniform dichotomy on $ \mathbb{R}_+ $ is taken from that paper. Others are about special differential equations, see the references in \cite{LP08} and \cite{SS01}. A more systematical theory should be established, which is not included in this paper.
\end{rmk}

Consider the following nonlinear differential equation,
\begin{equation}\label{*04}
\dot{z}(t) = Cz(t) + L(t\omega)z(t) + f(t\omega)z(t),
\end{equation}
where $ f: M \times Z \to Z $ satisfies the following assumption.
\begin{enumerate}[(D2)]
	\item \label{d2fff} $ f $ is continuous. For every $ \omega \in M $, $ \sup_{t \geq 0} \lip f(t\omega)(\cdot) = \varepsilon(\omega) < \infty $, and $ \omega \to \varepsilon(\omega) $ is locally bounded.
\end{enumerate}

The following result is important for it tells us how \eqref{*04} gives the cocycle correspondence under the uniform dichotomy condition \textnormal{($ \mathrm{UD}_+ $)}.

\begin{lem}\label{lem:dich00}
	A continuous function $ z(t) = (x_0(t), y_0(t)) \in X \times Y $, $ t_1 \leq t \leq t_2 $, is a mild solution of \eqref{*04}, i.e. it satisfies the following with $ x_0(t_1) = x'_1 $, $ y_0(t_2) = y'_2 $,
	\begin{equation}\label{*05}
	\begin{cases}
	x_0(t) = T(t - t_1)x'_1 + \int_{t_1}^{t} T(t-s) P_X ( L(s\omega)z(s) + f(s\omega)z(s) ) ~\mathrm{d} s, \\
	y_0(t) = S(t - t_2)y'_2 - \int_{t}^{t_2} S(t-s) P_Y( L(s\omega)z(s) + f(s\omega)z(s) ) ~\mathrm{d} s,
	\end{cases}
	t_1 \leq t \leq t_2,
	\end{equation}
	if and only if $ z(t) = (x(t), y(t)) \in X_{t\omega} \oplus Y_{t\omega} $ satisfies
	\begin{equation}\label{*06}
	\begin{cases}
	x(t) = T_1(t - t_1, t_1\omega)x(t_1) + \int_{t_1}^{t} T_1(t-s, s\omega) P_{s\omega} f(s\omega)z(s) ~\mathrm{d} s, \\
	y(t) = S_1(t - t_2, t_2\omega)y(t_2) - \int_{t}^{t_2} S_1(t-s, s\omega) P^c_{s\omega} f(s\omega)z(s) ~\mathrm{d} s,
	\end{cases}
	t_1 \leq t \leq t_2,
	\end{equation}
	where $ x(t) = P_{t\omega}z(t) $, $ y(t) = P^c_{t\omega}z(t) $.
\end{lem}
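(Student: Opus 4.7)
The plan is to show that both \eqref{*05} and \eqref{*06} are equivalent characterizations of the same underlying object, namely: $z$ is a mild solution (in the sense of \autoref{def:mild}) of the linear non-autonomous equation $\dot z(t)=Cz(t)+L(t\omega)z(t)+h(t)$ on $[t_1,t_2]$, where $h(s):=f(s\omega)z(s)$ is regarded as a fixed continuous inhomogeneity (well-defined once $z$ is fixed). The lemma will then follow by transitivity. Note that the two sides of the claimed equivalence do not specify the same boundary data: \eqref{*05} prescribes $P_X z(t_1)\in X$ and $P_Y z(t_2)\in Y$ in the bi-semigroup splitting, while \eqref{*06} prescribes $P_{t_1\omega}z(t_1)\in X_{t_1\omega}$ and $P^c_{t_2\omega}z(t_2)\in Y_{t_2\omega}$; this is harmless, as we are establishing an identity for one and the same continuous function $z$.

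The equivalence of \eqref{*05} with being such a mild solution is standard $C_0$ bi-semigroup theory, exactly mirroring the passage between \eqref{bi-mild0} and \eqref{*1} in \autoref{illustration}: decompose $z\in X\oplus Y$ via the fixed projections $P_X,P_Y$ associated with the bi-semigroup, apply the usual $C_0$-semigroup Duhamel formula to the $T$- and $S$-parts separately, and absorb the whole $L(s\omega)z(s)+h(s)$ into the forcing. Assumption (D1) supplies the needed continuity and local boundedness of $L$.

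The less standard half is to prove that \eqref{*06} is also equivalent to being a mild solution. Write the right-hand side of \eqref{*06} as $z=z_0+z_p$, where
\[
z_0(t):=T_1(t-t_1,t_1\omega)x(t_1)+S_1(t-t_2,t_2\omega)y(t_2)
\]
and $z_p$ is the integral remainder. By the very statement of $(\mathrm{UD}_+)$, $z_0$ is already a mild solution of the homogeneous equation $\dot z_0=Cz_0+L(t\omega)z_0$. It therefore suffices to establish a Duhamel identity for the dichotomy cocycle $(T_1,S_1)$: that $z_p$ is a mild solution of $\dot z_p=Cz_p+L(t\omega)z_p+h$ with $P_{t_1\omega}z_p(t_1)=0$ and $P^c_{t_2\omega}z_p(t_2)=0$. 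The intended route is to approximate $h$ uniformly on $[t_1,t_2]$ by piecewise-constant functions $h_n$; on each constant piece the integrals in \eqref{*06} collapse, via the cocycle property of $T_1,S_1$ applied to a constant forcing, to the unique mild solution of the corresponding inhomogeneous linear equation with zero boundary data, a solution supplied by direct application of $(\mathrm{UD}_+)$. Passing to the limit using closedness of $C$, the local boundedness and strong continuity of $L$, and the strong continuity of $T_1,S_1$ on compact sets yields the identity for $z_p$.

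The main obstacle is this Duhamel identity for $(T_1,S_1)$. The difficulty is that $T_1$ and $S_1$ live only on the spectral subbundles $\bigsqcup_\omega X_\omega$ and $\bigsqcup_\omega Y_\omega$ respectively and are not \emph{a priori} defined on all of $Z$, so no off-the-shelf $C_0$-semigroup Duhamel formula applies; the superposition argument must be bootstrapped through $(\mathrm{UD}_+)$ itself, which is exactly what the piecewise-constant approximation accomplishes. Once this identity is in hand, uniqueness of the mild solution of the linear equation with prescribed $\bigl(P_{t_1\omega}z(t_1),P^c_{t_2\omega}z(t_2)\bigr)$---obtained by applying the identity to the difference of two solutions---closes the equivalence in both directions and yields the lemma.
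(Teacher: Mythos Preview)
Your overall framework is sound and in fact agrees with the paper's: split $z$ into the homogeneous part $z_0$ (handled directly by $(\mathrm{UD}_+)$) and the particular part $z_p$, and show that $z_p$ is a mild solution of the inhomogeneous linear equation. The difficulty you correctly isolate --- that $T_1,S_1$ live only on the spectral subbundles, so no off-the-shelf Duhamel formula applies --- is exactly the point.

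However, your proposed resolution via piecewise-constant approximation has a genuine gap. You claim that on a piece where $h\equiv c$ the integrals in \eqref{*06} ``collapse, via the cocycle property of $T_1,S_1$ \ldots\ to the unique mild solution of the corresponding inhomogeneous linear equation \ldots\ supplied by direct application of $(\mathrm{UD}_+)$.'' But $(\mathrm{UD}_+)$ supplies only \emph{homogeneous} mild solutions; it says nothing about $\dot z = Cz + L(t\omega)z + c$ even for constant $c$. The cocycle identity $T_1(t+s,\omega)=T_1(t,s\omega)T_1(s,\omega)$ does not simplify $\int_a^t T_1(t-s,s\omega)P_{s\omega}c\,ds$ (note $P_{s\omega}c$ is still $s$-dependent), so nothing ``collapses.'' Thus the constant-forcing case is no easier than the general one, and the approximation step is circular.

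What the paper does instead is extract from $(\mathrm{UD}_+)$ the \emph{integral equations} that $T_1,S_1$ themselves satisfy relative to the bi-semigroup $T,S$, namely
\[
P_X T_1(t,\omega)=T(t)P_X+\bigl(T*(P_XL(\cdot\omega)T_1(\cdot,\omega))\bigr)(t),
\]
and the analogous identity for $P_X S_1$; these are precisely the statement that $T_1,S_1$ produce mild solutions of the homogeneous equation, rewritten via the $T$-Duhamel formula. One then convolves these identities with the forcing $g(s)=f(s\omega)z(s)$ and performs explicit Fubini-type manipulations to show that $P_X u^{t_1}$ and $P_X v^{t_2}$ (the two integral pieces of $z_p$) satisfy the first equation in \eqref{*05}. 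The nontrivial part is the cross term $P_X v^{t_2}$, which requires combining two separate consequences of the $P_X S_1$ identity. If you want to salvage an approximation argument, the right object to discretise is not $h$ but the integral itself as a Riemann sum of \emph{homogeneous} solutions $T_1(t-s_k,s_k\omega)P_{s_k\omega}h(s_k)\Delta_k$, each of which \emph{is} supplied by $(\mathrm{UD}_+)$; passing to the limit then requires commuting the closed operator $C$ with the integral --- essentially the same computation as the paper's, just organised differently.
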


\begin{proof}
	What we need here are the uniform dichotomy condition \textnormal{($ \mathrm{UD}_+ $)} in \textnormal{(b)}, i.e. $ T_1, S_1 $ satisfy the following:
	\begin{equation}\label{PT1x}
	P_X T_1(t, \omega) = T (t) P_X + (T * ( P_X L(\cdot \omega) T_1(\cdot,\omega) ))(t), ~ \forall t \geq 0,
	\end{equation}
	and
	\begin{equation}\label{PS1x}
	P_X S_1(t-s, s\omega) = T (t) P_X S_1(-s, s\omega) + (T * ( P_X L(\cdot \omega) S_1(\cdot - s, s\omega) ))(t),~ \forall 0 \leq t \leq s,
	\end{equation}
	where $ T * g $ means the \emph{convolution} of $ T $ and $ g $, i.e.
	\begin{equation}\label{convol}
	(T * g)(t) \triangleq \int_{0}^{t} T(t-s)g(s) ~\mathrm{ d } s.
	\end{equation}

	The `if part' and the `only if part' are dual, so we only consider the `if part'. Fix $ \omega $. Let $ z(t) $, $ t_1 \leq t \leq t_2 $, satisfy \eqref{*06} and be fixed. Set
	\[
	g(s) = f(s\omega)z(s).
	\]
	Now the equations become `non-homogeneous linear equations'. By the condition on $ T_1, S_1 $, the solutions of `homogeneous parts' of \eqref{*05} and \eqref{*06} are equal. So it suffices to consider the `non-homogeneous parts', i.e. let $ (x(t_1), y(t_2)) = (0, 0) $. Set
	\begin{align*}
	u^{t_1}(t,\omega) = & (T_1 * P_{(\cdot + t_1)\omega} g_{t_1}(\cdot))(t - t_1) \triangleq \int_{t_1}^{t} T_1(t-s, s\omega) P_{s\omega} g(s) ~\mathrm{d} s, \\
	v^{t_2}(t, \omega) = & - \int_{t}^{t_2} S_1(t-s, s\omega) P^c_{s\omega} g(s) ~\mathrm{d} s,
	\end{align*}
	$ t_1 \leq t \leq t_2 $. We need to show under the projection $ P_X $, they satisfy
	\begin{align}
	P_X u^{t_1}(t,\omega) = & \left\{ T * (  P_X P_{(\cdot + t_1)\omega} g_{t_1}(\cdot) + P_X L_{t_1}(\cdot \omega) u^{t_1}_{t_1}(\cdot,\omega)  ) \right\}  (t - t_1)  \label{pux}\\
	\triangleq & \int_{t_1}^{t} T(t-s) (  P_X P_{s\omega} g(s) + P_X L(s\omega)u^{t_1}(s,\omega)  ) ~\mathrm{d} s, \notag
	\end{align}
	and
	\begin{align}\label{pvx}
	P_X v^{t_2} (t, \omega) = T(t - t_1) P_X v^{t_2} (t_1, \omega) + \left\{  T * (  P_X P^{c}_{(\cdot + t_1)\omega}g_{t_1}(\cdot) + P_X L_{t_1}(\cdot\omega) v^{t_2}_{t_1}(\cdot, \omega)  ) \right\} (t - t_1),
	\end{align}
	$ t_1 \leq t \leq t_2 $, where we use the notation $ f_t(s) = f(t + s) $. In particular, $ x_0(t) \triangleq P_X z(t) = P_X (u^{t_1}(t, \omega) + v^{t_2}(t, \omega) ) $ satisfies the first equation in \eqref{*05} with $ x'_1 = P_X v^{t_2} (t_1, \omega) $. Using the similar equations satisfied by $ P_Y u^{t_1} $, $ P_Y v^{t_2} $, one can show $ y_0 (t) \triangleq P_Y z(t) $ satisfies the second equation in \eqref{*05}, which yields $ z(\cdot) $ satisfies \eqref{*05}.

	That $ P_X u^{t_1}(t,\omega) $ satisfies \eqref{pux} is an easy consequence of the fact that $ T_1 $ satisfies \eqref{PT1x}. More especially, take convolution on both sides of \eqref{PT1x} by $ P_{(\cdot + t_1)\omega} g_{t_1}(\cdot) $.

	Next we will show \eqref{pvx} holds. Multiply (right) on both sides of \eqref{PS1x} by $ P^c_{s\omega}g(s) $, and then integrate them from $ t $ to $ t_2 $ with respect to $ s $, yielding
	\begin{align}
	-P_X \int_{t}^{t_2} y(t,s) ~\mathrm{d} s = & -T(t)P_X \int_{t}^{t_2} y(0, s) ~\mathrm{d} s - \int_{t}^{t_2} \int_{0}^{t} T(t-r)P_XL(r\omega)y(r,s) ~\mathrm{d} r ~\mathrm{d} s \notag \\
	= &  -T(t)P_X \int_{t}^{t_2} y(0, s) ~\mathrm{d} s + \int_{0}^{t} T(t - r)P_XL(r\omega)v^{t_2}(r,\omega) ~\mathrm{d}r \notag \\
	& + \int_{0}^{t} T(t - r) P_X L(r \omega) \int_{r}^{t} y(r, s) ~\mathrm{d} s ~\mathrm{d} r, \label{equ:mm0}
	\end{align}
	where $ y(t,s) = S_1(t-s,s\omega) P^c_{s \omega}g(s) $.

	Also, multiplying (right) on both sides of \eqref{PS1x} by $ P^c_{s\omega}g(s) $ and letting $ t = s $, we get
	\[
	P_X P^c_{s \omega} g(s) = T(s)P_X y(0,s) + \left\{   T * (  P_X L(\cdot \omega) S_1 (\cdot - s, s\omega)P^c_{s \omega} g(s)  ) \right\} (s).
	\]
	Multiply (left) on both sides of the above equality by $ T(t-s) $ and then integrate them with respect to $ s $, yielding
	\begin{equation}\label{equ:mm1}
	T(t)P_X \int_{0}^{t} y(0,s) ~\mathrm{d} s - \int_{0}^{t} T(t-s) P_X P^{c}_{s \omega} g(s) ~\mathrm{d} s + \int_{0}^{t} T(t - r) P_X L(r \omega) \int_{r}^{t} y(r, s) ~\mathrm{d} s ~\mathrm{d} r = 0.
	\end{equation}

	By virtue of \eqref{equ:mm0} \eqref{equ:mm1}, we have shown that \eqref{pvx} holds for $ t_1 = 0 $, i.e.
	\[
	P_X v^{t_2} (t, \omega) = T(t) P_X v^{t_2} (0, \omega) + \left\{  T * (  P_X P^{c}_{(\cdot \omega)}g(\cdot) + P_X L(\cdot\omega) v^{t_2}(\cdot, \omega)  ) \right\} (t).
	\]
	Since
	\begin{align*}
	& \left\{  T * (  P_X P^{c}_{(\cdot \omega)}g(\cdot) + P_X L(\cdot\omega) v^{t_2}(\cdot, \omega)  ) \right\} (t) \\
	= & \left\{  T * (  P_X P^{c}_{(\cdot + t_1)\omega}g_{t_1}(\cdot) + P_X L_{t_1}(\cdot\omega) v^{t_2}_{t_1}(\cdot, \omega)  ) \right\} (t - t_1) \\
	& + T(t - t_1) \left\{  T * (  P_X P^{c}_{(\cdot \omega)}g(\cdot) + P_X L(\cdot\omega) v^{t_2}(\cdot, \omega)  ) \right\} (t_1) \\
	= & \left\{  T * (  P_X P^{c}_{(\cdot + t_1)\omega}g_{t_1}(\cdot) + P_X L_{t_1}(\cdot\omega) v^{t_2}_{t_1}(\cdot, \omega)  ) \right\} (t - t_1) \\
	& + T(t - t_1) \left\{  P_X v^{t_2} (t_1, \omega) - T(t_1) P_X v^{t_2} (0, \omega)  \right\},
	\end{align*}
	now \eqref{pvx} follows. This gives the proof.
\end{proof}

The existence and uniqueness of the solution $ z(t) = (x(t), y(t)) \in X_{t\omega} \oplus Y_{t\omega} $ ($ t_1 \leq t \leq t_2 $) of \eqref{*06} with $ x(t_1) = x_1, y(t_2) = y_2 $ are a standard application of Banach Fixed Point Theorem, as well as the continuous and smooth dependence of the `initial values', i.e. the following \autoref{lem:00} holds (by using the parameter-dependent fixed point theorem (see e.g. \cite[Appendix D.1]{Che18a})).

Using \eqref{*06}, we can define a unique \emph{cocycle correspondence} $ H(s, \omega): X_{\omega} \oplus Y_{\omega} \to X_{s\omega} \oplus Y_{s\omega} $ satisfying the following. Let $ t_1 = 0 $, $ t_2 = s $ in \eqref{*06}. Let $ z(t) = (x(t), y(t)) \in X_{t\omega} \oplus Y_{t\omega} $, $ 0 \leq t \leq s $, be the unique solution of \eqref{*06} with $ x(0) = x_1, y(s) = y_2 $. Define
\begin{gather*}
F_{s, \omega}: X_{\omega} \oplus Y_{s\omega} \to X_{s\omega}, (x_1, y_2) \mapsto x(s),\\
G_{s, \omega}: X_{\omega} \oplus Y_{s\omega} \to Y_{\omega}, (x_1, y_2) \mapsto y(0).
\end{gather*}
Now we have a unique correspondence $ H(s, \omega): X_{\omega} \oplus Y_{\omega} \to X_{s\omega} \oplus Y_{s\omega} $ with generating map $ (F_{s, \omega}, G_{s, \omega}) $. By the cocycle property of $ T_1, S_1 $ and the uniqueness of the solutions of \eqref{*06}, one can easily verify that $ H(t+s, \omega) = H(t,s\omega) \circ H(s, \omega) $ for all $ t,s \geq 0 $, $ \omega \in M $.

\begin{lem}\label{lem:00}
	Under \textnormal{(D1) (D2) ($ \mathrm{UD}_+ $)}, $ (s,\omega,z_1,z_2) \mapsto F_{s, \omega}(P_{\omega}z_1,P^c_{s\omega}z_2), G_{s, \omega}(P_{\omega}z_1,P^c_{s\omega}z_2) $ are continuous. Moreover, if $ f(\omega)(\cdot) \in C^r $ for all $ \omega \in M $ and $ (\omega, z) \mapsto D^r_zf(\omega)z $ is continuous, so are $ (s,\omega,z_1,z_2) \mapsto D^{r}_{(z_1,z_2)}F_{s, \omega}(P_{\omega}z_1,P^c_{s\omega}z_2), D^{r}_{(z_1,z_2)}G_{s, \omega}(P_{\omega}z_1,P^c_{s\omega}z_2) $.
\end{lem}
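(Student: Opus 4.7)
The plan is to recast \eqref{*06} as a fixed-point equation on a Banach space independent of $s$, carrying $(s, \omega, z_1, z_2)$ as a continuous parameter, and then invoke the parameter-dependent fixed point theorem of \cite[Appendix D.1]{Che18a}. To remove the $s$-dependence of the time interval $[0, s]$, I rescale by $\sigma = t/s \in [0,1]$ and set $u(\sigma) = x(\sigma s)$, $v(\sigma) = y(\sigma s)$, $w(\sigma) = u(\sigma) + v(\sigma)$, together with $x_1 = P_\omega z_1$ and $y_2 = P^c_{s\omega} z_2$. Equation \eqref{*06} then reads
\begin{align*}
u(\sigma) &= T_1(\sigma s, \omega) P_\omega z_1 + s \int_0^\sigma T_1((\sigma-\rho)s, \rho s\omega) P_{\rho s\omega} f(\rho s\omega) w(\rho) \, \mathrm{d}\rho,\\
v(\sigma) &= S_1((\sigma-1)s, s\omega) P^c_{s\omega} z_2 - s \int_\sigma^1 S_1((\sigma-\rho)s, \rho s\omega) P^c_{\rho s\omega} f(\rho s\omega) w(\rho) \, \mathrm{d}\rho,
\end{align*}
and defines an operator $\Phi = \Phi(s, \omega, z_1, z_2)$ on $E = C([0,1], Z) \times C([0,1], Z)$.

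Fix $(s_0, \omega_0)$ and restrict $(s, \omega)$ to a neighborhood $U \times V$ on which $\mu_s, \mu_u, \tau, \varepsilon$ and the projection bound $C_1$ are all uniformly bounded (using (D1), (D2) and the local boundedness built into $(\mathrm{UD}_+)$). Equip $E$ with the Bielecki-type norm $\|(u,v)\|_\lambda = \sup_\sigma e^{-\lambda \sigma}|u(\sigma)| + \sup_\sigma e^{\lambda(\sigma-1)}|v(\sigma)|$; a direct estimate using the exponential bounds on $T_1, S_1$ and the uniform Lipschitz constant of $f$ shows that for $\lambda$ large enough, $\Phi$ is a contraction with a constant $<1$ uniform over $(s, \omega, z_1, z_2) \in U \times V \times Z \times Z$.

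Next I verify that $(s, \omega, z_1, z_2) \mapsto \Phi(s, \omega, z_1, z_2)[u, v]$ is continuous in $E$ for each fixed $(u, v) \in E$. The boundary terms are continuous by the joint continuity of $T_1, S_1$ as $C_0$ cocycles (see \autoref{rmk:cocycle}), the continuity of the semiflow $(t, \omega) \mapsto t\omega$, and the continuity of $(\omega, z) \mapsto P_\omega z$. For the Duhamel integrals, the integrands are pointwise jointly continuous in $(\rho, s, \omega, w(\rho))$ by the same ingredients combined with continuity of $f$, and they are dominated by a locally uniform bound; dominated convergence yields continuity of each pointwise value, and continuity in $\sigma$ follows from the standard equicontinuity of $C_0$ cocycle orbits. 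The parameter-dependent fixed point theorem \cite[Appendix D.1]{Che18a} then gives that the unique fixed point $(u^*, v^*) = (u^*, v^*)(s, \omega, z_1, z_2)$ depends continuously on the parameter in $E$; composing with the continuous linear evaluations $u \mapsto u(1)$ and $v \mapsto v(0)$ yields the continuity of $F_{s,\omega}(P_\omega z_1, P^c_{s\omega} z_2) = u^*(1)$ and $G_{s,\omega}(P_\omega z_1, P^c_{s\omega} z_2) = v^*(0)$.

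For the $C^r$ statement, the superposition $(z_1, z_2, (u,v)) \mapsto f(\rho s\omega)(u(\rho)+v(\rho))$ is $C^r$ in $(z_1, z_2, u, v)$ with derivatives jointly continuous in all arguments under the hypothesis on $f$, while the projections and cocycles act linearly in the functional variables; hence $\Phi$ is $C^r$ in $(z_1, z_2, u, v)$ with derivatives continuous in $(s, \omega)$. The smooth version of the parameter-dependent fixed point theorem then gives the $C^r$ smoothness of $(u^*, v^*)$ in $(z_1, z_2)$ with continuous joint dependence on $(s, \omega)$, from which the second assertion follows. The chief technical obstacle is that $T_1(t, \omega)$, $S_1(-t, t\omega)$, and $\omega \mapsto P_\omega$ are only strongly continuous, not norm-continuous, in $\omega$; consequently all continuity assertions for the Duhamel terms must be derived via dominated convergence on the integrand rather than from uniform operator-norm estimates.
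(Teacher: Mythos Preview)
Your proposal is correct and follows essentially the same route as the paper: the paper does not give a detailed proof of this lemma, but states (in the paragraph immediately preceding it) that existence, uniqueness, and the continuous/smooth dependence all follow from the parameter-dependent fixed point theorem of \cite[Appendix D.1]{Che18a}. Your rescaling to a fixed interval and use of a Bielecki norm are standard devices for carrying out that program, and your care with strong (rather than norm) continuity of $T_1$, $S_1$, and $P_\omega$ is exactly the point one has to watch.
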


\begin{thm}\label{thm:biAB}
	Let $ C $ be a generator of $ C_0 $ bi-semigroup, and \textnormal{(D1) (D2) ($ \mathrm{UD}_+ $)} hold. Let $ \{H(t, \omega)\} $ be the cocycle correspondence induced by \eqref{*06}.
	Assume $  { \mu_u(\omega) - \mu_s(\omega) - 2 \varepsilon'(\omega) }  > 0 $, where $ \varepsilon'(\omega) = 2C_1\varepsilon(\omega) $. Take $ \alpha, \beta, \lambda_u, \lambda_s $ such that
	\[
	\frac{\varepsilon'(\omega)}{\mu_u(\omega) - \mu_s(\omega) -  \varepsilon'(\omega)} \leq \alpha(\omega), \beta(\omega) < 1,~
	\lambda_u(\omega) = e^{-\mu_u(\omega) + \varepsilon'(\omega) },~
	\lambda_s(\omega) = e^{\mu_s(\omega) + \varepsilon'(\omega) }.
	\]
	Then $ H(t, s\omega) $ satisfies \textnormal{(A)}$(\alpha(\omega), \lambda^t_u(\omega))$ \textnormal{(B)}$(\beta(\omega), \lambda^t_s(\omega))$ condition for all $ t,s \geq 0 $ and $ \omega \in M $.
	In fact, if $ \alpha(\omega), \beta(\omega) \in (\frac{\varepsilon'(\omega)}{\mu_{u}(\omega) - \mu_s(\omega) - \varepsilon'(\omega)}, 1) $ and $ t \geq \epsilon_1 > 0 $, then $ H(t,s\omega) $ satisfies (A)($ \alpha(\omega) $; $ k_{\alpha}(\omega)\alpha(\omega) $, $ \lambda^t_{u}(\omega) $) (B)($ \beta(\omega) $; $ k_{\beta}(\omega)\beta(\omega) $, $ \lambda^t_{s}(\omega) $) condition, where
	\[
	\frac{( \sigma(\omega)- \frac{\varepsilon'(\omega)}{h(\omega)})e^{-\sigma(\omega)\epsilon_1} + \frac{\varepsilon'(\omega)}{h(\omega)}}{\sigma(\omega)} \leq k_{h}(\omega) < 1,~\sigma(\omega) = \mu_{u}(\omega) - \mu_s(\omega) - \varepsilon'(\omega),
	~ h = \alpha,\beta.
	\]
	In particular, if there is a constant $ c > 1 $ such that
	\[
	\inf_{\omega}\{ \mu_u(\omega) - \mu_s(\omega) - (1+c) \varepsilon'(\omega) \} > 0,
	\]
	then $ \alpha, \beta, k_{\alpha},k_{\beta} $ can be taken constants less than $ 1 $ and $ \sup_{\omega}\lambda_s(\omega) \lambda_u(\omega) < 1 $. In fact, $ \sup_{\omega} \{\alpha(\omega)\} $, $ \sup_{\omega}\{\beta(\omega)\} \to 0 $ if $ \sup_{\omega}\varepsilon'(\omega) \to 0 $.
\end{thm}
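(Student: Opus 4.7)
My plan is to transport the argument of \autoref{lem:simpleAB} verbatim from the autonomous semigroup setting to the cocycle integral equation \eqref{*06}, which by \autoref{lem:dich00} encodes the mild solutions of \eqref{*04} split along the spectral decomposition $Z = X_{t\omega} \oplus Y_{t\omega}$. Since $H(t,s\omega)$ is defined through solutions of \eqref{*06} on $[0,t]$ with base point $s\omega$, and the dichotomy estimates $|T_1(t,r\omega)| \le e^{\mu_s(\omega) t}$, $|S_1(-t,t(r\omega))| \le e^{-\mu_u(\omega) t}$ in \textnormal{(UD$_+$)} are uniform in the shift parameter $r$, the rates controlling $H(t,s\omega)$ are $\mu_s(\omega), \mu_u(\omega)$ rather than $\mu_s(s\omega), \mu_u(s\omega)$; so it suffices to prove the estimate at $s = 0$ with obvious renaming.

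First I would bound the nonlinear term in the projected coordinates. For two solutions $z, z'$ of \eqref{*06}, write $\hat{x}(t) = P_{t\omega}(z-z')(t)$, $\hat{y}(t) = P^c_{t\omega}(z-z')(t)$. By (D2) and $\sup_\omega |P_\omega|, |P^c_\omega| \le C_1$,
\[
|P_{r\omega}(f(r\omega)z(r) - f(r\omega)z'(r))| \le C_1 \varepsilon(\omega) |z(r) - z'(r)| \le 2 C_1 \varepsilon(\omega) \max\{|\hat{x}(r)|, |\hat{y}(r)|\} = \varepsilon'(\omega) \max\{|\hat{x}(r)|, |\hat{y}(r)|\},
\]
where the factor $2$ comes from $|z(r) - z'(r)| \le |\hat{x}(r)| + |\hat{y}(r)|$; the same bound holds for $P^c_{r\omega}$. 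Substituting into \eqref{*06} and using the dichotomy bounds yields exactly the two integral inequalities for $|\hat{x}(t)|, |\hat{y}(t)|$ appearing in the proof of \autoref{lem:simpleAB} after the formal substitution $(\mu_s, \mu_u, \varepsilon) \to (\mu_s(\omega), \mu_u(\omega), \varepsilon'(\omega))$.

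From this point the two sublemmas in the proof of \autoref{lem:simpleAB} apply unchanged. The Gronwall step says that if $|\hat{x}(t)| \le \alpha(\omega) |\hat{y}(t)|$ throughout $[t_1, t_2']$ with $\alpha(\omega) \in (\tfrac{\varepsilon'(\omega)}{\mu_u(\omega) - \mu_s(\omega) - \varepsilon'(\omega)}, 1)$, then $|\hat{y}(t_1)| \le \lambda_u(\omega)^{t_2' - t_1} |\hat{y}(t_2')|$ and, via the explicit calculation, $|\hat{x}(t_2')| \le k_\alpha(\omega) \alpha(\omega) |\hat{y}(t_2')|$ with the stated $k_\alpha(\omega) < 1$. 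The second (open-closed continuity) step propagates the cone condition from $t_1$ to all of $[t_1, t_2]$ and, by passing to $\alpha' \downarrow \alpha(\omega)$, recovers the endpoint case $\alpha(\omega) = \tfrac{\varepsilon'(\omega)}{\mu_u(\omega) - \mu_s(\omega) - \varepsilon'(\omega)}$. Together these give (A) for $H(t,s\omega)$ with the announced constants. The (B) condition is entirely dual: it follows from the same argument applied to the time-reversed $y$-equation in \eqref{*06}, interchanging the roles of the stable and unstable directions. The uniform version is then immediate: under $\inf_\omega \{\mu_u(\omega) - \mu_s(\omega) - (1+c)\varepsilon'(\omega)\} > 0$ with $c > 1$, one has $\sup_\omega \tfrac{\varepsilon'(\omega)}{\mu_u(\omega) - \mu_s(\omega) - \varepsilon'(\omega)} \le \tfrac{1}{c} < 1$, so $\alpha, \beta, k_\alpha, k_\beta$ can be chosen independently of $\omega$, and $\sup_\omega \lambda_s(\omega) \lambda_u(\omega) = \sup_\omega e^{\mu_s(\omega) - \mu_u(\omega) + 2\varepsilon'(\omega)} < 1$.

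The proof is essentially bookkeeping, so there is no conceptual obstacle beyond \autoref{lem:simpleAB}; the only delicate point is to pin down the correct Lipschitz constant in the projected equation, namely $\varepsilon'(\omega) = 2 C_1 \varepsilon(\omega)$, which requires careful tracking of both the projection norm $\le C_1$ and the triangle-inequality loss in converting between $|z|$ (the ambient norm on $Z$) and $\max\{|x|, |y|\}$ (the max norm used in \autoref{lem:simpleAB} relative to the splitting). Once this identification is made, every subsequent estimate is a direct specialization of the autonomous case.
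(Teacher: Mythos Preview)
Your proposal is correct and follows essentially the same approach as the paper's own proof: both transport the two-sublemma argument of \autoref{lem:simpleAB} to the cocycle equation \eqref{*06}, with the sole modification being the substitution $(\mu_s,\mu_u,\varepsilon) \to (\mu_s(\omega),\mu_u(\omega),\varepsilon'(\omega))$ where $\varepsilon'(\omega)=2C_1\varepsilon(\omega)$. Your explicit tracking of the origin of the factor $2C_1$ (projection norm times the triangle-inequality loss in passing to the max norm) is slightly more detailed than the paper's sketch, but the logic is identical.
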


\begin{proof}
	The proof is essentially the same as proving \autoref{lem:simpleAB}. What we need is the equation \eqref{*06}. We give a sketch. Let any given $ t_1 < t_2 $ and $ \omega $ be fixed.	Let $ (x(t),y(t)) $, $ (x'(t),y'(t)) $, $ t_1 \leq t \leq t_2 $, satisfy \eqref{*06}; $ \hat{x}(t) = x(t) - x'(t) $, $ \hat{y}(t) = y(t) - y'(t) $. We need to show if $ |\hat{x}(t_1)| \leq \alpha(\omega) |\hat{y}(t_1)| $, then $ |\hat{x}(t_2)| \leq \alpha(\omega) |\hat{y}(t_2)| $  and $ |\hat{y}(t_1)| \leq \lambda^{t_2 - t_1}_u(\omega) |\hat{y}(t_2)| $.

	Let
	$ \alpha(\omega) \in [ \frac{\varepsilon'(\omega)}{\mu_u(\omega) - \mu_s(\omega) -  \varepsilon'(\omega)}, 1 ) $. Take any $ \alpha'(\cdot) $ such that $ \alpha(\omega) < \alpha'(\omega) < 1 $ and $ \alpha'(t\omega) \leq \alpha'(\omega) $ for all $ t \geq 0 $.
	The first step is to show if $ |\hat{x}(t)| \leq \alpha'(\omega) |\hat{y}(t)| $ for $ t \in [t_1, t'_2] $, $ t_1 < t'_2 \leq t_2 $, then $ |\hat{x}(t'_2)| < \alpha'(\omega) |\hat{y}(t'_2)| $ and $ |\hat{y}(t_1)| \leq \lambda_u^{t'_2 - t_1}(\omega) |\hat{y}(t'_2)| $. Only the Gronwall inequality is used.
	The next step is to show if $ |\hat{x}(t_1)| \leq \alpha'(\omega) |\hat{y}(t_1)| $, then $ |\hat{x}(t)| < \alpha'(\omega) |\hat{y}(t)| $ for all $ t > t_1 $ by using the first step with the same argument in the proof of \autoref{slem111}. Now the result follows.
\end{proof}

\subsection{Hille-Yosida (or MR) case} \label{HYccc}

\subsubsection{preliminaries}\label{preHYccc}

Let us introduce a more general class of operators than \textbf{Hille-Yosida} operators (see \autoref{operator} for a definition). Through this subsection, we assume the following (MR) holds.
\begin{enumerate}[(MR)]\label{MR}
	\item  Let the operator $ A : D(A) \subset Z \to Z $ (with $ \rho(A) \neq \emptyset $) satisfy the following properties, which we call an \textbf{MR operator}.
	\begin{enumerate}[(a)]
		\item $ A_0 \triangleq A_{\overline{D(A)}} $ (see \autoref{operator}) generates a $ C_0 $ semigroup $ T_0(\cdot) $ in $ \overline{D(A)} $ such that
		\[
		|T_0(t)| \leq e^{\mu t}, ~\forall t \geq 0.
		\]
		\item Then $ A $ generates a once \emph{integrated semigroup} (see \autoref{operator} for a definition) $ S $ in $ X $. Suppose
		\begin{equation}\label{sft}
		|(S\Diamond f)|_{[0,t]} \leq \delta(t) |f|_{[0,t]}, ~\forall t > 0,
		\end{equation}
		for all $ f \in C^{1}([0,t], X) $, and $ \delta(\cdot) $ is increase satisfying $ \delta(t) \to 0 $ as $ t \to 0 $, where
		\begin{equation*}
		(S\Diamond f)(t) \triangleq \frac{\mathrm{d}}{\mathrm{d} t} \int_{0}^{t} S(t-s)f(s) ~\mathrm{d} s .
		\end{equation*}
		Here $ |\cdot|_{[0,t]} $ is defined by $ |f|_{[0,t]} \triangleq \sup_{s \in [0,t]} |f(s)| $, if $ f \in C([0,t], Z) $. Note that in this case, \eqref{sft} also holds for all $ f \in C([0,t], Z) $ by standard density argument. Also, note that $ (S\Diamond f)(t) \in \overline{D(A)} $ for all $ t \geq 0 $.
	\end{enumerate}
\end{enumerate}

This class of operators was studied extensively in \cite{MR07,MR09} where the authors introduced them in order to deal with some class of non-linear differential equations, for instance the age structured models in $ L^p $ ($ p > 1 $) (see \autoref{ex:nondiss} \eqref{asm}). However, see also \autoref{ex:classical} \eqref{rde} for the case $ X = C^{0,\gamma}(\Omega) $ and \autoref{ex:nondiss} \eqref{ade}.
We are more interested in the case when $ A $ is a Hille-Yosida operator, which is a special case of MR operators. The Hille-Yosida operators were investigated comprehensively in \cite{DPS87}; see also \cite{EN00,ABHN11}. Since with a little more efforts, our argument also works for general case, so we also give the corresponding results for MR operators. We refer the reader to see \cite{ABHN11} for more details about integrated semigroup theory; we use this tool in order to give a representation of `variant of constant formula' under this general context.

What we really need is that if $ A $ is a Hille-Yosida operator, then it satisfies (a) (b). In fact, in this case, \eqref{sft} also holds for $ f \in L^{1}([0,t], X) $ and $ \delta(t) /t \to 1 $ as $ t \to 0^+ $ (note that we have assumed $ |T_0(t)| \leq e^{\mu t} $); this follows from the fact that in this case the integrated semigroup $ S $ is locally Lipschitz; for details, see \cite[Section 3.5]{ABHN11}.
Particularly, when $ A $ is a generator of a $ C_0 $ semigroup $ T_0 $, we have $ \delta(t)/t \to 1 $ as $ t \to 0^+ $ and $ (S\Diamond f)(t) = (T_0 * f)(t) $.

\subsubsection*{A key lemma: an illustration}
\begin{lem}[A pre-lemma] \label{lem:prelem}
	Let $ n \in \mathbb{N} $. If $ x(\cdot) \in C([0,n\varepsilon_1 ], Z) $ satisfies $ |x(t)| \leq e^{\hat{\mu} t}b $ for all $ t \in [0,n\varepsilon_1 ] $, then
	\begin{equation}\label{sx0}
	|(S\Diamond x)(n\varepsilon_1 )| \leq b \max\{ 1, e^{\hat{\mu}\varepsilon_1} \} \delta(\varepsilon_1) e^{\mu (n-1) \varepsilon_1} \frac{e^{(\hat{\mu} - \mu)n\varepsilon_1} - 1}{e^{ (\hat{\mu} - \mu)\varepsilon_1 } - 1} .
	\end{equation}
\end{lem}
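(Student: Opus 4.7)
My plan is to discretize $[0, n\varepsilon_1]$ into $n$ slabs of length $\varepsilon_1$, use a cocycle-like shift identity for $S\Diamond$ together with the (MR) bound \eqref{sft} on each slab, and then solve the resulting linear recursion by a finite geometric sum. Accordingly, I set $y_k \triangleq (S\Diamond x)(k\varepsilon_1)$ for $k = 0, 1, \dots, n$ and aim at estimating $|y_n|$.

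The first and only nontrivial step is to establish the shift identity
\[
(S\Diamond f)(t+s) \;=\; T_0(s)\,(S\Diamond f)(t) \;+\; (S\Diamond f_{t})(s), \qquad f_t(\sigma) \triangleq f(t+\sigma),
\]
for all $t, s \geq 0$ and continuous $f$. To see this, I would use that $\tau \mapsto (S\Diamond f)(\tau)$ is a continuous $\overline{D(A)}$-valued mild solution of $v' = Av + f$ with $v(0) = 0$; after the shift $w(\tau) = (S\Diamond f)(t+\tau)$, the function $w$ is a mild solution (in the sense of \autoref{def:mild}) of $w' = A w + f_t$ with $w(0) = (S\Diamond f)(t) \in \overline{D(A)}$. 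Since $A_0$ generates $T_0$ on $\overline{D(A)}$, the right-hand side above satisfies the same integral equation with the same initial datum, and uniqueness of mild solutions in $\overline{D(A)}$ closes the identity.

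Specializing to $t = k\varepsilon_1$, $s = \varepsilon_1$, and $f = x$, I combine $|T_0(\varepsilon_1)| \leq e^{\mu\varepsilon_1}$ (from (MR)(a)) with \eqref{sft} applied to $x_{k\varepsilon_1}$ on $[0,\varepsilon_1]$ to obtain
\[
|y_{k+1}| \;\leq\; e^{\mu\varepsilon_1}\,|y_k| \;+\; \delta(\varepsilon_1)\,\sup_{\sigma \in [0,\varepsilon_1]}|x(k\varepsilon_1 + \sigma)|.
\]
The pointwise bound $|x(\tau)| \leq b\,e^{\hat\mu\tau}$, split on the sign of $\hat\mu$, yields $\sup_{\sigma \in [0,\varepsilon_1]}|x(k\varepsilon_1+\sigma)| \leq b\,\max\{1, e^{\hat\mu\varepsilon_1}\}\, e^{\hat\mu k\varepsilon_1}$. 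Writing $\lambda \triangleq e^{\mu\varepsilon_1}$, $\rho \triangleq e^{\hat\mu\varepsilon_1}$, and $M \triangleq b\max\{1,e^{\hat\mu\varepsilon_1}\}\delta(\varepsilon_1)$, this reduces to the recursion $y_0 = 0$, $|y_{k+1}| \leq \lambda |y_k| + M\rho^k$. A one-line induction gives
\[
|y_n| \;\leq\; M\sum_{j=0}^{n-1} \lambda^{n-1-j}\rho^j \;=\; M\,\lambda^{n-1}\,\frac{(\rho/\lambda)^n - 1}{(\rho/\lambda) - 1},
\]
and reinserting $\lambda = e^{\mu\varepsilon_1}$, $\rho = e^{\hat\mu\varepsilon_1}$ reproduces \eqref{sx0} verbatim.

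The main obstacle is really just the shift identity in the MR setting, since one does not have a $C_0$ semigroup on all of $Z$ with which to invoke a classical variation-of-constants formula; instead one must pass through the integrated-semigroup description of $S\Diamond$ and use mild-solution uniqueness in $\overline{D(A)}$. Once that is granted, the rest is a routine Gronwall-type recursion together with a finite geometric sum, and in particular the factor $\max\{1, e^{\hat\mu\varepsilon_1}\}$ arises solely from supremizing a pure exponential over a window of length $\varepsilon_1$ whose location varies with $k$.
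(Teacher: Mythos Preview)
Your proof is correct and follows essentially the same route as the paper: set $K_m = (S\Diamond x)(m\varepsilon_1)$, invoke the shift identity $(S\Diamond f)(t+s) = T_0(s)(S\Diamond f)(t) + (S\Diamond f_t)(s)$ to get the recursion $|K_m| \leq e^{\mu\varepsilon_1}|K_{m-1}| + b\max\{1,e^{\hat\mu\varepsilon_1}\}\delta(\varepsilon_1)e^{\hat\mu(m-1)\varepsilon_1}$, and sum the resulting geometric series. The only difference is that the paper simply asserts the shift identity as following ``directly from the definition of $S\Diamond x$'' (it appears again as an ``elementary fact'' in the proof of \autoref{slem123}), whereas you supply a justification via uniqueness of mild solutions in $\overline{D(A)}$; either way the argument is the same.
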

\begin{proof}
	Set $ K_m = (S\Diamond x)(m\varepsilon_1) $. First note that $ K_m $, $ 1 \leq m \leq n $, satisfy
	\[
	K_m = T_0(\varepsilon_1)K_{m-1} + (S\Diamond x(\cdot + (m-1)\varepsilon_1 ))(\varepsilon_1),
	\]
	which follows directly from the definition of $ S\Diamond x $ (see also \cite{MR07}). Since
	\begin{align*}
	|K_m| \leq & e^{\mu \varepsilon_1}|K_{m-1}| + \delta(\varepsilon_1) \sup_{t \in [0,\varepsilon_1]}|x( t + (m-1)\varepsilon_1 )| \\
	\leq & e^{\mu \varepsilon_1} |K_{m-1}| + b \max\{ 1, e^{\hat{\mu}\varepsilon_1} \} \delta(\varepsilon_1)e^{\hat{\mu}(m-1)\varepsilon_1}, \\
	|K_0| \leq &  b \max\{ 1, e^{\hat{\mu}\varepsilon_1} \} \delta(\varepsilon_1),
	\end{align*}
	by induction, we know $ |K_n| = |(S\Diamond x)(n\varepsilon_1)| $ satisfies the conclusion.
\end{proof}

\begin{lem}[A key lemma]\label{lem:keylem}
	If $ x(\cdot), y(\cdot) \in C([0,N], Z) $ ($ N $ might be $ \infty $) satisfy
	\[
	|x(t)| \leq \beta |y(t)|, ~ |u(t)| \leq e^{\mu t}a, ~|y(t)| \leq |u(t)| + |(S\Diamond x)(t)|,
	\]
	for all $ t \in [0, N] $, then $ |y(t)| \leq e^{(\mu + \lambda)t} k a $. The constants $ \lambda > 0 $, $ k \geq 1 $ are chosen according to the following.
	\begin{enumerate}[(a)]
		\item If $ \delta(\epsilon)/{\epsilon} \to 1 $ as $ \epsilon \to 0^+ $, then we can choose $ \lambda = \beta $ and $ k = 1 $, e.g. when $ A $ is a Hille-Yosida operator or a generator of a $ C_0 $ semigroup.
		\item In general, let $ \sigma $ be any but fixed such that $ 0 < \sigma \leq 1/2 $. Since $ \delta(t) \to 0 $ as $ t \to 0^+ $, we can take $ \hat{\epsilon} > 0 $ small satisfying for all $ \epsilon \in [\hat{\epsilon}, 2\hat{\epsilon}] $,
		\begin{equation}\label{lambda}
		\sigma > K(\epsilon) \triangleq e^{\sigma} \delta(\epsilon)\beta\max\{ e^{-\mu \epsilon}, 1 \}, ~\beta \delta(\epsilon) < 1.
		\end{equation}
		Take $ \lambda = \sup_{\epsilon \in [\hat{\epsilon}, 2\hat{\epsilon}]} \{ K(\hat{\epsilon}) / {\hat{\epsilon}} \} $, $ k = (1 - \beta\delta(\hat{\epsilon}))^{-1} \max\{1,e^{-\mu \hat{\epsilon}}\} $. (Only need that \eqref{lambda} holds for $ \epsilon = 2\hat{\epsilon} $.)
	\end{enumerate}
\end{lem}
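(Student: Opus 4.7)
The plan is to handle cases (a) and (b) separately; both start from the hypothesis $|y(t)| \leq |u(t)| + |(S\Diamond x)(t)|$ together with $|x(s)| \leq \beta|y(s)|$ and $|u(t)| \leq e^{\mu t}a$.

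\emph{Case (a).} Under $\delta(\epsilon)/\epsilon \to 1$, and in particular when $A$ is Hille-Yosida or a $C_0$-semigroup generator, the integrated semigroup $S$ is locally Lipschitz with $|S'(t)| \leq e^{\mu t}$, which yields the pointwise-integral bound $|(S\Diamond x)(t)| \leq \int_0^t e^{\mu(t-s)}|x(s)|\,ds$. Combining with $|x(s)| \leq \beta|y(s)|$ produces
\[
|y(t)| \leq e^{\mu t}a + \beta \int_0^t e^{\mu(t-s)}|y(s)|\,ds.
\]
Setting $\phi(t) = e^{-\mu t}|y(t)|$, the classical Gronwall inequality applied to $\phi(t) \leq a + \beta\int_0^t \phi(s)\,ds$ gives $\phi(t) \leq a e^{\beta t}$, i.e., $|y(t)| \leq a e^{(\mu+\beta)t}$, so $\lambda = \beta$ and $k = 1$.

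\emph{Case (b).} Without a pointwise-integral representation of $S\Diamond x$, my argument will be a continuity/bootstrap that uses \autoref{lem:prelem} on successive sub-intervals. Define
\[
\tau_* \triangleq \sup\{t \in [0, N] : |y(s)| \leq ka\, e^{(\mu+\lambda)s}\text{ for all } s \in [0, t]\}.
\]
Since $|y(0)| \leq |u(0)| \leq a \leq ka$ and $y$ is continuous, $\tau_* > 0$. I plan to show $\tau_* = N$ by contradiction: if $\tau_* < N$, continuity forces $|y(\tau_*)| = ka\, e^{(\mu+\lambda)\tau_*}$. For $\tau_* \geq 2\hat\epsilon$, pick an integer $n \geq 2$ and $\epsilon \in [\hat\epsilon, 2\hat\epsilon]$ with $n\epsilon \leq \tau_* < (n+1)\epsilon$. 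The inductive bound $|x(s)| \leq \beta ka\, e^{(\mu+\lambda)s}$ on $[0, n\epsilon]$ (valid by definition of $\tau_*$) allows me to invoke \autoref{lem:prelem} with $\hat\mu = \mu + \lambda$ and $b = \beta ka$, bounding $|(S\Diamond x)(n\epsilon)|$ explicitly. Applying the elementary inequality $(e^{\lambda n\epsilon}-1)/(e^{\lambda\epsilon}-1) \leq e^{\lambda n\epsilon}/(\lambda\epsilon)$ together with the calibration $K(\epsilon) \leq \lambda\epsilon$ built into \eqref{lambda} (via $\lambda = K(\hat\epsilon)/\hat\epsilon$ and monotonicity of $\delta$), I can conclude $|y(n\epsilon)| \leq ka\, e^{(\mu+\lambda)n\epsilon}$. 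A second short-interval estimate on $[n\epsilon, \tau_*]$ using $\beta\delta(\hat\epsilon) < 1$ extends this to the strict inequality $|y(\tau_*)| < ka\, e^{(\mu+\lambda)\tau_*}$, contradicting the definition of $\tau_*$. The short range $\tau_* < 2\hat\epsilon$ follows from the same short-interval estimate, where $\sup_{[0,\tau_*]}|y| \leq \max\{1, e^{\mu\hat\epsilon}\}a/(1-\beta\delta(\hat\epsilon))$ is converted to the pointwise bound by absorbing $\max\{1, e^{\mu\hat\epsilon}\}$ into $\max\{1, e^{-\mu\hat\epsilon}\}\cdot e^{\lambda t}$ from $k$ and the exponential slack.

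The main obstacle is the precise constant calibration: one must check that the specific choices $\lambda = K(\hat\epsilon)/\hat\epsilon$ and $k = (1-\beta\delta(\hat\epsilon))^{-1}\max\{1, e^{-\mu\hat\epsilon}\}$ are just large enough for every inequality to close, both on the generic sub-interval and on the initial interval $[0,\hat\epsilon]$, where the factor $\max\{1, e^{\mu\hat\epsilon}\}$ from $\sup_{[0,\hat\epsilon]}e^{\mu t}$ must be balanced against $\max\{1, e^{-\mu\hat\epsilon}\}$ in $k$ together with $e^{\lambda\hat\epsilon}$. The cushion $e^{\sigma}$ built into $K(\epsilon)$, combined with the restriction $\sigma \leq 1/2$, provides exactly the room needed for both the base case and the iterative step; letting $\hat\epsilon \to 0$ and $\sigma \to 0$ also recovers case (a) as a degenerate limit, serving as a sanity check.
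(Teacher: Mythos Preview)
Your case (b) strategy is essentially the paper's: a continuation argument driven by the pre-lemma (\autoref{lem:prelem}) on subintervals of length in $[\hat\epsilon,2\hat\epsilon]$, seeded by the short-interval bound $|y|_{[0,t]}\leq \max\{1,e^{\mu t}\}a+\beta\delta(t)|y|_{[0,t]}$. The paper packages this slightly differently---first proving a general claim under the calibration inequality $\max\{1,e^{(\mu+\lambda)\epsilon}\}\delta(\epsilon)\beta\leq \lambda\epsilon e^{\mu\epsilon}$, then verifying that the stated $\lambda$ satisfies it---and it writes $t_0=n\varepsilon_1$ exactly (with $\varepsilon_1=t_0/n\in[\hat\epsilon,2\hat\epsilon)$) rather than your two-step $n\epsilon\leq\tau_*<(n+1)\epsilon$; but these are cosmetic. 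One caution: the statement's formula $\lambda=\sup_\epsilon\{K(\hat\epsilon)/\hat\epsilon\}$ is a typo for $\sup_\epsilon\{K(\epsilon)/\epsilon\}$ (as the paper's proof makes clear), and your calibration ``$K(\epsilon)\leq\lambda\epsilon$ via monotonicity of $\delta$'' only works with the corrected definition.

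Your case (a), however, has a genuine gap. You assert the pointwise bound $|(S\Diamond x)(t)|\leq\int_0^t e^{\mu(t-s)}|x(s)|\,ds$ as a consequence of $\delta(\epsilon)/\epsilon\to 1$. That implication is false at the level of the abstract (MR) hypothesis: all you are given is the sup-norm estimate $|(S\Diamond f)|_{[0,t]}\leq\delta(t)|f|_{[0,t]}$, which does not upgrade to a convolution bound no matter how $\delta$ behaves near $0$. The convolution representation $(S\Diamond x)(t)=\int_0^t T_0(t-s)x(s)\,ds$ only makes sense when $T_0$ acts on all of $Z$ (the $C_0$-semigroup case); for a non-densely defined Hille-Yosida operator $T_0$ lives on $\overline{D(A)}\subsetneq Z$ and the integral is not even defined for general $x(\cdot)\in C([0,N],Z)$. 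The paper circumvents this entirely: it proves (a) by fixing any $\lambda>\beta$, running the case-(b) claim at scale $\epsilon_0$ (which is available once $\epsilon_0$ is small because $\delta(\epsilon_0)/\epsilon_0\to 1$ forces the calibration inequality), then letting $\epsilon_0\to 0^+$ so that $k\to 1$, and finally $\lambda\to\beta^+$. Your own closing remark---that (a) should drop out of (b) as a degenerate limit---is exactly the correct route; promote it from sanity check to the actual proof and drop the Gronwall argument.
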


\begin{rmk}
	This lemma given here is to overcome the failure of the Gronwall inequality in our general setting.
	There is also a well known and similar result in the analytic semigroup setting, i.e. the singular Gronwall inequality (see e.g. \cite[Chapter II section 3.3]{Ama95} and \cite{Hen81}). In this case, $ \delta(\epsilon) = \epsilon^{1/p} $, $ p > 1 $.
	In their paper \cite{MR09}, Magal and Ruan indeed had already known in the spirit of this type result (see \cite[Proposition 2.14]{MR09}), in order to prove the existence and some stability results of some class of semi-linear differential equations (see \cite{MR09}) and the existence of the center manifolds in `equilibrium' case (see \cite{MR09a}).
\end{rmk}

\begin{proof}[Proof of \autoref{lem:keylem}]
	First, we \emph{claim} that if $ \lambda > 0 $ satisfies
	\begin{equation}\label{lambda11}
	\max\{ 1, e^{(\mu + \lambda)\epsilon} \} \delta(\epsilon) \beta \leq \lambda \epsilon e^{\mu \epsilon} < (e^{\lambda \epsilon} - 1) e^{\mu \epsilon},
	\end{equation}
	for $ \epsilon \in [\epsilon_0, 2\epsilon_0] $, and $ \beta \delta(\epsilon_0) < 1 $, then $ |y(t)| \leq e^{(\mu + \lambda )t}ka $ for all $ t \in [0, N] $, where
	$ k = (1 - \beta\delta(\epsilon_0))^{-1} \max\{1,e^{-\mu \epsilon_0}\} $.
	The same strategy in the proof of \autoref{lem:simpleAB} will be used here.

	\begin{slem}
		Let $ t_0 \geq \epsilon_0 $, $ a' \geq a $. If $ |y(t)| \leq e^{(\mu + \lambda)t} a' $ for all $ t \in [0, t_0] $, then $ |y(t_0)| < e^{(\mu + \lambda)t_0} a' $.
	\end{slem}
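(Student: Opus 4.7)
The plan is to reduce the sublemma to the pre-lemma (Lemma \ref{lem:prelem}) via a careful choice of step size. Since $t_0 \geq \epsilon_0$, I would first pick an integer $n \geq 1$ and a step $\epsilon \in [\epsilon_0, 2\epsilon_0]$ with $t_0 = n\epsilon$: if $t_0 \in [\epsilon_0, 2\epsilon_0]$ take $n=1, \epsilon = t_0$; otherwise, since the interval $(t_0/(2\epsilon_0), t_0/\epsilon_0]$ has length $\geq 1$ it contains an integer $n \geq 2$, and then $\epsilon = t_0/n \in [\epsilon_0, 2\epsilon_0)$ works.

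Next, I would feed $x(\cdot)$ into the pre-lemma. Using the hypotheses $|x(t)| \leq \beta|y(t)|$ and $|y(t)| \leq e^{(\mu+\lambda)t}a'$ on $[0,t_0]$, we obtain $|x(t)| \leq e^{\hat{\mu}t}b$ with $\hat{\mu} = \mu+\lambda$ and $b = \beta a'$. Applying \autoref{lem:prelem} on $[0, n\epsilon]$ yields
\[
|(S\Diamond x)(t_0)| \leq \beta a'\, \max\{1, e^{(\mu+\lambda)\epsilon}\}\,\delta(\epsilon)\, e^{\mu(n-1)\epsilon}\, \frac{e^{\lambda t_0} - 1}{e^{\lambda \epsilon} - 1}.
\]

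Now I invoke the standing assumption \eqref{lambda11}, which asserts $\max\{1, e^{(\mu+\lambda)\epsilon}\}\delta(\epsilon)\beta < (e^{\lambda\epsilon} - 1)e^{\mu\epsilon}$ for every $\epsilon \in [\epsilon_0, 2\epsilon_0]$. Substituting this strict inequality into the bound above and simplifying $e^{\mu\epsilon} \cdot e^{\mu(n-1)\epsilon} = e^{\mu t_0}$ gives
\[
|(S\Diamond x)(t_0)| < a'\, e^{\mu t_0}\,(e^{\lambda t_0} - 1) = a'\bigl(e^{(\mu+\lambda)t_0} - e^{\mu t_0}\bigr).
\]
Combining with $|u(t_0)| \leq e^{\mu t_0}a \leq e^{\mu t_0}a'$ and the third hypothesis yields
\[
|y(t_0)| \leq |u(t_0)| + |(S\Diamond x)(t_0)| < e^{\mu t_0}a' + a'\bigl(e^{(\mu+\lambda)t_0} - e^{\mu t_0}\bigr) = e^{(\mu+\lambda)t_0} a',
\]
which is the desired strict inequality.

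The main subtlety is not a calculation but a quantifier issue: the condition \eqref{lambda11} only gives information for $\epsilon$ in the window $[\epsilon_0, 2\epsilon_0]$, so one must arrange that $t_0$ is an integer multiple of such an $\epsilon$ in order to apply the pre-lemma as a single telescoping estimate rather than mixing different step sizes. The decomposition trick in the first paragraph handles precisely this, and the factor $2$ in $[\epsilon_0, 2\epsilon_0]$ is exactly what guarantees the existence of the required integer $n$. Once this is set up, the strictness in \eqref{lambda11} transfers directly to the strict inequality the sublemma claims.
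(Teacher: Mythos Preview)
Your proof is correct and follows essentially the same approach as the paper: write $t_0 = n\varepsilon_1$ with $\varepsilon_1 \in [\epsilon_0, 2\epsilon_0]$, apply \autoref{lem:prelem} with $\hat{\mu} = \mu + \lambda$ and $b = \beta a'$, and use the strict inequality in \eqref{lambda11} to conclude. You supply more detail than the paper on why such a decomposition of $t_0$ exists and spell out the substitution step, but the argument is the same.
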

	\begin{proof}
		Let $ t_0 = n \varepsilon_1 $, where $ n \in \mathbb{N} $ and $ \varepsilon_1 \in [\epsilon_0, 2\epsilon_0] $. Note that $ n \geq 1 $. Now by \eqref{sx0} and \eqref{lambda11},
		\begin{align*}
		|y(t_0)| \leq & e^{\mu n \varepsilon_1} a + |(S\Diamond x)(n\varepsilon_1 )| \\
		\leq & e^{\mu n \varepsilon_1}a' +  \beta \max\{ 1, e^{(\mu + \lambda)\varepsilon_1} \} \delta(\varepsilon_1) e^{\mu (n-1) \varepsilon_1} \frac{e^{\lambda n\varepsilon_1} - 1}{e^{ \lambda\varepsilon_1 } - 1} a'\\
		< & e^{(\mu + \lambda)n\varepsilon_1} a',
		\end{align*}
		giving the proof.
	\end{proof}
	\begin{slem}
		Let $ a' \geq a $. If $ |y(t)| \leq e^{(\mu + \lambda)t} a' $ for all $ t \in [0, \epsilon_0] $, then $ |y(t)| \leq e^{(\mu + \lambda)t} a' $ for all $ t \in [0, N] $.
	\end{slem}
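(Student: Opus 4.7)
The statement is a standard continuity/bootstrap argument, in the same spirit as the proof of Sublemma~\ref{slem111} in the autonomous illustration.

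The plan is to define the set
\[
D = \{ t_0 \in [\epsilon_0, N] : |y(t)| \leq e^{(\mu+\lambda)t} a' \text{ for all } t \in [0, t_0] \},
\]
and show that $\sup D = N$. By hypothesis, $\epsilon_0 \in D$, so $D \neq \emptyset$, and by continuity of $y$ the set $D$ is closed in $[\epsilon_0, N]$. Thus $T := \sup D$ belongs to $D$.

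Next I would argue by contradiction: suppose $T < N$. Then the bound $|y(t)| \leq e^{(\mu+\lambda)t} a'$ holds on all of $[0, T]$, so the previous sublemma applies (with $t_0 = T \geq \epsilon_0$) and yields the strict inequality $|y(T)| < e^{(\mu+\lambda)T} a'$. By continuity of $y$ and of the exponential, there exists $\eta > 0$ with $T + \eta \leq N$ such that $|y(t)| \leq e^{(\mu+\lambda)t} a'$ for all $t \in [T, T+\eta]$. Combined with the definition of $T$, this gives $T + \eta \in D$, contradicting $T = \sup D$. Hence $T = N$, which yields the conclusion.

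This step is genuinely routine — there is no real obstacle, since all the analytic work has already been absorbed into the previous sublemma (which provided the \emph{strict} inequality needed to reopen the inequality $|y(t_0)| \leq e^{(\mu+\lambda)t_0} a'$ past any candidate boundary point). The only mild subtlety is that the first sublemma requires $t_0 \geq \epsilon_0$, which is why the base step of the bootstrap starts at $\epsilon_0$ rather than $0$; on $[0, \epsilon_0]$ the bound is already assumed.

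Finally, once both sublemmas are in hand, one would specialize $a' = k a$ (with $k = (1-\beta\delta(\epsilon_0))^{-1}\max\{1, e^{-\mu \epsilon_0}\}$) and verify the base case $|y(t)| \leq e^{(\mu+\lambda)t} k a$ on $[0, \epsilon_0]$: using $|y(t)| \leq e^{\mu t} a + |(S\Diamond x)(t)|$ together with $|x(t)| \leq \beta |y(t)|$ and \eqref{sft}, one obtains $|y|_{[0,\epsilon_0]} (1 - \beta \delta(\epsilon_0)) \leq e^{\mu \epsilon_0} a$ (or the analogous estimate depending on the sign of $\mu$), whence $|y(t)| \leq k a \leq e^{(\mu+\lambda)t} k a$ on $[0,\epsilon_0]$. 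Combined with the claim this gives $|y(t)| \leq e^{(\mu+\lambda)t} k a$ on $[0,N]$, completing the proof of the key lemma.
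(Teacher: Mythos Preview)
The proposal is correct and essentially identical to the paper's proof: both take the supremum $T$ of the set where the bound holds and derive a contradiction from the previous sublemma's \emph{strict} inequality at $T$ when $T<N$. The only cosmetic difference is that you start your set $D$ at $\epsilon_0$ so that $T\geq\epsilon_0$ is automatic, whereas the paper starts $D_1$ at $0$ and first applies the previous sublemma at $t_0=\epsilon_0$ to force $t_0>\epsilon_0$; your closing remarks about specializing $a'=ka$ and verifying the base case on $[0,\epsilon_0]$ also match exactly what the paper does immediately after the two sublemmas.
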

	\begin{proof}
		Only need proof when $ \epsilon_0 < N $. First by above sublemma, $ |y(\epsilon_0)| < e^{(\mu + \lambda)\epsilon_0} a' $. Set
		\[
		D_1 = \{  t_1 \in [0,N]: |y(t)|\leq e^{(\mu + \lambda)t}a',~\forall t \in [0, t_1] \},
		\]
		and $ t_0 = \sup D_1 $. If $ t_0 < N $, then $ t_0 \in D_1 $, $ t_0 > \epsilon_0 $ and $ |y(t_0)| = e^{(\mu + \lambda)t_0}a' $. Again by above sublemma, this is a contradiction. So $ t_0 = N $.
	\end{proof}

	If $ t \in [0, \min\{ \epsilon_0, N \}] $, then by \eqref{sft}, we have
	\[
	|y|_{[0,t]} \leq \max\{ 1, e^{\mu t} \}a + \beta \delta(t) |y|_{[0,t]} \leq \max\{ 1, e^{\mu t} \} a + \beta \delta(\epsilon_0) |y|_{[0,t]},
	\]
	yielding $ |y(t)| \leq e^{(\mu + \lambda )t}ka $. Now using the above sublemma, we prove the claim.

	Let $ \epsilon = 2\hat{\epsilon} $ satisfy \eqref{lambda}, then \eqref{lambda} holds for all $ \epsilon \in [\hat{\epsilon}, 2\hat{\epsilon}] $. Take $ \lambda(\epsilon) = K(\epsilon)/\epsilon $. Then it satisfies
	\[
	\mathrm{(a')}~ \lambda(\epsilon) >( \delta(\epsilon) / \epsilon ) \beta e^{-\mu \epsilon}, ~~
	\mathrm{(b')} ~ \lambda(\epsilon) = e^\sigma ( \delta(\epsilon)  / \epsilon )\beta > e^{K(\epsilon)} (\delta(\epsilon) / \epsilon)  \beta = e^{\lambda(\epsilon) \epsilon}  ( \delta(\epsilon) / \epsilon ) \beta,
	\]
	for all $ \epsilon \in [\hat{\epsilon}, 2\hat{\epsilon}] $.
	Let $ \lambda = \sup_{\epsilon \in [\hat{\epsilon}, 2\hat{\epsilon}]} \{ K(\epsilon) / {\epsilon} \} $ and $ \epsilon \in [\hat{\epsilon}, 2\hat{\epsilon}] $. Then $ \lambda \epsilon \leq 1 $ since $ \sigma \leq 1/2 $, which yields $ \lambda > e^{\lambda \epsilon}  ( \delta(\epsilon) / \epsilon ) \beta $. (Note that $ \lambda e^{-\lambda \epsilon} \geq \lambda(\epsilon) e^{-\lambda(\epsilon) \epsilon} $.)
	That is $ \lambda $ satisfies \eqref{lambda11} for $ \epsilon \in [\hat{\epsilon}, 2\hat{\epsilon}]  $. Therefore, by applying the above claim, we finish the proof of the general case (b).

	Let us consider the case (a), i.e. $ \delta(t) / t \to 1 $ as $ t \to 0^+ $. Let $ \lambda > \beta $ be fixed. Then there is an $ \hat{\epsilon} > 0 $ such that if $ 0 < \epsilon_0 < \hat{\epsilon} $, then \eqref{lambda11} holds for $ \epsilon = \epsilon_0 $ and $ \beta \delta(\hat{\epsilon}) < 1 $. So by the claim, we see
	\[
	|y(t)| \leq e^{(\mu + \lambda)t} \frac{\max\{1,e^{-\mu \epsilon_0}\}}{1 - \beta\delta(\epsilon_0)} a, ~\forall t \in [0, N].
	\]
	Let $ \epsilon_0 \to 0^+ $ and then $ \lambda \to \beta^+ $, which yields the result.	The proof is complete.
\end{proof}

\subsubsection{the study of $ \dot{z}(t) = Az(t) + L(t\omega)z(t) $} \label{ss11}

\paragraph*{\textbf{Notation}} \label{eee}
For the convenience of writing, if a function $ \mu : M \to \mathbb{R} $ satisfying $ \mu(t \omega) \leq \mu(\omega) $ for all $ t \geq 0 $, $ \omega \in M $, we will write $ \mu \in \mathcal{E}_1(t) $. For example, $ \tau \in \mathcal{E}_1(t) $ (in assumption (D1) in \autopageref{d1L}).

Consider the following differential equation
\begin{equation}\label{*2}
\dot{z}(t) = Az(t) + L(t\omega)z(t),
\end{equation}
where $ L: M \to L(\overline{D(A)},Z) $ satisfies assumption (D1) in \autopageref{d1L}.
$ z(\cdot) $ is called a \emph{classical solution} of \eqref{*2}, if $  z(\cdot) \in C^1([0,\infty), X)  $ and it point-wisely satisfies \eqref{*2}.

Now by the perturbation results from \cite{MR07}, we know $ A(\omega) \triangleq A + L(\omega) $ is also an MR operator (if $ A $ is Hille-Yosida so is $ A(\omega) $). In fact, we have the following.

\begin{lem}\label{lem:coycle0}
	Let \textnormal{(MR) (D1)} hold. Then we have the following about $ A(\omega) \triangleq A + L(\omega) $.
	\begin{enumerate}[(1)]
		\item $ A_0(\omega) \triangleq A(\omega)|_{\overline{D(A)}}: D(A_0(\omega)) \subset \overline{D(A)} \to \overline{D(A)} $, $ \omega \in M $, generate a $ C_0 $ linear cocycle $ \{T_0(t, \omega)\} $ satisfying the following.
		\begin{enumerate}[(a)]
			\item $ T_0(t, \omega) = T_0(t) + [ S\Diamond (L(\cdot \omega) T_0(\cdot, \omega)) ](t) $.
			\item $ T_0(t+s, \omega) = T_0(t, s\omega)T_0(s, \omega) $ for all $ t, s \in \mathbb{R}_+ $, $ \omega \in M $.
			\item $ \overline{D(A_0(\omega))} = \overline{D(A)} $. If $ \frac{\mathrm{d}}{\mathrm{d} t}|_{t=0} T_0(t, \omega)x $ exists, then $ \frac{\mathrm{d}}{\mathrm{d} t}|_{t=0} T_0(t, \omega)x = A_0(\omega) x $.
			Moreover, under $ \delta(t) = O(t) $ as $ t \to 0^+ $, e.g. $ A $ is a Hille-Yosida operator or a generator of a $ C_0 $ semigroup, then $ \frac{\mathrm{d}}{\mathrm{d} t}|_{t=0} T_0(t, \omega)x $ exists if and only if $ x \in D(A_0(\omega)) $.

		\end{enumerate}
		\item \label{integral} There is a strongly continuous function $ S_0(\cdot, \cdot): \mathbb{R}_+ \times M \to L(Z, Z) $ satisfying the following.
		\begin{enumerate}[(a)]
			\item $ S_0(t, \omega) = S(t) + [ S\Diamond (L(\cdot \omega) S_0(\cdot, \omega)) ](t) $.
			\item For any $ f \in C([0,t], X) $, any $ t > 0 $, set
			\[
			(S_0 \Diamond f)(\omega)(t) = \frac{\mathrm{d}}{\mathrm{d} t} \int_{0}^{t} S_0(t-s, s\omega) f(s) ~\mathrm{d} s,
			\]
			which exists and is continuous. Moreover, it satisfies
			\begin{equation}\label{s01}
			|(S_0 \Diamond f)(\omega)(\cdot)|_{[0,t]} \leq \delta_1(t,\omega) |f(\cdot)|_{[0,t]},
			\end{equation}
			and $ \delta_1(t,\omega) / \delta(t) \to 1 $ as $ t \to 0^+ $. In fact, $ \delta_1(t,\omega) = \frac{\delta(t)}{1 - \tau(\omega)\delta(t)} $ when $ \tau(\omega)\delta(t) < 1 $, $ \delta_1(t,\cdot) \in \mathcal{E}_1(t) $ and $ \delta_1(\cdot,\omega) $ is increased.
			\item (\textbf{key}) $ S_0 \Diamond f $ satisfies the following equality:
			\begin{equation}\label{sf1}
			(S_0 \Diamond f)(\omega)(t+s) = T_0(t,s\omega)(S_0 \Diamond f)(\omega)(s) + (S_0 \Diamond f_s)(s\omega)(t),
			\end{equation}
			where $ f_s(t) \triangleq f(s + t) $.
		\end{enumerate}
	 \item
	 $ z(\cdot) $ is a mild solution of \eqref{*2} if and only if $ z(0) \in \overline{D(A)} $ and $ z(t) = T_0(t, \omega)z(0) $.
	 If $ \delta(t) = O(t) $ as $ t \to 0^+ $, then $ z(\cdot) $ is a classical solution of \eqref{*2} if and only if $ z(0) \in D(A_0(\omega)) $ and $ z(t) = T(t, \omega)z(0) $.
	\end{enumerate}
\end{lem}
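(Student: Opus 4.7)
The plan is to treat $L(\omega): \overline{D(A)} \to Z$ as a bounded perturbation of the MR operator $A$, building $T_0(\cdot,\omega)$ and $S_0(\cdot,\omega)$ by solving the integral equations in (1)(a) and (2)(a) via Banach's fixed point theorem, and then deriving all remaining properties from these integral equations by uniqueness arguments combined with the bound \eqref{sft}. The backbone is that, by the perturbation result of Magal--Ruan \cite{MR07}, $A(\omega) = A + L(\omega)$ remains an MR operator (Hille-Yosida if $A$ is), so the roles of $T_0(\cdot)$ and $S(\cdot)$ should be played by $T_0(\cdot,\omega)$ and a parameter family $S_0(\cdot,\omega)$.

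\textbf{Construction of $T_0(t,\omega)$.} First I would solve
\[
T_0(t,\omega) = T_0(t) + [S\Diamond (L(\cdot\omega) T_0(\cdot,\omega))](t)
\]
as a fixed point in $C([0,t_0], L(\overline{D(A)}, \overline{D(A)}))$. By (D1) and \eqref{sft}, the integral term is a contraction as soon as $\tau(\omega)\delta(t_0) < 1$; local existence extends to all of $\mathbb{R}_+$ by iteration, since $\tau(s\omega)\leq\tau(\omega)$ is locally controlled. The cocycle property (b) follows because both $t \mapsto T_0(t+s,\omega)$ and $t \mapsto T_0(t, s\omega)T_0(s,\omega)$ satisfy the same fixed-point equation with the same initial operator $T_0(s,\omega)$; uniqueness forces equality. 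Since $S(t)$ and the convolution $S\Diamond$ both land in $\overline{D(A)}$, so does $T_0(t,\omega)$, and differentiating the integral equation at $t=0$ (under $\delta(t)=O(t)$ for the Hille-Yosida case) characterises the generator as $A_0(\omega)$, yielding (c).

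\textbf{Construction of $S_0$ and $(S_0\Diamond f)$.} Solve the analogous equation for $S_0(t,\omega)$ in $C([0,t_0], L(Z,Z))$ by the same contraction argument. For the operator $(S_0 \Diamond f)(\omega)(t)$, the key observation is that $u(t) \triangleq (S_0 \Diamond f)(\omega)(t)$ satisfies
\[
u(t) = (S \Diamond f)(t) + \bigl(S \Diamond (L(\cdot\omega)\, u)\bigr)(t),
\]
obtained by substituting the integral equation for $S_0$ into the definition of $u$, interchanging orders of integration, and using linearity; a standard density argument from $C^1$ to $C$ in (MR)(b) gives the validity. From \eqref{sft} this gives $|u|_{[0,t]} \leq \delta(t)|f|_{[0,t]} + \tau(\omega)\delta(t)|u|_{[0,t]}$, which, when $\tau(\omega)\delta(t)<1$, solves to the sharp bound $\delta_1(t,\omega) = \delta(t)/(1-\tau(\omega)\delta(t))$ in \eqref{s01}; monotonicity of $\delta_1(\cdot,\omega)$ and $\delta_1(t,\cdot) \in \mathcal{E}_1(t)$ follow since $\tau \in \mathcal{E}_1(t)$. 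The ratio $\delta_1(t,\omega)/\delta(t)\to 1$ is then immediate.

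\textbf{The key identity \eqref{sf1} and part (3).} For the identity
\[
(S_0 \Diamond f)(\omega)(t+s) = T_0(t,s\omega)(S_0 \Diamond f)(\omega)(s) + (S_0 \Diamond f_s)(s\omega)(t),
\]
I would argue by uniqueness: viewing both sides as functions of $t$ for fixed $s,\omega,f$, each satisfies, after using the integral equations for $T_0$ and $S_0$ together with the cocycle identity from the first step, the same variant-of-constant integral equation driven by $f_s$ with common value $(S_0\Diamond f)(\omega)(s)$ at $t=0$. Part (3) then comes for free: since $A(\omega)$ is an MR (resp.\ Hille-Yosida) operator with associated $C_0$ semigroup $T_0(\cdot,\omega)$ on $\overline{D(A)}$, the equivalence of $z(t) = T_0(t,\omega)z(0)$ with being a mild solution (or, under $\delta(t)=O(t)$, a classical solution with $z(0)\in D(A_0(\omega))$) is the standard integrated-semigroup characterisation applied to $A(\omega)$. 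The main obstacle I anticipate is precisely \eqref{sf1}: translating the operator cocycle identity for $T_0$ into the inhomogeneous statement for $(S_0\Diamond f)$ requires combining the identities (1)(a) and (2)(a) in precisely the right order, and the sharp form of $\delta_1$ needs the Neumann series bookkeeping to be tight rather than absorbed into generic constants.
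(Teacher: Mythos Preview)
Your proposal is correct and follows essentially the same route as the paper: construct $T_0$ and $S_0$ by Banach fixed point on short intervals and extend, obtain the cocycle law and \eqref{sf1} by uniqueness of solutions to the resulting integral equations, and derive the $\delta_1$ bound from $u = (S\Diamond f) + S\Diamond(L(\cdot\omega)u)$. The paper packages the passage between the $(T_0,S)$ and $(T_0(\cdot,\omega),S_0)$ formulations as an explicit sublemma (the equivalence $W = T_0(\cdot,\omega)x + (S_0\Diamond f)(\omega)$ iff $W = T_0(\cdot)x + S\Diamond(L(\cdot\omega)W + f)$), which is exactly the bridge you describe informally for \eqref{sf1} and for part (3); for (1)(c) it also isolates a small sublemma showing that existence of $\dot u(0)$ forces $x\in D(A)$ via closedness of $A$, with the converse under $\delta(t)=O(t)$, rather than leaving this to ``differentiating the integral equation''.
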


\begin{rmk}
	We can not give any result like $ \frac{\mathrm{d}}{\mathrm{d} t}|_{t=0} T_0(t, \omega)x $ exists if and only if $ x \in D(A_0(\omega)) $ when $ \delta(t) = O(t) $ as $ t \to 0^+ $ does not satisfy, unless more regularity of $ L(\cdot \omega) $ is supposed; see \autoref{lem:diff1} for such a result.
	In many situations, that $ A $ is a Hille-Yosida operator is sufficient for us. See \cite{Are04, Ama95} and the references therein for more details about the maximal regularity with respect to time variable.
\end{rmk}

\begin{proof}[Proof of \autoref{lem:coycle0}]
	\textbf{(I)}. The proof of the following sublemma is standard by applying Banach Fixed Point Theorem. Here we give a sketch for the convenience of readers.
	\begin{slem}\label{slem123}
		For any strongly continuous $ V: \mathbb{R}_+ \times M  \to L(Z, Z) $, there is a unique strongly continuous $ W: \mathbb{R}_+ \times M  \to L(Z, Z) $ satisfying
		\[
		W(t, \omega) = V(t, \omega) + [ S\Diamond (L(\cdot \omega) W(\cdot, \omega)) ](t), ~t \geq 0. \tag{$ \odot $}
		\]
		Moreover, if $ V(t,\omega) \in \overline{D(A)} $ for all $ \omega \in M, t > 0 $, then so is $ W(t, \omega) $.
	\end{slem}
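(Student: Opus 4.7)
The plan is a Banach fixed-point argument carried out first on a short time slab and then iterated in time by means of the shift identity for $S\Diamond$. Fix a compact $K\subset M$; by assumption (D1)(ii), $\tau_K\triangleq \sup_{\omega\in K}\tau(\omega) < \infty$, and since $\delta(t)\to 0$ as $t\to 0^+$ we may choose $\tau_1>0$ with $\delta(\tau_1)\tau_K<1$. On the complete metric space of strongly continuous, operator-norm-bounded maps $W:[0,\tau_1]\times K\to L(Z,Z)$ equipped with the supremum of the operator norm, define
\[
(\Phi W)(t,\omega) = V(t,\omega) + \bigl[S\Diamond\bigl(L(\cdot\,\omega)W(\cdot,\omega)\bigr)\bigr](t).
\]
By the estimate (MR)(b), $\|\Phi W_1 - \Phi W_2\|\leq \delta(\tau_1)\tau_K\|W_1-W_2\|$, so $\Phi$ is a contraction, yielding existence and uniqueness on $[0,\tau_1]\times K$.

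To extend past $\tau_1$, I would use the shift identity $(S\Diamond f)(\tau_1+r) = T_0(r)(S\Diamond f)(\tau_1) + (S\Diamond f_{\tau_1})(r)$, where $f_{\tau_1}(s)=f(\tau_1+s)$; this is the standard cocycle-type relation for once-integrated semigroups. Applied to $f(s)=L(s\omega)W(s,\omega)$ and using $L(s(\tau_1\omega))=L((\tau_1+s)\omega)$, the equation for $g(r)\triangleq W(\tau_1+r,\omega)$ takes the form $g(r) = \tilde V(r,\omega) + [S\Diamond(L(\cdot\,(\tau_1\omega))g)](r)$ with $\tilde V$ already determined by the step-one data. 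Since $\tau\in\mathcal{E}_1(t)$ gives $\tau(\tau_1\omega)\leq \tau(\omega)\leq \tau_K$, the same $\tau_1$ works, BFPT extends $W$ to $[0,2\tau_1]\times K$, and iterating produces $W$ on $[0,\infty)\times K$. Exhausting $M$ by compacta gives global existence and uniqueness.

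Linearity of $W(t,\omega)$ in its spatial argument follows from uniqueness applied to the scalar-valued version of the equation. Operator-norm boundedness is the standard fixed-point bound $\|W(t,\omega)\|\leq (1-\delta(\tau_1)\tau_K)^{-1}\sup_{s\leq\tau_1}\|V(s,\omega)\|$, propagated across shift blocks. Strong continuity in $(t,\omega)$ is preserved by each Picard iterate of $\Phi$ (since both $V$ and $(t,\omega,x)\mapsto L(t\omega)x$ are strongly continuous, and $g\mapsto S\Diamond g$ is continuous from $C([0,\tau_1],Z)$ to itself by (MR)(b)), and the uniform operator-norm convergence given by the contraction transfers strong continuity to the limit. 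Finally, since $(S\Diamond h)(t)\in\overline{D(A)}$ for every $h$ by (MR)(b), the fixed-point equation gives $W(t,\omega)x-V(t,\omega)x\in\overline{D(A)}$, which yields the moreover statement. The only mildly delicate ingredient is the shift identity for $S\Diamond$ used in the iteration; this is a direct consequence of the defining relation $S(t+r)=T_0(r)S(t)+\text{(integrated part)}$ for once-integrated semigroups and is the step I would verify with most care.
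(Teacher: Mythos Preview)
Your approach is essentially the same as the paper's: a Banach fixed-point argument on a short time interval followed by iteration via the shift identity $(S\Diamond f)(t+s)=T_0(t)(S\Diamond f)(s)+(S\Diamond f_s)(t)$. The paper carries this out pointwise in $\omega$ (fixing $\omega$, choosing $\vartheta(\omega)$ with $\delta(\vartheta(\omega))\tau(\omega)<1$, then extending block by block) rather than on compact subsets of $M$.

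There is one technical slip in your organization: you invoke ``exhausting $M$ by compacta,'' but $M$ is only assumed to be a Hausdorff topological space, with no local compactness or $\sigma$-compactness hypothesis. The paper avoids this by constructing $W(\cdot,\omega)$ for each fixed $\omega$ separately; uniqueness then makes the pointwise pieces fit together, and strong continuity in $(t,\omega)$ is recovered afterward from the local boundedness of $\tau$ (assumption (D1)(ii)), which guarantees that the Picard iterates converge locally uniformly in the strong operator topology. Your argument on compact $K$ is fine where it applies and is useful for the strong-continuity step, but the global existence should be phrased pointwise (or on neighborhoods where $\tau$ is bounded) rather than via a compact exhaustion. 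Apart from this, your handling of the shift identity, the ``moreover'' clause via $(S\Diamond h)(t)\in\overline{D(A)}$, and the contraction estimate are all correct and match the paper.
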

	\begin{proof}
		Fix $ \omega $. Since $ \delta(t) \to 0 $ as $ t \to 0^+ $, we can choose $ \vartheta(\omega) > 0 $ such that $ \delta(\vartheta(\omega)) < 1 $. Now we can uniquely define $ W(t, \omega) $ for $ t \in [0, \vartheta(\omega)] $ satisfying above equation by using Banach Fixed Point Theorem. Then observing the following elementary fact about $ S \Diamond f $ that
		\[
		(S \Diamond f) (t + s) = T_0(t) (S \Diamond f)(s) + (S \Diamond f_s)(t),
		\]
		one can give $ W(t, \omega) $ for $ t \in [\vartheta(\omega), 2\vartheta(\omega)] $. Or more specifically, there is a unique $ \widetilde{W}(t,\omega) $ satisfying
		\[
		\widetilde{W}(t,\omega) =  V(t + \vartheta(\omega), \omega) + T_0(t) [ S\Diamond (L(\cdot \omega) W(\cdot, \omega)) ](\vartheta(\omega)) + [ S\Diamond (L((\cdot+ \vartheta(\omega)) \omega) \widetilde{W}(\cdot, \omega)) ](t),
		\]
		for $ t \in [0, \vartheta(\omega)] $. (Note that $ \tau(\vartheta(\omega)\omega) \leq \tau(\omega) $.) Set $ W(t, \omega) = \widetilde{W}(t-\vartheta(\omega),\omega) $ for $ t \in [\vartheta(\omega), 2\vartheta(\omega)] $. Then it also satisfies ($ \odot $) in $ [\vartheta(\omega), 2\vartheta(\omega)] $. Continue this way to construct $ W $. The strong continuity of $ W $ is obvious since $ \omega \mapsto \tau(\omega) $ is locally bounded.
	\end{proof}

	Using above sublemma by setting $ V(t, \omega) = T_0(t) $, we have $ T_0(t, \omega) $ satisfying (1) (a). Let us show (1) (b). Observe that
	\begin{align*}
	T_0(t+s,\omega) = & T_0(t+s) +  (S\Diamond ( L(\cdot \omega)T_0(\cdot,\omega) ))(t+s) \\
	= & T_0(t + s) + T_0(t) (S\Diamond ( L(\cdot \omega)T_0(\cdot,\omega) ))(s) \\
	& + (S\Diamond ( L((\cdot + s) \omega)T_0(\cdot + s,\omega) ))(t), \\
	T_0(t,s\omega)T_0(s, \omega) = & T_0(t)T(s,\omega) + (  S\Diamond ( L((\cdot + s) \omega)T_0(\cdot ,s\omega) ) T_0(s, \omega)  )(t) \\
	= & T_0 (t + s) + T_0(t)  (S\Diamond ( L(\cdot \omega)T_0(\cdot,\omega) )) (s) \\
	& + (  S\Diamond ( L((\cdot + s) \omega)T_0(\cdot ,s\omega) ) T_0(s, \omega)  )(t).
	\end{align*}
	Let $ g^1(t) = T_0(t, s\omega)T_0(s, \omega) $, $ g^2(t) = T_0(t+s, \omega) $. We see that they all satisfy
	\[
	g^i (t) = T_0 (t + s) + T_0(t)  (S\Diamond ( L(\cdot \omega)T_0(\cdot,\omega) )) (s) +  (  S\Diamond ( L((\cdot + s) \omega) g^i(\cdot) )  ) (t).
	\]
	By the uniqueness result in \autoref{slem123}, we know $ g^1 = g^2 $.

	\textbf{(II)}. By letting $ V(t,\omega) = S(t) $ in \autoref{slem123}, we have $ S_0(t,\omega) $ satisfying (2) (a). For any $ f \in C^1([0,t], Z) $, since (by exchanging the order of integration)
	\[
	([ S\Diamond (L(\cdot \omega) S_0(\cdot, \omega)) ] \Diamond f) (t) = [ S\Diamond (L(\cdot \omega) (S_0 \Diamond f )(\omega)(\cdot)  ) ] (t),
	\]
	which exists and is continuous, we conclude that
	\[
	|(S_0 \Diamond f)(\omega)(\cdot)|_{[0,t]} \leq \frac{\delta(t)}{1 - \tau(\omega) \delta(t)} |f|_{[0,t]},
	\]
	provided $ \tau(\omega)\delta(t) < 1 $. By the standard density argument, we know that the above also holds for $ f \in C([0,t], Z) $. We have proven (2) (a) (b). Let us consider (2) (c).

	\begin{slem}\label{slem:aa}
		Let
		\[
		W(t, \omega)x =T_0(t, \omega)x + ( S_0 \Diamond f )(\omega) (t), ~t \geq 0, \omega \in M.
		\]
		Then it satisfies
		\[
		W(t, \omega)x = T_0(t)x + [ S \Diamond ( L(\cdot \omega) W(\cdot,\omega)x + f(\cdot) ) ] (t),
		\]
		and vice versa.
	\end{slem}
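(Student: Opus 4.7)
The plan is to prove both directions by direct computation using the identities just established in parts (1)(a) and (2)(a) of the lemma, together with the Fubini-type interchange identity that was noted in the proof of (2)(b).

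For the forward direction, I would substitute the defining formula for $W$ into the right-hand side of the claimed equation. From (1)(a), I have $T_0(t,\omega)x = T_0(t)x + [S\Diamond(L(\cdot\omega)T_0(\cdot,\omega)x)](t)$. For the $(S_0 \Diamond f)(\omega)(t)$ term, I would use (2)(a), i.e.\ $S_0(t,\omega) = S(t) + [S\Diamond(L(\cdot\omega)S_0(\cdot,\omega))](t)$, convolve (in the $\Diamond$ sense) against $f$, and then apply the interchange identity
\[
\bigl([S\Diamond(L(\cdot\omega)S_0(\cdot,\omega))]\Diamond f\bigr)(t) = [S\Diamond(L(\cdot\omega)(S_0\Diamond f)(\omega)(\cdot))](t),
\]
which was already recorded in the proof of (2)(b). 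This yields
\[
(S_0 \Diamond f)(\omega)(t) = (S\Diamond f)(t) + [S\Diamond(L(\cdot\omega)(S_0\Diamond f)(\omega)(\cdot))](t).
\]
Adding the two expansions and using linearity of $[S\Diamond(L(\cdot\omega)\,\cdot\,)]$ in its argument, the right-hand side collapses to $T_0(t)x + [S\Diamond(L(\cdot\omega)W(\cdot,\omega)x + f(\cdot))](t)$, as desired.

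For the converse, the strategy is uniqueness: I would view the identity
\[
W(t,\omega)x = T_0(t)x + [S\Diamond(L(\cdot\omega)W(\cdot,\omega)x + f(\cdot))](t)
\]
as a fixed-point equation for the vector-valued function $t\mapsto W(t,\omega)x$. Since $\delta(t)\to 0$ as $t\to 0^+$ and $\tau$ is locally bounded, on a sufficiently short interval $[0,\vartheta(\omega)]$ the map $u(\cdot)\mapsto T_0(\cdot)x + [S\Diamond(L(\cdot\omega)u(\cdot)+f(\cdot))](\cdot)$ is a contraction on $C([0,\vartheta(\omega)],Z)$ (this is the pointwise-in-$x$ version of the contraction argument in \autoref{slem123}, using the bound \eqref{sft}). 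Extending by the semigroup-style identity $(S\Diamond g)(t+s) = T_0(t)(S\Diamond g)(s)+(S\Diamond g_s)(t)$ in the same staircase manner as \autoref{slem123}, uniqueness holds on all of $\mathbb{R}_+$. Since the forward direction already shows that $T_0(t,\omega)x + (S_0\Diamond f)(\omega)(t)$ is a solution, any other solution must coincide with it.

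The potentially delicate step is making sure the Fubini-type interchange identity really applies to the operator-valued $S_0$ when evaluated against the continuous function $f$; this is why one first verifies everything for $f\in C^1$ (where all derivatives exist classically) and then passes to general $f\in C([0,t],Z)$ by the density argument already invoked in (2)(b). Beyond that, the computation is entirely formal: everything reduces to the two defining relations for $T_0(\cdot,\omega)$ and $S_0(\cdot,\omega)$ plus one Fubini swap, and the converse is just uniqueness for the standard $S\Diamond$-fixed-point equation.
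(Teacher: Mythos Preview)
Your proposal is correct and follows essentially the same route as the paper. The forward direction is the same computation: expand via (1)(a) and (2)(a), invoke the Fubini-type identity for $S_0\Diamond f$ already recorded in (2)(b), and collect terms. For the converse, the paper's displayed computation only makes the forward implication explicit; your uniqueness argument (which is exactly the contraction step of \autoref{slem123} applied pointwise in $x$) is the natural way to fill in the ``vice versa,'' and is what the paper tacitly relies on. Your remark about handling the Fubini swap first for $f\in C^1$ and then passing to $f\in C$ by density is appropriate and matches what was done in (2)(b).
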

	\begin{proof}
		This follows from
		\begin{align*}
		& T_0(t)x + [ S \Diamond ( L(\cdot \omega) W(\cdot,\omega)x + f(\cdot) ) ] (t) \\
		= & T_0(t)x + [ S \Diamond ( L(\cdot \omega) T_0( \cdot, \omega )x ) ] (t) \\
		& + [ S \Diamond \{ L (\cdot \omega) ( W(\cdot,\omega)x - T_0(\cdot, \omega)x ) \} ] (t) + ( S \Diamond  f ) (t) \\
		= & T_0(t,\omega)x + (S_0 \Diamond f)(\omega)(t) \\
		& + [ S \Diamond \{ L (\cdot \omega) ( W(\cdot,\omega)x - T_0(\cdot, \omega)x ) \} ] (t) - [S \Diamond ( L(\cdot \omega) (S_0 \Diamond f)(\omega)(\cdot) )](t) \\
		= & W(t, \omega)x + 0,
		\end{align*}
		which gives the proof.
	\end{proof}

	Let $ u(t) = (S_0 \Diamond f)(\omega)(t) $, then it follows that $ u(t) $ satisfies
	\[
	u(t) = [ S \Diamond ( L(\cdot \omega) u(\cdot) + f(\cdot) ) ] (t).
	\]
	In particular, $ u_s(t) \triangleq u(t + s) $ satisfies
	\[
	u_s(t) = T_0(t)u(s) + [ S \Diamond ( L((\cdot + s )\omega) u_s(\cdot) + f_s(\cdot) ) ] (t).
	\]
	So by above sublemma,
	\[
	u_s(t) = T_0(t,s\omega)u(s) + (S_0 \Diamond f_s )(s \omega)(t),
	\]
	i.e. (2) (c) holds.

	\textbf{(III)}. The first part of (3) is a consequence of \autoref{slem:aa} for $ f \equiv 0 $.
	For the proof of (1) (c), and the second part of (3),
	we need the following.
	\begin{slem}\label{slem:diff}
		Consider the mild solution of the following equation.
		\[
		\begin{cases}
		\dot{u}(t) = Au(t) + h(t), \\
		u(0) = x,
		\end{cases}
		\]
		where $ h $ is continuous. (1) If $ \dot{u}(0) $ exists, then $ x \in D(A) $, $ Ax + h(0) \in \overline{D(A)} $ and $ \dot{u}(0) = Ax + h(0) $; (2) vice versa if $ \delta(t) = O(t) $ as $ t \to 0^+ $.
	\end{slem}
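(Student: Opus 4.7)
For Part (1), my plan is to read off everything from the defining integral identity
\[
u(t)-x=A\!\int_0^t u(s)\,\mathrm{d}s+\int_0^t h(s)\,\mathrm{d}s.
\]
Dividing by $t$ and sending $t\to 0^+$, continuity of $u$ and $h$ yields $\tfrac{1}{t}\!\int_0^t u(s)\,\mathrm{d}s\to x$ and $\tfrac{1}{t}\!\int_0^t h(s)\,\mathrm{d}s\to h(0)$; so if $\dot u(0)$ exists, then $A\bigl(\tfrac{1}{t}\!\int_0^t u(s)\,\mathrm{d}s\bigr)\to\dot u(0)-h(0)$, and closedness of $A$ delivers $x\in D(A)$ together with $\dot u(0)=Ax+h(0)$. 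For the membership $\dot u(0)\in\overline{D(A)}$, once $x\in D(A)\subset\overline{D(A)}$ has been established, the mild solution admits the standard representation $u(t)=T_0(t)x+(S\Diamond h)(t)$ (a direct consequence of (MR), or of the $L\equiv 0$ case of \autoref{slem:aa}); both summands lie in $\overline{D(A)}$, so every difference quotient $(u(t)-x)/t$ lies in the closed subspace $\overline{D(A)}$ and so does its limit.

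For Part (2), assume now $x\in D(A)$, $b:=Ax+h(0)\in\overline{D(A)}$, and $\delta(t)=O(t)$. The plan hinges on the commutation identity
\[
S(t)Ax=T_0(t)x-x\qquad\text{for every }x\in D(A),
\]
which I would derive by combining the integrated-semigroup identity $AS(t)z=T_0(t)z-z$ on $z\in\overline{D(A)}$ with $AS(t)z=S(t)Az$ on $z\in D(A)$; the latter is a short resolvent-conjugation calculation using that $R(\lambda,A)$ commutes with $S(t)$. Noting $(S\Diamond h(0))(t)=S(t)h(0)$ for the constant $h(0)$, the standard representation then splits cleanly as
\[
u(t)-x=\bigl(T_0(t)x-x\bigr)+(S\Diamond h)(t)=S(t)b+(S\Diamond(h-h(0)))(t).
\]
Dividing by $t$, the first summand equals $\tfrac{1}{t}\!\int_0^t T_0(s)b\,\mathrm{d}s$ (since $b\in\overline{D(A)}$) and tends to $b$ by strong continuity of $T_0$ on $\overline{D(A)}$, while \eqref{sft} together with $\delta(t)=O(t)$ and continuity of $h$ gives
\[
|(S\Diamond(h-h(0)))(t)|\le\delta(t)\,|h-h(0)|_{[0,t]}=O(t)\cdot o(1)=o(t).
\]
Combining, $(u(t)-x)/t\to b=Ax+h(0)$, i.e.\ $\dot u(0)=Ax+h(0)$, completing the converse.

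The main obstacle I foresee is the commutation $S(t)Ax=T_0(t)x-x$ in the range where $Ax\notin\overline{D(A)}$: the naive formula $S(t)Ax=\int_0^t T_0(s)Ax\,\mathrm{d}s$ is then meaningless because the integrand is undefined, and one has to fall back on the intrinsic integrated-semigroup identity $A\!\int_0^t S(s)y\,\mathrm{d}s=S(t)y-ty$ (valid for every $y\in Z$) or on the resolvent route sketched above. This is precisely why the weaker joint hypothesis $Ax+h(0)\in\overline{D(A)}$, rather than $Ax\in\overline{D(A)}$ and $h(0)\in\overline{D(A)}$ separately, is the correct assumption: the irregular components of $Ax$ and $h(0)$ are absorbed together by $S(t)$ and, after scaling by $t$, produce a bona fide element of $\overline{D(A)}$.
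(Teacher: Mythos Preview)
Your proposal is correct and follows essentially the same route as the paper. For Part~(1) both you and the paper divide the defining integral identity by $t$ and invoke closedness of $A$, then use the representation $u(t)=T_0(t)x+(S\Diamond h)(t)\in\overline{D(A)}$ to obtain $\dot u(0)\in\overline{D(A)}$; for Part~(2) the paper records $(S(t)x)'=x+S(t)Ax$ for $x\in D(A)$ (citing \cite[Lemma~3.2.9]{ABHN11}), which is precisely your commutation identity $S(t)Ax=T_0(t)x-x$, and then splits $u(t)-x=S(t)(Ax+h(0))+[S\Diamond(h-h(0))](t)$ exactly as you do, handling the two summands by $Ax+h(0)\in\overline{D(A)}$ and by the estimate~\eqref{sft} with $\delta(t)=O(t)$ respectively.
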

	\begin{proof}
		(1) If $ \dot{u}(0) $ exists, which yields $ A\frac{1}{t}\int_{0}^{t} u(s) ~\mathrm{d}s =- \frac{1}{t} \int_{0}^{t} h(s) ~\mathrm{d}s + \frac{u(t) - x}{t} $ exists as $ t \to 0^+ $, then by the closeness of $ A $, $ x = \lim_{t \to 0} \frac{1}{t}\int_{0}^{t} u(s) ~\mathrm{d}s \in D(A) $ and $ Ax = -h(0) + \dot{u}(0) \in \overline{D(A)} $. As $ u(t) \in \overline{D(A)} $, we see $ \dot{u}(0) \in \overline{D(A)} $.

		Let us consider the part of `vice versa'. Since $ x \in D(A) $, we have $ (S(t)x)' = x + S(t)Ax $. Thus, by \cite[Lemma 3.2.9]{ABHN11}, we see
		\[
		u(t) = x + S(t)Ax + (S \Diamond h)(t).
		\]
		Therefore,
		\begin{equation}\label{equ00}
		u(t) - x = S(t)(Ax+h(0)) + [S \Diamond (h(\cdot) - h(0) ) ](t).
		\end{equation}
		Since $ Ax + h(0) \in \overline{D(A)} $,  we have
		\[
		\frac{\mathrm{d}}{\mathrm{d} t} S(t)(Ax + h(0)) = T_0(t)(Ax + h(0)).
		\]
		Furthermore, by $ \delta(t) = O(t) $ as $ t \to 0^+ $, we get
		\[
		|\frac{1}{t}[S \Diamond (h(\cdot) - h(0) ) ](t)| \leq \frac{\delta(t)}{t} \sup_{s \in [0,t]} |h(s) - h(0)| \to 0,~\text{as}~ t \to 0^+,
		\]
		which yields that $ \dot{u}(0) = Ax + h(0) $.
	\end{proof}

	Now, using above sublemma by letting $ h(t) = L(t\omega)z(t) $ which is continuous, we know that (1) (c) holds. (Note that $ x \in D(A_0(\omega)) $ means that, by definition, $ x \in D(A) $ and $ Ax + L(\omega)x \in \overline{D(A)} $). The second part of (3) is a consequence of (1) (c). The proof is complete.
\end{proof}

\subsubsection{the study of $ \dot{z}(t) = Az(t) + L(t\omega)z(t) + f(t\omega)z(t) $}

Consider the following nonlinear differential equation,
\begin{equation}\label{*3}
\dot{z}(t) = Az(t) + L(t\omega)z(t) + f(t\omega)z(t),
\end{equation}
where $ f: M \times \overline{D(A)} \to Z $ satisfies assumption (D2) in \autopageref{d2fff}.

\begin{lem}\label{lem:vcf1}
	Let \textnormal{(MR) (D1) (D2)} hold. Then there is a unique $ C_0 $ (non-linear) cocycle $ U: \mathbb{R}_+ \times M \times Z \to Z $ over $ t $ such that the following hold.
	\begin{enumerate}[(1)]
		\item  $ U(\cdot, \omega)x $, $ x \in X $, are the mild solutions of \eqref{*3}, and $ U $ satisfies the following \textbf{variant of constant formula} (through $ T_0(\cdot) $),
		\begin{equation}\label{Uvcf0}
		U(t, \omega)x = T_0(t)x + [S \Diamond (( L(\cdot\omega) + f(\cdot \omega) )U(\cdot, \omega)x)](t),
		\end{equation}
		and another \textbf{variant of constant formula} (through $ T_0(\cdot, \cdot) $),
		\begin{equation}\label{Uvcf}
		U(t, \omega)x = T_0(t, \omega)x + [S_0 \Diamond ( f(\cdot \omega) U(\cdot, \omega)x)](\omega)(t).
		\end{equation}
		\item If $ f(\omega)(\cdot) \in C^r $, then $ U(t, \omega)(\cdot) \in C^r $. Moreover, if $ (\omega,x) \mapsto D^r_xf(\omega)(x) $ is continuous, so is $ (t,\omega, x) \mapsto D^r_xU(t,\omega)(x) $.
	\end{enumerate}
\end{lem}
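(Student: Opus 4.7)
The plan is to construct $U$ via a contraction fixed-point argument based on the \emph{second} variant of constant formula \eqref{Uvcf}, which is the more convenient one because it isolates the nonlinearity from the unbounded linear operator $A$. Once $U$ is built on a small time interval, I would bootstrap to all of $\mathbb{R}_+$ using the key identity \eqref{sf1} for $S_0\Diamond\cdot$, which is precisely what gives the cocycle property ``for free.'' The first variant \eqref{Uvcf0} and the mild-solution property will then be automatic consequences of Sublemma~\ref{slem:aa}.

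\emph{Step 1 (local existence and uniqueness via \eqref{Uvcf}).} Fix $\omega\in M$ and $x\in Z$. For $t_0>0$ small enough that $\delta_1(t_0,\omega)\varepsilon(\omega)<1$, which is possible because $\delta_1(t,\omega)\to 0$ as $t\to 0^+$, I consider the map
\[
\Phi_{\omega,x}:C([0,t_0],Z)\to C([0,t_0],Z),\qquad
(\Phi_{\omega,x}u)(t)=T_0(t,\omega)x+[S_0\Diamond f(\cdot\omega)u(\cdot)](\omega)(t).
\]
The estimate \eqref{s01} together with (D2) shows that $\Phi_{\omega,x}$ is a contraction with constant $\delta_1(t_0,\omega)\varepsilon(\omega)<1$, yielding a unique fixed point $U(\cdot,\omega)x$ on $[0,t_0]$. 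Parameter-dependent contraction (cited as \cite[Appendix D.1]{Che18a}) gives joint continuity in $(\omega,x)$.

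\emph{Step 2 (global extension via \eqref{sf1}).} To extend to all $t\geq 0$, the trick is that the identity $(S_0\Diamond f)(\omega)(t+s)=T_0(t,s\omega)(S_0\Diamond f)(\omega)(s)+(S_0\Diamond f_s)(s\omega)(t)$ from Lemma~\ref{lem:coycle0}\eqref{integral}(c) means that if $U(\cdot,\omega)x$ solves \eqref{Uvcf} on $[0,s]$, then setting $v(t)=U(t+s,\omega)x$ turns \eqref{Uvcf} at time $t+s$ into the same fixed-point equation on $[0,t_0]$ but with initial data $U(s,\omega)x$ and base point $s\omega$. Since $\omega\mapsto\tau(\omega),\varepsilon(\omega)$ are locally bounded and $t\omega$ stays in compact subsets for $t$ in compact intervals, one obtains a uniform local time step and iterates. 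Moreover, this shows that the extended function satisfies
\[
U(t+s,\omega)x=U(t,s\omega)\circ U(s,\omega)x,
\]
i.e.\ the cocycle property.

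\emph{Step 3 (equivalence with \eqref{Uvcf0} and mild-solution property).} Applying Sublemma~\ref{slem:aa} with $f(\cdot)$ replaced by $f(\cdot\omega)U(\cdot,\omega)x$ converts \eqref{Uvcf} into \eqref{Uvcf0}. In turn, from \eqref{Uvcf0} and standard integrated-semigroup identities (see \cite[Section 3.2]{ABHN11}), one has $\int_0^t U(s,\omega)x\,\mathrm{d}s\in D(A)$ and
\[
U(t,\omega)x=x+A\int_0^t U(s,\omega)x\,\mathrm{d}s+\int_0^t\bigl(L(s\omega)+f(s\omega)\bigr)U(s,\omega)x\,\mathrm{d}s,
\]
so $U(\cdot,\omega)x$ is a mild solution of \eqref{*3} in the sense of Definition~\ref{def:mild}. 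Uniqueness of the mild solution follows from the uniqueness in Step 1 applied on successive short intervals. Joint continuity of $(t,\omega,x)\mapsto U(t,\omega)x$ then gives the $C_0$ property.

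\emph{Step 4 (regularity in $x$).} When $f(\omega)(\cdot)\in C^r$ with continuous $(\omega,x)\mapsto D^r_x f(\omega)x$, I would apply the $C^r$ parameter-dependent contraction to $\Phi_{\omega,x}$ and differentiate \eqref{Uvcf} formally; $D^k_x U$ is then seen to solve the linearized fixed-point equations obtained by Faà di Bruno, each of which is again a contraction with the same constant $\delta_1(t_0,\omega)\varepsilon(\omega)<1$, so the derivatives exist and are continuous on successive short intervals, and one bootstraps globally as in Step 2.

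The \textbf{main obstacle} I anticipate is in Step 3: the translation between the integral equation \eqref{Uvcf0} and the mild-solution Definition~\ref{def:mild} requires invoking the identity $\frac{\mathrm{d}}{\mathrm{d}t}\int_0^t S(t-s)g(s)\,\mathrm{d}s=A\int_0^t(S\ast g)(r)\,\mathrm{d}r$-type relations carefully, since $A$ need not be densely defined; this is where integrated-semigroup machinery is essential and why (MR) is formulated the way it is. Step 2 is conceptually delicate but essentially handed to us by \eqref{sf1}, and Steps 1 and 4 are standard.
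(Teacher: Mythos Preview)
Your proposal is correct and complete, but it runs in the reverse direction from the paper's proof. The paper constructs $U$ from the \emph{first} formula \eqref{Uvcf0} (via the same short-interval contraction argument as in Sublemma~\ref{slem123}, with nonlinearity $L(\cdot\omega)+f(\cdot\omega)$), proves the cocycle property by the uniqueness argument of Lemma~\ref{lem:coycle0}(1)(b), and only then deduces the second formula \eqref{Uvcf} from Sublemma~\ref{slem:aa}. You instead start from \eqref{Uvcf}, obtain the cocycle property directly from the translation identity \eqref{sf1}, and recover \eqref{Uvcf0} via the ``vice versa'' direction of Sublemma~\ref{slem:aa}. Both routes are short; your choice has the aesthetic advantage that the contraction constant $\delta_1(t_0,\omega)\varepsilon(\omega)$ involves only the Lipschitz bound of the nonlinearity, and the cocycle identity drops out of \eqref{sf1} without a separate uniqueness argument.

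One small correction in Step~2: the claim that ``$t\omega$ stays in compact subsets for $t$ in compact intervals'' is neither assumed nor needed---$M$ is merely Hausdorff. The reason the local time step $t_0$ is uniform along the forward orbit of $\omega$ is that $\tau(\cdot),\varepsilon(\cdot)\in\mathcal{E}_1(t)$ by their very definitions (sups over forward orbits), so $\delta_1(t_0,s\omega)\varepsilon(s\omega)\leq\delta_1(t_0,\omega)\varepsilon(\omega)$ for all $s\geq 0$; this is what makes the iteration in Step~2 go through.
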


The results might be well known at least in the case when $ M = \mathbb{R} $, $ t $ is the translation dynamic system in $ \mathbb{R} $, i.e. $ t(s) = t+s $, and $ A $ is a Hille-Yosida operator (or more particularly a sectorial operator). See also \cite{Hen81,Ama95}.

\begin{proof}[Proof of \autoref{lem:vcf1}]
	We can define a unique $ U $ by using \eqref{Uvcf0} (see also \autoref{slem123} and \cite{MR09}). Moreover, a continuous function $ z(\cdot) $ is a mild solution of \eqref{*3} if and only if $ z(t) = U(t,\omega)x $ satisfying \eqref{Uvcf0}.
	The strong continuity of $ U $ and (2) are obvious by noting that $ \omega \mapsto \varepsilon(\omega) $ is locally bounded.
	The cocycle property of $ U $ can be proved the same as that of $ T_0(\cdot,\cdot) $ in \autoref{lem:coycle0} (1) (b). That $ U $ also satisfies \eqref{Uvcf} follows from \autoref{slem:aa} by setting $ f(t) = f(t\omega)U(t,\omega)x $. The proof is complete.
\end{proof}

\subsubsection{uniform dichotomy on $ \mathbb{R}_+ $: $ C_0 $ cocycle case}

\begin{enumerate}[(UD)] \label{UD}
	\item Assume that the $ C_0 $ cocycle $ T_0(\cdot, \cdot)(\cdot): \mathbb{R}_+ \times M \times \overline{D(A)} \to \overline{D(A)} $ (obtained in \autoref{lem:coycle0} (1)) satisfies the following property, which is called the \textbf{uniform dichotomy on $ \mathbb{R}_+ $}; see \cite{CL99} for details and characterizations.

	\begin{enumerate}[(a)]
		\item (spectral spaces) The space $ \overline{D(A)} $ can be decomposed as $ \overline{D(A)} = X_{\omega} \oplus Y_{\omega} $, $ \omega \in M $, with associated projections $ P_\omega $, $ \omega \in M $ such that $ R(P_\omega) = X_{\omega} $, $ \ker P_\omega = Y_{\omega} $ and $ \omega \to P_{\omega} $ are strongly continuous. $ P^c_\omega = I - P_\omega $. Moreover,
		\[
		T_0(t,\omega) X_{\omega} \subset X_{t \omega}, ~ T_0(t,\omega) Y_{\omega} \subset Y_{t \omega},
		\]
		and $ T_0(t,\omega): Y_{\omega} \to Y_{t \omega} $ is invertible. Set
		\[
		\widehat{T}(t,\omega) = T_0(t, \omega)P_{\omega}: X_{\omega} \to X_{t \omega},~ \widehat{S}(-t,t\omega) = (T_0(t, \omega))^{-1}P^c_{t\omega}: Y_{t\omega} \to Y_{ \omega}.
		\]
		\item (angle condition) There is a constant $ C_1 > 0 $ such that $ \sup_{\omega}|P_{\omega}| \leq C_1 $, $ \sup_{\omega}|P^c_{\omega}| \leq C_1 $.
		\item (spectra) There are functions $ \mu_s, \mu_u $ of $ M \to \mathbb{R} $, such that
		\begin{equation}\label{spectra}
		|\widehat{T}(t,r\omega)| \leq e^{\mu_s(\omega) t},~|\widehat{S}(-t,t(r\omega))| \leq e^{-\mu_u(\omega) t},
		\end{equation}
		for all $ t, r \geq 0 $ and $ \omega \in M $.
	\end{enumerate}
\end{enumerate}
Here $ \widehat{S}(-t,t\omega) $ should be written as $ \widehat{S}'(-t,\omega) $ in a more rigid sense; we write this form only for giving an intuitive sense when $ t $ indeed is a flow; see also \autoref{rmk:cocycle}.

\begin{rmk}[about the asymptotic behavior of $ T_0(\cdot,\cdot) $]\label{rmk:asymptotic}
	The general results about the estimates \eqref{spectra} are
	\[
	|\widehat{T}(t,r\omega)| \leq M_0(\omega)e^{\mu_s(\omega) t}, ~|\widehat{S}(-t,t(r\omega))| \leq M_0(\omega)e^{-\mu_u(\omega) t},
	\]
	where $ M_0 : M \to \mathbb{R}_+ $.
	Consider a new equivalent norm $ |\cdot|_{\omega} $ on $ X_{\omega} $ defined by
	\[
	|x|_{\omega} = \inf\{  \sup_{t  \geq 0}|e^{-\mu_s(\omega_0) t}T_0(t,\omega)x| : \exists s_0 \geq 0 \text{ such that } s_0\omega_0 = \omega \} , ~x \in X_{\omega}.
	\]
	Then $ |x| \leq |x|_{\omega} \leq M_0(\omega) |x| $ if $ x \in X_{\omega} $. Now if $ x \in X_{\omega} $, $ s_0\omega_0 = \omega $, where $ s_0 \geq 0 $, then
	\begin{align*}
	|\widehat{T}(t,\omega)x|_{t\omega} = & \inf\{ \sup_{r \geq 0}|e^{-\mu_s(\omega'_0) r}T_0(r,t\omega)T_0(t,\omega)x|: \exists s'_0 \geq 0 \text{ such that } s'_0\omega'_0 = t\omega \}\\
	= & \inf\{ \sup_{r \geq 0}|e^{-\mu_s(\omega'_0) r}T_0(t+r,\omega)x|: \exists s'_0 \geq 0 \text{ such that } s'_0\omega'_0 = t\omega \}   \\
	= & \inf\{ e^{\mu_s(\omega'_0) t} \sup_{r \geq 0}|e^{-\mu_s(\omega'_0) (t+r)}T_0(t+r,\omega)x|: \exists s'_0 \geq 0 \text{ such that } s'_0\omega'_0 = t\omega \}\\
	\leq & \inf\{ e^{\mu_s(\omega''_0) t} \sup_{r \geq 0}|e^{-\mu_s(\omega''_0) r}T_0(r,\omega)x|: \exists s''_0 \geq 0 \text{ such that } s''_0\omega''_0 = \omega \}\\
	\leq & e^{\mu_s(\omega_0) t} |x|_{\omega}.
	\end{align*}
	That is $ \widehat{T}(t,\omega): (X_{\omega}, |\cdot|_{\omega}) \to (X_{t\omega}, |\cdot|_{t\omega}) $, $ \omega \in M $, satisfy \eqref{spectra}.  Similar norm $ |\cdot|_{c,\omega} $ can be defined on $ Y_{\omega} $. Let $ |x|^{\sim}_{\omega} = \max\{ |P_{\omega}x|_{\omega}, |P^c_{\omega}x|_{c,\omega} \} $, then $ |x| \leq 2|x|^{\sim}_{\omega} \leq 2C_1M_0(\omega) |x| $, which shows that $ |\cdot|^{\sim}_{\omega} $ is an equivalent norm on $ \overline{D(A)} $ for each $ \omega \in M $. Consider $ M \times \overline{D(A)} $ as a bundle over $ M $ with metric fibers $ (X_\omega \oplus Y_{\omega}, |\cdot|^{\sim}_{\omega}) $, $ \omega \in M $. Now $ T_0(\cdot,\cdot) $ also satisfies \textnormal{(b) (c)} in the bundle $ M \times \overline{D(A)} $. That $ M_0 = 1 $ is important in our argument. Also, note that in the new norm $ |\cdot|^{\sim}_{\omega} $, $ |P_\omega| = 1 $ and $ |P^c_\omega| = 1 $.
\end{rmk}

\begin{rmk}
	The notion of uniform dichotomy on $ \mathbb{R}_+ $ about a $ C_0 $ cocycle is essentially taken from \cite[Definition 6.14]{CL99} which states that $ \widehat{T}(t,r\omega) \leq M_0(\omega) e^{\mu_s(\omega)t} $ for all $ t,r \geq 0 $, similar for $ \widehat{S} $.
	In Multiplicative Ergodic Theorem, the functions $ \mu_s $, $ \mu_u $ are usually characterized through the Lyapunov numbers, so in general they are invariant under $ t $ (usually for the case that $ t $ is invertible). We refer the readers to see \cite[Section 7.1]{CL99} and the references therein for more characterizations about $ \mu_{s}, \mu_{u} $, i.e. the point-wise version of uniform dichotomy. Note that Sacker-Sell spectral theory (see e.g. \cite{SS94, CL99}) only characterizes the absolutely uniform dichotomy, which has a little restriction for us to apply.
	The readers should notice that there is \emph{no} any information about the decomposition of $ Z $, nor the extensions of $ \widehat{T}, \widehat{S} $.

\end{rmk}

By acting $ P_{\omega}, P^c_{\omega} $ on both side of \eqref{Uvcf}, we know that $ z(t) = ( x(t), y(t) ) \in X_{t\omega} \oplus Y_{t\omega} = \overline{D(A)} $ ($ t_1 \leq t \leq t_2 $) is a mild solution of \eqref{*3}, if and only if it satisfies
\begin{equation}\label{**0}
\begin{cases}
x(t) = \widehat{T}(t - t_1, t_1 \omega)x(t_1) + P_{t\omega} ( S_0 \Diamond ( f_{t_1}(\cdot\omega) z_{t_1}(\cdot)  ) ) (t_1 \omega) (t - t_1), \\
y(t) = \widehat{S}(t - t_2, t_2 \omega)y(t_2) - \widehat{S}(t - t_2, t_2 \omega) P^c_{t_2 \omega} ( S_0 \Diamond ( f_{t}(\cdot\omega) z_{t}(\cdot)  ) ) (t \omega) (t_2 - t),
\end{cases}
\end{equation}
for all $ t_1 \leq t \leq t_2 $, where $ f_t(s\omega) = f((t+s)\omega) $.

Through \eqref{**0}, one can define a cocycle correspondence $ H(s, \omega): X_{\omega} \oplus Y_{\omega} \to X_{s \omega} \oplus Y_{s \omega} $ as follows. Let $ t_1 = 0, t_2 = s $. Given $ (x_1, y_2) \in X_{\omega} \times Y_{s\omega} $, then there is a continuous $ z(t) = (x(t), y(t)) \in X_{t\omega} \oplus Y_{t\omega} $ ($ 0 \leq t \leq s $) satisfying \eqref{**0} with $ x(0) = x_1 $, $ y(s) = y_2 $. Define $ F_{s,\omega}(x_1, y_2) = x(s) $ and $ G_{s,\omega} (x_1, y_2) = y(0) $. Let $ H(s,\omega) \sim (F_{s,\omega}, G_{s,\omega}) $. Here note that $ \graph H(t, \omega) = \graph U(t, \omega) $, i.e. $ H $ can be regarded as a $ C_0 $ cocycle over $ t $. We also say the cocycle correspondence $ \{ H(s, \omega) \} $ (or $ H $) is induced by \eqref{**0}.

\begin{thm}\label{them:spec}
	Let \textnormal{(MR) (D1) (D2)} and \textnormal{(UD)} hold.
	Let $ \{H(t, \omega)\} $ be the cocycle correspondence induced by \eqref{**0}.
	\begin{enumerate}[(1)]
		\item Assume $ \delta(t) / t \to 1 $ as $ t \to 0^+ $ and $ \mu_u(\omega) - \mu_s(\omega) - 2 \varepsilon'(\omega)  > 0 $, where $ \varepsilon'(\omega) = 2C_1\varepsilon(\omega) $. Take $ \alpha, \beta, \lambda_u, \lambda_s $ such that
		\[
		\frac{\varepsilon'(\omega)}{\mu_u(\omega) - \mu_s(\omega) -  \varepsilon'(\omega)} \leq \alpha(\omega), \beta(\omega) < 1,~
		\lambda_u(\omega) = e^{-\mu_u(\omega) + \varepsilon'(\omega) },~
		\lambda_s(\omega) = e^{\mu_s(\omega) + \varepsilon'(\omega) }.
		\]
		Then $ H(t, s\omega) $ satisfies \textnormal{(A)}$(\alpha(\omega), \lambda^t_u(\omega))$ \textnormal{(B)}$(\beta(\omega), \lambda^t_s(\omega))$ condition for all $ t,s \geq 0 $ and $ \omega \in M $.
		In fact, if $ \alpha(\omega), \beta(\omega) \in (\frac{\varepsilon'(\omega)}{\sigma(\omega)}, 1) $ and $ t \geq \epsilon_1 > 0 $, then $ H(t,s\omega) $ satisfies (A)($ \alpha(\omega); k_{\alpha}(\omega, \epsilon_1)\alpha(\omega), \lambda^t_{u}(\omega) $) (B)($ \beta(\omega); k_{\beta}(\omega, \epsilon_1)\beta(\omega), \lambda^t_{s}(\omega) $) condition, where $ k_{h}(\omega, \epsilon_1) < 1 $, $ h = \alpha,\beta $, and $ \sigma(\omega) = \mu_{u}(\omega) - \mu_s(\omega) - \varepsilon'(\omega) $.
		In particular, if there is a constant $ c > 1 $ such that
		\[
		\inf_{\omega}\{ \mu_u(\omega) - \mu_s(\omega) - (1+c) \varepsilon'(\omega) \} > 0,
		\]
		then $ \alpha, \beta, k_{\alpha},k_{\beta} $ can be taken constants less than $ 1 $ and $ \sup_{\omega}\lambda_s(\omega) \lambda_u(\omega) < 1 $. In fact, $ \sup_{\omega} \{\alpha(\omega)\} $, $ \sup_{\omega}\{\beta(\omega)\} \to 0 $ if $ \sup_{\omega}\varepsilon'(\omega) \to 0 $.

		\item In general, assume there is a function $ \eta(\cdot): M \to \mathbb{R}_+ $ ($ \eta(\omega) > 0 $) such that $ \mu_u(\omega) - \mu_s(\omega) - 2\eta(\omega)  > 0 $ and $ \eta(\cdot) \in \mathcal{E}_1(t) $.
		For any given functions $ \varepsilon_1(\cdot): M \to \mathbb{R}_+ $ ($ \varepsilon_1(\omega) > 0 $), there are functions $ \vartheta(\cdot) $, $ \alpha(\cdot) $, $ \beta(\cdot) $, $ k(\cdot) $, $ \alpha_1(\cdot) $, $ \beta_1(\cdot) $ of $ M \to \mathbb{R}_+ $, and functions
		$ \tilde{\lambda}_s(\cdot) $, $ \tilde{\lambda}_u(\cdot) $ of $ M \to \mathbb{R} $, depending only on $ \delta_1(\cdot, \cdot) $, $ \mu_s(\cdot) $, $ \mu_u(\cdot), C_1 $, $ \varepsilon_1(\cdot) $, satisfying
		$ \vartheta(\cdot), \tilde{\lambda}_s(\cdot), \tilde{\lambda}_u(\cdot) \in \mathcal{E}_1(t) $, and $ \sup_{\omega} \{\alpha_1(\omega)- \alpha(\omega)\} > 0 $, $ \sup_{\omega} \{\beta_1(\omega)- \beta(\omega)\} > 0 $, $ \sup_{\omega}\{ \alpha_1(\omega), \beta_1(\omega) \} < 1 $,
		\[
		 \tilde{\lambda}_s(\omega) \leq e^{ \mu_s(\omega) + \eta(\omega)},~
		 \tilde{\lambda}_u(\omega) \leq e^{-\mu_u(\omega) +\eta(\omega)},
		 ~\tilde{\lambda}_s(\omega) \tilde{\lambda}_u(\omega) < 1,
		 ~ \omega \in M,
		\]
		such that if $ \varepsilon(\omega) < \vartheta(\omega) $, then the following hold.

		\begin{enumerate}[(a)]
			\item $ H(t, s\omega) $ (or $ U(t, s\omega) $) satisfies \textnormal{(A)}$(\alpha(\omega)$; $\alpha_1(\omega)$, $k(\omega) \tilde{\lambda}^t_u(\omega))$ \textnormal{(B)}$(\beta(\omega)$; $\beta_1(\omega)$, $k(\omega) \tilde{\lambda}^t_s(\omega))$ condition for all $ t,s \geq 0 $ and $ \omega \in M $.

			\item And for any $ \omega \in M $, if $ t \geq \varepsilon_1(\omega) $ then $ H(t, s\omega) $ (or $ U(t, s\omega) $) satisfies \textnormal{(A)}$(\alpha_1(\omega)$; $ \alpha(\omega) $, $k(\omega) \tilde{\lambda}^t_u(\omega))$ \textnormal{(B)}$(\beta_1(\omega)$; $ \beta(\omega) $, $k(\omega) \tilde{\lambda}^t_s(\omega))$ condition for all $ s \geq 0 $.

			\item $ \alpha(\cdot) $, $ \alpha_1(\cdot) $,  $ \beta(\cdot) $, $ \beta_1(\cdot) $, $ k(\cdot) $ can also be chosen as constants less than $ 1 $ under some appropriate choice of $ \vartheta(\cdot) $ (and so $ \varepsilon(\cdot) $). In fact, $ \sup_{\omega} \{\alpha_1(\omega), \beta_1(\omega)\} \to 0 $ as $ \sup_{\omega}\varepsilon(\omega) \to 0 $.

			\item \label{sp:dd} Let $ M $ be a locally metrizable space (see \autoref{rmk:amplitude} below) and $ M_1 \subset M $. If $ \mu_s(\cdot), \mu_u(\cdot), \eta(\cdot) $ are bounded and \emph{$ \xi $-almost uniformly continuous} around $ M_1 $ (see \autoref{rmk:amplitude} below), then $ \tilde{\lambda}_s(\cdot) $, $ \tilde{\lambda}_u(\cdot)$ can be chosen to be bounded and $ C\xi $-almost uniformly continuous around $ M_1 $ for some constant $ C > 0 $.

			\item If $ \inf_{\omega}\{ \mu_u(\omega) - \mu_s(\omega) \} > 0 $,
			then $ \sup_{\omega}\tilde{\lambda}_s(\omega) \tilde{\lambda}_u(\omega) < 1 $; if $ \sup_{\omega} \mu_{s}(\omega) < 0 $ (resp. $ \inf_{\omega} \mu_{u}(\omega) > 0 $), then $ \sup_{\omega}\tilde{\lambda}_s(\omega) < 1 $ (resp. $ \sup_{\omega}\tilde{\lambda}_u(\omega) < 1 $).

			\item If $ \pm \tau(\cdot) , \pm \varepsilon(\cdot) \in \mathcal{E}_1(t) $, then $ \pm\alpha(\cdot) $, $ \pm\alpha_1(\cdot) $,  $ \pm\beta(\cdot) $, $ \pm\beta_1(\cdot) $, $ \pm k(\cdot) $, $ \pm \tilde{\lambda}_s(\cdot) $, $ \pm \tilde{\lambda}_u(\cdot)$ can be chosen such that they all belong to $ \mathcal{E}_1(t) $.
		\end{enumerate}
	\end{enumerate}
\end{thm}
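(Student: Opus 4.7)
The plan is to mirror the scheme used for \autoref{thm:biAB} (the bi-semigroup case), replacing the classical convolution $T*g$ by the integrated-semigroup formulation $S_0\Diamond f$ coming from \autoref{lem:coycle0}\eqref{integral}. The new technical input is \autoref{lem:keylem}, which takes the place of Gronwall's inequality when $\delta(t)$ is only $o(1)$ as $t\to 0^+$.

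First I would fix $\omega\in M$ and $t_1<t_2$, and take two solutions $z=(x,y),z'=(x',y')$ of \eqref{**0}, writing $\hat{x}=x-x'$, $\hat{y}=y-y'$. Applying the dichotomy bounds \eqref{spectra} after passing to the adapted norm of \autoref{rmk:asymptotic} (which lets me take $|P_\omega|=|P^c_\omega|=1$, at the price of absorbing a factor $2C_1$ into $\varepsilon'(\omega)=2C_1\varepsilon(\omega)$) and invoking \eqref{s01} with Lipschitz constant $\varepsilon(\omega)$ for $f$, the two lines of \eqref{**0} yield, for $t_1\le t\le t_2$,
\begin{align*}
|\hat{x}(t)| &\le e^{\mu_s(\omega)(t-t_1)}|\hat{x}(t_1)|+\bigl|(S_0\Diamond g)(t_1\omega)(t-t_1)\bigr|,\\
|\hat{y}(t)| &\le e^{\mu_u(\omega)(t-t_2)}|\hat{y}(t_2)|+e^{\mu_u(\omega)(t-t_2)}\bigl|(S_0\Diamond g_t)(t\omega)(t_2-t)\bigr|,
\end{align*}
with $|g(s)|\le\varepsilon'(\omega)|(\hat x(s),\hat y(s))|$. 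The rest of the argument then follows the two-step template of \autoref{lem:simpleAB}: (i) a cone sublemma showing that whenever $|\hat x|\le\alpha|\hat y|$ holds on some $[t_1,t'_2]$, one gets strict contraction $|\hat x(t'_2)|<\alpha|\hat y(t'_2)|$ together with the exponential bound $|\hat y(t_1)|\le\lambda_u^{t'_2-t_1}|\hat y(t'_2)|$; (ii) a supremum/continuity argument, as in \autoref{slem111}, that propagates the cone condition from $t_1$ to all of $[t_1,t_2]$. The (B) half is symmetric.

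In part~(1) the assumption $\delta(t)/t\to 1$ makes $S_0\Diamond g$ obey the same bound as a classical convolution with $T_0$, so Gronwall's inequality applies and the constants quoted ($\lambda_u=e^{-\mu_u+\varepsilon'}$, $\lambda_s=e^{\mu_s+\varepsilon'}$, and the sharper $k_\alpha,k_\beta<1$ on $[\epsilon_1,\infty)$) are produced verbatim as in \autoref{lem:simpleAB}. In part~(2), I replace the Gronwall step by \autoref{lem:keylem}(b) applied with $\beta=\beta(\omega)$, $\mu=-\mu_u(\omega)$, $\delta=\delta_1(\cdot,\omega)$ to the function $t\mapsto|\hat y(t_2-t)|$, obtaining an exponential decay with rate $-\mu_u(\omega)+\lambda(\omega)$, where $\lambda(\omega)$ is calibrated through the $\sigma$-parameter of \autoref{lem:keylem}(b) and depends only on $\delta_1(\cdot,\omega)$, $\mu_u(\omega)$, $\beta(\omega)$. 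Choosing $\vartheta(\omega)$ small enough forces $\lambda(\omega)<\eta(\omega)$; then taking $\tilde\lambda_u(\omega)=e^{-\mu_u(\omega)+\eta(\omega)}$, $\tilde\lambda_s(\omega)=e^{\mu_s(\omega)+\eta(\omega)}$, the standing gap $\mu_u-\mu_s-2\eta>0$ delivers $\tilde\lambda_s\tilde\lambda_u<1$. Assertions (c)--(f) of part~(2) are then bookkeeping: (c) and the second half of (b) follow by restricting all rates to constants under a uniform spectral gap; (d) uses that $\tilde\lambda_{s,u}$ are locally Lipschitz functions of $(\mu_s,\mu_u,\eta)$, so the $\xi$-almost uniform continuity is preserved up to a multiplicative constant; (f) uses the monotonicity $\delta_1(t,\cdot)\in\mathcal{E}_1(t)$ combined with the assumed $\mathcal{E}_1(t)$ hypotheses on $\tau,\varepsilon,\eta$.

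The main obstacle I foresee is the global-in-$\omega$ synthesis in part~(2): \autoref{lem:keylem}(b) yields quantitative constants $(\hat\varepsilon,\lambda,k)$ that depend on $\omega$ through $\delta_1(\cdot,\omega)$, $\mu_s(\omega),\mu_u(\omega)$ and on a free $\sigma\in(0,1/2]$, and these must be packaged into a single tuple $(\vartheta,\alpha,\alpha_1,\beta,\beta_1,k,\tilde\lambda_s,\tilde\lambda_u)$ simultaneously satisfying the $\mathcal{E}_1(t)$-monotonicity, the strict separations $\alpha<\alpha_1$, $\beta<\beta_1$, and the spectral-gap inequalities, uniformly in $\omega$. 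The room to do this comes precisely from the strict excess $\eta(\omega)-\lambda(\omega)>0$ supplied by the hypothesis $\mu_u-\mu_s-2\eta>0$ together with $\eta\in\mathcal{E}_1(t)$; the rest is a careful but direct global analogue of the constant choices in \autoref{lem:simpleAB} and \autoref{rmk:detail}.
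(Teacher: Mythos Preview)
Your outline for part~(1) is essentially the paper's: the key lemma \autoref{lem:keylem}(a) plays the role of Gronwall for the $\hat y$-rate, and a discretized estimate (the paper's \autoref{slem:11}, an analogue of \autoref{lem:prelem}) handles the $\hat x$-bound; the cone propagation then goes through as in \autoref{slem111}. So far so good.

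The gap is in part~(2). Your two-step template claims a cone sublemma of the form ``$|\hat x|\le\alpha|\hat y|$ on $[t_1,t'_2]$ implies $|\hat x(t'_2)|<\alpha|\hat y(t'_2)|$'' for \emph{all} $t'_2>t_1$. This is exactly what fails in the general MR case: without $\delta(t)/t\to 1$, the bound $|(S_0\Diamond g)(\cdot)|\le\delta_1(\cdot,\omega)|g|$ is not sharp enough at short times to force strict contraction back to the \emph{same} angle. That is why the statement of part~(2) carries \emph{two} cone parameters $\alpha<\alpha_1$ (and $\beta<\beta_1$): item~(a) allows the output angle to widen to $\alpha_1$, and only after a uniform waiting time $t\ge\varepsilon_1(\omega)$ does item~(b) recover the improvement $\alpha_1\mapsto\alpha$. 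You cannot derive this structure from ``bookkeeping''; it is the new content of part~(2). The paper obtains it via a dedicated sublemma (\autoref{slem:small}) that treats the two regimes separately: for $t_2-t'_1\le\varepsilon_1$ one proves the crude estimate $\beta\mapsto\beta_1$ directly from \eqref{s01}, and for $t_2-t'_1\ge\varepsilon_1$ one combines the forward rate from \autoref{lem:keylem}(b) with the discretized backward estimate of \autoref{slem:11}(1) to force $\beta_1\mapsto\beta$ strictly. The careful choice of $\beta(\omega)<\beta_1(\omega)<1$ and of $\vartheta(\omega)$ is made precisely so that both inequalities close simultaneously.

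A secondary issue: your plan to apply \autoref{lem:keylem}(b) to $t\mapsto|\hat y(t_2-t)|$ is not literally valid, because the backward nonlinear term in the second line of \eqref{**0} has the form $\widehat S(t-t_2,\cdot)\,P^c\,(S_0\Diamond\cdots)(t\omega)(t_2-t)$, which does not time-reverse into an expression of the shape $|(S\Diamond x)(t)|$ required by \autoref{lem:keylem}. The paper sidesteps this by applying \autoref{lem:keylem} only to the \emph{forward} $\hat x$-evolution and handling the backward $\hat y$-piece through the discrete recursion of \autoref{slem:11}(1), whose structure (driven by $\widehat S$ and the identity \eqref{sf1}) is genuinely different from the forward one.
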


\begin{proof}
	For any given $ t_1 < t_2 $, let $ z(t) = (x(t),y(t)) $, $ z'(t) = (x'(t),y'(t)) $, $ t_1 \leq t \leq t_2 $, satisfy \eqref{**0}. Set $ \hat{z}(t) \triangleq (\hat{x}(t), \hat{y}(t)) \triangleq z(t) - z'(t) $.
	Let $ t_2 - t_1 = n \varepsilon_1 $. Consider
	\begin{align*}
	\hat{K}_m & = \widehat{S}(-m \varepsilon_1, t_2 \omega) P^c_{t_2 \omega} ( S_0 \Diamond ( \tilde{z}_{t_2 - m \varepsilon_1} (\cdot, \omega) ) ) ((t_2 - m \varepsilon_1) \omega) (m \varepsilon_1), \\
	K_m & = P^c_{(t_1 + m\varepsilon_1) \omega} ( S_0 \Diamond ( \tilde{z}_{t_1 } (\cdot, \omega) ) ) (t_1  \omega) (m \varepsilon_1),
	\end{align*}
	$ 0 \leq m \leq n $, where $ \tilde{z} (t, \omega) = f(t\omega) z(t) - f(t\omega) z'(t)  $. Note that $ \varepsilon(t\omega) \leq \varepsilon(\omega) $, $ t \geq 0 $, so $ |\tilde{z}(t,s\omega)| \leq \varepsilon(\omega) |\hat{z}(t)|  $ for $ t,s \geq 0 $.

	\begin{slem}\label{slem:11}
		\begin{enumerate}[(1)]
			\item If $ |\tilde{z}(t, \omega)| \leq e^{-\hat{\mu}(\omega)(t_2 - t)} b $ for $ t_1 \leq t \leq t_2 $, then
			\[
			|\hat{K}_n| \leq C_1b \delta_1(\varepsilon_1, \omega) \max\{ 1, e^{-\hat{\mu}(\omega)\varepsilon_1} \} e^{ -\mu_u(\omega) n \varepsilon_1 }  \frac{ e^{ -(\hat{\mu}(\omega) - \mu_u(\omega)) n \varepsilon_1 }- 1  }{ e^{ -(\hat{\mu}(\omega) - \mu_u(\omega))  \varepsilon_1 }- 1  }.
			\]
			\item If $ |\tilde{z}(t, \omega)| \leq e^{\hat{\mu}_1(\omega)(t - t_1)} a $ for $ t_1 \leq t \leq t_2 $, then
			\[
			|K_n| \leq C_1a \delta_1(\varepsilon_1,\omega) \max\{ 1, e^{\hat{\mu}_1(\omega)\varepsilon_1} \} e^{ \mu_s(\omega) (n-1) \varepsilon_1 } \frac{ e^{ (\hat{\mu}_1(\omega) - \mu_s(\omega)) n \varepsilon_1 }- 1  }{ e^{ (\hat{\mu}_1(\omega) - \mu_s(\omega))  \varepsilon_1 }- 1  }.
			\]
		\end{enumerate}
	\end{slem}
	\begin{proof}
		The proof is the same as \autoref{lem:prelem}. We only give the proof of (1). By \eqref{sf1}, we see that
		\begin{align*}
		\hat{K}_m = & \widehat{S}(-\varepsilon_1, (m-1)\varepsilon_1 \omega ) \hat{K}_{m-1} + \widehat{S}(-\varepsilon_1, (t_2 - (m-1)\varepsilon_1) \omega )  \\
		&\qquad\qquad \cdot P^c_{(t_2 - (m-1)\varepsilon_1) \omega} ( S_0 \Diamond ( \tilde{z}_{t_2 - m\varepsilon_1}(\cdot, \omega) ) ((t_2 - m \varepsilon_1) \omega) ) (\varepsilon_1).
		\end{align*}
		Thus, we have (by \eqref{spectra} and \eqref{s01})
		\begin{align*}
		|\hat{K}_m| \leq & e^{-\mu_u(\omega) \varepsilon_1} |\hat{K}_{m-1}| + e^{-\mu_u(\omega) \varepsilon_1} C_1 \delta_1(\varepsilon_1, (t_2 - m \varepsilon_1) \omega) \sup_{t \in [0,\varepsilon_1]} |\tilde{z}(t + t_2 - m \varepsilon_1,\omega)| \\
		\leq & e^{-\mu_u(\omega)\varepsilon_1} |\hat{K}_{m-1}| + e^{-\mu_u(\omega)\varepsilon_1}  C_1b \delta_1(\varepsilon_1,\omega) \max\{ 1, e^{\hat{\mu}(\omega)\varepsilon_1} \} e^{-\hat{\mu}(\omega) m\varepsilon_1},
		\end{align*}
		by induction, giving the proof.
	\end{proof}

	\textbf{Proof of (1).} For this case, all the main steps are the same as in the proof of \autoref{lem:simpleAB}.
	\begin{slem}
		Take any $ \alpha'(\cdot) $ such that $ \alpha(\omega) < \alpha'(\omega) < 1 $.
		Let $ \omega $ be fixed and $ t_1 <  t'_2 \leq t_2  $. If $ |\hat{x}(t)| \leq \alpha'(\omega) |\hat{y}(t)| $ for all $ t \in [t_1, t'_2] $, then $ |\hat{x}(t'_2)| < \alpha'(\omega) |\hat{y}(t'_2)| $ and $ |\hat{y}(t_1)| \leq \lambda^{t'_2 - t_1}_u(\omega) |\hat{y}(t'_2)| $. Moreover, if $ t'_2 - t_1 \geq \epsilon_1 $, then $ |\hat{x}(t'_2)| \leq k_{\alpha}(\omega,\epsilon_1)\alpha'(\omega) |\hat{y}(t'_2)| $ for some $ k_{\alpha}(\omega,\epsilon_1) < 1 $.
	\end{slem}
	\begin{proof}
		By the same argument in the proof of \autoref{lem:keylem} (using the above sublemma \textnormal{(1)} instead of \autoref{lem:prelem}), we have if $ t \in [t_1, t'_2] $, then
		\[
		|\hat{y}(t)| \leq e^{(-\mu_u(\omega) + \varepsilon'(\omega)) (t'_2 -t)}|\hat{y}(t'_2)|.
		\]
		Also, noting that in this case,
		\[
		|\tilde{z}(t, \omega)| \leq 2\varepsilon(\omega)e^{(-\mu_u(\omega) + \varepsilon'(\omega))(t'_2 - t_1)} e^{(\mu_u(\omega) - \varepsilon'(\omega))(t -t_1)}|\hat{y}(t'_2)|,
		\]
		by the above sublemma, when $ t'_2 - t_1 = n\varepsilon_1 $, $ n \geq 1 $, we see that
		\begin{align*}
		|\hat{x}(t'_2)| \leq &  e^{\mu_s(\omega) (t'_2 -t_1)} |\hat{x}(t_1)| + |K_n| \\
		\leq & \alpha'(\omega) e^{ -\sigma(\omega)  n\varepsilon_1} |\hat{y}(t'_2)| +  \hat{\delta} e^{-\sigma(\omega) n \varepsilon_1} e^{ -\mu_s(\omega)  \varepsilon_1 } \frac{ e^{ \sigma(\omega) n \varepsilon_1 }- 1  }{ e^{\sigma(\omega) \varepsilon_1 }- 1  }|\hat{y}(t'_2)|,
		\end{align*}
		where $ \hat{\delta} = \delta_1(\varepsilon_1,\omega) \varepsilon'(\omega) \max\{ 1, e^{(\mu_u(\omega)-\varepsilon'(\omega))\varepsilon_1} \} $ and $ \sigma(\omega) = \mu_u(\omega) - \mu_s(\omega) - \varepsilon'(\omega) > 0 $. Then $ |\hat{x}(t'_2)| < \alpha'(\omega) |\hat{y}(t'_2)| $, if
		\[
		\max\{ e^{-\mu_s(\omega)\varepsilon_1}, e^{\sigma(\omega) \varepsilon_1 } \} \delta_1(\varepsilon_1,\omega) \varepsilon'(\omega) \leq \alpha'(\omega) \sigma(\omega)\varepsilon_1 < \alpha'(\omega) (e^{\sigma(\omega) \varepsilon_1 } -1 ).
		\]
		Since $ \delta(t) / t \to 1 $ as $ t \to 0^+ $ and $ \frac{\varepsilon'(\omega)}{\sigma(\omega)} < \alpha'(\omega) < 1 $, the above inequality can be satisfied if $ \varepsilon_1 $ is taken sufficiently small (depending on $ \omega $ and $ t_2' $), i.e. $ n $ is sufficiently large.

		Let any $ \epsilon_1 > 0 $ and $ t'_2 - t_1 \geq \epsilon_1 $. In fact, choose $ k_0(\omega) < 1 $ such that $ \frac{\varepsilon'(\omega)}{\sigma(\omega)} < k_0(\omega)\alpha'(\omega) $, then for some small $ \varepsilon_1 > 0 $ such that $ \hat{\delta} e^{ -\mu_s(\omega) \varepsilon_1} \leq k_0(\omega)\alpha'(\omega) \sigma(\omega) \varepsilon_1 $ and $ t'_2 - t_1 = n \varepsilon_1 \geq \epsilon_1 $, we see that $ |\hat{x}(t'_2)| \leq \{(1 - k_0(\omega))e^{-\sigma(\omega)n\varepsilon_1} + k_0(\omega)\} \alpha'(\omega) |\hat{y}(t'_2)| $ and so $ |\hat{x}(t'_2)| \leq k_{\alpha}(\omega,\epsilon_1) \alpha'(\omega) |\hat{y}(t'_2)| $ where $ k_{\alpha}(\omega,\epsilon_1) = (1 - k_0(\omega))e^{-\sigma(\omega)\epsilon_1} + k_0(\omega) < 1 $.
	\end{proof}

	Now with the help of above sublemma, we can use the argument in the proof of \autoref{slem111} to conclude the proof of (1).

	\textbf{Proof of (2).}
	Let $ \varepsilon_1(\cdot): M \to \mathbb{R}_+ $ be any given function such that $ \varepsilon_1(\omega) > 0 $. Let $ \omega $ be fixed and $ t_1 \leq t'_1 < t_2  $.
	Choose $ \hat{\epsilon} = \hat{\epsilon}(\omega) > 0 $ such that
	\[
	1/2 > e^{1/2} \delta_1(2\hat{\epsilon},\omega)\max\{ e^{-\mu_s(\omega) 2\hat{\epsilon}}, 1 \},
	\]
	and
	\begin{equation}\label{aa1}
	2 (\mu_u(\omega) - \mu_s(\omega) - \eta(\omega)) > |\mu_s(\omega)| \hat{\epsilon}.
	\end{equation}
	Let
	\[
	K_1(\omega)(\epsilon) \triangleq e^{1/2} \delta_1(\epsilon,\omega)\max\{ e^{-\mu_s(\omega) \epsilon}, 1 \}.
	\]
	Let $ \varepsilon(\omega) $ satisfy $ 2C_1 \varepsilon(\omega)\delta_1(\hat{\epsilon}, \omega) < 1 $, and set
	\[
	\lambda(\omega) \triangleq 2C_1\varepsilon(\omega) \sup_{\epsilon \in [\hat{\epsilon}, 2\hat{\epsilon}]} \{K_1(\omega)(\epsilon)/\epsilon\},
	~~
	k(\omega) \triangleq (1 - 2C_1 \varepsilon(\omega)\delta_1(\hat{\epsilon}, \omega))^{-1} \max\{1,e^{-\mu_s(\omega) \hat{\epsilon}}\}.
	\]
	Also, we can assume $ \lambda(\omega) < \eta(\omega) $ by taking $ \varepsilon(\omega) $ further smaller. Set $ \tilde{\lambda}_s(\omega) = e^{\mu_s(\omega) + \lambda(\omega)} $.
	\begin{slem}
		If $ |\hat{y}(t)| \leq |\hat{x}(t)| $ for all $ t \in [t'_1, t_2] $, then
		\[
		|\hat{x}(t)| \leq e^{(\mu_s(\omega) + \lambda(\omega))(t-t'_1) } k(\omega) |\hat{x}(t'_1)|, ~t \in [t'_1, t_2].
		\]
	\end{slem}
	\begin{proof}
		The same argument in the proof of \autoref{lem:keylem} can be applied, so we omit the proof.
	\end{proof}
	\begin{slem}\label{slem:small}
		Let $ \varepsilon_1 = \varepsilon_1(\omega) > 0 $. Then there are $ \beta(\omega), \beta_1(\omega) \geq 0 $, $ \vartheta(\omega) > 0 $ satisfying $ \beta(\omega) < \beta_1(\omega) < 1 $ such that if $ \varepsilon(\omega) < \vartheta(\omega) $, then the following hold.
		\begin{enumerate}[(1)]
			\item If $ t_2 - t'_1 \leq \varepsilon_1 $ and $ |\hat{y}(t_2)| \leq \beta(\omega) |\hat{x}(t_2)| $, then $ |\hat{y}(t'_1)| \leq \beta_1(\omega) |\hat{x}(t'_1)| $. \label{small}
			\item If $ t_2 - t'_1 \geq \varepsilon_1 $ and $ |\hat{y}(t)| \leq \beta_1(\omega) |\hat{x}(t)| $ for all $ t \in [t'_1, t_2] $, then $ |\hat{y}(t'_1)| < \beta(\omega) |\hat{x}(t'_1)| $ and $ |\hat{x}(t_2)| \leq \tilde{\lambda}^{t_2 - t'_1}_s(\omega) k(\omega) |\hat{x}(t'_1)| $.
		\end{enumerate}
	\end{slem}
	\begin{proof}
		Set $ \hat{c}_1 \triangleq \max\{ 1, e^{\mu_s(\omega) \varepsilon_1}, e^{-\mu_u(\omega) \varepsilon_1} \} $, $ \hat{\delta}_1 \triangleq 2C_1 \delta_1(\varepsilon_1,\omega) \varepsilon(\omega) $.
		Assume
		\[
		\hat{c}_1 \hat{\delta}_1 = \hat{c}_1 2C_1 \delta_1(\varepsilon_1,\omega) \varepsilon(\omega) < 1,~
		\hat{c}_1 \hat{\delta}_1 / (  1 - \hat{c}_1 \hat{\delta}_1  ) < 1,
		\]
		by taking $ \varepsilon(\omega) $ further smaller.

		First assume $ t_2 - t'_1 \leq \varepsilon_1 $, then by \eqref{**0} \eqref{spectra} \eqref{s01}, we have
		\begin{align*}
		|\hat{x}|_{[t'_1,t_2]} & \leq \hat{c}_1 |\hat{x}(t'_1)| + 2C_1 \varepsilon(\omega)\delta_1(\varepsilon_1, \omega)|\hat{z}|^\sim_{[t'_1,t_2]}, \\
		|\hat{y}|_{[t'_1,t_2]} & \leq \hat{c}_1 |\hat{y}(t_2)| + \hat{c}_1 2C_1 \varepsilon(\omega)\delta_1(\varepsilon_1, \omega)|\hat{z}|^\sim_{[t'_1,t_2]},
		\end{align*}
		where $ |\hat{z}|^\sim_{[t'_1,t_2]} \triangleq \max\{ |\hat{x}|_{[t'_1,t_2]}, |\hat{y}|_{[t'_1,t_2]} \} $.
		Thus,
		\[
		|\hat{z}|^\sim_{[t'_1,t_2]} \leq \frac{\hat{c}_1}{1 - \hat{c}_1  \hat{\delta}_1} \max\{ |\hat{x}(t'_1)|, |\hat{y}(t_2)| \}.
		\]

		Assume $ |\hat{y}(t_2)| \leq \beta(\omega) |\hat{x}(t_2)| $, $ \beta(\omega) \leq 1 $.
		Then
		\begin{gather*}
		|\hat{x}(t_2)|  \leq \frac{e^{\mu_s(\omega)(t_2 - t'_1)} + \rho \hat{\delta}_1}{1-\beta(\omega) \rho \hat{\delta}_1 } |\hat{x}(t'_1)|, \\
		|\hat{y}(t'_1)| \leq  e^{-\mu_u(\omega)(t_2 - t'_1)} |\hat{y}(t_2)| + \hat{c}_1 \rho \hat{\delta}_1 \max\{ |\hat{x}(t'_1)|, |\hat{y}(t_2)| \}
		\leq  \beta_1(\omega) |\hat{x}(t'_1)|,
		\end{gather*}
		where
		\[
		\rho \triangleq \hat{c}_1  / (  1 - \hat{c}_1 \hat{\delta}_1  ),~
		\beta_1(\omega) \triangleq  ( k_1 + \hat{c}_1 \rho \hat{\delta}_1/\beta(\omega) ) \beta(\omega),~
		k_1 \triangleq  \frac{ 1  + \hat{c}_1 \rho \hat{\delta}_1 } {1- \rho \hat{\delta}_1 } > 1.
		\]
		In the above computation we use the fact that $ \mu_u(\omega) - \mu_s(\omega) > 0 $ and $ \rho \hat{\delta}_1 < 1 $.

		Suppose $ t_2 - t'_1 = n \varepsilon_0 $, $ n \geq 1 $, $ \varepsilon_0 \in [\varepsilon_1, 2\varepsilon_1] $, and $ |\hat{y}(t)| \leq \beta_1(\omega) |\hat{x}(t)| \leq |\hat{x}(t)| $ for all $ t \in [t'_1, t_2] $. Then by above sublemma and \autoref{slem:11} \textnormal{(1)} with
		\[
		|\tilde{z}(t, \omega)| \leq 2\varepsilon(\omega)e^{(\mu_s(\omega) + \lambda(\omega))(t_2 - t'_1)} e^{-(\mu_s(\omega) + \lambda(\omega))(t_2 -t)}k(\omega)|\hat{x}(t'_1)|,
		\]
		we have
		\begin{align*}
		|\hat{y}(t'_1)| & \leq e^{-\mu_u(\omega)(t_2 - t'_1)}|\hat{y}(t_2)| + |\hat{K}_n| \quad \text{(by the second equation of \eqref{**0})} \\
		& \leq \beta_1(\omega) k(\omega) e^{-\sigma_1(\omega) n \varepsilon_0} |\hat{x}(t_1')| + \hat{c}_2 \hat{\delta}_1 e^{-\sigma_1(\omega)n \varepsilon_0}  \frac{ e^{ \sigma_1(\omega) n \varepsilon_0 }- 1  }{ e^{\sigma_1(\omega) \varepsilon_0 }- 1  } k(\omega)|\hat{x}(t'_1)| \\
		& < \beta(\omega) |\hat{x}(t'_1)|,
		\end{align*}
		where $ \sigma_1(\omega) \triangleq \mu_u(\omega) - \mu_s(\omega) - \lambda(\omega) > 0 $ (since $ \lambda(\omega) < \eta(\omega) $) and $ \hat{c}_2 \triangleq \max\{ 1, e^{-2(\mu_s(\omega)+\lambda(\omega))\varepsilon_1} \} $.
		The last inequality can be satisfied if
		\begin{equation*}
		k_1 + ( \hat{c}_1 \rho + \hat{c}_2 ) \hat{\delta}_1 / \beta(\omega) < e^{\sigma_1(\omega) \varepsilon_0}/ k(\omega) < e^{2\sigma_1(\omega) \varepsilon_1}/ k(\omega).
		\end{equation*}

		Since $ e^{2\sigma_1(\omega) \varepsilon_1} > 1 $ and \eqref{aa1}, $ \varepsilon(\omega) $ can be made small enough such that the following two inequalities hold; first
		\[
		e^{2\sigma_1(\omega) \varepsilon_1} / k(\omega)  - k_1 > 0,
		\]
		and then
		\[
		\frac{( \hat{c}_1 \rho + \hat{c}_2 ) \hat{\delta}_1}{e^{2\sigma_1(\omega) \varepsilon_1}/ k(\omega) - k_1}  < \frac{1 - \hat{c}_1\rho \hat{\delta}_1}{k_1}  \leq \varsigma < 1,
		\]
		for some fixed constant $ \varsigma $ sufficiently close to $ 1 $.
		Now take $ \beta(\omega) $ such that
		\[
		\frac{( \hat{c}_1 \rho + \hat{c}_2 ) \hat{\delta}_1}{e^{2\sigma_1(\omega) \varepsilon_1}/ k(\omega) - k_1} < \beta(\omega) < \frac{1 - \hat{c}_1\rho \hat{\delta}_1}{k_1} ~(< 1).
		\]
		So $ \beta_1(\omega) < 1 $. The proof is complete.
	\end{proof}

	By the above sublemmas and using the argument in the proof of \autoref{slem111}, we obtain that $ H(t_2-t_1, t_1\omega) $ satisfies (a) and (b). (B) condition can be proved similarly. The other conclusions are obvious by our construction. So we complete the proof.
\end{proof}

\begin{rmk}[almost uniform continuity property]\label{rmk:amplitude}
	Let $ M $ be a Hausdorff topology space with an open cover $ \{ U_m: m \in M \} $. Assume (a) $ U_m $ is open and $ m \in U_m $, $ m \in M $; (b) every $ U_m $ is a metric space with metric $ d_m $ (might be incomplete) and in $ U_m $ the metric topology is the same as the subspace topology induced from $ M $.
	Then we say $ M $ is a \textbf{locally metrizable space}. Let $ U_m(\epsilon) \triangleq \{ m'\in U_m: d_m(m',m) < \epsilon \} $.
	Let $ g: M \to \mathbb{R} $, $ M_1 \subset M $. Define the \emph{amplitude} of $ g $ around $ M_1 $ as
	\[
	\mathfrak{A}_{M_1}(\sigma) = \sup \{ |g(m) - g(m_0)|: m \in U_{m_0}(\sigma), m_0 \in M_1 \}.
	\]
	We say $ g $ is \emph{$ \xi $-almost uniformly continuous around $ M_1 $} if $ \mathfrak{A}_{M_1}(\sigma) \leq \xi $ as $ \sigma \to 0 $. The notions are taken from \cite{Che18a}.
	Now \autoref{them:spec} \eqref{sp:dd} is obvious since we can choose $ \tilde{\lambda}_s(\omega) = e^{ \mu_s(\omega) + \eta(\omega)} $, $ \tilde{\lambda}_u(\omega) = e^{-\mu_u(\omega) +\eta(\omega)} $.
\end{rmk}

\begin{rmk}\label{rmk:sharp1}
	Similar as \autoref{thm:gencocycleAB} (2), the Lipschitz constant of $ f(\omega)(\cdot) $ can be computed in $ X_{\omega} \oplus Y_{\omega} $ with max norm $ |(x, y)| = \max\{|x|, |y|\} $, i.e.
	\[
	\sup\{ \lip f(t\omega)\mathcal{P}_{t\omega}(\cdot): \mathcal{P}_{t\omega} \in \{ P_{t\omega}, P^c_{t\omega} \}, t \geq 0 \} = \varepsilon_{m}(\omega);
	\]
	in this case, the result can be applied to the general case in \autoref{rmk:asymptotic} (by using the norm $ |\cdot|^{\sim}_{\omega} $).
	Also, we can take $ \varepsilon'(\omega) = C_1\varepsilon_{m}(\omega) $ in \autoref{them:spec} (1). The `spectral gap condition' $ \mu_u(\omega) - \mu_s(\omega) - 2 \varepsilon'(\omega) > 0 $ in \autoref{them:spec} (1) now seems new in the cocycle setting even for the case that $ A $ is a generator of a $ C_0 $ semigroup (so for the general $ C_0 $ cocycle case; note that what we really need is \eqref{**0}). The result might be also new even for the special case $ M = \{ \omega_0 \} $ and $ A $ is a Hille-Yosida operator.
	By some clever argument, one can give a very optimal estimate on `spectral gap condition' for the case when $ A_{\overline{D(A)}} $, the part of $ A $ in $ \overline{D(A)} $ (see \autoref{operator} for a definition), is a densely-defined sectorial operator and $ L: M \to L(\overline{D(A)},D(A^{-\alpha})) $ for some suitable $ \alpha > 0 $, which we do not give here; see also \cite[Theorem 2.15]{Zel14}. For the general case of \autoref{them:spec} (2), there is no optimal estimate.
\end{rmk}

\begin{rmk}
	Assume that there is a semigroup $ \widetilde{T}: \mathbb{R}_+ \times Z \to Z $ such that $ S \Diamond f = \widetilde{T} * f $ for all $ f \in C(\mathbb{R}, Z) $. For example, $ A $ is a generator of a $ C_0 $ semigroup or an analytic semigroup. For the latter case, $ \widetilde{T} $ might \emph{not} be a $ \bm{C_0} $ semigroup. Assume $ X = \widetilde{X}_\omega \oplus \widetilde{Y}_\omega $ with associated projections $ \widetilde{P}_\omega $, $ \widetilde{P}^c_\omega $, which are continuous extensions of $ P_\omega $, $ P^c_\omega $ and invariant about $ A(\omega) $.
	At this time, \eqref{**0} can be written in a more symmetry form as
	\[
	\begin{cases}
	x(t) = \widetilde{T}_1(t-t_1, t_1 \omega)x(t_1) + \int_{t_1}^{t} \widetilde{T}_1(t-s,s\omega)\widetilde{P}_{s\omega}f(s\omega)z(s\omega) ~\mathrm{d} s , \\
	y(t) = \widetilde{S}_1(t-t_2, t_2 \omega)y(t_2) - \int_{t}^{t_2} \widetilde{S}_1(t-s,s\omega)\widetilde{P}^c_{s\omega}f(s\omega)z(s\omega) ~\mathrm{d} s,
	\end{cases}
	\]
	where $ \widetilde{T}_1(t,\omega) : \widetilde{X}_\omega \to \widetilde{X}_{t\omega} $, $ \widetilde{S}_1(-t,t\omega) : \widetilde{Y}_{t\omega} \to \widetilde{Y}_{\omega} $ are continuous extensions of $ \widehat{T}(t,\omega), \widehat{S}(-t,t\omega) $ respectively. See also \cite{DPL88}. However, this form might fail when $ X $ can not be decomposed as two \textbf{closed} invariant subspaces about $ A(\omega) $.

	If $ S \Diamond f $ can not be written as $ \widetilde{T} * f $, then the continuous extensions of $ \widehat{T}, \widehat{S} $ might also fail. There are some authors who tried to give an extension of $ T_0(\cdot,\cdot) $ in some particular form, see e.g. \cite{MR09a}. The idea in that paper is that if $ \overline{D(A)} = X_\omega \oplus Y_\omega $, $ X = \hat{X}_\omega \oplus Y_\omega $, and $ X_\omega \subset \hat{X}_\omega $ with $ \hat{X}_\omega $ invariant under $ A(\omega) $, then use some method (see \cite{MR09a}, where the authors owned this method to H. R. Thieme) to give an extension of $ \int_{0}^{t} T_0(s, \omega)|_{X_{\omega}} ~\mathrm{d} s $ to $ \hat{X}_\omega $ under some context, which is denoted by $ \widetilde{S}_1(t,\omega) $. For this case, one has
	\[
	S_0(t,\omega) = (\widetilde{S}_1(t, \omega), \int_{0}^{t}T_0(s, \omega)|_{Y_{\omega}} ~\mathrm{d} s ):  \hat{X}_{\omega} \oplus Y_{\omega} \to \hat{X}_{t\omega} \oplus Y_{t\omega},
	\]
	and \eqref{**0} can be read as
	\[
	\begin{cases}
	x(t) = \widehat{T}(t-t_1, t_1 \omega)x(t_1) + ( \widetilde{S}_1 \Diamond ( \widetilde{P}_{(\cdot + t_1)\omega}f_{t_1}(\cdot\omega) z_{t_1}(\cdot)  ) ) (t_1 \omega) (t - t_1) , \\
	y(t) = \widehat{S}(t-t_2, t_2 \omega)y(t_2) - \int_{t}^{t_2} \widehat{S}(t-s,s\omega)\widetilde{P}^c_{s\omega}f(s\omega)z(s\omega) ~\mathrm{d} s,
	\end{cases}
	\]
	where $ \widetilde{P}_\omega $, $ \widetilde{P}^c_\omega $ are projections associated with $ X = \hat{X}_\omega \oplus Y_\omega $, which are continuous extensions of $ P_\omega $, $ P^c_\omega $. See \cite{MR09a} for details when $ M $ is a fixed point $ \{ \omega_0 \} $ and $ Y_{\omega_0} $ is finite dimensional. The existence of $ Y_{\omega_0} $ sometimes can be guaranteed under some `compact' assumption on $ T_0(\cdot, \omega_0) $, e.g. the \emph{essential growth bound} of $ T_0(\cdot, \omega_0) $ is strict less than the \emph{exponential growth bound} of $ T_0(\cdot, \omega_0) $, or especially $ T_0(t_0, \omega_0) $ is compact for some $ t_0 >0 $; see e.g. \cite[Section IV.1.20]{EN00} for details.
	The readers should notice that the uniform dichotomy assumption on \eqref{equ:linear} for our results in \autoref{HYccc} is only in $ \overline{D(A)} $ and there is no information in $ Z $.
\end{rmk}

\subsection{general $ C_0 $ case}\label{genccc}

Consider the following integral equation (i.e. equation \eqref{*06}),
\begin{equation} \label{equ:IE}
\begin{cases}
x(t) = T_1(t - t_1, t_1\omega)x(t_1) + \int_{t_1}^{t} T_1(t-s, s\omega) P_{s\omega} f(s\omega)z(s) ~\mathrm{d} s, \\
y(t) = S_1(t - t_2, t_2\omega)y(t_2) - \int_{t}^{t_2} S_1(t-s, s\omega) P^c_{s\omega} f(s\omega)z(s) ~\mathrm{d} s,
\end{cases}
t_1 \leq t \leq t_2,
\end{equation}
where $ \{(T_1(t,\omega), S_1(-t,t\omega)) \} $ is a $ C_0 $ cocycle correspondence satisfying uniform dichotomy on $ \mathbb{R}_+ $ (see \autoref{def:ud+}) and $ f $ satisfies assumption (D2) (in \autopageref{d2fff}). 

Using the solutions of above integral equation (i.e. \eqref{*06}), one can define a unique cocycle correspondence $ H(s, \omega) \sim (F_{s, \omega}, G_{s, \omega}) : X_{\omega} \oplus Y_{\omega} \to X_{s\omega} \oplus Y_{s\omega} $, i.e. $ (x_2, y_2) \in H(s,\omega)(x_1, y_1) $ if and only if there is a continuous $ z(t) = (x(t), y(t)) \in X_{t\omega} \times Y_{t\omega} $ ($ 0 \leq t \leq s $) satisfying \eqref{*06} with $ (x(0), y(0)) = (x_1, y_1) $ and $ (x(s), y(s)) = (x_2, y_2) $, where $ t_1 = 0, t_2 = s $. We say \emph{the cocycle correspondence $ H $ is induced by \eqref{equ:IE}}.

\begin{thm}\label{thm:gencocycleAB}
	Let the cocycle correspondence $ H $ be induced by \eqref{equ:IE}, where $ \{ (T_1(t,\omega), S_1(-t,t\omega)) \} $ is a $ C_0 $ cocycle correspondence satisfying uniform dichotomy on $ \mathbb{R}_+ $ (see \autoref{def:ud+}) and $ f $ satisfies assumption (D2) (in \autopageref{d2fff})
	\begin{enumerate}[(1)]
		\item The same results in \autoref{lem:00} hold for $ H $.
		\item Moreover, suppose $ { \mu_u(\omega) - \mu_s(\omega) - 2 \varepsilon_{m}(\omega) }  > 0 $ where
		\[
		\sup\{ \lip \mathcal{P}_{t\omega}f(t\omega)\mathcal{Q}_{t\omega}(\cdot): \mathcal{P}_{t\omega},\mathcal{Q}_{t\omega} \in \{ P_{t\omega}, P^c_{t\omega} \}, t \geq 0 \} = \varepsilon_{m}(\omega),
		\]
		i.e. the Lipschitz constant of $ f(\omega)(\cdot) $ is computed in $ X_{\omega} \oplus Y_{\omega} $ with max norm $ |(x, y)| = \max\{|x|, |y|\} $, $ (x, y) \in X_{\omega} \oplus Y_{\omega} $. Take $ \alpha, \beta, \lambda_u, \lambda_s $ such that
		\[
		\frac{\varepsilon_{m}(\omega)}{\mu_u(\omega) - \mu_s(\omega) -  \varepsilon_{m}(\omega)} \leq \alpha(\omega), \beta(\omega) < 1,~
		\lambda_u(\omega) = e^{-\mu_u(\omega) + \varepsilon_{m}(\omega) },~
		\lambda_s(\omega) = e^{\mu_s(\omega) + \varepsilon_{m}(\omega) }.
		\]
		Then $ H(t, s\omega) $ satisfies \textnormal{(A)}$(\alpha(\omega), \lambda^t_u(\omega))$ \textnormal{(B)}$(\beta(\omega), \lambda^t_s(\omega))$ condition for all $ t,s \geq 0 $ and $ \omega \in M $.
		In fact, if $ \alpha(\omega), \beta(\omega) \in (\frac{\varepsilon_{m}(\omega)}{\sigma(m)}, 1) $ and $ t \geq \epsilon_1 > 0 $, then $ H(t,s\omega) $ satisfies (A)($ \alpha(\omega); k_{\alpha}(\omega)\alpha(\omega), \lambda^t_{u}(\omega) $) (B)($ \beta(\omega); k_{\beta}(\omega)\beta(\omega), \lambda^t_{s}(\omega) $) condition, where
		\[
		\frac{( \sigma(\omega)- \frac{\varepsilon_{m}(\omega)}{h(\omega)})e^{-\sigma(\omega)\epsilon_1} + \frac{\varepsilon_{m}(\omega)}{h(\omega)}}{\sigma(\omega)} \leq k_{h}(\omega) < 1,~\sigma(\omega) = \mu_{u}(\omega) - \mu_s(\omega) - \varepsilon_{m}(\omega),
		~ h = \alpha,\beta.
		\]
		In particular, if there is a constant $ c > 1 $ such that
		\[
		\inf_{\omega}\{ \mu_u(\omega) - \mu_s(\omega) - (1+c) \varepsilon_{m}(\omega) \} > 0,
		\]
		then $ \alpha, \beta, k_{\alpha}, k_{\beta} $ can be taken constants less than $ 1 $ (for example $ \alpha = \beta = 1/c $ and $ k_{\alpha} = k_{\beta} = (1 - 1/c)r + 1/c  $ where $ r = e^{-\delta_0\epsilon_1} < 1 $ and $ \sigma(\omega) > \delta_0 > 0 $) and $ \sup_{\omega}\lambda_s(\omega) \lambda_u(\omega) < 1 $. In fact, $ \sup_{\omega} \alpha(\omega) $, $ \sup_{\omega}\beta(\omega) \to 0 $ if $ \sup_{\omega}\varepsilon_{m}(\omega) \to 0 $.
	\end{enumerate}
\end{thm}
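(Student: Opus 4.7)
The plan is to treat \eqref{equ:IE} directly, without any detour through integrated semigroups, since here by hypothesis the linear homogeneous part is already a $C_0$ cocycle correspondence on the spectral splitting $Z = X_{\omega} \oplus Y_{\omega}$. This lets me copy, almost verbatim, the strategy used in \autoref{lem:simpleAB} and \autoref{thm:biAB}, with the one refinement that the Lipschitz constant is now measured in the block form of $f$ (hence the better constant $\varepsilon_m(\omega)$ in place of $2C_1\varepsilon(\omega)$).

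For part (1), I would first establish existence/uniqueness of a solution $z(t) = (x(t), y(t)) \in X_{t\omega} \oplus Y_{t\omega}$ to \eqref{equ:IE} on $[0, s]$ with prescribed $(x(0), y(s)) = (x_1, y_2)$ by the Banach Fixed Point Theorem on $C([0,s], Z)$ equipped with an exponentially weighted sup norm, choosing the weight so that the Lipschitz contribution from $f$ is absorbed by the norms $|T_1(t-r, r\omega)|$, $|S_1(t-r, r\omega)|$ controlled by \eqref{spectra}; the local boundedness of $\varepsilon(\cdot)$ together with strong continuity of $T_1, S_1, P$ and continuity of $f$ make the fixed point vary continuously in $(s, \omega, x_1, y_2)$. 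For higher regularity, I would invoke the parameter-dependent fixed point theorem (as cited in \cite[Appendix D.1]{Che18a}) to transfer the $C^r$ regularity of $f(\omega)(\cdot)$ and the joint continuity of $D^r_z f$ to the maps $F_{s,\omega}, G_{s,\omega}$. Finally, the cocycle identity $H(t+s, \omega) = H(t, s\omega)\circ H(s, \omega)$ follows from the cocycle property of $T_1, S_1$ together with uniqueness in \eqref{equ:IE}.

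For part (2), fix $\omega \in M$ and $t_1 < t_2$. Let $z(t), z'(t)$ be two solutions of \eqref{equ:IE} and $\hat{x} = x - x'$, $\hat{y} = y - y'$. By \eqref{equ:IE}, \eqref{spectra}, and the definition of $\varepsilon_m(\omega)$,
\begin{align*}
|\hat{x}(t)| &\leq e^{\mu_s(\omega)(t - t_1)} |\hat{x}(t_1)| + \varepsilon_m(\omega) \int_{t_1}^{t} e^{\mu_s(\omega)(t-r)} \max\{|\hat{x}(r)|, |\hat{y}(r)|\} \,\mathrm{d}r, \\
|\hat{y}(t)| &\leq e^{-\mu_u(\omega)(t_2 - t)} |\hat{y}(t_2)| + \varepsilon_m(\omega) \int_{t}^{t_2} e^{\mu_u(\omega)(t - r)} \max\{|\hat{x}(r)|, |\hat{y}(r)|\} \,\mathrm{d}r,
\end{align*}
for $t \in [t_1, t_2]$. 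Now the proof of \autoref{lem:simpleAB} carries over with $\varepsilon$ replaced by $\varepsilon_m(\omega)$ and the constants depending on $\omega$. Concretely, I would first prove the analogue of \autoref{slem000}: if $|\hat{x}(t)| \leq \alpha(\omega) |\hat{y}(t)|$ on $[t_1, t_2']$ with $\alpha(\omega) \in [\varepsilon_m(\omega)/(\mu_u(\omega) - \mu_s(\omega) - \varepsilon_m(\omega)), 1)$, then Gronwall applied to the second inequality yields $|\hat{y}(t)| \leq \lambda_u^{t_2' - t}(\omega) |\hat{y}(t_2')|$, and substituting into the first inequality and integrating produces the quotient expression that is $< 1$ under the stated spectral gap. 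Next, the analogue of \autoref{slem111} — propagating the cone condition $|\hat{x}(t_1)| \leq \alpha(\omega)|\hat{y}(t_1)|$ forward by a sup argument on the set where the cone holds with $\alpha$ slightly enlarged — yields the (A) condition with the advertised $k_\alpha(\omega)$ refinement on $t \geq \epsilon_1$. The (B) condition is obtained dually by exchanging the roles of $(x, \mu_s, T_1)$ and $(y, -\mu_u, S_1)$; this symmetry is exact in the max-norm setup because $\varepsilon_m(\omega)$ bounds both $P_{s\omega} f$ and $P^c_{s\omega} f$.

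The main obstacle I anticipate is purely notational rather than conceptual: bookkeeping the $\omega$-dependence of $\alpha, \beta, k_\alpha, k_\beta, \lambda_s, \lambda_u$ and checking that the cocycle property $\mu_s(r\omega), \mu_u(r\omega)$ implicit in \eqref{spectra} lets one replace $\omega$ by $s\omega$ in the final statement so that $H(t, s\omega)$ — not merely $H(t, \omega)$ — satisfies the (A)(B) bounds uniformly in $s \geq 0$. The uniform quantitative corollary (constant $\alpha = \beta = 1/c$, etc., when $\inf_\omega\{\mu_u(\omega) - \mu_s(\omega) - (1+c)\varepsilon_m(\omega)\} > 0$) then follows by a direct substitution into the explicit formula for $k_h(\omega)$, using $\sigma(\omega) \geq \delta_0 > 0$ to bound $e^{-\sigma(\omega)\epsilon_1} \leq e^{-\delta_0 \epsilon_1}$, after which $\sup_\omega \lambda_s(\omega)\lambda_u(\omega) = \sup_\omega e^{\mu_s(\omega) - \mu_u(\omega) + 2\varepsilon_m(\omega)} < 1$ is immediate.
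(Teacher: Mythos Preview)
Your proposal is correct and follows essentially the same approach as the paper, which simply states that part (1) is standard and part (2) is proved exactly as in \autoref{lem:simpleAB} (see also \autoref{thm:biAB}). Your observation that the only refinement is replacing $2C_1\varepsilon(\omega)$ by the sharper block Lipschitz constant $\varepsilon_m(\omega)$ is precisely the point, and the rest is the same Gronwall-plus-cone-propagation argument via the analogues of \autoref{slem000} and \autoref{slem111}.
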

\begin{proof}
	The proof of (1) is quite standard.
	The proof of (2) is essentially the same as \autoref{lem:simpleAB}; see also \autoref{thm:biAB}.
\end{proof}

Here, $ k_{h}(\omega) h(\omega) \in (\frac{\varepsilon_{m}(\omega)}{\sigma(m)}, 1) $, $ h = \alpha, \beta $. Note that \autoref{thm:biAB}, as well as \autoref{lem:simpleAB}, are consequences of \autoref{thm:gencocycleAB} and $ \varepsilon_{m}(\omega) \leq 2C_1\varepsilon(\omega) = \varepsilon'(\omega) $. 

What's the relation between the solutions of \eqref{equ:IE} and \eqref{equ:main}? We have dealt with this in \autoref{bi-semigroup} and \autoref{HYccc} for two special cases. However, in general, it's very complicated to talk about the solutions of \eqref{equ:main}; see also \cite{Sch02} for a survey. Here, we continue to consider other case which the solutions of \eqref{equ:main} can be described as mild solutions like \autoref{def:mild}. For brevity (in order to let our idea be presented clearly), let
\[\label{equ:cc1}\tag{$ \circledast $}
\mathcal{C}(\omega) = AB(\omega) + L(\omega), ~\omega \in M,
\]
where $ A: D(A) \subset Z \to Z $ is a closed linear operator, $ B: M \to L(Z, Z) $ and $ L: M \to L(Z, Z) $; here $ L $ satisfies (D1) (in \autopageref{d1L}) and similar for $ B $. In this case, A function $ u \in C([a,b], Z) $ is called a \textbf{(mild) solution} of \eqref{equ:main} if it satisfies (i) $ \int_{a}^{t} B(s\omega) u(s) ~\mathrm{ d } s \in D(A) $ for all $ t \in [a,b] $ and (ii) the following
\[
u(t) = u(a) + A \int_{a}^{t} B(s\omega) u(s) ~\mathrm{ d } s + \int_{a}^{t} (L(s\omega)u(s) + f(s\omega)u(s)) ~\mathrm{ d } s, ~t \in [a,b].
\]
(In fact, we can take $ B(\omega) $ only as closed linear operators but in this situation we need $ s \mapsto B(s\omega) u(s) $ is also continuous.) For a concrete example of $ \{\mathcal{C}(\omega)\} $ like \eqref{equ:cc1}, see \autoref{exa:non-auto}.

\begin{defi}\label{def:diffAndCocycle}
	Let $ \{\mathcal{C}(\omega)\} $ be as \eqref{equ:cc1}. 
	\begin{enumerate}[(a)]
		\item We say a $ C_0 $ cocycle $ \{U(t,\omega)\} $ (or $ U $) on $ M \times Z $ (or $ Z $) is generated by \eqref{equ:main} if for every $ x \in Z $, $ z(t) = U(t,\omega)x $, $ t \geq 0 $, is the \emph{unique} mild solution of \eqref{equ:main} with $ z(0) = x $.
		We say a $ C_0 $ (linear) cocycle $ \{T_0(t,\omega)\} $ (or $ T_0 $) on $ M \times Z $ (or $ Z $) is generated by $ \{\mathcal{C}(\omega)\} $ or \eqref{equ:linear} if \eqref{equ:linear} generates a $ C_0 $ cocycle $ \{T_0(t,\omega)\} $.
		
		\item A $ C_0 $ cocycle correspondence $ H_1 $ (or $ \{H_1(t,\omega)\} $) on $ M \times Z $ is generated (or induced) by \eqref{equ:main} or $ \{\mathcal{C}(\omega)\} $ if the assumptions (a) (b) in \autoref{def:ud+} hold (i.e. $ H_1(t,\omega) \sim (T_1(t,\omega), S_1(-t,t\omega)): X_{\omega} \oplus Y_{\omega} \to X_{t\omega} \oplus Y_{t\omega} $); in addition for $ z(t) \triangleq ( T_1(t-t_1, t_1 \omega)x_1, S_1(t-t_2,t_2 \omega)y_2 ) \in X_{t\omega} \oplus Y_{t\omega} $ ($ t_1 \leq t \leq t_2 $), $ z(\cdot) $ is the \emph{unique} mild solution of \eqref{equ:main} with $ P_{t_1\omega} z(t_1) = x_1 $ and $ P^c_{t_2\omega} z(t_2) = y_2 $.
	\end{enumerate}
\end{defi}

\begin{lem}\label{lem:mild}
	Let $ \{\mathcal{C}(\omega)\} $ be as \eqref{equ:cc1}. 
	\begin{enumerate}[(1)]
		\item Assume \eqref{equ:linear} generates a $ C_0 $ cocycle $ \{T_0(t,\omega)\} $. Then for any continuous $ g \in C(\mathbb{R}, Z) $, the mild solution $ z \in C([0,a], X) $ of the following
		\begin{equation*}\tag{$ \S $} \label{gg00}
		\dot{z}(t) = \mathcal{C}(t\omega)z(t) + g(t),
		\end{equation*}
		has the form
		\[\tag{$ \S\S $} \label{gg11}
		z(t) = T_0(t,\omega)z(0) + \int_{0}^{t} T_0(t-s,s\omega)g(s) ~\mathrm{ d } s, ~ t\in [0,a]. 
		\]
		\item Similarly, if \eqref{equ:linear} generates a $ C_0 $ cocycle correspondence $ \{H_1(t,\omega) \sim (T_1(t,\omega), S_1(-t,t\omega)) \} $ (see \autoref{def:diffAndCocycle}), then the mild solution $ z \in C([0,a], Z) $ of \eqref{gg00} has the form
		\begin{equation*}
		\begin{cases}
		x(t) = T_1(t, \omega)x(0) + \int_{0}^{t} T_1(t-s, s\omega) P_{s\omega} g(s) ~\mathrm{d} s, \\
		y(t) = S_1(t - a, a\omega)y(a) - \int_{t}^{a} S_1(t-s, s\omega) P^c_{s\omega} g(s) ~\mathrm{d} s,
		\end{cases}
		t \in [0,a],
		\end{equation*}
		where $ x(t) = P_{t\omega}z(t) $, $ y(t) = P^c_{t\omega}z(t) $.
	\end{enumerate}
\end{lem}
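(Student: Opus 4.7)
The lemma is a variation-of-constants formula adapted to the cocycle (respectively cocycle correspondence) setting, and it naturally generalizes the autonomous Duhamel principle. Its twin aspects are \emph{existence} (the stated integral formula yields a mild solution) and \emph{uniqueness} (any mild solution must coincide with it). The cocycle correspondence case in (2) is in the same spirit as \autoref{lem:dich00}, so I will model the second part on that earlier computation.

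For part (1), the plan is as follows. First I verify that the right-hand side of \eqref{gg11} satisfies the defining mild-solution identity
\[
u(t) = u(0) + A\int_{0}^{t} B(s\omega) u(s) \,\mathrm{d}s + \int_{0}^{t} \big( L(s\omega) u(s) + g(s) \big) \,\mathrm{d}s.
\]
Split $z(t) = z_1(t) + z_2(t)$ with $z_1(t) \triangleq T_0(t,\omega)z(0)$ and $z_2(t) \triangleq \int_0^t T_0(t-s,s\omega) g(s) \,\mathrm{d}s$. Since $T_0$ is the $C_0$ cocycle generated by \eqref{equ:linear}, $z_1$ is the unique mild solution of the homogeneous problem, hence satisfies the above identity with $g \equiv 0$. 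For $z_2$, fix $s \in [0,a]$; by the cocycle property, $r \mapsto T_0(r, s\omega) g(s)$ is itself the mild solution of $\dot{v}(r) = \mathcal{C}((s+r)\omega) v(r)$ with $v(0) = g(s)$. Writing this identity with variable change $r = t-s$ and then integrating in $s$, I intend to use Fubini on the resulting iterated integrals and closedness of $A$ (note that $s \mapsto B(s\omega) T_0(\cdot-s,s\omega) g(s)$ is continuous so the inner integrals of $B$-weighted terms lie in $D(A)$) to pull $A$ outside, obtaining exactly the mild-solution identity for $z_2$ with right-hand side $g$. Adding the two identities produces \eqref{gg00} in mild form. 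Uniqueness then follows by linearity: the difference of two mild solutions with equal initial data solves the homogeneous problem in the mild sense, so by the hypothesis that $T_0$ is \emph{generated} by \eqref{equ:linear} (which includes uniqueness), the difference vanishes.

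For part (2), the plan is to apply the projections $P_{t\omega}$ and $P^c_{t\omega}$ to the mild solution $z(t)$ and derive coupled integral equations for $x(t) = P_{t\omega}z(t)$ and $y(t) = P^c_{t\omega}z(t)$. The strategy mirrors the proof of \autoref{lem:dich00}: one first sets $h(s) \triangleq g(s)$ and rewrites the mild-solution identity as a non-homogeneous linear equation for $z$, then exploits the fact that $T_1$ and $S_1$ (in particular the convolution-type identities obtained by applying $P_{(\cdot)\omega}$ and $P^c_{(\cdot)\omega}$ to the relations defining $H_1$ as a cocycle correspondence) satisfy relations analogous to \eqref{PT1x} and \eqref{PS1x} with $\mathcal{C}(\cdot)$ in place of the linear perturbation $L$. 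The $x$-equation then arises by convolving the $T_1$-relation with $P_{s\omega} g(s)$; the $y$-equation comes from running the $S_1$-relation backwards from time $a$, producing the minus sign and the integral over $[t,a]$. A delicate point is to verify, as in the bi-semigroup case, that the two representations (the one through $\mathcal{C}(\cdot)$ as mild solution, and the one through $T_1, S_1$) agree, which is the content of the Fubini-type manipulation carried out in the proof of \autoref{lem:dich00}; the argument here is formally identical once one replaces the generator of the bi-semigroup by the data of the $C_0$ cocycle correspondence.

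The main obstacle I anticipate is the justification of the interchange of the \emph{closed, unbounded} operator $A$ with the outer integration in part (1), and more generally the identification of the two "variant of constant formulas" in part (2). In the $C_0$ cocycle case this can be resolved by continuity of the integrands, closedness of $A$, and Fubini applied to the iterated integral over $\{(s,\tilde{s}) : 0 \le s \le \tilde{s} \le t\}$, all together producing an element of $D(A)$; the formal computation is exactly the one already performed in \autoref{lem:dich00} (see equations \eqref{equ:mm0}--\eqref{equ:mm1} there) and the verification in part (2) here is nothing more than a reorganization of those identities, so the details are routine and I would omit them or indicate that they are identical to the argument of \autoref{lem:dich00}.
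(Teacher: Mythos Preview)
Your proposal is correct and follows essentially the same route as the paper: for (1) the paper also splits $z=z_1+z_2$ with $z_1(t)=T_0(t,\omega)z(0)$ and $z_2(t)=\int_0^t T_0(t-s,s\omega)g(s)\,\mathrm{d}s$, then computes $A\int_0^t B(s\omega)z_2(s)\,\mathrm{d}s$ by Fubini and the closedness of $A$ exactly as you outline; for (2) the paper likewise just refers back to the computation in \autoref{lem:dich00}.
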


\begin{proof}
	(1) By the uniqueness of the solution of \eqref{equ:linear}, we only need to show \eqref{gg11} indeed is a solution of \eqref{gg00}. Let
	\[
	z_1(t) = T_0(t,\omega)z(0), ~ z_2(t) = \int_{0}^{t} T_0(t-s,s\omega)g(s) ~\mathrm{ d } s.
	\]
	Then $ A \int_{0}^{t} B(s\omega) z_1(s) ~\mathrm{ d } s = z_1(t) - z(0) - \int_{0}^{t} L(s\omega)z_1(s) ~\mathrm{ d } s $ and
	\begin{align*}
	A \int_{0}^{t} B(s\omega) z_2(s) ~\mathrm{ d } s & = A \int_{0}^{t} \int_{r}^{t} B(s\omega) T_0(s-r,r\omega) g(r) ~\mathrm{ d } s ~\mathrm{ d } r \\
	& = \int_{0}^{t} A \int_{r}^{t} B(s\omega) T_0(s-r,r\omega) g(r) ~\mathrm{ d } s ~\mathrm{ d } r \\
	& = \int_{0}^{t} \left\{T_0(t-r,r\omega)g(r) - g(r) - \int_{r}^{t} L(s\omega) T_0(s-r,r\omega) g(r) ~\mathrm{ d } s \right\} ~\mathrm{ d }r   \\
	& = z_2(t) - \int_{0}^{t} g(r) ~\mathrm{ d } r - \int_{0}^{t} \int_{0}^{s} L(s\omega) T_0(s-r,r\omega) g(r) ~\mathrm{ d } r ~\mathrm{ d }s \\
	& = z_2(t) - \int_{0}^{t} g(r) ~\mathrm{ d } r - \int_{0}^{t} L(s\omega)z_2(s) ~\mathrm{ d } s,
	\end{align*}
	where we exchange the order of integration and use the closeness of $ A $ (see e.g. \cite[Proposition 1.1.7]{ABHN11}), which yields $ z = z_1 + z_2 $ is the mild solution of \eqref{gg00}.
	The proof of (2) is almost identical with \autoref{lem:dich00}.
\end{proof}

Note that if \eqref{equ:linear} generates a $ C_0 $ cocycle $ T_0 $, then that \eqref{equ:linear} satisfies uniform dichotomy on $ \mathbb{R}_+ $ means that $ T_0 $ satisfies uniform dichotomy on $ \mathbb{R}_+ $ in the classical sense of \cite{CL99} (i.e. assumption (UD) in \autopageref{UD}).
Consider nonlinear differential equation \eqref{equ:main} with $ f $ satisfying (D2) in \autopageref{d2fff}, i.e.
\begin{equation*}
\dot{z}(t) = \mathcal{C}(t\omega)z(t) + f(t\omega)z(t).
\end{equation*}

\begin{lem}\label{lem:gencocycle}
	Let $ \{\mathcal{C}(\omega)\} $ be as \eqref{equ:cc1} and assumption (D2) (in \autopageref{d2fff}) hold.
	\begin{enumerate}[(1)]
		\item If $ \{ \mathcal{C}(\omega) \} $ generates a $ C_0 $ linear cocycle $ T_0 $ over $ t $, then \eqref{equ:main} also generates a $ C_0 $ (nonlinear) cocycle $ U $ over $ t $, i.e. for $ x \in X $, $ U(\cdot,\omega)x $ is the mild solution of \eqref{equ:main}.
		In this case, if $ f(\omega)(\cdot) \in C^r $, then $ U(t, \omega)(\cdot) \in C^r $, and in addition if $ (\omega,x) \mapsto D^r_xf(\omega)(x) $ is continuous, so is $ (t,\omega, x) \mapsto D^r_xU(t,\omega)(x) $.
		Moreover, $ U $ satisfies the following,
		\begin{equation}\label{ggg*1}
		U(t,\omega)x = T_0(t,\omega)x + \int_{0}^{t} T_0(t-s,s\omega) f(s\omega)U(s,\omega)x ~\mathrm{ d }s.
		\end{equation}
		\item Assume \eqref{equ:linear} generates a $ C_0 $ cocycle correspondence $ \{H_1(t,\omega) \sim (T_1(t,\omega), S_1(-t,t\omega)) \} $ (see \autoref{def:diffAndCocycle}).
		Then a continuous function $ z \in C([t_1, t_2], Z) $ is a mild solution of \eqref{equ:main} if and only if $ z(t) = (x(t), y(t)) \in X_{t\omega} \oplus Y_{t\omega} $ ($ t_1 \leq t \leq t_2 $) satisfies \eqref{equ:IE}, where $ x(t) = P_{t\omega}z(t) $, $ y(t) = P^c_{t\omega}z(t) $.
	\end{enumerate}
\end{lem}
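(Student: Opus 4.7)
The plan is to reduce both parts to the linear results established in \autoref{lem:mild} by treating the nonlinear term $f(s\omega)z(s)$ as an inhomogeneity $g(s)$.

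For part (1), I would first show the equivalence between mild solutions of \eqref{equ:main} and continuous solutions of the integral equation \eqref{ggg*1}. Given a mild solution $z$ of \eqref{equ:main} on $[0,a]$, set $g(s) = f(s\omega)z(s)$; continuity of $f$ and $z$ gives $g \in C([0,a], Z)$, so by \autoref{lem:mild} (1) applied to equation \eqref{gg00} with this $g$, the solution $z$ must have the form \eqref{ggg*1}. Conversely, any continuous solution of \eqref{ggg*1} is a mild solution of \eqref{equ:main} by the same lemma. Once this equivalence is in place, the existence and uniqueness of $U(\cdot,\omega)x$ on a short interval $[0,\vartheta(\omega)]$ follows from Banach's fixed point theorem applied to the right-hand side of \eqref{ggg*1}, using that $\sup_{s \in [0,a]}|T_0(s,\omega)|$ is finite (by the uniform boundedness principle applied to the strongly continuous cocycle) and that $\lip f(s\omega)(\cdot) \leq \varepsilon(\omega)$ is locally bounded in $\omega$. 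Global existence on $\mathbb{R}_+$ is obtained by iterating this local construction, using the cocycle identity $T_0(t+s,\omega) = T_0(t,s\omega)T_0(s,\omega)$ together with the translation identity $g_{s}(\cdot) = f((s+\cdot)\omega) U(s+\cdot, \omega)x$ to get the cocycle property of $U$. Strong continuity and the $C^r$ regularity of $(t,\omega,x) \mapsto U(t,\omega)x$ (and of its derivatives) follow from the parameter-dependent fixed point theorem (see e.g. \cite[Appendix D.1]{Che18a}), in exactly the same way as \autoref{lem:vcf1} (2) and \autoref{lem:coycle0} were obtained.

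For part (2), the argument is essentially identical to that of \autoref{lem:dich00} but applied in this more general cocycle correspondence setting. Fix $z \in C([t_1,t_2], Z)$, set $g(s) = f(s\omega)z(s)$, and notice that $z$ is a mild solution of \eqref{equ:main} on $[t_1,t_2]$ if and only if $z$ is a mild solution of the linear inhomogeneous equation \eqref{gg00} with this choice of $g$. By \autoref{lem:mild} (2), this is equivalent to $(x(t), y(t)) = (P_{t\omega}z(t), P^c_{t\omega}z(t))$ satisfying the two-sided integral system stated there, which is exactly \eqref{equ:IE} once $g(s)$ is rewritten as $f(s\omega)z(s)$. Thus the `if and only if' is immediate from \autoref{lem:mild}.

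The main obstacle I anticipate is part (1): in particular, verifying that one can indeed iterate the local contraction to get a global $C_0$ cocycle. The subtlety is that the Lipschitz constant $\varepsilon(\omega)$ is only locally bounded in $\omega$, not uniformly bounded, so the length of the local existence interval depends on $\omega$; one must argue that along a fixed orbit $\{s\omega: s \in [0,T]\}$ (which is compact as the image of $[0,T]$ under the continuous semiflow $t$) the relevant constants remain bounded, so local solutions can be concatenated across the full orbit segment. Once this compactness observation is made, the rest is a routine fixed-point argument. Everything else---uniqueness, the cocycle identity $U(t+s,\omega) = U(t,s\omega)U(s,\omega)$, and the continuous/smooth dependence---follows from standard arguments already used in \autoref{lem:coycle0} and \autoref{lem:vcf1}.
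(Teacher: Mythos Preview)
Your proposal is correct and follows essentially the same route as the paper: both parts reduce to \autoref{lem:mild} by freezing $g(s)=f(s\omega)z(s)$, and part (1) is completed via the Banach fixed point theorem on \eqref{ggg*1} together with the same regularity machinery already used for \autoref{lem:vcf1} and \autoref{lem:coycle0}. One small simplification: the ``obstacle'' you anticipate in iterating the local solution is not really there, because by assumption (D2) the quantity $\varepsilon(\omega)=\sup_{t\geq 0}\lip f(t\omega)(\cdot)$ already uniformly bounds the Lipschitz constant along the \emph{entire} forward orbit of $\omega$, so the length of the local existence interval can be taken to depend only on the initial $\omega$ and no compactness argument for the orbit segment is needed.
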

\begin{proof}
	To prove (1), one can use \eqref{ggg*1} to define the unique $ U $ by applying standard argument using Banach Fixed Point Theorem. (Also the other statements follow the same way.) That $ U(\cdot,\omega)x $ is a mild solution of \eqref{equ:main} follows from \autoref{lem:mild} (1).
	To prove (2), first note that the solutions of \eqref{*06} are unique and then the conclusion follows from \autoref{lem:mild} (2).
\end{proof}

\begin{rmk}[parabolic type]
	Instead of using mild solutions to characterize the solutions of \eqref{equ:main}, one can use the so-called classical solutions especially for the \emph{parabolic} type (in the sense of Tanabe). We say a function $ u \in C([a,b], Z) \cap C^1((a,b], Z) $ is a \textbf{(singular) classical solution} of \eqref{equ:main} if $ u $ point-wisely satisfies \eqref{equ:main} for all $ t \in (a, b] $. In analogy with \autoref{def:diffAndCocycle}, a $ C_0 $ cocycle $ \{U(t,\omega)\} $ (resp. a $ C_0 $ cocycle correspondence $ \{H_1(t,\omega) \sim (T_1(t,\omega), S_1(-t,t\omega))\} $) on $ M \times Z $ (or $ Z $) is generated by \eqref{equ:main} if in the \autoref{def:diffAndCocycle}, `mild solution' is replaced by `(singular) classical solution'. In order to obtain the similar results like \autoref{lem:mild} where `mild solution' is replaced by `(singular) classical solution', additional regularity assumptions on $ \{T_0(t,\omega)\} $ (resp. $ T_1(t,\omega), S_1(-t,t\omega) $) and $ g(\cdot) $ are needed; see e.g. \cite[Section 5.6 and 5.7]{Paz83} for the non-autonomous case in $ [0, T] $ or \cite[Section VI.9.6]{EN00}. These regularity assumptions with other regularity assumption on $ (t, z) \mapsto f(t\omega)z $ will make \autoref{lem:gencocycle} hold; in particular, the solutions of \eqref{ggg*1} or \eqref{equ:IE} are the (singular) classical solutions of \eqref{equ:main}. The details will be given elsewhere.
\end{rmk}

\begin{rmk}[hyperbolic type]
	The description of the solutions of \eqref{equ:main} for the \emph{hyperbolic} type (in sense of Kato (see \cite[Section 5.3-5.5]{Paz83})) is intricate. The following is a natural but restricted way. We say a function $ u \in C^1([a,b], Z) $ is a \textbf{classical solution} of \eqref{equ:main} if $ u $ point-wisely satisfies \eqref{equ:main} for all $ t \in [a, b] $ (so particularly $ z(t) \in D(\mathcal{C}(t\omega)) $). A $ C_0 $ (Lipschitz) cocycle $ \{U(t,\omega)\} $ on $ M \times Z $ (or $ Z $) is generated by \eqref{equ:main} (i) if for every $ \omega \in M $ there is a dense linear space $ Y(\omega) \subset D(\mathcal{C}(\omega)) $ such that for every $ x \in Y(\omega) $, $ z(t) = U(t,\omega)x $, $ t \geq 0 $, is the classical solution of \eqref{equ:main} with $ z(0) = x $, and in addition, (ii) $ x \mapsto U(t, \omega)x $ is Lipschitz. Note that by (ii), $ U(t, \omega)(\cdot) $ is well defined in all $ Z $. Now assume the $ C_0 $ linear cocycle $ \{U(t,\omega)\} $ on $ M \times Z $ is generated by \eqref{equ:linear} (see also \cite[Section 5.3]{Paz83} and \cite[Section VI.9]{EN00}). In \cite{Paz83}, Pazy called the equation \eqref{gg11} as mild solutions of \eqref{gg00}. But this is not clear with the classical solutions of \eqref{gg00}. So one needs to give more regularity of \eqref{gg11}; there exist at least two ways: higher range-regularity or time-regularity of $ g $ (see e.g. \cite[Section 5.5]{Paz83}), which make equation \eqref{gg11} be the classical solutions of \eqref{gg00}. Using this way, \autoref{lem:gencocycle} also holds. Similar remark can be made for the $ C_0 $ cocycle correspondence. More precise statements will appear in our future work.
\end{rmk}

\begin{rmk}[Problem I: well-posed case]\label{rmk:defic}
	We have no result about \eqref{equ:main} in the general context when \eqref{equ:linear} generates a $ C_0 $ cocycle only on $ X_0 $, a closed subspace of $ X $. This will happen when we consider the delay equations or the age structured models in $ L^1 $ with nonlinear \emph{discrete} delay term which can be rewritten as an abstract equation \eqref{equ:main} with $ \mathcal{C}(\omega) = A + L(\omega) $, $ L: M \to L(\overline{D(A)},Z_{-1}) $, $ f: M \times \overline{D(A)} \to Z_{-1} $, where $ \overline{D(A)} \hookrightarrow Z_{-1} $ and $ A: D(A) \subset Z \to Z $ is a Hille-Yosida operator (but $ A_{\overline{D(A)}} $ is \emph{not} a sectorial operator); usually $ Z_{-1} \subset D(A^{-1}) $ with graph norm.
	What we can deal with for this situation is only the (type II) listed in \autoref{overview} (see \autoref{HYccc}).
\end{rmk}

\begin{rmk}[Problem II: ill-posed case]\label{rmk:laL}
	In \cite{dlLla09}, the author considered the case that $ \mathcal{C}(\omega) = A $, $ M = \{\omega\} $, where $ A $ is densely-defined \emph{bi-sectorial operator} (see \cite{vdMee08,dlLla09}) under the exponential trichotomy condition and the nonlinear map $ f $ is also allowed to be unbounded. One needs to explain more about the unbounded perturbation $ f $ in \cite{dlLla09}. $ f $ is a $ C^1 $ map of $ Z \to Z_{-1} $ where $ Z \hookrightarrow Z_{-1}  $ (for instance $ Z_{-1} = D(A^{-\alpha}) $, $ 0 < \alpha < 1 $), $ Z = X_s \oplus X_c \oplus X_u $ and $ Z_{-1} = Y_s \oplus Y_c \oplus Y_u $. Let $ A_h = A_{X_h} $, $ h = s,c,u $. A technical restriction is that $ \mathrm{ e }^{tA_h} : Y_h \to X_h ~ (\subset Y_h) $, $ h = s,c,u $. Although for some applications the additional restriction can be satisfied (see e.g. the examples in \cite{dlLla09}), in abstract setting this is not always true. See the operator on a Hilbert space $ \widetilde{H} $ given in \cite[Theorem 9]{MY90} which will be denoted by $ \widetilde{A} $. If we choose $ Z = \overline{D(\widetilde{A})} $, $ Z_{-1} = \widetilde{H} $, $ A = \widetilde{A}_{X} $, due to that $ \widetilde{A} $ is not decomposable, the assumption on $ Z_{-1} $ in the previous paper can not be satisfied. The readers might consider further about \autoref{ex:cylinder} \eqref{cya11} for the case $ X = C^1_0(\Omega) \times C(\Omega) $ and \autoref{ex:nondiss} \eqref{mixed}. However, if all the settings in \cite{dlLla09} are satisfied, the same argument in the proof of \autoref{thm:biAB} can be applied to show the cocycle correspondence generated by \eqref{equ:main} satisfies (A) (B) condition, so our results about invariant manifolds (see \autoref{continuous case} and \autoref{normalH}) can be applied to this situation.
	In general, we have no idea how to study \eqref{equ:main} when \eqref{equ:linear} satisfies uniform dichotomy on $ \mathbb{R}_+ $ only in a closed subspace of $ Z $, unlike the case (type III) listed in \autoref{overview} (see \autoref{genccc}).
\end{rmk}

\section{Invariant manifolds and Applications}\label{ManifoldApp}

In this section, we give some applications of our abstract results about (discrete) `dynamical systems' in \cite{Che18a, Che18b}, i.e. the existence and regularity of invariant graphs for cocycle correspondences with (relatively) uniform hyperbolicity and approximately normal hyperbolicity theory, to differential equations with the help of the results obtained in \autoref{diffCocycle}. The restatements of the main results in \cite{Che18a, Che18b} are given in \autoref{continuous case} and \autoref{normalH} but in a version of continuous `dynamical systems', included for the reader's convenience. The detailed application to (abstract) differential equations is contained in \autoref{generalA} and \autoref{generalB} from an abstract viewpoint.

\subsection{existence and regularity of invariant graphs: the continuous case}\label{continuous case}

In \cite{Che18a}, we gave a detailed study of existence and regularity of invariant graphs for bundle correspondences. A number of applications were given to illustrate the power of this theory. In the following, we restate the corresponding results for cocycle correspondences which can be applied to some classes of differential equations introduced in \autoref{diffCocycle} (see also \autoref{examples} for some concrete examples). Verification of the following hyperbolicity condition (or (A$ '_1 $) (B$ _1 $) condition) has been already given in \autoref{diffCocycle}.

\emph{For the convenience of writing, we write the metrics $d(x,y) \triangleq |x-y|$}. A bundle with metric fibers means each fiber is a complete metric space.

\begin{enumerate}[$ \bullet $]
	\item  In \autoref{thmAc} and \autoref{thmBc}, we take functions $ \eta: \mathbb{R}_+ \times M \rightarrow \mathbb{R}_+ $ and $ \varepsilon_1(\cdot), \varepsilon(\cdot): M \to \mathbb{R}_+ $, such that they satisfy $ \eta(t, r\omega) \leq \varepsilon^{r}(\omega) \eta(t,\omega) $, and $ 0 \leq \varepsilon(\omega) \leq \varepsilon_1(\omega) $, for all $ \omega \in M $ and $ t , r \geq 0 $.
\end{enumerate}

For a cocycle correspondence $ H $ over $ t $, we say $ H $ satisfies \textbf{($ \bm{\mathrm{A'_1}} $)($ \bm{\alpha, \lambda_u; c} $) ($\bm{ \mathrm{B_1}} $)($\bm{ \beta; \beta', \lambda_s; c }$) condition}, if every $ H(t, \omega) \sim (F_{t,\omega}, G_{t,\omega}) $ satisfies (i) (A$ ' $)$(\alpha(\omega), c(\omega)\lambda^{t}_u(\omega))$ (B)$(\beta'(\omega); \beta(\omega), c(\omega)\lambda^{t}_s(\omega))$ condition (see \autoref{defAB}), and (ii) if $ t \geq \hat{\varepsilon}_1 > 0 $, (B)$(\beta(\omega); \beta'(\omega), c(\omega)\lambda^{t}_s(\omega))$ condition (see \autoref{defAB}). This class of hyperbolic condition is motivated by \autoref{them:spec} (2). But this is also satisfied by the (A) (B) condition studied in \autoref{thm:biAB}, \autoref{thm:gencocycleAB} and \autoref{them:spec} (1); for this case $ c(\omega) \equiv 1 $ and $ \beta \equiv \beta' $.

\begin{thm}\label{thmAc}
	Let $(X, M, \pi_1), (Y, M, \pi_2)$ be two bundles with metric fibers and $t: M \rightarrow M$ a semiflow.
	Let $H: \mathbb{R}_+ \times X \times Y \rightarrow X \times Y$ be a cocycle correspondence over $t$ with a generating cocycle $(F,G)$.
	Assume that the following hold.
	\begin{enumerate}[(i)]
		\item ($\varepsilon$-pseudo-stable section) $ i = (i_X, i_Y): M \to X \times Y $ is an $\varepsilon$-\textbf{pseudo-stable section} of $H$, i.e.
		\[
		| i_X(t\omega) - F_{t,\omega}( i_X(\omega), i_Y(t\omega) ) | \leq \eta(t,t\omega), ~ | i_Y(\omega) - G_{t, \omega}( i_X(\omega), i_Y(t \omega) ) | \leq \eta(t, \omega).
		\]
		\item $H$ satisfies $(\mathrm{A'_1})$$(\alpha, \lambda_u; c)$ $( \mathrm{B_1}) $$(\beta; \beta', \lambda_s; c)$ condition, where $\alpha$, $\beta$, $\beta'$, $\lambda_s$, $ \lambda_u $, $c$ are functions of $M \rightarrow \mathbb{R}_+$. In addition,
		\begin{enumerate}[(a)]
			\item (angle condition) $\sup_\omega \alpha(\omega) \beta'(\omega) < 1 $, $ \sup_\omega \{\alpha(\omega), \beta(\omega)\} < \infty $, $ \beta'(t \omega) \leq \beta(\omega)$, $ \forall \omega \in M  $ and $ t \geq 0 $,
			\item (spectral condition) $\sup_\omega \lambda_u(\omega) \lambda_s(\omega) < 1 $, $ \sup_\omega \lambda_u(\omega) \varepsilon_1(\omega) < 1 $. $ \sup_\omega c(\omega) < \infty $.
		\end{enumerate}
	\end{enumerate}
	Then there is a unique bundle map $ f: X \to Y $ over $\id$ satisfying the following \textnormal{(1) (2)}.
	\begin{enumerate}[(1)]
		\item $\lip f_\omega \leq \beta'(\omega) $, $| f_\omega(i_X(\omega))  - i_Y(\omega)| \leq K \inf_{r \geq t_0} \eta(r,\omega)$, where $K \geq 0$ is a constant and $ t_0 > 0 $ is large.
		\item $\graph f \subset H^{-1}\graph f$. More precisely, $ (x_{t,\omega}(x), f_{t \omega}(x_{t,\omega}(x)) ) \in H(t, \omega)(x, f_\omega(x)) , ~ \forall x \in X_\omega $, where $x_{t,\omega}(\cdot): X_\omega \to X_{t\omega} $ such that $\lip x_{t,\omega}(\cdot) \leq c(\omega)\lambda_s^t(\omega)$, $| x_{t,\omega}(i_X(\omega))  - i_X(t \omega)| \leq K_0(\eta(t,t\omega)+\inf_{r \geq t_0} \eta(r,t\omega)) $, for all $ \omega \in M $, $ t \geq 0 $ and some constant $ K_0 $.
	\end{enumerate}
\end{thm}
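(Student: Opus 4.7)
The plan is to produce $f$ as the unique fixed point of a graph transform operator, in the spirit of Hadamard's method adapted to the cocycle-correspondence setting with generating cocycle. First I would fix a suitable working space: the set $\mathcal{S}$ of bundle maps $f : X \to Y$ over $\id_M$ satisfying $\lip f_\omega \leq \beta(\omega)$ together with a weighted boundedness condition $\sup_\omega w(\omega)^{-1}|f_\omega(i_X(\omega)) - i_Y(\omega)| < \infty$, where $w(\omega)$ is built from the residual $\eta$ (roughly $w(\omega) = \inf_{r\geq t_0}\eta(r,\omega)$). Equipped with the corresponding weighted sup distance on the fibers, $\mathcal{S}$ is a complete metric space.

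Next I would define the graph transform $\Gamma_t$ for a single large $t \geq \hat\varepsilon_1$. Using the generating map $(F,G)$, invariance of $\graph f$ under $H$ amounts to: for each $x\in X_\omega$ there exists $y^\ast\in Y_{t\omega}$ with $f_\omega(x) = G_{t,\omega}(x,y^\ast)$, $y^\ast = f_{t\omega}(F_{t,\omega}(x,y^\ast))$. Given $f$, solve the inner equation $y = f_{t\omega}(F_{t,\omega}(x,y))$ by Banach contraction: the map $y\mapsto f_{t\omega}(F_{t,\omega}(x,y))$ has Lipschitz constant bounded by $\beta(t\omega)\alpha(\omega)$, which is $<1$ uniformly thanks to the angle conditions $\beta'(t\omega)\leq\beta(\omega)$ and $\sup_\omega\alpha\beta' < 1$ (together with $\sup\beta<\infty$). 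Set $(\Gamma_t f)_\omega(x) := G_{t,\omega}(x, y^\ast(x))$; this is the natural backward graph transform in the presence of the generating cocycle.

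The heart of the argument is verifying that $\Gamma_t$ maps $\mathcal{S}$ into itself and is a contraction. The Lipschitz bound $\lip(\Gamma_t f)_\omega \leq \beta'(\omega)$ is exactly the content of the (B) part of $(\mathrm{A}'_1)(\mathrm{B}_1)$: if $(x_i,y_i) \mapsto (x_i', y_i')$ lie in $\graph H(t,\omega)$ with $|y'_2 - y'_1| \leq \beta'|x'_2 - x'_1|$ then $|y_2 - y_1| \leq \beta|x_2 - x_1|$, so passing through the inversion gives the desired bound. The distance estimate is obtained by evaluating the implicit equation at $x = i_X(\omega)$, using the $\varepsilon$-pseudo-stable inequalities to control the residuals, and absorbing them with the contraction rate $c(\omega)\lambda_s^t(\omega)$ from (B2) — yielding the $K\inf_{r\geq t_0}\eta(r,\omega)$ estimate once $t_0$ is chosen so that $\sup_\omega\lambda_u\varepsilon_1 < 1$ neutralises the shift along the cocycle. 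For the contraction step, $|(\Gamma_t f_1)_\omega(x) - (\Gamma_t f_2)_\omega(x)|$ is controlled through $\lip_y G_{t,\omega} \leq c(\omega)\lambda_u^t(\omega)$ combined with Lipschitz dependence of $y^\ast$ on $f$; the spectral hypotheses $\sup_\omega\lambda_u\lambda_s < 1$ and $\sup_\omega \lambda_u\varepsilon_1 < 1$ then make $\Gamma_t$ a strict contraction on $\mathcal{S}$ in the weighted norm. Banach's fixed point theorem delivers a unique $f$; part (2) reads off directly by setting $x_{t,\omega}(x) = F_{t,\omega}(x, f_{t\omega}(x_{t,\omega}(x)))$, with $\lip x_{t,\omega}(\cdot)\leq c(\omega)\lambda_s^t(\omega)$ supplied by (B2), and the estimate on $|x_{t,\omega}(i_X(\omega)) - i_X(t\omega)|$ obtained by the same pseudo-stable residual computation.

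The main obstacle I anticipate is the bookkeeping of the $\omega$-dependence: the weight $w(\omega)$ must be chosen so that both the pseudo-stable source terms $\eta(t,t\omega),\eta(t,\omega)$ and the spectral factors $c(\omega)\lambda_u^t(\omega)$, $c(\omega)\lambda_s^t(\omega)$ are compatible as $\omega$ varies along the orbit $\{r\omega\}_{r\geq 0}$, which is what the hypotheses $\eta(t,r\omega)\leq\varepsilon^r(\omega)\eta(t,\omega)$, $\sup_\omega\lambda_u\varepsilon_1 < 1$ and the monotonicity $\beta'(t\omega)\leq\beta(\omega)$ are designed to handle. A secondary subtlety is that since $t$ is only a semiflow, invariance of $\graph f$ must be proved for every $s\geq 0$ by first establishing it for a single large $t = t_0$ and then bootstrapping via the cocycle identity $H(t+s,\omega) = H(t,s\omega)\circ H(s,\omega)$ together with the uniqueness of the fixed point applied at every base point $\omega$; this forces $\Gamma_s f = f$ for every $s$.
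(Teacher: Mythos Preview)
Your plan is essentially the paper's approach. The paper also fixes a single large time $t_0$ (chosen so that $\sup_\omega \frac{c^2\lambda_u^{t_0}\lambda_s^{t_0}+c\lambda_u^{t_0}\varepsilon_1^{t_0}}{1-\alpha\beta'(t_0\omega)}<1$), works with the discrete bundle correspondence $\widehat H(\omega)=H(t_0,\omega)$ over $\hat u(\omega)=t_0\omega$, and then bootstraps to arbitrary $t$ via the cocycle identity together with uniqueness. The only packaging difference is that the paper black-boxes the discrete fixed-point step by citing the first existence theorem of \cite{Che18a}, whereas you sketch the graph-transform contraction directly; and the paper carries out the bootstrap concretely via \autoref{lem:gr}, constructing $f'$ with $\graph f'_\omega\subset H(t,\omega)^{-1}\graph f_{t\omega}$, showing $\graph f'_\omega\subset\widehat H(\omega)^{-1}\graph f'_{t_0\omega}$, and invoking uniqueness.

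Two small technical corrections to your sketch. First, the working space should carry the bound $\lip f_\omega\le\beta'(\omega)$, not $\beta(\omega)$: the $(\mathrm{B}_1)$ condition for $t\ge\hat\varepsilon_1$ takes a graph of slope $\le\beta(\omega)$ in the target fiber and returns slope $\le\beta'(\omega)$ in the source, and the compatibility $\beta'(t_0\omega)\le\beta(\omega)$ is exactly what makes this self-map well defined on the $\beta'$-space. Second, the weight is more naturally $\eta(t_0,\omega)$ for the fixed $t_0$; the $\inf_{r\ge t_0}$ in the statement arises a posteriori because the construction works for every sufficiently large $t_0$ with a constant $K$ independent of $t_0$ (this is property (c) of the discrete result in \cite{Che18a}), not because one builds the inf into the weight from the start.
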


Before we prove the result, the following elementary fact will be needed; the proof is easy, see also \cite[Section 4.2]{Che18a}.
We use the symbols: $\sum_\lambda (X, Y) \triangleq \{ \varphi: X \rightarrow Y~ : \lip \varphi \leq \lambda \}$, if $X, Y$ are metric spaces, and
$\graph f \triangleq \{ (x, f(x)): x \in X \}$, if $f: X \rightarrow Y$ is a map.

\begin{lem}\label{lem:gr} \label{lem:bas1}
	Assume that $ X_i, Y_i $, $ i=1,2 $, are complete metric spaces, and $ H: X_1 \times Y_1 \rightarrow X_2 \times Y_2 $ is a correspondence with a generating map $ (F,G) $ satisfying (A$'$)$(\alpha, \lambda_u)$ (B)$(\beta; \beta', \lambda_s)$ condition. Take positive $ \hat{\beta} $ such that $ \alpha \hat{\beta} < 1 $ and $ \hat{\beta} \leq \beta $.

	\begin{enumerate}[(1)]
		\item Let $f_2 \in \sum_{\hat{\beta}} (X_2, Y_2)$. Then there exist unique $f_1 \in \sum_{\beta'} (X_1, Y_1)$ and $x_1(\cdot) \in \sum_{\lambda_s} (X_1, X_2)$, such that $ (x_1(x), f_2(x_1(x))) \in H(x, f_1(x)) $, $ x \in X_1$, i.e.
		\[
		F(x, f_2(x_1(x))) = x_1(x), ~ G(x, f_2(x_1(x))) = f_1(x).
		\]
		\item Under (1), take $ (\hat{x}_i, \hat{y}_i) \in X_i \times Y_i $, $ i =1,2 $, such that
		\[
		|F(\hat{x}_1, \hat{y}_2) - \hat{x}_2| \leq \eta_1, |G(\hat{x}_1, \hat{y}_2) - \hat{y}_1| \leq \eta_2,~|f_2(\hat{x}_2) - \hat{y}_2| \leq C_2.
		\]
		Then we have the following estimates:
		\[
		|f_1(\hat{x}_1) - \hat{y}_1| \leq \lambda_u \frac{\hat{\beta}\eta_1 + C_2}{1-\alpha \hat{\beta}} + \eta_2, ~|x_1(\hat{x}_1) - \hat{x}_2| \leq  \frac{\alpha C_2 + \eta_1
		}{1-\alpha \hat{\beta}}.
		\]
		Particularly, if $(x_2, y_2) \in H(x_1, y_1)$, and $y_2 = f_2(x_2)$, then $y_1 = f_1(x_1), ~ x_2 = x_1(x_1)$.
	\end{enumerate}

\end{lem}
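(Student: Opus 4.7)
The plan is to reduce both the existence of the pull-back graph and the uniqueness clause to a single Banach contraction on $X_2$, and then to extract the Lipschitz estimates on $x_1(\cdot)$ and $f_1$ from the $(\mathrm{B})$ hypothesis rather than from differentiating the fixed-point equation. First I would fix $x \in X_1$ and introduce $T_x \colon X_2 \to X_2$ by $T_x(z) = F(x, f_2(z))$. The (A$'$) part of the hypothesis gives $\lip F(x,\cdot) \leq \alpha$, and combined with $\lip f_2 \leq \hat\beta$ this forces $\lip T_x \leq \alpha\hat\beta < 1$. Completeness of $X_2$ and the Banach Fixed Point Theorem then provide a unique $x_1(x) \in X_2$ with $x_1(x) = F(x, f_2(x_1(x)))$. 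Setting $f_1(x) \triangleq G(x, f_2(x_1(x)))$, the generating-map description of $H$ gives $(x_1(x), f_2(x_1(x))) \in H(x, f_1(x))$ immediately.

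For the Lipschitz bounds in part (1), the natural move is \emph{not} to subtract the defining equations of $x_1(x)$ and $x_1(x')$ (which would only yield a constant worse than $\lambda_s$), but to notice that both pairs $(x, f_1(x)) \mapsto (x_1(x), f_2(x_1(x)))$ and $(x', f_1(x')) \mapsto (x_1(x'), f_2(x_1(x')))$ lie in $\graph H$, and
\[
|f_2(x_1(x)) - f_2(x_1(x'))| \leq \hat\beta\, |x_1(x) - x_1(x')| \leq \beta\, |x_1(x) - x_1(x')|
\]
because $\hat\beta \leq \beta$. This activates the hypothesis of the (B) condition, so (B2) gives $\lip x_1(\cdot) \leq \lambda_s$ and (B1) gives $\lip f_1 \leq \beta'$.

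For part (2), both estimates follow from a single triangle inequality anchored at the approximate data. For $|x_1(\hat x_1) - \hat x_2|$, insert $F(\hat x_1,\hat y_2)$ and $f_2(\hat x_2)$ between the two sides of the fixed-point identity $x_1(\hat x_1) = F(\hat x_1, f_2(x_1(\hat x_1)))$; the $F$-Lipschitz bound $\alpha$ together with $\lip f_2 \leq \hat\beta$ produces a $\alpha\hat\beta\,|x_1(\hat x_1) - \hat x_2|$ term that absorbs into the left side, leaving $(1-\alpha\hat\beta)|x_1(\hat x_1) - \hat x_2| \leq \alpha C_2 + \eta_1$. For $|f_1(\hat x_1) - \hat y_1|$, write $f_1(\hat x_1) = G(\hat x_1, f_2(x_1(\hat x_1)))$, split through $G(\hat x_1, \hat y_2)$, use $\lip G(\hat x_1,\cdot) \leq \lambda_u$ together with $|f_2(x_1(\hat x_1)) - \hat y_2| \leq \hat\beta |x_1(\hat x_1) - \hat x_2| + C_2$, and substitute the estimate just obtained; the arithmetic collapses to $\lambda_u \frac{\hat\beta\eta_1 + C_2}{1-\alpha\hat\beta} + \eta_2$.

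The concluding ``particularly'' clause is essentially free: if $(x_2,y_2) \in H(x_1,y_1)$ and $y_2 = f_2(x_2)$, then by the definition of the generating map, $x_2 = F(x_1, y_2) = F(x_1, f_2(x_2))$, so $x_2$ is a fixed point of $T_{x_1}$ and uniqueness forces $x_2 = x_1(x_1)$ and hence $y_1 = G(x_1,y_2) = f_1(x_1)$. There is no serious obstacle here; the only conceptual point to keep straight is that $\lambda_s$ and $\beta'$ must come from the (B) hypothesis on $H$, not from contraction estimates, since $(\mathrm{B}')$ is not assumed.
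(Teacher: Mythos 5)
The paper itself does not prove this lemma (it simply says ``the proof is easy, see also \cite[Section 4.2]{Che18a}''), so there is no in-paper argument to compare against; but your proof is correct and is exactly the natural argument one would expect the cited reference to give. The fixed-point construction of $x_1(x)$ via $T_x = F(x, f_2(\cdot))$ with contraction factor $\alpha\hat\beta < 1$ is right; the key subtlety you correctly isolate is that the Lipschitz bounds $\lip x_1 \le \lambda_s$ and $\lip f_1 \le \beta'$ must be pulled out of the conditional (B) hypothesis on $\graph H$ rather than from contraction-type estimates, since (B$'$) is not assumed, and you trigger (B) correctly by noting $|f_2(x_1(x)) - f_2(x_1(x'))| \le \hat\beta\,|x_1(x) - x_1(x')| \le \beta\,|x_1(x) - x_1(x')|$. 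The part (2) arithmetic also checks out: absorbing the $\alpha\hat\beta$ term yields $(1-\alpha\hat\beta)|x_1(\hat x_1)-\hat x_2| \le \alpha C_2 + \eta_1$, and substituting into $\lambda_u(\hat\beta|x_1(\hat x_1)-\hat x_2| + C_2) + \eta_2$ collapses to $\lambda_u\frac{\hat\beta\eta_1 + C_2}{1-\alpha\hat\beta} + \eta_2$ exactly as stated.
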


\begin{proof}[Proof of \autoref{thmAc}]
	We use a standard method by reducing it to the discrete case which was obtained in \cite{Che18a}.
	By condition (ii) (a)(b), there is a sufficiently large $ t_0 > \max\{1,\hat{\varepsilon}_1\} $ such that
	\begin{equation*}
	\sup_\omega \frac{ c^2(\omega)\lambda_u^{t_0}(\omega) \lambda_s^{t_0}(\omega) + c(\omega)\lambda_u^{t_0}(\omega) \varepsilon_1^{t_0}(\omega)}{1 - \alpha(\omega) \beta'(t_0\omega)}   < 1.
	\end{equation*}
	Set
	\begin{equation}\label{discrete}
	\widehat{H}(\omega) = H(t_0,\omega), ~
	\hat{u}(\omega) = t_0 \omega, ~
	\hat{\lambda}_s(\omega) = c(\omega)\lambda_s^{t_0}(\omega), ~ \hat{\lambda}_u(\omega) = c(\omega)\lambda_u^{t_0}(\omega).
	\end{equation}
	Then $ \widehat{H} $ is a bundle correspondence over $ \hat{u} $ satisfying (A$ ' $)$(\alpha, \hat{\lambda}_u)$ (B)$(\beta; \beta', \hat{\lambda}_s)$ condition and $ i $ now is an $\varepsilon^{t_0}$-pseudo-stable section of $ \widehat{H} $ (by letting $ \eta(\omega) = \eta(t_0,\omega) $), i.e.
	\[
	| i_X(\hat{u}(\omega)) - F_{t_0,\omega}( i_X(\omega), i_Y(\hat{u}(\omega)) ) | \leq \eta(\hat{u}(\omega)), ~ | i_Y(\omega) - G_{t_0,\omega}( i_X(\omega), i_Y(\hat{u}(\omega)) ) | \leq \eta(\omega),
	\]
	with $ \eta(\hat{u}(\omega)) \leq \varepsilon^{t_0}(\omega) \eta(\omega) $.
	By the first existence results in \cite[Section 4.1]{Che18a}, there is a unique bundle map $ f: X \to Y $ over $ \id $ such that
	\begin{enumerate}[(a)]
		\item $\lip f_\omega \leq \beta'(\omega) $, $| f_\omega(i_X(\omega))  - i_Y(\omega)| \leq K \eta(\omega)$;
		\item $ \graph f_\omega \subset \widehat{H}(\omega)^{-1} \graph f_{t_0\omega} $ for all $ \omega \in M $, where $K \geq 0$ is a constant independent of for large $ t_0 $; in fact, $ K $ can be taken as $ K = \frac{\overline{\lambda}_1\sup_{\omega}\beta(\omega) + 1}{1 - \overline{\lambda}_1} $, where $ \overline{\lambda}_1 = \sup_{\omega} \frac{ c(\omega)\lambda_u^{t_*}(\omega) \varepsilon_1^{t_*}(\omega)}{1 - \alpha(\omega) \beta'(\omega)} < 1 $, if $ t_0 \geq t_* $.
		\item $ f $ does not depend on the choice of $ \eta^0(\cdot) = \eta(\cdot) $ as long as it satisfies $ \eta^0(\hat{u}(\omega)) \leq \varepsilon^{t_0}_1(\omega) \eta^0(\omega) $, $ \eta(\omega) \leq \eta^0(\omega) $, for all $ \omega \in M $.
	\end{enumerate}

	Next, we need to show
	\[
	\graph f_\omega \subset H(t,\omega)^{-1} \graph f_{t \omega},~\text{if}~ 0 < t < t_0 . \tag{$ \star $}
	\]
	Fix $ t $. Since $\sup_\omega \alpha(\omega) \beta'(\omega) < 1 $, by \autoref{lem:gr} (1), there is a unique bundle map $ f': X \to Y $ over $ \id $ such that $ \graph f'_\omega \subset H(t,\omega)^{-1} \graph f_{t\omega} $, $ \lip f'_\omega \leq \beta(\omega) $. Also, by \autoref{lem:bas1} (2), it holds that $| f'_\omega(i_X(\omega))  - i_Y(\omega)| \leq K' (\eta(t,\omega) + \eta(t_0,\omega))$ for some fixed constant $ K' $ independent of $ t $. If we show
	\[
	\graph f'_{\omega} \subset \widehat{H}(\omega)^{-1} \graph f'_{t_0 \omega}, ~\forall \omega \in M, \tag{$ \star \star $}
	\]
	then by (B$ _1 $) condition, it also holds $ \lip f'_{\omega} \leq \beta'(\omega) $; and so by the uniqueness of $ f $, we get ($ \star $) holds. Observe that, by the cocycle property of $ H $,
	\[
	\graph f'_{\omega} \subset H(t,\omega)^{-1} \graph f_{t\omega} \subset H(t,\omega)^{-1} \widehat{H}(t\omega)^{-1} \graph f_{t_0(t\omega)} \subset H(t+t_0,\omega)^{-1} \graph f_{(t+t_0)\omega}.
	\]
	So $ \graph f'_\omega \subset \widehat{H}(\omega)^{-1} H(t, t_0 \omega)^{-1} \graph f_{t(t_0\omega)} $, which yields that ($ \star \star $) holds; i.e. if for any $ x \in X_{\omega} $, there are $ (x_1, y_1) \in H(t, t_0 \omega)^{-1} \graph f_{t(t_0\omega)} $ and $ x_2 \in X_{t(t_0\omega)} $ such that $ (x_1, y_1) \in \widehat{H}(\omega) (x, f'_{\omega}(x)) $ and $ (x_2, f_{t(t_0\omega)}(x_2)) \in H(t, t_0 \omega)(x_1, y_1) $, so by \autoref{lem:gr} (2), $ y_1 = f'_{t_0\omega}(x_1) $, and thus $ (x, f'_{\omega}(x)) \in \widehat{H}(\omega)^{-1} \graph f'_{t_0 \omega} $.

	It follows from (B$ _1 $) condition that $\lip x_{t,\omega}(\cdot) \leq c(\omega)\lambda_s^t(\omega)$. By \autoref{lem:bas1} (2), one gets $| x_{t,\omega}(i_X(\omega))  - i_X(t \omega)| \leq K_0(\eta(t,t\omega)+\inf_{r \geq t_0} \eta(r,t\omega)) $, where $ K_0 = \frac{\max\{K, K'\}\sup_{\omega}\alpha(\omega) + 1}{1 - \alpha(\omega) \beta'(\omega)} $. The proof is complete.
\end{proof}

The following theorem can be proved as the same way as proving \autoref{thmAc} by using the second existence theorem in \cite[Section 4.1]{Che18a}. We use the \emph{notation} $\widetilde{d} (A, z) \triangleq \sup_{\tilde{z} \in A} d(\tilde{z}, z) $, if $A$ is a subset of a metric space. \label{notationDD}

\begin{thm}\label{thmBc}
	Let $(X, M, \pi_1), (Y, M, \pi_2)$ be two bundles with metric fibers and $t: M \rightarrow M$ a semiflow.
	Let $H: \mathbb{R}_+ \times X \times Y \rightarrow X \times Y$ be a cocycle correspondence over $t$ with a generating cocycle $(F,G)$.
	Assume that the following hold.
	\begin{enumerate}[(i)]
		\item ($\varepsilon$-$Y$-bounded-section) $ i = (i_X, i_Y): M \to X \times Y $ is an $\varepsilon$-$Y$-\textbf{bounded-section} of $H$, i.e.
		\[
		\widetilde{d}(G_{t,\omega}( X_\omega, i_Y(t\omega) ) , i_Y(\omega)) \leq \eta(t,\omega).
		\]
		\item $H$ satisfies $(\mathrm{A'_1})$$(\alpha, \lambda_u; c)$ $( \mathrm{B_1}) $$(\beta; \beta', \lambda_s; c)$ condition, where $\alpha, \beta, \beta', \lambda_u, \lambda_s, c$ are functions of $M \rightarrow \mathbb{R}_+$. In addition,
		\begin{enumerate}[(a)]
			\item (angle condition) $\sup_\omega \alpha(\omega) \beta'(\omega) < 1 $, $ \sup_\omega \{\alpha(\omega), \beta(\omega)\} < \infty $, $ \beta'(t\omega) \leq \beta(\omega)$, for all $ \omega \in M  $, $ t \geq 0 $.
			\item (spectral condition) $\sup_\omega \lambda_u(\omega) \varepsilon_1(\omega) < 1$. $ \sup_\omega c(\omega) < \infty $.
		\end{enumerate}
	\end{enumerate}
	Then there is a unique bundle map $ f: X \to Y $ over $\id$ satisfying the following \textnormal{(1) (2)}.
	\begin{enumerate}[(1)]
		\item $\lip f_\omega \leq \beta'(\omega) $, $\widetilde{d}( f_\omega(X_\omega), i_Y(\omega)) \leq K \eta(0,\omega)$, where $K \geq 0$ is a constant.
		\item $ \graph f \subset H^{-1} \graph f$. More precisely, $ (x_{t,\omega}(x), f_{t \omega}(x_{t,\omega}(x)) ) \in H(t, \omega)(x, f_\omega(x)) $, $ \forall x \in X_\omega $, where $ x_{t,\omega}(\cdot): X_\omega \to X_{t\omega} $ such that $\lip x_{t,\omega}(\cdot) \leq c(\omega)\lambda_s^t(\omega)$, for all $ \omega \in M $, $ t \geq 0 $. Moreover, if
		\[
		\widetilde{d}(F_{t,\omega}(X_{\omega}, i_Y(t \omega)) , i_X(t\omega)) \leq \eta(t, t\omega),~\forall (t,\omega) \in \mathbb{R}_+ \times M,
		\]
		then $\widetilde{d}( x_{t,\omega}(X_\omega), i_X(t \omega) ) \leq K_0 \eta(t,t\omega)$ for some constant $ K_0 $.
	\end{enumerate}
\end{thm}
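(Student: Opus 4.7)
The plan is to mimic the proof of \autoref{thmAc}, reducing to the discrete-time setting where the second existence theorem of \cite[Section 4.1]{Che18a} applies, and then bootstrapping from the discrete time-$t_0$ invariance to continuous time-$t$ invariance for all $t > 0$. Since the spectral condition here only requires $\sup_\omega \lambda_u(\omega)\varepsilon_1(\omega) < 1$ (no $\lambda_s\lambda_u$ constraint), we choose $t_0$ large enough so that
\[
\overline{\lambda}_1 \triangleq \sup_\omega \frac{c(\omega)\lambda_u^{t_0}(\omega)\varepsilon_1^{t_0}(\omega)}{1 - \alpha(\omega)\beta'(t_0\omega)} < 1,
\]
and set $\widehat{H}(\omega) = H(t_0,\omega)$, $\hat{u}(\omega) = t_0\omega$, $\hat{\lambda}_s(\omega) = c(\omega)\lambda_s^{t_0}(\omega)$, $\hat{\lambda}_u(\omega) = c(\omega)\lambda_u^{t_0}(\omega)$. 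Then $\widehat{H}$ is a bundle correspondence over $\hat{u}$ satisfying (A$'$)$(\alpha,\hat{\lambda}_u)$ (B)$(\beta;\beta',\hat{\lambda}_s)$ condition, and $i$ is an $\varepsilon^{t_0}$-$Y$-bounded-section of $\widehat{H}$ (set $\eta(\omega) = \eta(t_0,\omega)$).

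First I would apply the discrete second existence theorem from \cite[Section 4.1]{Che18a} to obtain a unique bundle map $f: X \to Y$ over $\id$ with $\lip f_\omega \leq \beta'(\omega)$, $\widetilde{d}(f_\omega(X_\omega), i_Y(\omega)) \leq K\eta(0,\omega)$, and $\graph f_\omega \subset \widehat{H}(\omega)^{-1}\graph f_{t_0\omega}$ for all $\omega$. The constant $K$ is independent of $t_0$ for $t_0$ large, and $f$ does not depend on the particular admissible choice of upper bounding function (this is crucial for what follows).

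Next, to upgrade to continuous-time invariance $\graph f_\omega \subset H(t,\omega)^{-1}\graph f_{t\omega}$ for all $0 < t < t_0$, fix such $t$. By \autoref{lem:gr}(1), since $\sup_\omega \alpha(\omega)\beta(\omega) < 1$ (from $\beta'(t\omega)\leq \beta(\omega)$ and the angle condition), there is a unique bundle map $f': X \to Y$ over $\id$ with $\graph f'_\omega \subset H(t,\omega)^{-1}\graph f_{t\omega}$ and $\lip f'_\omega \leq \beta(\omega)$. Using the cocycle property $H(t+t_0,\omega) = \widehat{H}(t\omega)\circ H(t,\omega)$ and composing with $H(t,t_0\omega)$, one computes
\[
\graph f'_\omega \subset H(t,\omega)^{-1}\widehat{H}(t\omega)^{-1}\graph f_{t_0(t\omega)} = H(t,\omega)^{-1}\widehat{H}(t\omega)^{-1}\graph f_{t(t_0\omega)},
\]
and rearranging via the cocycle identity gives $\graph f'_\omega \subset \widehat{H}(\omega)^{-1}H(t,t_0\omega)^{-1}\graph f_{t(t_0\omega)}$. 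Applying \autoref{lem:gr}(2) fiber-wise shows $\graph f'_\omega \subset \widehat{H}(\omega)^{-1}\graph f'_{t_0\omega}$. Then (B$_1$) condition forces $\lip f'_\omega \leq \beta'(\omega)$; moreover, the $Y$-bounded estimate $\widetilde{d}(f'_\omega(X_\omega),i_Y(\omega)) \leq K'\eta(0,\omega)$ follows by the same argument as in \autoref{thmAc} (replacing the pseudo-stable estimate with the $Y$-bounded estimate). By the uniqueness asserted in the discrete case, $f' = f$, which gives continuous-time invariance.

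Finally, for each $(x,f_\omega(x)) \in \graph f$, applying \autoref{lem:gr}(1) produces the map $x_{t,\omega}: X_\omega \to X_{t\omega}$ with $\lip x_{t,\omega} \leq c(\omega)\lambda_s^t(\omega)$ from the (B$_1$) part of the hypothesis. Under the additional hypothesis $\widetilde{d}(F_{t,\omega}(X_\omega,i_Y(t\omega)),i_X(t\omega)) \leq \eta(t,t\omega)$, the estimate $\widetilde{d}(x_{t,\omega}(X_\omega),i_X(t\omega)) \leq K_0\eta(t,t\omega)$ follows from \autoref{lem:gr}(2) applied with $(\hat{x}_1,\hat{y}_1) = (x,f_\omega(x))$ and $\hat{x}_2 = i_X(t\omega)$, combining the control $|f_{t\omega}(\hat{x}_2) - i_Y(t\omega)| \leq K\eta(0,t\omega)$ with the $F$-estimate. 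The main obstacle is the middle step — verifying that the candidate $f'$ actually satisfies the discrete invariance for $\widehat{H}$ — but this is dispatched by careful use of the cocycle identity combined with the uniqueness in \autoref{lem:gr}(2) (which forces $y_1 = f'_{t_0\omega}(x_1)$ whenever the relevant graph compatibility holds). The rest is purely mechanical transfer from the discrete theorem.
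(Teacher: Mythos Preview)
Your proposal is correct and follows exactly the approach the paper indicates: reduce to the discrete second existence theorem of \cite[Section 4.1]{Che18a} at a large time step $t_0$, then bootstrap to all $t>0$ via the cocycle identity and the uniqueness in \autoref{lem:gr}, just as in the proof of \autoref{thmAc}. One minor slip: the inequality you invoke to apply \autoref{lem:gr}(1) should be the angle condition $\sup_\omega \alpha(\omega)\beta'(\omega)<1$ itself (using $\lip f_{t\omega}\leq\beta'(t\omega)$), not $\sup_\omega\alpha(\omega)\beta(\omega)<1$, which does not follow from the stated hypotheses.
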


We state a local version of the existence result for the strong stable case. The thresholds in angle condition and spectral condition are a little different from \autoref{thmAc} and \autoref{thmBc}.
\begin{thm}\label{thm:local}
	Let $(X, M, \pi_1), (Y, M, \pi_2)$ be two bundles with fibers being Banach spaces and $t: M \rightarrow M$ a semiflow. Take functions $ \varepsilon_1(\cdot), \varepsilon(\cdot): M \to \mathbb{R}_+ $. Let $ H: \mathbb{R}_+ \times X \times Y \to X \times Y $ be a cocycle correspondence over $ t $.
	For every $ (t,\omega) \in \mathbb{R}_+ \times M $, suppose that
	\[
	H(t,\omega) \sim (F_{t,\omega},G_{t,\omega}): X_{t,\omega}(r_{t,1}) \times Y_{t,\omega}(r'_{t,1}) \rightarrow X_{t\omega}(r_{t,1}) \times Y_{t\omega}(r'_{t,2}),
	\]
	satisfies (A$ ' $)($ \alpha(\omega) $, $ c(\omega)\lambda^{t}_u(\omega) $) (B)($ \beta'(\omega) $; $ \beta(\omega) $, $ c(\omega)\lambda^{t}_s(\omega) $) condition, and if $ t \geq \hat{\varepsilon}_1 > 0 $, (B)($ \beta(\omega) $; $ \beta'(\omega) $, $ c(\omega)\lambda^{t}_s(\omega) $) condition,
	where $ \sup\{r_{t,i}, r'_{t,i}: i = 1, 2, t\in[0,b] \} > 0 $ for any $ b > 0 $.
	\begin{enumerate}[(i)]
		\item
		Assume for a fixed $ t_0 > 0 $,
		\[
		\sup_{t\in [0,t_0]}| F_{t,\omega}( 0, 0 ) | \leq \eta(t_0\omega), ~ \sup_{t\in [0,t_0]}| G_{t, \omega}( 0, 0 ) | \leq \eta(\omega),
		\]
		where $ \eta: M \rightarrow \mathbb{R}_+ $, with $ \eta(t_0\omega) \leq \varepsilon^{t_0}(\omega) \eta(\omega) $ and $ 0 \leq \varepsilon(\omega) \leq \varepsilon_1(\omega) $, $ \forall \omega \in M $.
		\item
		Assume
		\begin{enumerate}[(a)]
			\item (angle condition) $ \sup_{\omega} \alpha(\omega) \beta'(\omega) < 1 / 2 $, $ \sup_{\omega} \{ \alpha(\omega), \beta(\omega) \} < \infty $, $ \beta'(t\omega) \leq \beta(\omega)$, $ \forall \omega \in M $, $ t \geq 0 $,
			\item (spectral condition) $\sup_\omega \lambda_u(\omega) \lambda_s(\omega) < 1 $, $ \sup_\omega \lambda_u(\omega) \varepsilon_1(\omega) < 1 $, $ \sup_\omega \lambda_s(\omega) < 1 $, and $ \sup_{\omega} c(\omega) < \infty $.
		\end{enumerate}
	\end{enumerate}
	If $ \eta_0 > 0 $ is small and $ \sup_{\omega}\eta(\omega) \leq \eta_0 $, then there is a small $ \sigma_0 > 0 $ such that there are maps $ f_\omega: X_\omega(\sigma_0) \to Y_\omega $, $ \omega \in M $, uniquely satisfying the following (1) (2).
	\begin{enumerate}[(1)]
		\item $\lip f_{\omega} \leq \beta'(\omega) $, $| f_{\omega}(0) | \leq K \eta(\omega)$, for some constant $ K \geq 0 $.
		\item $ \graph f_{\omega} \subset H(t,\omega)^{-1} \graph f_{t\omega} $. More precisely, $ (x_{t,\omega}(x), f_{t\omega}(x_{t,\omega}(x)) ) \in H(t,\omega)(x, f_{\omega}(x)) $, $ \forall x \in X_{\omega}(\sigma_0) $, where $ x_{t,\omega}(\cdot): X_{\omega}(\sigma_0) \to X_{t\omega}(\sigma_0) $ such that $\lip x_{t,\omega}(\cdot) \leq c(\omega)\lambda^{t}_s(\omega)$. Moreover, $ \sup_{t\in[0,t_0]}| x_{t,\omega}(0) | \leq K_0\eta(\omega) $, $\forall \omega \in M$, where $ K_0 > 0 $.
	\end{enumerate}
\end{thm}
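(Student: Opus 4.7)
The plan is to reduce the local theorem to the discrete global result \autoref{thmAc} via a standard extend–contract–restrict argument. First, view the problem at the discrete time scale $t_0$: set $\hat{H}(\omega) \triangleq H(t_0, \omega)$, which is a bundle correspondence over $\hat{u}(\omega) \triangleq t_0 \omega$ with local generating map $(\hat{F}_\omega, \hat{G}_\omega) \triangleq (F_{t_0,\omega}, G_{t_0,\omega})$ defined on $X_\omega(r_{t_0,1}) \times Y_{t_0\omega}(r'_{t_0,2})$. By hypothesis (i), the origin section $i \equiv 0$ satisfies $|\hat{F}_\omega(0,0)| \leq \eta(t_0\omega)$, $|\hat{G}_\omega(0,0)| \leq \eta(\omega)$ with $\eta(\hat{u}(\omega)) \leq \varepsilon^{t_0}(\omega)\eta(\omega)$.

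Second, I would extend $(\hat{F}_\omega, \hat{G}_\omega)$ to globally defined maps $(\tilde{F}_\omega, \tilde{G}_\omega): X_\omega \times Y_{t_0\omega} \to X_{t_0\omega} \times Y_\omega$ by precomposing each variable with a radial retraction onto the domain ball; such retractions in a Banach space have Lipschitz constant at most $2$, which is precisely the reason for the stronger angle condition $\sup_\omega \alpha(\omega)\beta'(\omega) < 1/2$ here as opposed to $< 1$ in \autoref{thmAc}. The extended correspondence $\tilde{H}$ is a global bundle correspondence over $\hat{u}$ satisfying $(\mathrm{A}')(\alpha, c\lambda^{t_0}_u)$ $(\mathrm{B})(\beta;\beta', c\lambda^{t_0}_s)$ (and the dual $(\mathrm{B})$-version using $t_0 \geq \hat{\varepsilon}_1$), with $i \equiv 0$ still an $\varepsilon^{t_0}$-pseudo-stable section of bound $\eta(\omega)$. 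Applying the discrete global existence result of \cite{Che18a} (i.e., the engine underlying \autoref{thmAc}) to $\tilde{H}$ yields a unique global bundle map $\tilde{f}: X \to Y$ with $\lip \tilde{f}_\omega \leq \beta'(\omega)$, $|\tilde{f}_\omega(0)| \leq K\eta(\omega)$, and invariance $\graph \tilde{f}_\omega \subset \tilde{H}(\omega)^{-1}\graph \tilde{f}_{\hat{u}(\omega)}$, together with a graph transform $\tilde{x}_{t_0,\omega}(\cdot)$ of Lipschitz constant $\leq c(\omega)\lambda^{t_0}_s(\omega)$ and $|\tilde{x}_{t_0,\omega}(0)| \leq K_0 \eta(\omega)$.

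Third, I would localize. Using $\sup_\omega \lambda_s(\omega) < 1$ and $\sup_\omega c(\omega) < \infty$, pick $\sigma_0 > 0$ and $\eta_0 > 0$ so small that for every $\omega$ and every $x \in X_\omega(\sigma_0)$:
\begin{enumerate}[(a)]
\item $(x, \tilde{f}_\omega(x))$ lies in the local domain $X_\omega(r_{t_0,1}) \times Y_\omega(r'_{t_0,1})$ of the original $\hat{F}_\omega, \hat{G}_\omega$, and similarly $(\tilde{x}_{t_0,\omega}(x), \tilde{f}_{t_0\omega}(\tilde{x}_{t_0,\omega}(x)))$ lies in $X_{t_0\omega}(r_{t_0,1}) \times Y_{t_0\omega}(r'_{t_0,2})$;
\item $\tilde{x}_{t_0,\omega}(X_\omega(\sigma_0)) \subset X_{t_0\omega}(\sigma_0)$, which follows from $c\lambda^{t_0}_s \sigma_0 + K_0\eta_0 \leq \sigma_0$.
\end{enumerate}
On such orbits the extension is invisible, so $f_\omega \triangleq \tilde{f}_\omega|_{X_\omega(\sigma_0)}$ satisfies $\graph f_\omega \subset H(t_0,\omega)^{-1} \graph f_{t_0\omega}$ with the asserted bounds. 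Iterating gives invariance under $H(nt_0, \omega)$ for all $n \geq 1$. For general $t \in (0, t_0)$, I would mimic the $(\star)$--$(\star\star)$ argument in the proof of \autoref{thmAc}: apply a localized version of \autoref{lem:gr} to produce $f'_\omega: X_\omega(\sigma_0) \to Y_\omega$ with $\graph f'_\omega \subset H(t,\omega)^{-1}\graph f_{t\omega}$ and $\lip f'_\omega \leq \beta(\omega)$, use the cocycle property to show $\graph f'_\omega \subset H(t_0,\omega)^{-1}\graph f'_{t_0\omega}$, and invoke uniqueness in the discrete theorem to identify $f'_\omega = f_\omega$ and upgrade $\lip f_\omega \leq \beta'(\omega)$. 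Uniqueness of the local $f$ follows from the global uniqueness applied to any admissible extension.

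The main obstacle is step three: quantifying $\sigma_0$ and $\eta_0$ uniformly in $\omega$ so that every forward iterate of $\tilde{x}_{t_0,\omega}$ (and the corresponding $\tilde{f}$-value) remains in the local domain where $\tilde{H} \equiv \hat{H}$. The strong stable hypothesis $\sup_\omega \lambda_s(\omega) < 1$ is essential, as without genuine contraction along the base fiber direction no choice of $\sigma_0$ can keep orbits inside a prescribed ball for all forward discrete times.
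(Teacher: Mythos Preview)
Your extend--apply-global--restrict strategy is a natural idea, but it does not go through with the stated hypotheses, and it is \emph{not} what the paper does.

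The gap is in the extension step. Precomposing $(F_{t_0,\omega},G_{t_0,\omega})$ with radial retractions of Lipschitz constant $2$ does give globally defined maps, but the (B) condition degrades by a factor of $2$ on \emph{each side}: if the local $H(t_0,\omega)$ satisfies (B)$(\beta;\beta',c\lambda_s^{t_0})$, then the extended $\tilde H$ only satisfies (B)$(\beta/2;\,2\beta',\,2c\lambda_s^{t_0})$, and (A$'$) picks up its own factor $\tilde\alpha=2\alpha$. So the angle condition needed to run the global engine underlying \autoref{thmAc} becomes $\tilde\alpha\tilde\beta'=4\alpha\beta'<1$, i.e.\ $\alpha\beta'<1/4$, strictly stronger than the $1/2$ you have. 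Your attribution of the $1/2$ threshold to the Lip-$2$ retraction is therefore off by a factor of $2$; a single retraction does not explain it.

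The paper avoids extension entirely. It invokes a genuinely \emph{local} existence result (the ``third existence theorem'' of \cite{Che18a}, not the global first one behind \autoref{thmAc}), applied orbit-by-orbit: fix $\omega_0$, restrict to the forward orbit $\widehat M_{\omega_0}=\{(nt_0)\omega_0:n\in\mathbb N\}$, pass to a large iterate $H(n_0t_0,\cdot)$, and prove a sublemma that the pseudo-stable section estimates propagate correctly under composition (with constants $c_k$ independent of $\omega_0$). The choice of $n_0$ is made so that
\[
\sup_\omega\bigl\{c^2\lambda_u^{n_0t_0}\lambda_s^{n_0t_0}+c\lambda_u^{n_0t_0}\varepsilon_1^{n_0t_0}+c\lambda_s^{n_0t_0}\bigr\}<1-2\gamma,\qquad \gamma=\sup_\omega\alpha\beta',
\]
and it is \emph{this} inequality that forces $\gamma<1/2$, not any retraction. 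The local maps $f^{n_0,(\omega_0)}$ produced on each orbit are then shown to be consistent across base points (via the uniqueness clause of the local theorem), yielding a single $f_\omega$ on $X_\omega(\sigma_0)$; invariance under the full continuous $H(t,\cdot)$ is then obtained by the $(\star)$--$(\star\star)$ argument you outlined. Your step three (localization) is the right idea for the tail of the proof, but the paper never needs it because it never leaves the local balls in the first place.
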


\begin{proof}
	First note that we can assume, without loss of generality, $ \sup_{\omega} \varepsilon(\omega) \leq 1 $. This can be argued as follows. Take $ \eta_1(\omega) = \sup_{t \geq 0} \eta(t\omega) < \infty $, then $ \eta_1(t_0\omega) \leq \min\{1,\varepsilon^{t_0}(\omega)\} \eta_1(\omega) $; so by using $ \eta_1(\omega), \min\{1,\varepsilon(\omega)\} $ instead of $ \eta(\omega), \varepsilon(\omega) $ respectively, all the assumptions also hold.

	In the following, we will use an argument in the proof of existence results in \cite{Che18a}. Set
	\[
	\hat{\alpha} = \sup_{\omega} \alpha(\omega), ~\hat{\beta}= \sup_{\omega} \beta(\omega), ~\gamma = \sup_{\omega} \alpha(\omega) \beta(\omega) < 1/2.
	\]
	Choose a large $ n_0 \in \mathbb{N} $ (independent of $ \omega \in M $) such that $ nt_0 > \hat{\varepsilon}_1 $ and
	\[
	\sup_\omega  \{c^2(\omega)\lambda_u^{n_0t_0}(\omega) \lambda_s^{n_0t_0}(\omega) + c(\omega)\lambda_u^{n_0t_0}(\omega) \varepsilon_1^{n_0t_0}(\omega)  + c(\omega) \lambda^{n_0t_0}_{s}(\omega)\} < 1 - 2 \gamma.
	\]
	Set $ u = t_0: M \to M $.
	Fix any $ \omega_0 \in M $. Let $ \widehat{M}_{\omega_0} = \{ (nt_0)(\omega_0): n \in \mathbb{N} \} $. $ H(kt_0, \cdot)(\cdot) $ can be regarded as a bundle correspondence $ X|_{\widehat{M}_{\omega_0}} \times Y|_{\widehat{M}_{\omega_0}} \to X|_{\widehat{M}_{\omega_0}} \times Y|_{\widehat{M}_{\omega_0}} $ over $ u^{k} $, denoted by $ \widehat{H}^{kt_0}|_{\widehat{M}_{\omega_0}} $. Define a function $ \hat{\varepsilon}_1(\cdot) $ over $ u $ as
	\[
	\hat{\varepsilon}_2(\omega) = \max\{ \varepsilon^{t_0}(\omega), \lambda^{t_0}_{s}(\omega) \} (\leq 1), ~\omega \in M,
	\]
	and let
	\[
	\hat{\varepsilon}^{(k)}_2(\omega) = \hat{\varepsilon}_2(\omega)\cdot\hat{\varepsilon}_2(u(\omega))\cdots\hat{\varepsilon}_2(u^{k-1}(\omega)).
	\]

	\begin{slem}
		The section $ i $ now is an $ \hat{\varepsilon}_2^{(k)} $-pseudo-stable section of $ \widehat{H}^{kt_0}|_{\widehat{M}_{m_0}} $, i.e.
		\[
		| F_{kt_0,\omega}( 0, 0 ) | \leq \eta^{(\omega_0)}_{k}(u^{k}(\omega)), ~
		| G_{kt_0,\omega}( 0, 0 ) | \leq \eta^{(\omega_0)}_{k}(\omega),
		\]
		$ \omega \in \widehat{M}_{\omega_0} $, where $ \eta^{(\omega_0)}_{k}(u^i(\omega_0)) = c_k \hat{\varepsilon}^{(i)}_2(\omega_0) \eta(\omega_0) $, $ i \geq 0 $, and $ c_k \geq 1 $ is a constant independent of $ \omega_0 $.
	\end{slem}
	\begin{proof}
		This is a direct consequence of \autoref{lem:bas1} (2). We only consider the case $ k=2 $. Let $ \hat{y} \in Y_{t_0\omega} $ be the unique point satisfying $ \hat{y} = G_{t_0,t_0\omega}( F_{t_0,\omega}(0, \hat{y}), 0 ) $. Then (noting that $ \beta'(t\omega) \leq \beta(\omega) $)
		\[
		|\hat{y}| \leq \frac{ (\beta'(\omega)) + 1) \eta(u(\omega)) }{1 - \alpha(\omega)\beta'(\omega)},
		\]
		and
		\begin{gather*}
		|G_{2t_0,\omega}(0, 0) | = |G_{t_0,\omega}(0, \hat{y}) | \leq c(\omega)\lambda_u(\omega) | \hat{y} | + \eta(\omega) \leq c(\omega)\lambda_u(\omega) \frac{ (\beta'(\omega) + 1) \eta(u(\omega)) }{1 - \alpha(\omega)\beta'(\omega)} + \eta(\omega),\\
		|F_{2t_0,\omega}(0,0)| = | F_{t_0,t_0\omega} ( F_{t_0,\omega}(0, \hat{y}), 0 ) |
		\leq c(u(\omega))\lambda_s(u(\omega)) \frac{ (\alpha'(\omega) + 1)\eta(u(\omega))}{1 -  \alpha(\omega)\beta'(\omega)} + \eta(u^2(\omega)).
		\end{gather*}
		If $ \omega = u^{j}(\omega_0) $, then $ \eta(\omega) \leq \hat{\varepsilon}^{(j)}_{2}(\omega_0)\eta(\omega_0) $.
		So we can choose
		\[
		c_2 = \max\left\{ \frac{\hat{c}\lambda_1 (\hat{\beta} + 1)}{1 - \gamma} + 1, \frac{\hat{c}(\hat{\alpha} + 1)}{1 - \gamma} + 1 \right\},
		\]
		where $ \lambda_1 \triangleq \sup_\omega \varepsilon(\omega) \lambda_u(\omega) $ and $ \hat{c} = \sup_{\omega}c(\omega) < \infty $, completing the proof.
	\end{proof}
	We have shown $ \widehat{H}^{n_0t_0}|_{\widehat{M}_{\omega_0}} $ satisfies the third existence theorem in \cite[Section 4.1]{Che18a}, so when $ \eta_0 > 0 $ is small such that $ \sup_{\omega}\eta(\omega) \leq \eta_0 $, there are a small constant $ \sigma_{0} > 0 $ (independent of $ \omega_0 \in M $ since $ \eta^{(\omega_0)}_{n_0}(u^i(\omega_0)) \leq c_{n_0} \eta_0 $) and a unique bundle map $ f^{n_0, (\omega_0)}: X|_{\widehat{M}_{\omega_0}}(\sigma_{0}) \to Y|_{\widehat{M}_{\omega_0}} $ such that
	\begin{enumerate}[(a{0})]
		\item $ \lip f^{n_0, (\omega_0)}_{\omega} \leq \beta'(\omega) $, $ | f^{n_0, (\omega_0)}_{\omega}(0)| \leq K'_1 \eta^{(\omega_0)}_{n_0}(\omega) \leq K'_1c_{n_0} \eta_0 $, where $ K'_1 \geq 0 $ (independent of $ \omega_0 $),
		\item $ \graph f^{n_0, (\omega_0)}_{\omega} \subset H(n_0t_0,\omega)^{-1} \graph f^{n_0, (\omega_0)}_{u^{n_0}(\omega)} $, $ \omega \in \widehat{M}_{\omega_0} $.
		\item Also, $ f^{n_0, (\omega_0)} $ does not depend on the choice of $ \eta^0(\cdot) = \eta^{(\omega_0)}_{n_0}(\cdot) $ as long as it satisfies $ \eta^0(u^{n_0}(\omega)) \leq \hat{\varepsilon}^{(n_0)}_2(\omega) \eta^0(\omega) $, $ \eta^{(\omega_0)}_{n_0}(\omega) \leq \eta^0(\omega) \leq \eta_0 $, for all $ \omega \in \widehat{M}_{\omega_0} $.
	\end{enumerate}
	Using above property (c0), we find that $ f^{n_0, (\omega_0)}_{u^{n_0}(\omega)} = f^{n_0, (u^{n_0}(\omega_0))}_{u^{n_0}(\omega_0)} $. Indeed, $ \{f^{n_0, (\omega_0)}_{\omega'}: \omega' \in \widehat{M}_{u^{n_0}(\omega_0)} \} $ also fulfills (a0) (b0) for the case that $ \omega_0 $ is replaced by $ u^{n_0}(\omega_0) $, with $ \eta^0(\cdot) = \eta^{(\omega_0)}_{n_0}(\cdot) |_{\widehat{M}_{u^{n_0}(\omega)}} $ instead of $ \eta^{(u^{n_0}(\omega_0))}_{n_0}(\cdot) $. Set $ f_{\omega_0} = f^{n_0, (\omega_0)}_{\omega_0}: X_{\omega_0}(\sigma_{0}) \to Y_{\omega_0} $. Then the above argument shows that $ f $ is the unique bundle map of $ X(\sigma_{0}) \to Y $ satisfies
	\begin{enumerate}[(a{1})]
		\item $ \lip f_{\omega} \leq \beta'(\omega) $, $ | f_{\omega}(0)| \leq K'_1 c_{n_0}\eta(\omega) \leq K'_1c_{n_0} \eta_0 $,
		\item $ \graph f_{\omega} \subset H(n_0t_0,\omega)^{-1} \graph f_{(n_0t_0)(\omega)} $, $ \omega \in M $.
	\end{enumerate}
	The uniqueness of $ f $ also shows that $ \graph f_{\omega} \subset H(t,\omega)^{-1} \graph f_{t\omega} $ for all $ t \geq 0 $, if we choose further smaller $ \eta_0 $ and $ \sigma_{0} $; see the proof of \autoref{thmAc}. Other conclusions are obvious and this gives the proof.
\end{proof}

\begin{rmk}
	In \autoref{thmAc}, \autoref{thmBc} and \autoref{thm:local}, there are two special cases which are useful for application: (i) $ \varepsilon_1 \equiv 0 $ (specially $ \eta \equiv 0 $) and $ \varepsilon_1 \equiv 1 $. \autoref{thm:local} is a basic tool to construct strong (un)stable foliations (laminations) for differential equations; for this case $ \eta \equiv 0 $. Similar as \autoref{thm:local}, in \autoref{thmAc} and \autoref{thmBc}, the condition (i) can be stated only for $ 0 \leq t \leq t_0 $; we give the information for all $ t \geq 0 $ only for giving the estimate $| x_{t,\omega}(i_X(\omega))  - i_X(t \omega)| \leq K_0(\eta(t,t\omega)+\inf_{r \geq t_0} \eta(r,t\omega)) $ for all $ t \geq 0 $.

\end{rmk}
See more characterizations and corollaries in \cite{Che18a}.

We are going to discuss the regularity of the bundle map $ f $ given in \autoref{thmAc}. Since this map is constructed through the bundle correspondence $ \widehat{H} $ (see \eqref{discrete}), so the regularity results in \cite{Che18a} all hold.
The assumptions on $ X \times Y $, $ i $, and the (almost) continuity of the functions in (A$ '_1 $) (B$ _1 $) condition are the same as the discrete case. (The function $ c $ does not need any condition; just take $ \hat{c} \geq \sup_{\omega}c(\omega) $ instead of $ c $.)
The regularity properties of maps $ u, F, G $ in that paper \cite{Che18a} now become for the maps $ \hat{u} = t_0 $, $ \overline{F}_\omega = F_{t_0,\omega} $, $ \overline{G}_\omega = G_{t_0,\omega} $. Finally, the spectral gap conditions are the same as the discrete case but the abbreviation of them (see \cite[Remark 6.6]{Che18a}) needs to be little modified, i.e. $ \vartheta \equiv 1 $.

In order to more easily verify the conditions on $ \hat{u} $, $ \overline{F}, \overline{G} $, one can give a more restriction on $ t $, $ F, G $; that is, some conditions do not depend on the choice of $ t_0 $. Let $ b > a \geq 0 $. The idea is given in the following: the $ C^1 $ and $ C^{k,1} $ smoothness of $ g_{t_0}(\cdot) $ can be replaced by the $ C^1 $ and $ C^{k,1} $ smoothness of $ g_{t}(\cdot) $ for $ \forall t \in [a,b] $, where $ g_{t}(\cdot) $ can be taken as $ t(\cdot) $, $ F_{t,\omega}(\cdot), G_{t, \omega}(\cdot) $, and $ F_{t,\cdot}(\cdot), G_{t, \cdot}(\cdot) $; the $ C^{k,1} $ constants of $ g_{t}(\cdot) $ can depend on $ t $; however, for $ C^{k,\alpha} $ ($ 0 < \alpha < 1 $) case, we need the $ C^{k,\alpha} $ information of $ F_{t,\omega}(\cdot), G_{t, \omega}(\cdot) $ or $ F_{t,\cdot}(\cdot), G_{t, \cdot}(\cdot) $ for all sufficiently large $ t $.

Here are simple facts about the fiber-regularity of $ f $ (i.e. $ x \mapsto f_{\omega}(x) $), which are direct consequences of \cite[Lemma 6.7 and Lemma 6.12]{Che18a}.
\begin{thm}[$ C^k $ leaves]\label{lem:leaf1}
	Let $ f $ be obtained in \autoref{thmAc}.
	Assume (i) $\alpha,\beta,\beta'$, $\lambda_s, \lambda_u$, $c$ are all bounded functions; (ii) the fibers of the bundles $ X, Y $ are Banach spaces; (iii) for every $ (t,\omega) \in [a,b] \times M $ ($ b > a \geq 0 $), $ F_{t,\omega}(\cdot), G_{t,\omega}(\cdot) $ are $ C^1 $. Then for every $ \omega \in M $, $ f_\omega(\cdot) \in C^1 $.

	In addition, suppose (i) for $ (t,\omega) \in [a,b] \times M $ ($ b > a \geq 0 $), $ F_{t,\omega}(\cdot), G_{t,\omega}(\cdot) $ are $ C^{k} $, (ii)
	\[
	\sup_{\omega} \{|F_{t,\omega}(\cdot)|_{k-1,1}, |G_{t,\omega}(\cdot)|_{k-1,1}\} \leq C_{t} < \infty,
	\]
	and (iii) $ \sup_{\omega} \lambda^{k}_{s}(\omega) \lambda_{u}(\omega) < 1 $, then for every $ \omega \in M $, $ f_\omega(\cdot) \in C^{k} $ and $ \sup_{\omega}|f_\omega(\cdot)|_{k-1,1} < \infty $. Here $ |u|_{k,1} = \max\{ \sup_{x}|D^{i}u(x)|, \lip D^{k}u(\cdot): i = 1,2,\ldots,k \} $.

\end{thm}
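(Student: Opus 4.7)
The plan is to reduce to the discrete setting already exploited in the proof of \autoref{thmAc} and then invoke the fiberwise regularity results \cite[Lemma 6.7 and Lemma 6.12]{Che18a}. Recall that in that proof, $f$ is characterized as the unique bundle map satisfying $\graph f_\omega \subset \widehat H(\omega)^{-1}\graph f_{t_0\omega}$, where $\widehat H(\omega) = H(t_0,\omega) \sim (\overline F_\omega,\overline G_\omega) = (F_{t_0,\omega}, G_{t_0,\omega})$ is a bundle correspondence over $\hat u = t_0$, for any sufficiently large $t_0$. Its discrete (A$'$)(B) data are $\alpha(\omega)$, $\beta(\omega)$, $\beta'(\omega)$, together with $\hat\lambda_u(\omega) = c(\omega)\lambda_u^{t_0}(\omega)$ and $\hat\lambda_s(\omega) = c(\omega)\lambda_s^{t_0}(\omega)$; by boundedness of all these functions, both the angle condition $\sup_\omega \alpha\beta' < 1$ and the spectral gap $\sup_\omega \hat\lambda_s\hat\lambda_u < 1$ are automatic.

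For the $C^1$ statement, the first step is to arrange that $\overline F_\omega, \overline G_\omega$ be $C^1$ for arbitrarily large $t_0$. The cocycle identity $H(t_1+t_2,\omega) = H(t_1, t_2\omega) \circ H(t_2,\omega)$ shows that $F_{t_0,\omega}$ and $G_{t_0,\omega}$ are obtained from the generating maps at times $t \in [a,b]$ by composition, and composition preserves $C^1$. Since $\bigcup_{n \geq 1}[na,nb]$ covers $[N,\infty)$ for some $N$, every sufficiently large $t_0$ can be reached. Fix such a $t_0$ exceeding also the threshold from \autoref{thmAc}; then \cite[Lemma 6.7]{Che18a} applied fiberwise to $\widehat H$ produces $f_\omega(\cdot) \in C^1$.

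For the $C^k$ statement, the only extra ingredient is the discrete $k$-th order spectral gap
\[
\sup_\omega \hat\lambda_s^k(\omega)\hat\lambda_u(\omega) = \sup_\omega c^{k+1}(\omega)\bigl(\lambda_s^k(\omega)\lambda_u(\omega)\bigr)^{t_0} < 1,
\]
which holds provided $t_0$ is chosen still larger, since $c$ is bounded and $\sup_\omega \lambda_s^k\lambda_u < 1$ by hypothesis (iii). Hypothesis (ii) together with the chain rule estimates for $C^{k-1,1}$ norms of compositions gives the required uniform bound $\sup_\omega\{|\overline F_\omega|_{k-1,1},|\overline G_\omega|_{k-1,1}\} < \infty$. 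Then \cite[Lemma 6.12]{Che18a} yields $f_\omega(\cdot) \in C^k$ with $\sup_\omega |f_\omega(\cdot)|_{k-1,1} < \infty$.

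The principal difficulty is purely bookkeeping: one must exhibit a single $t_0$ that (a) exceeds the thresholds $\hat\varepsilon_1$ and the discrete spectral threshold needed in the proof of \autoref{thmAc}, (b) makes $c^{k+1}(\lambda_s^k\lambda_u)^{t_0}$ uniformly $<1$, and (c) admits a representation $t_0 = \sum_i t_i$ with $t_i \in [a,b]$, so that $F_{t_0,\omega}, G_{t_0,\omega}$ are genuinely $C^k$ with the uniform $C^{k-1,1}$ bound transported through the composition. Each of these is satisfied for all sufficiently large $t_0$, so a common choice exists and the theorem then reduces to a direct application of the cited discrete regularity lemmas.
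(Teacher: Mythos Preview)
Your proposal is correct and follows essentially the same route as the paper, which states the result as a direct consequence of \cite[Lemma 6.7 and Lemma 6.12]{Che18a} after the discretization $\widehat H(\omega)=H(t_0,\omega)$ already carried out in the proof of \autoref{thmAc}; the paper also explicitly sketches (in the paragraph preceding the theorem) the same passage from regularity of $F_{t,\omega},G_{t,\omega}$ on $[a,b]$ to regularity of $F_{t_0,\omega},G_{t_0,\omega}$. One small point worth making precise: when you write that ``composition preserves $C^1$'', the generating map of $H(t_1,t_2\omega)\circ H(t_2,\omega)$ is not a literal composite but is obtained by solving the fixed-point equation $y_2=G_{t_1,t_2\omega}(F_{t_2,\omega}(x_1,y_2),y_3)$ (cf.\ \autoref{lem:gr}); the $C^k$ and uniform $C^{k-1,1}$ dependence on $(x_1,y_3)$ then follows from the parameter-dependent fixed point theorem under the standing angle condition, which is exactly what the paper invokes elsewhere.
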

Similar results for $ f $ obtained in \autoref{thmBc} and \autoref{thm:local} hold.
If $ H $ is induced by equation \eqref{equ:main} (see \autoref{diffCocycle}), then the $ C^{k,\alpha} $ assumptions on $ F_{t,\omega}(\cdot), G_{t,\omega}(\cdot) $ can change to on the tangent field (i.e. $ z \mapsto f(\omega)z $), which are the consequences of parameter-dependent fixed point theorem (cf e.g. \cite[Appendix D.1]{Che18a}), see e.g. \autoref{lem:00} and \autoref{lem:vcf1} in $ C^{k} $ case as well.

The statement of the base-regularity of $ f $ (i.e. $ \omega \mapsto f_{\omega}(\cdot) $) is more complicated. There is an issue of how we describe the uniform properties of the bundle map $ f $ respecting the base points in approximate bundles, such as the uniformly $ C^{k,\alpha} $ ($ k = 0,1 $, $ 0 \leq \alpha \leq 1 $) continuity. In \cite{Che18a}, this was done by a natural way based on the works e.g. \cite{HPS77, PSW12, BLZ99, BLZ08, Cha04, Eld13, Ama15}; that is, the bundle map is represented in local bundle charts belonging to preferred bundle atlases, called the local representations, and the uniform property of this bundle map means the uniform property of local representations. Some prerequisites are needed making it impossible for us to fully present the whole regularity results in \cite{Che18a}.
Let's take a quick glimpse of the regularity results about $ f $ in a \textbf{\emph{not}} very sharp and general setting.
For the precise meaning of uniformly (locally) $ C^{k,\alpha} $ regularity of $ f $ with respect to the bundle charts $ \mathcal{M}, \mathcal{A}, \mathcal{B} $ given in the following assumptions, see \cite{Che18a}.

\begin{enumerate}[({E}1)]
	\item (about $ M $) Let $ M $ be a $ C^1 $ Finsler manifold and $ M^0_1 \subset M $. Let $ M_1 $ be the $ \varepsilon $-neighborhood of $ M^0_1 $ ($ \varepsilon > 0 $). Take a $ C^1 $ atlas $ \mathcal{N} $ of $ M $. Let $ \mathcal{M} $ be the canonical bundle atlas of $ TM $ induced by $ \mathcal{N} $. Assume $ M $ is \emph{$ C^{1,1} $-uniform} around $ M_1 $ with respect $ \mathcal{M} $ (see \cite{Che18a}).

	\item (about $ X \times Y $) $ (X, M, \pi_1) $, $ (Y, M, \pi_2) $ are $ C^1 $ vector bundles endowed with $ C^0 $ \emph{connections} $ \mathcal{C}^X, \mathcal{C}^Y $ which are \emph{uniformly (locally) Lipschitz} around $ M_1 $ (see \cite{Che18a}). Take $ C^1 $ \emph{normal} (vector) bundle atlases $ \mathcal{A} $, $ \mathcal{B} $ of $ X, Y $ respectively. Assume for sufficiently small $ \xi > 0 $, $ (X, M, \pi_1) $, $ (Y, M, \pi_2) $ both have \emph{($ \xi $-almost) $ C^{1,1} $-uniform trivializations} at $ M_1 $ with respect to $ \mathcal{A}, \mathcal{B} $ (and $ \mathcal{M} $), respectively (see \cite{Che18a}).

	\item (about $ i $) Take the section $ i $ as the $ 0 $-section of $ X \times Y $.

	\item (about $ t $) $ t : M \to M $ is a $ C^1 $ semiflow. Denote the linear cocycle $ U $ on $ TM $ by $ U(t,\omega) = Dt(\omega) : T_\omega M \to T_{t\omega}M $. Assume $ U $ satisfies the following. Let $ \zeta > 0 $ be small (depending on the spectral gap condition).
	\begin{enumerate}[(i)]
		\item There are $ t'_0 > 0 $ and a positive function $ \mu $ such that $ | U(nt'_0,\omega) | \leq \mu^{nt'_0}(\omega) $ for all large integral $ n $ and $ \omega \in M $;
		\item $ \omega \mapsto |U(t'_0,\omega)| $ is $ \zeta $-almost uniformly continuous around $ M_1 $ (see \autoref{rmk:amplitude});
		\item $ \sup_{\omega \in M_1} \mu (\omega) < \infty $.
	\end{enumerate}
	\noindent Assume there is a $ t_1 > 0 $ such that $ t(M) \subset M^0_1 $ for all $ t \geq t_1 $.
	\item [(E5)] The functions in (A$ '_1 $) (B$ _1 $) condition are $ \zeta $-almost uniformly continuous around $ M_1 $ and $ \zeta $-almost continuous; see \autoref{rmk:amplitude}.

\end{enumerate}

(E1) will be satisfied, for example, (i) $ M $ is a uniformly regular Riemannian manifold on $ M_1 $ (see \cite{Ama15}), (ii) $ M $ has bounded geometry with $ M_1 $ far away from the boundary of $ M $, including the class of smooth compact Riemannian manifolds (see \cite{Ama15, Eld13}), (iii) $ M $ is the class of immersed manifolds in Banach spaces discussed \cite{BLZ99, BLZ08} (or see the hypothesis (H1) $ \sim $ (H4) in \autoref{normalH}), or (iv) see the class of immersed manifolds introduced in \cite[Chapter 6]{HPS77}; see \cite{Che18a} for details.

(E2) will be fulfilled, for example, $ X, Y $ have $ 2 $-th order bounded geometry (see \cite[Page 45]{Eld13} and \cite[Page 65]{Shu92}) or $ X, Y $ are the $ C^{2} $-uniform Banach bundles discussed in \cite[Chapter 6]{HPS77}; see also \cite{Che18a} for more examples.

The following abbreviation of spectral gap condition will be used.
Let $ \lambda, \theta : M \to \mathbb{R}_+ $.
$ \lambda\theta $, $ \max\{ \lambda, \theta \} $ are defined by
\[
(\lambda\theta) (\omega) = \lambda(\omega)\theta(\omega), ~\max\{ \lambda, \theta \}(\omega) = \max\{ \lambda(\omega), \theta(\omega) \}.
\]
The notation $ \bm {\lambda < 1} $ means that $ \sup_\omega \lambda(\omega) < 1 $.
If $ \theta < 1 $, then the meaning of the notation $ \bm{\lambda^{*\alpha} \theta} < 1 $ is different in different settings in \cite{Che18a}; for simplicity, here it means
\begin{enumerate}[(i)]
	\item $ \sup_{\omega} \lambda^\alpha(\omega) \theta(\omega) < 1 $ if $ \alpha = 1 $,
	\item $ \sup_\omega  \sup_{t \geq 0}\lambda(t\omega) \sup_{t \geq 0} \theta(t\omega) < 1 $ otherwise. Particularly if $ \lambda(t\omega) \leq \lambda(\omega) $ and $ \theta(t\omega) \leq \theta(\omega) $ for all $ (t,\omega) \in \mathbb{R}_+ \times M $, then this also means case (i); the functions in (A) (B) condition of $ H $ induced by differential equations usually are characterized in this case, see \autoref{diffCocycle}.
\end{enumerate}

\begin{thm}\label{thm:Regc}
	Assume \textnormal{(E1) $ \sim $ (E5)} hold with $ \xi,\zeta $ small (depending on the spectral gap condition). Let $ H: \mathbb{R}_+ \times X \times Y \rightarrow X \times Y $ be a cocycle correspondence over $ t $ with a generating cocycle $ (F,G) $. Let $ f $ be given in \autoref{thmAc} when $ i $ is an \textbf{invariant section} of $ H $ (i.e. $ \eta(\cdot,\cdot) \equiv 0 $) and assume $\alpha,\beta,\beta'$, $\lambda_s, \lambda_u$, $c$ are all bounded functions. Then we have the following results about the regularity of $ f $. (In the following, $ 0 < \alpha, \beta \leq 1 $ and for \textbf{all} $ \bm{t \in [a,b]} $.)
	\begin{enumerate}[(1)]
		\item $ f_\omega(0) = 0 $. If $ F_{t,(\cdot)}(\cdot), G_{t,(\cdot)}(\cdot) $ are continuous, so is $ f $. If $ (\omega, z) \mapsto D_zF_{t,\omega}(z), D_zG_{t, \omega}(z) $ are continuous, so is $ (\omega,x) \mapsto D_xf_\omega (x) $. Moreover, if $ DF_{t,\omega}(\cdot), DG_{t, \omega}(\cdot) $ are $ C^{0,\gamma} $ uniform for $ \omega $, and $ \lambda^{*\gamma\alpha}_s \lambda_s \lambda_u < 1 $,
		then $ Df_\omega(\cdot) $ is $ C^{0,\gamma\alpha} $ uniform for $ \omega $.

		\item Suppose
		(i) $ F_{t,(\cdot)}(\cdot), G_{t,(\cdot)}(\cdot) $ are uniformly (locally) $ C^{0,1} $ around $ M_1 $,
		(ii) $  (\max\{ \lambda^{-1}_s, 1 \} \mu)^{*\alpha} \lambda_s \lambda_u < 1 $.
		\textbf{Or} suppose
		(i$ ' $) $ F_{t,(\cdot)}(\cdot), G_{t,(\cdot)}(\cdot) $ are uniformly (locally) $ C^{1,1} $ around $ M_1 $,
		(ii$ ' $) $ (\frac{\mu}{\lambda_s})^{*\alpha} \lambda_s \lambda_u < 1 $.
		Then $ \omega \mapsto f_\omega(\cdot) $ is uniformly (locally) $ \alpha $-H\"older around $ M_1 $.

		\item Suppose
		(i) $ \omega \mapsto DF_{t,\omega}(0, 0), DG_{t,\omega}(0, 0) $ are uniformly (locally) $ C^{0,\gamma} $ around $ M_1 $,
		(ii) $  \mu^{* \gamma \alpha} \lambda_s \lambda_u < 1 $.
		Then $ \omega \mapsto Df_\omega(0) $ is uniformly (locally) $ C^{0,\gamma \alpha} $ around $ M_1 $.

		\item Suppose
		(i) $ D_zF_{t,(\cdot)}(\cdot), D_zG_{t, (\cdot)}(\cdot) $ are uniformly (locally) $ C^{0,1} $ around $ M_1 $,
		(ii) $ \lambda^2_s \lambda_u \mu^\alpha < 1 $, $ \lambda^{*\beta}_s \lambda_s \lambda_u < 1 $, $ \mu^{*\alpha} \lambda_s \lambda_u < 1 $.
		Then $ \omega \mapsto Df_\omega(\cdot) $ is locally $ \alpha \beta $-H\"older around $ M_1 $.

		\item\label{zz1} Suppose
		(i) $ F_{t,(\cdot)}(\cdot), G_{t,(\cdot)}(\cdot) $ are $ C^{1,1} $ around $ M_1 $ (see \cite[Remark 6.6]{Che18a}) and $ C^1 $ in $ X \times Y $,
		(ii) $ \lambda_s \lambda_u \mu < 1 $.
		Then $ f $ is $ C^1 $, $ \nabla_\omega f_\omega(0) = 0 $ for all $ \omega \in M_1 $ and there is a constant $ C $ such that $ |\nabla_\omega f_\omega(x)| \leq C |x| $ for all $ x \in X_\omega $, $ \omega \in M_1 $. Here $ \nabla f $ means the covariant derivative of $ f $ with respect to $ \mathcal{C}^X, \mathcal{C}^{Y} $; see e.g. \cite{Che18a}.
		Moreover, if an additional spectral gap condition holds: $ \lambda_s^{*\beta} \lambda_s \lambda_u < 1 $ and $ \max\{ 1, \lambda_s \}^{*\alpha} \lambda_s \lambda_u \mu < 1 $, then $ \nabla_\omega f_\omega(\cdot) $ is locally $ \alpha \beta $-H\"older uniform for $ \omega \in M $.

		\item Under \eqref{zz1}, assume for every $ t \in [a,b] $, $ U(t,\cdot) $ is uniformly (locally) $ C^{1,1} $ around $ M_1 $. Suppose
		$ \lambda^2_s \lambda_u < 1 $, $ \lambda^2_s \lambda_u \mu < 1 $, $ \max\{ \frac{\mu}{\lambda_s}, \mu \}^{*\alpha} \lambda_s \lambda_u \mu < 1 $.
		Then $ \omega \mapsto \nabla_\omega f_\omega(\cdot) $ is uniformly (locally) $ \alpha $-H\"older around $ M_1 $; this gives $ (\omega,x) \mapsto f_{\omega}(x) $ is uniformly locally $ C^{1,\alpha} $ around $ M_1 $.
	\end{enumerate}
\end{thm}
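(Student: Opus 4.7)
The plan is to reduce the continuous setting to the discrete setting already treated in \cite{Che18a}. Recall from the proof of \autoref{thmAc} that the bundle map $f$ was constructed from the discrete bundle correspondence $\widehat{H}(\omega) \triangleq H(t_0, \omega)$ over $\hat{u} \triangleq t_0 : M \to M$ for a fixed, sufficiently large $t_0 > 0$, and $f$ is uniquely characterized by the discrete invariance $\graph f_\omega \subset \widehat{H}(\omega)^{-1} \graph f_{t_0\omega}$. Hence all regularity assertions for $f$ can be obtained by applying the discrete regularity theorems from \cite{Che18a} (specifically \cite[Lemma 6.7, Lemma 6.12]{Che18a} and the base-regularity theorems in \cite[Sections 6--7]{Che18a}) to $\widehat{H}$, provided the continuous hypotheses translate to the required discrete ones.

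The key translations are the following. First, by the cocycle property $\widehat{H}(\omega) = H(t_*, (N-1)t_*\omega) \circ \cdots \circ H(t_*, \omega)$ where $t_0 = N t_*$ with $t_* \in [a, b]$, the uniform fiber- and base-regularity of $(F_{t_*, \omega}, G_{t_*, \omega})$ for $t_* \in [a, b]$ yields the analogous regularity of $(F_{t_0, \omega}, G_{t_0, \omega})$ by finite composition, using also the $C^1$ regularity of the semiflow $t$ from \textnormal{(E4)} when one tracks the base dependence. Second, the abbreviated spectral gap inequalities $\lambda^{*\alpha}\theta < 1$ are preserved under raising the rates to the $t_0$-th power, since they are stated in terms of suprema of the rate functions along the orbit. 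Third, the $\zeta$-almost uniform continuity requirements in \textnormal{(E4)}--\textnormal{(E5)} carry over to $\widehat{H}$ and to $U(t_0,\omega) = Dt_0(\omega)$ via finite composition with bounded constants.

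Given these translations, each item \textnormal{(1)}--\textnormal{(6)} is proved by: fixing $t_0$ as in the proof of \autoref{thmAc}, verifying the corresponding discrete hypothesis from the continuous one via the translations above, and invoking the appropriate discrete regularity theorem in \cite{Che18a}. Item \textnormal{(1)} is the pure fiber case and reduces to \cite[Lemma 6.7 and Lemma 6.12]{Che18a}; items \textnormal{(2)}--\textnormal{(4)} concern base and mixed regularity of local representations relative to the bundle atlases $\mathcal{A}, \mathcal{B}$; items \textnormal{(5)}--\textnormal{(6)} involve joint $C^1$ smoothness expressed via the covariant derivative with respect to the connections $\mathcal{C}^X, \mathcal{C}^Y$, where $\nabla_\omega f_\omega(0) = 0$ on $M_1$ reflects that the zero section is invariant and that the base derivative of the invariant graph, projected to the normal bundle, must vanish at $0$.

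The main obstacle is to verify that the uniform-regularity framework around $M_1$ (assumptions \textnormal{(E1)}--\textnormal{(E2)}) is preserved under the finite composition producing $\widehat{H}$ from $H(t_*, \cdot)$: one needs the uniform Lipschitzness of $\mathcal{C}^X, \mathcal{C}^Y$, the $C^{1,1}$-uniformity of the bundle atlases, and the assumption $t(M) \subset M^0_1$ for $t \geq t_1$ from \textnormal{(E4)} to ensure that trajectories eventually lie in the regular region and that the local-representation norms of $\widehat{H}$ remain uniformly controlled around $M_1$. Once this uniformity is established, each spectral gap condition appearing in \textnormal{(1)}--\textnormal{(6)} is exactly what the corresponding discrete regularity theorem in \cite{Che18a} demands, and the conclusions follow.
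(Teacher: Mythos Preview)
Your proposal is correct and follows essentially the same route as the paper: reduce to the discrete bundle correspondence $\widehat{H}(\omega)=H(t_0,\omega)$ over $\hat u=t_0$, observe that the regularity hypotheses on $F_{t,(\cdot)},G_{t,(\cdot)}$ for $t\in[a,b]$ transfer to $F_{t_0,(\cdot)},G_{t_0,(\cdot)}$ by finite composition (together with the Lipschitz control $d(t_0\omega',t_0\omega)\le(\mu(\omega)+\varsigma)^{t_0}d(\omega',\omega)$ coming from (E1) and (E4)), and then invoke the discrete regularity theorems of \cite{Che18a}. The paper's own proof is in fact briefer than yours, recording only the Lipschitz estimate for $t_0$ and the transfer of regularity to the time-$t_0$ generating maps before citing \cite[Theorem 6.2]{Che18a}.
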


\begin{proof}
	Here we mention a fact, i.e. from (E1) and (E4), for any given $ \varsigma > 0 $ and any sufficiently large $ t_0 = n t'_0 > t_1 $ (depending on $ \varsigma, t'_0 $), there is an $ \epsilon_1 > 0 $ such that if $ \omega' \in U_{\omega}(\epsilon_1) = \{ \omega' \in M: d(\omega', \omega_0) < \epsilon_1 \} $, $ \omega \in M_1 $, where $ d $ is the Finsler metric in each component of $ M $, then
	\[
	d(t_0(\omega'), t_0(\omega)) \leq (\mu(\omega) + \varsigma)^{t_0} d(\omega', \omega).
	\]
	Also, the above conditions on $ F_{t,(\cdot)}(\cdot) $, $ G_{t,(\cdot)}(\cdot) $, $ t \in [a,b] $, will imply the same conditions on $ F_{t_0,(\cdot)}(\cdot) $, $ G_{t_0,(\cdot)}(\cdot) $. So apply the regularity results in \cite{Che18a} (see also \cite[Theorem 6.2]{Che18a}) to $ \hat{u} = t_0 $, $ \overline{F}_\omega = F_{t_0,\omega} $, $ \overline{G}_\omega = G_{t_0,\omega} $ to give desired results.
\end{proof}

If $ H $ is induced by the class of differential equations studied in \autoref{diffCocycle} (i.e. \eqref{equ:main}), then the uniformly (locally) $ C^{0,1} $ or $ C^{1,1} $ properties of $ F_{t,(\cdot)}(\cdot), G_{t,(\cdot)}(\cdot) $ can be satisfied if we give the uniformly (locally) $ C^{0,1} $ or $ C^{1,1} $ smoothness assumptions on the tangent field (i.e. $ (\omega,z) \mapsto f(\omega)(z) $ in \eqref{equ:main}), the semiflow $ t(\cdot) $, and also the spectral projections and $ T_{1}, S_1 $ in the uniform dichotomy condition (i.e. \autoref{def:ud+}) satisfied by \eqref{equ:main}.
See \cite{Che18a} for general regularity results about $ f $.

\subsection{approximately normal hyperbolicity theory}\label{normalH}

Normally hyperbolic invariant manifold theory was studied comprehensively in \cite{Fen72, HPS77} with further development in \cite{BLZ98, BLZ99, BLZ08} for abstract infinite-dimensional dynamical systems in Banach spaces and in \cite{LW97, PS01} for partial differential equations in Banach spaces. Roughly, this theory gives results of (i) persistence and existence of center and center-(un)stable manifolds which maintain some smoothness, (ii) decoupling and linearization of the system along the normally hyperbolic invariant manifold, and (iii) asymptotic behaviors of the orbits around this invariant manifold. The result (iii) can be characterized by the so called strong (un)stable foliations in center-(un)stable manifolds.
This theory was expanded and extended to more general settings in our paper \cite{Che18b} which we will state briefly below in a version of continuous dynamical systems towards making it applicable to both well-posed and ill-posed differential equations. The reader can find more illustrations and remarks about this theory in \cite{Che18b} and see \autoref{generalB} for a more intuitive application to some classes of autonomous different equations in Banach spaces.

\textbf{(I)}. The setting for a $ C^{0,1} $ (immersed) submanifold $ \Sigma $ of a Banach space $ Z $ is the following.
Take a representation $ (\widehat{\Sigma}, \phi) $ of $ \Sigma $.
Let $ \widehat{\Sigma} $ be a $ C^0 $ manifold and $ \phi: \widehat{\Sigma} \to Z $ with $ \phi(\widehat{\Sigma}) = \Sigma $. For any $ \widehat{m} \in \widehat{\Sigma} $, let $ \widehat{U}_{\widehat{m}} (\epsilon) $ be the component of $ \phi^{-1} (\Sigma \cap \mathbb{B}_{\phi(\widehat{m})} (\epsilon) ) $ containing $ \widehat{m} $, where $ \mathbb{B}_{\phi(\widehat{m})} (\epsilon) = \{ m': |m' - m| < \epsilon \} $. Let $ \phi ( \widehat{U}_{\widehat{m}} (\epsilon) ) = U_{m,\gamma} (\epsilon) $, $ \widehat{U}_{\widehat{m}} = \bigcup_{\epsilon > 0} \widehat{U}_{\widehat{m}} (\epsilon) $, $ U_{m,\gamma} = \phi ( \widehat{U}_{\widehat{m}} ) $, where $ m = \phi(\widehat{m}) $, $ \gamma \in \Lambda(m) \triangleq \phi^{-1}(\phi(\widehat{m})) $. $ \phi_{m,\gamma} \triangleq \phi: \widehat{U}_{\widehat{m}} \to U_{m,\gamma} $ is homeomorphic with $ \widehat{U}_{\widehat{m}} $ open.
There are a family of projections $ \{ \Pi^{\kappa}_{m}: m \in \Sigma \} $, $ \kappa = s, c, u $, such that $ \Pi^{s}_{m} + \Pi^{c}_{m} + \Pi^{u}_{m} = \id $. Set $ \Pi^{h}_{m} = \id - \Pi^{c}_m $, and $ X^{\kappa}_{m} = R(\Pi^{\kappa}_{m}) $, $ \kappa = s, c, u, h $; also $ X^{\kappa_1\kappa_2}_{m} = X^{\kappa_1}_{m} \oplus X^{\kappa_1}_{m} $, where $ \kappa_1 \neq \kappa_2 \in \{ s,c, u \} $. For $ K \subset \Sigma $, $ \widehat{K}' \subset \widehat{\Sigma} $ and $ \kappa = s, c, u $, set
\begin{gather*}
\widehat{K} = \phi^{-1}(K) \subset \widehat{\Sigma}, ~\widehat{K}_{\epsilon} = \bigcup_{\widehat{m} \in \widehat{K}} \widehat{U}_{\widehat{m}}(\epsilon), ~ K_{\epsilon} = \phi(\widehat{K}_{\epsilon}), \\
{X}^{\kappa}_{m}(r) = \{ x \in {X}^{\kappa}_{m}: |x| < r \}, ~
{X}^{\kappa}_{\widehat{K}'} (r) = \{ (\widehat{m}',x): x \in {X}^{\kappa}_{\phi(\widehat{m}')}(r), \widehat{m}' \in \widehat{K}' \}.
\end{gather*}

We make the following assumption which is essentially due to \cite{BLZ99, BLZ08}; see also \cite[Appendix C]{Che18a} and \cite[Section 4.3]{Che18b} for more explanations about this class of (uniformly) immersed submanifolds.
\begin{enumerate}[(H1)]
	\item ($ C^{0,1} $ immersed submanifold). For any $ m \in \Sigma $, $ \exists \epsilon_m > 0 $, $ \exists \delta_0(m) > 0 $ such that
	\[
	\sup\left\{ \frac{|m_1 - m_2 - \Pi^c_{m}(m_1 - m_2)|}{|m_1 - m_2|} : m_1 \neq m_2 \in U_{m,\gamma} (\epsilon) \right\} \leq \chi_{m}(\epsilon) < 1 / 4,
	\]
	where $ \gamma \in \Lambda(m) $, $ 0 < \epsilon \leq \epsilon_{m} $ and $ \chi_{m}(\cdot) > 0 $ is increased, and
	\[
	X^c_{m}(\delta_0(m)) \subset \Pi^c_{m} (U_{m,\gamma} (\epsilon_{m}) - m),
	\]
	that is, $ \Sigma $ is a $ C^{0,1} $ immersed submanifold of $ Z $. Let $ K \subset \Sigma $. We further assume $ \Sigma $ has some local uniform properties around $ K $. Suppose $ \bm{\inf_{m \in K} \epsilon_{m} > \epsilon_1} > 0 $.

	\item (about $ \{\Pi^{\kappa}_m\} $). There are constants $ L > 0, L_0 > 0 $, such that (i) $ \sup_{m \in K}|\Pi^{\kappa}_m| \leq L_0 $ and (ii)
	\[
	|\Pi^{\kappa}_{m_1} - \Pi^{\kappa}_{m_2}| \leq L|m_1 - m_2|,
	\]
	where $ m_1, m_2 \in U_{m,\gamma} (\epsilon_1) $, $\gamma \in \Lambda(m), m \in K $, $ \kappa = s, c, u $.

	\item (almost uniformly differentiable at $ K $). $ \sup_{m \in K} \chi_{m}(\epsilon) \leq \chi(\epsilon) < 1/ 4 $ if $ 0 < \epsilon \leq \epsilon_1 $, where $ \chi(\cdot) $ is an increased function.

	\item (uniform size neighborhood at $ K $). There is a constant $ \delta_0 > 0 $ such that $ \inf_{m \in K} \delta_0(m) > \delta_0 $.
\end{enumerate}
Some examples are the following. 
\begin{enumerate}[$ \bullet $]
	\item $ \Sigma $ is an open set of a complemented closed subspace of $ Z $ and $ d(K, \partial \Sigma) > 0 $.
	\item Any $ C^1 $ compact embedding submanifold $ \Sigma $ of $ Z $ with $ d(K, \partial \Sigma) > 0 $ and $ K \subset \Sigma $, where $ \partial \Sigma $ is the boundary of $ \Sigma $.
	\item If $ \phi: \widehat{\Sigma} \to Z $ is a $ C^{1} $ \emph{leaf immersion} (see \cite[Section 6]{HPS77}) with $ \widehat{\Sigma} $ boundaryless, then $ \phi(\widehat{\Sigma}) = \Sigma $ with itself satisfies (H1) $ \sim $ (H4) (by smooth approximation on $ T\Sigma $); in this case $ \widehat{\Sigma} $ is finite-dimensional. A special case is a leaf of a compact foliation.
	\item The finite-dimensional cylinders (or $ \mathbb{T}^{n}, \mathbb{S}^n $) in $ Z $. See also \cite{BLZ99} for an example.
\end{enumerate}

For the smooth result of invariant manifolds, we need, in some sense, the $ C^0 $ continuity of $ m \mapsto \Pi^{\kappa}_{m} $ in all $ \Sigma $ in the immersed topology.
\begin{enumerate}[(H0)]
	\item For small $ \xi_0 > 0 $ and every $ \widehat{m} \in \widehat{\Sigma} $, assume $ \limsup_{\widehat{m}' \to \widehat{m}} |\Pi^{\kappa}_{\phi(\widehat{m}')} - \Pi^{\kappa}_{\phi(\widehat{m})}| \leq \xi_0 $, $ \kappa = s, c, u $. That is, $ \Pi^{\kappa}_{(\cdot)} $ is $ \xi_0 $-almost continuous in the following sense.
\end{enumerate}

We say a function $ g : \Sigma \to \mathfrak{M} $ is \emph{$ \xi $-almost uniformly continuous} around $ K $ (in the immersed topology), if the \emph{amplitude} of $ g $ around $ K \subset \Sigma $, defined by
\begin{equation*}
\mathfrak{A}_{K}(\epsilon) \triangleq \mathfrak{A}_{K,g}(\epsilon) \triangleq \sup \{ d(g(m), g(m_0)): m \in U_{m_0,\gamma}(\epsilon), m_0 \in K, \gamma \in \Lambda(m_0) \},
\end{equation*}
where $ d $ is the metric in $ \mathfrak{M} $, satisfies $ \mathfrak{A}_{K}(\epsilon) \leq \xi $ as $ \epsilon \to 0 $. We say $ g $ is \emph{$ \xi $-almost continuous} (in the immersed topology) if for each $ m \in \Sigma $, one has $ \mathfrak{A}_{\{m\}}(\epsilon) \leq \xi $ as $ \epsilon \to 0 $. A family of functions $ g^{\lambda} : \Sigma \to \mathfrak{M} $, $ \lambda \in \Theta $, are said to be \emph{$ \xi $-almost equicontinuous} around $ K $ (in the immersed topology), if $ \sup_{\lambda \in \Theta}\mathfrak{A}_{K,g^{\lambda}}(\epsilon) \leq \xi $ as $ \epsilon \to 0 $. \label{almost}

\textbf{(II)}. Let $ t : m \mapsto tm \triangleq t(m) $ be a $ C_0 $ semiflow on $ \Sigma $ in the immersed topology; this means that there is a $ C_0 $ semiflow $ \widehat{t} $ on $ \widehat{\Sigma} $ such that $ \phi \circ \widehat{t} = t \circ \phi $.

\textbf{(III)}. Let $ \widehat{\Pi}^\kappa_{m} $, $ \kappa = s, c, u $, be projections such that $ \widehat{\Pi}^s_{m} + \widehat{\Pi}^c_{m} + \widehat{\Pi}^u_{m} = \id $, $ m \in \Sigma $. Set $ \widehat{X}^{\kappa}_m = R(\widehat{\Pi}^{\kappa}_m) $ and
$ \widehat{X}^{\kappa_1 \kappa_2}_m = \widehat{X}^{\kappa_1}_m \oplus \widehat{X}^{\kappa_2}_m $, $ m \in \Sigma $, $ \kappa_1, \kappa_2 \in \{ s, c, u \} $, $ \kappa_1 \neq \kappa_2 $. $ \widehat{X}^{\kappa}_{m}(r) = \{ x \in \widehat{X}^{\kappa}_{m}: |x| < r \} $.

\textbf{(IV)}. Let $ H: \mathbb{R}_+ \times Z \to Z $ be a continuous correspondence.
Let $ \widehat{H}(t,m) = H(t)(\cdot + m) - t(m) $, i.e. $ \graph \widehat{H}(t,m) = \graph H(t) - (m, t(m)) \subset Z \times Z $; so $ \widehat{H} $ is a continuous cocycle correspondence over $ t $.

\label{defi:ABk}
For brevity, we adopt the following notions. Suppose for all $ t,s \geq 0 $ and $ m \in \Sigma $,
\[
\widehat{H}(t,s(m)) \sim (\widehat{F}^{\kappa}_{t,s(m)}, \widehat{G}^{\kappa}_{t,s(m)}): \widehat{X}^{\kappa}_{s(m)}(r_{t,1}) \oplus \widehat{X}^{\kappa_1}_{s(m)}(r_{t,2}) \to \widehat{X}^{\kappa}_{(t+s)(m)}(r'_{t,1}) \oplus \widehat{X}^{\kappa_1}_{(t+s)(m)} (r'_{t,2}),
\]
where $ \kappa_1 = csu - \kappa $, and where $ r_{t,i}, r'_{t,i} > 0 $, $ i = 1, 2 $, are constants independent of $ m $ but might depend on $ t $ and $ \inf \{r_{i,t}, r'_{i,t}: t \in [a,b] \} > 0 $ for any $ b > a \geq 0 $.
\begin{enumerate}[$ \bullet $]
	\item If $ (\widehat{F}^{\kappa}_{t,s(m)}, \widehat{G}^{\kappa}_{t,s(m)}) $ satisfies \textnormal{(A$ ' $)}$(\alpha_1(m)$, $c(m) {\lambda}^t_1(m))$ (see \autoref{defAB}), then we say $ \widehat{H} \approx (\widehat{F}^{\kappa}, \widehat{G}^{\kappa}) $ satisfies \textbf{(A$ '_1 $) ($ \bm{\alpha_1, \lambda_1; c} $) condition in $ \kappa $-direction};
	
	\item if $ (\widehat{F}^{\kappa}_{t,s(m)}, \widehat{G}^{\kappa}_{t,s(m)}) $ satisfies \textnormal{(A$ ' $)}$(\alpha(m)$, $c(m) {\lambda}^t_1(m))$ condition (see \autoref{defAB}) and if $ t \geq \hat{\varepsilon}_1 $, \textnormal{(A)}($\alpha(m)$; $ \alpha_1(m) $, $c(m) {\lambda}^t_{1}(m)$) condition (see \autoref{defAB}), where $  \hat{\varepsilon}_1 > 0 $ is a constant, then we say $ \widehat{H} \approx (\widehat{F}^{\kappa}, \widehat{G}^{\kappa}) $ satisfies \textbf{(A$ _0 $) ($ \bm{\alpha; \alpha_1, \lambda_1; c} $) condition in $ \kappa $-direction};
	
	\item similarly, \textbf{(B$ '_1 $) ($ \bm{\beta_1, \lambda_2; c} $) condition in $ \kappa $-direction} and \textbf{(B$ _0 $) ($ \bm{\beta; \beta_1, \lambda_2; c} $) condition in $ \kappa $-direction}.
\end{enumerate}

\begin{enumerate}[({A}1)]
	\item (submanifold condition) Let $ \Sigma $ with $ K \subset \Sigma $ satisfy (H1) $ \sim $ (H4).

	\item ((strictly) inflowing condition) Let $ \bm{t_0} > 0 $ be fixed and $ \xi > 0 $ small. The semiflow in \textbf{(II)} satisfies the following. \textbf{(i)} $ t_0 (\Sigma) \subset K $;
	\textbf{(ii)} there are functions $ v^{t} : \Sigma \to Z $, $ t \in [0,t_0] $, which are {$ \xi $-almost equicontinuous} around $ K $ (in the immersed topology), and a (small) $ \xi_1 > 0 $ such that $ \sup_{t\in [0,t_0]}\sup_{m \in \Sigma}|v^{t}(m) - t(m)| \leq \xi_1 $.

	\item (approximately normal hyperbolicity condition) $ \Sigma $ satisfies the following `\emph{approximately $ cs $-normal hyperbolicity}' condition with respect to $ H $.
	Assume there is a small $ \xi_2 > 0 $ such that $ \sup_{m \in K}|\widehat{\Pi}^\kappa_m - \Pi^\kappa_m| \leq \xi_2 $, $ \kappa = s, c, u $.

	\begin{enumerate}[(a)]
		\item ((A$ ' $)(B) condition)
		Suppose $ \widehat{H} \approx (\widehat{F}^{cs}, \widehat{G}^{cs}) $ satisfies (A$ '_1 $)($ {\alpha, \lambda_{u}; c} $) (B$ _0 $)($ {\beta; \beta', \lambda_{cs}; c} $) condition in $ cs $-direction.
		Moreover,

		\textbf{(i)} (angle condition) $ \sup_m \alpha(m) \beta'(m) < 1/2 $, $ \inf_m \{\beta(m) - \beta'(m)\} > 0 $;

		\textbf{(ii)} (spectral condition) $ \sup_m \lambda_{u}(m) < 1 $;

		\textbf{(iii)} the functions $ \alpha(\cdot), \beta(\cdot), \beta'(\cdot) $, $ \lambda_{u}(\cdot), \lambda_{cs}(\cdot) $ are \emph{bounded}, $ \xi $-almost continuous and $ \xi $-almost uniform continuous around $ K $ (in the immersed topology) and also $ c(\cdot) $ is bounded.

		\item (approximation) There is a small $ \eta > 0 $ such that
		\[
		\sup_{t\in[0,t_0]}\sup_{m \in \Sigma} |\widehat{F}^{cs}_{t,m}(0,0)| \leq \eta, ~
		\sup_{t\in[0,t_0]}\sup_{m \in \Sigma} |\widehat{G}^{cs}_{t,m}(0,0)| \leq \eta.
		\]

		\item ($ s $-contraction)
		If $ (0, \hat{x}^s_i, \hat{x}^{u}_i) \times (\tilde{x}^{c}_i, \tilde{x}^s_i, \tilde{x}^{u}_{i}) \in \graph H(t_0, m) \cap \{\widehat{X}^{cs}_{m}(r_{t_0,1}) \oplus \widehat{X}^{u}_{m}(r_{t_0,2}) \times \widehat{X}^{cs}_{t_0(m)}(r'_{t_0,1}) \oplus \widehat{X}^{u}_{t_0(m)} (r'_{t_0,2})\} $, $ i = 1,2 $, $ m \in \Sigma $, and $ |\tilde{x}^{u}_{1} - \tilde{x}^{u}_{2}| \leq B |\hat{x}^s_1 - \hat{x}^s_2| $, then
		\[
		|\tilde{x}^{s}_{1} - \tilde{x}^{s}_{2}| \leq \lambda^*_s  |\hat{x}^s_1 - \hat{x}^s_2|,
		\]
		where $ B > \sup_{m \in M}c(\omega)\lambda^{t_0}_{cs}(m) \beta(m) $ is some constant and $ \lambda^*_s < 1 $.
	\end{enumerate}

	\item (smooth condition) \textbf{(i)} Assume for every $ m \in \Sigma $, $ \widehat{F}^{cs}_{t_0,m}(\cdot) $, $ \widehat{G}^{cs}_{t_0,m}(\cdot) $ are $ C^1 $, and (H0) holds with $ \xi_0 $ sufficiently small.

	\noindent \textbf{(ii)} (spectral gap condition) $ \sup_m \lambda_{cs}(m) \lambda_u(m) < 1 $.

\end{enumerate}

We say a submanifold $ \Sigma $ of $ X $ satisfying assumption (A1) is \emph{approximately (strictly) inflowing} and \emph{approximately $ cs $-normally hyperbolic} with respect to $ H $ if the assumptions (A2) (A3) hold; sometimes we also say $ \Sigma $ is \emph{(strictly) inflowing} with respect to the semiflow $ t $.

\label{defi:orbit}
\begin{enumerate}[$ \bullet $]
	\item Under (A1) (A2) (i), we say $ \{z_{t}\}_{t \geq 0} $ in $ X^s_{\widehat{\Sigma}}(\sigma) \oplus X^u_{\widehat{\Sigma}} (\varrho) $ is a ($ \sigma, \varrho, \varepsilon $)-\emph{forward orbit} (or \emph{forward orbit} for short) of $ H $,
	if $ z_t = (\widehat{m}_t, x^s_t, x^u_t) \in X^s_{\widehat{\Sigma}}(\sigma) \oplus X^u_{\widehat{\Sigma}} (\varrho) $, $ \phi(\widehat{m}_t) + x^s_t + x^u_t \in H(t-s)(\phi(\widehat{m}_{s}) + x^s_{s} + x^u_{s}) $ ($ t \geq s $), $ \widehat{m}_{t_0} \in \widehat{U}_{\widehat{t_0}(\widehat{m}_{0})}(\varepsilon) $ and $ \widehat{m}_t \in \widehat{U}_{\widehat{(t - t_1)}(\widehat{m}_{t_1})}(\varepsilon) $ where $ 0 \leq t - t_1 \leq t_0, t \geq t_0, t_1 \geq 0 $.
	
	\item Similarly, under (A1) (A2) (i), we say $ \{z_{n}\}_{n \in \mathbb{N}} $ in $ X^s_{\widehat{\Sigma}}(\sigma) \oplus X^u_{\widehat{\Sigma}} (\varrho) $ is a ($ \sigma, \varrho, \varepsilon $)-\emph{forward orbit} (or \emph{forward orbit} for short) of $ H(t_0) $,
	if $ z_n = (\widehat{m}_n, x^s_n, x^u_n) \in X^s_{\widehat{\Sigma}}(\sigma) \oplus X^u_{\widehat{\Sigma}} (\varrho) $, $ \phi(\widehat{m}_n) + x^s_n + x^u_n \in H(t_0)(\phi(\widehat{m}_{n-1}) + x^s_{n-1} + x^u_{n-1}) $ and $ \widehat{m}_{n} \in \widehat{U}_{\widehat{t_0}(\widehat{m}_{n-1})}(\varepsilon) $, $ n = 1, 2, \ldots $.
	
	\item Similar notion of ($ \sigma, \varrho, \varepsilon $)-\emph{backward orbit} of $ H(t_0) $ (resp. $ H $) can be defined, if $ t_0: \Sigma_1 (\subset \Sigma) \to \Sigma $ is invertible where $ \Sigma_1 \subset K $ (resp. $ t $ is a flow).
	For instance, $ \{z_{-t}\}_{t \geq 0} $ in $ X^s_{\widehat{\Sigma}}(\sigma) \oplus X^u_{\widehat{\Sigma}} (\varrho) $ is a ($ \sigma, \varrho, \varepsilon $)-\emph{backward orbit} (or \emph{backward orbit} for short) of $ H $,
	if $ z_{-t} = (\widehat{m}_{-t}, x^s_{-t}, x^u_{-t}) \in X^s_{\widehat{\Sigma}}(\sigma) \oplus X^u_{\widehat{\Sigma}} (\varrho) $, $ \phi(\widehat{m}_{-t}) + x^s_{-t} + x^u_{-t} \in H(s-t)(\phi(\widehat{m}_{-s}) + x^s_{-s} + x^u_{-s}) $ ($ 0 \leq t \leq s $), $ \widehat{m}_{-t_0} \in \widehat{U}_{\widehat{-t_0}(\widehat{m}_{0})}(\varepsilon) $ and $ \widehat{m}_t \in \widehat{U}_{\widehat{(t - t_1)}(\widehat{m}_{t_1})}(\varepsilon) $ where $ -t_0 \leq t - t_1 \leq 0, t \leq -t_0, t_1 \leq 0 $.
	$ \{z_{t}\}_{t \in \mathbb{F}} $ is a ($ \sigma, \varrho, \varepsilon $)-\emph{orbit} if $ \{z_{t}\}_{t \in \mathbb{F}_+} $ is a ($ \sigma, \varrho, \varepsilon $)-{forward orbit} and $ \{z_{-t}\}_{t \in \mathbb{F}_+} $ is a ($ \sigma, \varrho, \varepsilon $)-{backward orbit}, where $ \mathbb{F} = \mathbb{R} \text{ or } \mathbb{Z} $.
\end{enumerate}

\begin{thm}[center-stable manifold]\label{thm:A}
	Let (A1) (A2) (A3) hold. If $ \xi, \xi_1, \xi_2, \eta $ are small, and $ \chi(\epsilon) $ is small when $ \epsilon $ is small, (or more precisely, there are $ \zeta_1, \zeta_2 > 0 $ and a function $ \eta(\cdot, \cdot) > 0 $, if $ \xi, \xi_1, \xi_2, \chi(\epsilon) < \zeta_1 $ and $ \eta \leq \eta(\epsilon, \chi(\epsilon)) < \zeta_1 $, provided $ \epsilon < \zeta_2 $,) then there are positive small $ \varepsilon, \sigma, \varrho $ (depending on $ \chi(\epsilon), \epsilon $) such that the following hold.

	\begin{enumerate}[(1)]
		\item In $ X^s_{\widehat{\Sigma}}(\sigma) \oplus X^u_{\widehat{\Sigma}} (\varrho) $, there is a set $ W^{cs}_{loc}(\Sigma) $ called the \textbf{local center-stable manifold} of $ \Sigma $, which is defined by
		\begin{align*}
		W^{cs}_{loc}(\Sigma) = \{ z \in X^s_{\widehat{\Sigma}}(\sigma) \oplus X^u_{\widehat{\Sigma}} (\varrho): \exists & \{ z_n \}_{n\geq 0} \text{ such that } z_0 = z,  \\
		& \text{ and it is a ($ \sigma,\varrho,\varepsilon $)-forward orbit of } H(t_0) \}.
		\end{align*}
		Moreover, it has the following properties.
		\begin{enumerate}[(i)]
			\item \label{it:h0} $ W^{cs}_{loc}(\Sigma) $ can be represented as a graph of a map. That is there is a map $ h_0 $ such that $ h_0(\widehat{m}, \cdot): X^s_{\phi(\widehat{m})} (\sigma) \to X^u_{\phi(\widehat{m})}(\varrho) $, $ \widehat{m} \in \widehat{\Sigma} $, and
			\[
			W^{cs}_{loc}(\Sigma) = \graph h_0 \triangleq \{ (\widehat{m}, x^s, h_0(\widehat{m}, x^s)): x^s \in X^{s}_{\phi(\widehat{m})}(\sigma), \widehat{m} \in \widehat{\Sigma} \}.
			\]
			Moreover, $ h_0 $ in some sense is (uniformly) Lipschitz around $ K $, that is, there is a function $ \mu(\cdot) $, such that $ \mu(m) = (1+\chi_{*}) \beta'(m) + \chi_{*} $ with $ \chi_{*} \to 0 $ as $ \epsilon,\chi(\epsilon),\eta \to 0 $,
			and for every $ m \in K $, it holds
			\[
			|\Pi^u_{m}( h_0(\widehat{m}_1, x^s_1) - h_0(\widehat{m}_2, x^s_2)  )| \leq \mu(m) \max\{| \Pi^c_{m}(  \phi(\widehat{m}_1) - \phi(\widehat{m}_2)  ) |, | \Pi^s_{m}( x^s_1 - x^s_2 ) |\},
			\]
			where $ \widehat{m}_i \in \widehat{U}_{\widehat{m}}(\varepsilon) $, $ x^{s}_i \in X^s_{\phi(\widehat{m}_i)} (\sigma) $, $ \widehat{m} \in \phi^{-1}(m) $. 
			\item Furthermore, the map $ h_0 $ is unique in the sense that if $ h'_0 $ satisfies (i) and $ \graph h'_0 \subset H(t_0)^{-1} \graph h'_0 $, then $ h'_0 = h_0 $.
			\item We use the following notations: for any $ \widehat{K}' \subset \widehat{\Sigma} $ and $ 0 < \sigma' < \sigma $, set
			\[
			W^{cs}_{loc}(\widehat{K}') \triangleq \graph h_0|_{X^s_{ \widehat{K}' } (\sigma) }, ~W^{cs}_{\sigma'}(\widehat{K}') \triangleq \graph h_0|_{X^s_{ \widehat{K}' } (\sigma') }.
			\]
			Then (a) $ W^{cs}_{loc}(\Sigma) \subset H(t)^{-1} W^{cs}_{loc}(\widehat{K}_{\varepsilon}) $ if $ t \geq t_0 $; (b) $ W^{cs}_{\sigma_1}(\widehat{K}_{\varepsilon}) \subset H(t)^{-1} W^{cs}_{loc}(\Sigma) $ for some $ \sigma_1 < \sigma $ if $ t \geq 0 $; (c) $ \Sigma \subset W^{cs}_{loc}(\Sigma) $, if $ \eta = 0 $.
		\end{enumerate}

		\item $ H(t_0) $ in $ W^{cs}_{loc}(\Sigma) $ induces a map meaning that for any $ z_0 = (\widehat{m}_0, x^s_0, x^u_0) \in W^{cs}_{loc}(\Sigma) $, there is only one $ z_1 = (\widehat{m}_1, x^s_1, x^u_1) \triangleq \phi(\widehat{m}_1) + x^s_1 + x^u_1 \in W^{cs}_{loc}(\Sigma) $ such that $ z_1 \in H(t_0)(z_0) $, where $ \widehat{m}_1 \in \widehat{U}_{\widehat{u}(\widehat{m}_0)} (\varepsilon) $, $ x^\kappa_i \in X^{\kappa}_{\phi(\widehat{m}_i)} $, $ \widehat{m}_i \in \widehat{\Sigma} $, $ i = 0,1 $, $ \kappa = s, u $. Denote the map by $ \mathcal{H}: z_0 \mapsto z_1 $. Similarly, $ H $ in $ W^{cs}_{\sigma_1}(\widehat{K}_{\varepsilon}) $ induces a semiflow $ \varphi $, i.e. for every $ z = (\widehat{m}_0, x^s_0, x^u_0) \in W^{cs}_{\sigma_1}(\widehat{K}_{\varepsilon}) $, there is a unique ($ \sigma, \varrho, \varepsilon $)-forward orbit $ \{z_{t}\}_{t\geq 0} \subset W^{cs}_{loc}(\Sigma) $ of $ H $ such that $ z_0 = z $; define $ \varphi^t(z_0) = z_{t} $.

		\item In addition, let (A4) hold with $ \xi_0 $ (in (H0)) sufficiently small, then $ W^{cs}_{loc}(\Sigma) $ is a $ C^1 $ immersed submanifold of $ X $.
		Furthermore, suppose $ \eta = 0 $ and $ D_{x^{cs}}\widehat{G}^{cs}_{t_0,m}( 0, 0, 0 ) = 0 $ for all $ m \in \Sigma $. Then $ T_m W^{cs}_{loc}(\Sigma) = \widehat{X}^{cs}_{m} $ for all $ m \in \Sigma $.
	\end{enumerate}
\end{thm}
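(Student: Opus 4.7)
The strategy is to reduce the whole statement to a discrete-time problem for the time-$t_0$ map $H(t_0)$ and then invoke the local invariant-graph results of Section \ref{continuous case}, in particular \autoref{thm:local}, combined with the regularity theorem \autoref{thm:Regc}. Working with $H(t_0)$ is natural because (A2)(i) forces $t_0(\Sigma)\subset K$, so every iterate of the base dynamics lands in the region where (H1)--(H4) supply uniform geometric estimates.

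The first block of work would be the local set-up. For each $\widehat{m}\in\widehat{\Sigma}$ I would use the chart $\phi_{\widehat{m}}:\widehat{U}_{\widehat{m}}(\epsilon)\to U_{m,\gamma}(\epsilon)$ together with the dynamical splitting $Z=\widehat{X}^{s}_{\phi(\widehat{m})}\oplus\widehat{X}^{c}_{\phi(\widehat{m})}\oplus\widehat{X}^{u}_{\phi(\widehat{m})}$ to parametrize a neighbourhood of $\Sigma$ in the form $(\widehat{m},x^{s},x^{u})$, with the $c$-direction absorbed into the base $\widehat{\Sigma}$. Viewed in these coordinates, $H(t_0)$ becomes a bundle correspondence over $\widehat{t_0}$ with fibers $\widehat{X}^{s}_{\phi(\widehat{m})}$ on the domain side and $\widehat{X}^{u}_{\phi(\widehat{m})}$ on the target side, and with generating bundle map $(\widehat{F}^{cs}_{t_0,m},\widehat{G}^{cs}_{t_0,m})$ from (A3)(a). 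The $\eta$-approximation in (A3)(b) translates into the zero section being an $\eta$-pseudo-stable section, the angle and spectral conditions in (A3)(a)(i),(ii) furnish the angle/spectral hypotheses of \autoref{thm:local} in $cs$-direction, and the boundedness and $\xi$-almost continuity in (A3)(a)(iii) give the remaining regularity requirements. Applying \autoref{thm:local} with the fiber over $\widehat{m}$ being $\widehat{X}^{s}_{\phi(\widehat{m})}(\sigma)$ produces a bundle map $h_{0}(\widehat{m},\cdot):\widehat{X}^{s}_{\phi(\widehat{m})}(\sigma)\to\widehat{X}^{u}_{\phi(\widehat{m})}(\varrho)$ with $\graph h_{0}\subset H(t_{0})^{-1}\graph h_{0}$, Lipschitz constant at most $\beta'(m)$, and $|h_{0}(\widehat{m},0)|\lesssim\eta$; this yields items (i) and (ii) of part (1), with uniqueness inherited from that of \autoref{thm:local}.

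The extension from discrete to continuous time, i.e.\ the properties listed in item (iii) of (1) and the semiflow in (2), follows the template used in ($\star$)--($\star\star$) of the proof of \autoref{thmAc}: given any $t\geq 0$, one constructs the ``one-step'' candidate $h_{0}^{t}$ satisfying $\graph h_{0}^{t}\subset H(t)^{-1}\graph h_{0}$ via \autoref{lem:bas1}(1), verifies $\graph h_{0}^{t}\subset H(t_{0})^{-1}\graph h_{0}^{t}\circ\widehat{t_{0}}$ by the cocycle property of $H$, and then concludes $h_{0}^{t}=h_{0}$ from the uniqueness clause already proved. Inclusion (iii)(a) is immediate from $t_{0}(\Sigma)\subset K$; (iii)(b) requires a smaller radius $\sigma_{1}$ because points near the boundary of $\graph h_{0}|_{X^{s}_{\widehat{K}_{\varepsilon}}(\sigma)}$ may escape under $H(t)$ for short $t$, and the choice of $\sigma_{1}$ is dictated by the Lipschitz bounds on $\widehat{F}^{cs}_{t,m}$ together with the $s$-contraction (A3)(c); (iii)(c) is the $\eta=0$ specialization of the uniqueness. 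The induced map $\mathcal{H}$ in part (2) is the restriction of $H(t_{0})$ to $\graph h_{0}$, where single-valuedness is exactly the content of \autoref{lem:gr}(1); the semiflow $\varphi^{t}$ then arises by combining $\mathcal{H}$ with the short-time transport in (iii)(b). For part (3), under (A4) with $\xi_{0}$ small, \autoref{thm:Regc}(5) applied to the same discrete correspondence gives the $C^{1}$ smoothness of $h_{0}$ because $\lambda_{cs}\lambda_{u}<1$ is exactly the $\lambda_{s}\lambda_{u}\mu<1$ condition there (with $\mu\equiv 1$ along $\Sigma$ in $cs$-direction); the tangent space identity $T_{m}W^{cs}_{loc}(\Sigma)=\widehat{X}^{cs}_{m}$ under the stated hypothesis follows because the linearized graph-transform fixed point is forced to be zero when $D_{x^{cs}}\widehat{G}^{cs}_{t_0,m}(0,0,0)=0$.

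The hardest step will be the Lipschitz estimate in item (i) of (1), which is formulated in the \emph{geometric} projections $\Pi^{\kappa}_{m}$ coming from (A1) rather than in the \emph{dynamical} projections $\widehat{\Pi}^{\kappa}_{m}$ in which \autoref{thm:local} is naturally applied. Converting between the two introduces errors controlled only by $\xi_{2}$ and by $\chi(\epsilon)$ (the latter measuring how flat $\Sigma$ is in $U_{m,\gamma}(\epsilon)$ relative to $X^{c}_{m}$), and the stated bound with $\mu(m)=(1+\chi_{*})\beta'(m)+\chi_{*}$ requires showing that, once these errors are accounted for, the resulting inequalities remain consistent (in particular that the ``mixed'' angle condition $\alpha(m)\mu(m)<1$ still holds) so that the graph property survives re-expression in $\Pi^{\kappa}_{m}$-coordinates. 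This bookkeeping, together with the choice of the size parameters $\varepsilon,\sigma,\varrho$ simultaneously small enough to make all of $\xi,\xi_{1},\xi_{2},\chi(\epsilon),\eta$ usable, is the genuinely delicate part; once done, everything else reduces to direct citations of the abstract theory.
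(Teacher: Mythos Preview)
There is a genuine gap in the reduction to \autoref{thm:local}. The generating maps $(\widehat{F}^{cs}_{t_0,m},\widehat{G}^{cs}_{t_0,m})$ supplied by (A3)(a) act on $\widehat{X}^{cs}_{m}\times\widehat{X}^{u}_{t_0(m)}$, so the cocycle correspondence they define has base $\Sigma$, with ``$X$-fibers'' $\widehat{X}^{cs}_m$ and ``$Y$-fibers'' $\widehat{X}^u_m$ --- not, as you write, base $\widehat{\Sigma}$ with fibers $\widehat{X}^s_m$. If you keep this $(cs,u)$-split, then in the language of \autoref{thm:local} one has $\lambda_s=\lambda_{cs}$, and the spectral hypothesis $\sup_\omega\lambda_s(\omega)<1$ of \autoref{thm:local}(ii)(b) fails: (A3)(a)(ii) gives only $\sup_m\lambda_u(m)<1$, and neither $\sup_m\lambda_{cs}(m)<1$ nor $\sup_m\lambda_{cs}(m)\lambda_u(m)<1$ is assumed for existence (the latter appears only in (A4)(ii) for smoothness). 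If instead you try to ``absorb the $c$-direction into the base'' so that the fibers become $\widehat{X}^s_m$, then $H(t_0)$ is no longer a bundle correspondence over $\widehat{t_0}$ at all, because the image base point $\widehat{m}_1$ depends on the fiber data $(x^s_0,x^u_0)$ and not merely on $\widehat{m}_0$. Either way \autoref{thm:local} does not apply. The passage from a family of local graphs over $\widehat{X}^{cs}_m$ to a single map $h_0(\widehat{m},\cdot):X^s_{\phi(\widehat{m})}(\sigma)\to X^u_{\phi(\widehat{m})}$ in tubular-neighbourhood coordinates is precisely the content of the normal-hyperbolicity construction; it uses (H1)--(H4) and the $s$-contraction (A3)(c) in an essential way, and is not ``bookkeeping.''

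The paper does not attempt such a reduction: it cites \cite{Che18b} for all the discrete-time conclusions (existence, Lipschitz estimate and uniqueness of $h_0$, the induced map $\mathcal{H}$, and the $C^1$ smoothness under (A4)), and devotes its effort only to the continuous-time invariance (1)(iii) and the semiflow in (2). That argument is also more involved than the $(\star)$--$(\star\star)$ template from \autoref{thmAc} you invoke: it works point by point in tubular-neighbourhood coordinates, introduces the local representation $f_{\widehat{m}_t}$ of $W^{cs}_{loc}$ at the shifted base point $\widehat{m}_t=\widehat{t}(\widehat{m})$, and combines the already-known $H(t_0)$-invariance with the cocycle identity $H(t+t_0)=H(t_0)\circ H(t)$ to push an $H(t)$-preimage back into $\graph h_0$; the $s$-contraction (A3)(c) is used along the way to keep intermediate points inside the correct balls.
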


\begin{proof}
	The results associated with $ H(t_0) $, in fact, had been proved in our paper \cite{Che18b}; so we refer the reader to see that paper for details. What we need to prove is the center-stable manifold is also invariant under $ H(t) $ (i.e. (1) (iii)) and $ H $ in $ W^{cs}_{loc}(\widehat{K}_{\varepsilon}) $ induces a semiflow (i.e. (2)).

	Fix $ t $ such that $ 0 < t < t_0 $. First note that the constant $ \varepsilon $ is chosen such that (H2) $ \sim $ (H4) also hold and so the (uniformly) Lipschitz continuity of $ h_0 $ in (1) (i) when $ K $ is replaced by $ K_{2\varepsilon} $; see \cite{Che18b} for details. Take any $ \widehat{m} \in \widehat{K}_{\varepsilon} $ and set $ m = \phi(\widehat{m}) $ ($ \in K_{\varepsilon} $), $ m_t = t(m) $ and $ \widehat{m}_t = \widehat{t}(\widehat{m}) $. By assumption (A2) (ii), we can assume $ m_t \in K_{2\varepsilon} $, i.e. $ \widehat{m}_t \in \widehat{K}_{2\varepsilon} $, if $ \xi, \xi_1 $ are small. The local representation $ f_{\widehat{m}_t} $ of $ W^{cs}_{loc}(\widehat{K}_{2\varepsilon}) $ at $ \widehat{m}_t $ is given by
	\[
	m_{t} + x^{s} + x^{c} + f_{\widehat{m}_t}(x^{s}, x^{c}) = \phi(\widehat{m}') + \overline{x}^s + h_0(\widehat{m}', \overline{x}^s), \tag{$ \blacklozenge $}
	\]
	where $ x^{\kappa} \in X^{\kappa}_{m_t}(c_1 \sigma_{*}) $, $ \kappa = s, c $, ($ c_1 $ is smaller than $ 1 $ but sufficiently close to $ 1 $), $ \widehat{m}' \in \widehat{U}_{\widehat{m}_t}(\varepsilon) $, $ \overline{x}^s \in X^{s}_{\phi(\widehat{m}')}(\sigma_{*}) $; here $ \sigma_{*} $ can be taken in $ [\chi_{*} \varepsilon, \sigma] $ for some small $ \chi_{*} > 0 $ depending on $ \varepsilon, \chi(\varepsilon), \eta $ and $ \chi_{*} \to $ as $ \varepsilon, \chi(\varepsilon), \eta \to 0 $ (see \cite{Che18b}). Also, note that $ \lip f_{\widehat{m}_t}(\cdot) \approx \beta'(\widehat{m}_t) $. By \autoref{lem:bas1}, for any $ x^s_0 \in X^{s}_{m} (\sigma_1) $,
	there are unique $ x^u_0 \in X^{u}_{m} $ and $ x^{\kappa} \in X^{\kappa}_{m_t}(c_1 \sigma) $, $ \kappa = s, u $, such that
	\[
	m_{t} + x^{s} + x^{c} + f_{\widehat{m}_t}(x^{s}, x^{c}) \in H(t)(m + x^s_0 + x^u_0), \tag{$ \blacklozenge \blacklozenge $}
	\]
	where $ \sigma_1 = O(\chi_{*}\varepsilon) \in (\chi_{*} \varepsilon, \sigma) $; also we have $ x^u_0 \in X^{u}_{m} (\varrho_1) $, where $ \varrho_1 = O(\chi_{*}\varepsilon) < \varrho $; write
	$ m_{t} + x^{s} + x^{c} + f_{\widehat{m}_t}(x^{s}, x^{c}) $ as ($ \blacklozenge $).
	Next, if we show there is $ (\widehat{m}_1, \overline{x}^s_1, \overline{x}^u_1) \in X^s_{\widehat{K}_{\varepsilon}}(\sigma_1) \oplus X^u_{\widehat{K}_{\varepsilon}} (\varrho_1) $, where $ \widehat{m}_1 \in \widehat{U}_{\widehat{t}_0(\widehat{m})} (\varepsilon) $, such that
	\[
	\phi(\widehat{m}_1) + \overline{x}^s_1 + \overline{x}^{u}_1 \in H(t_0)(m + x^s_0 + x^u_0) \cap H(t)^{-1} (\graph h_0),
	\]
	where $ H(t)^{-1} (\graph h_0) $ is constructed as the above way (i.e. ($ \blacklozenge \blacklozenge $)), then from the characterization of $ W^{cs}_{loc}(\Sigma) $ (i.e. conclusion (1)), we see $ x^u_0 = h_0(\widehat{m}, x^s_0) $, which yields $ W^{cs}_{\sigma_1}(\widehat{K}_{\varepsilon}) \subset H(t)^{-1} W^{cs}_{loc}(\Sigma) $. By the cocycle property of $ H $ and $ \graph h_0 \subset H(t_0)^{-1} \graph h_0 $, there is a unique $ (\widehat{m}_2, \overline{x}^s_2, \overline{x}^u_2) \in \graph h_0 $, where $ \widehat{m}_2 \in \widehat{U}_{\widehat{t}_0(\widehat{m}')} (\varepsilon) $, such that $ \phi(\widehat{m}_2) + \overline{x}^s_2 + \overline{x}^u_2 \in H(t_0)( \phi(\widehat{m}') + \overline{x}^s + h_0(\widehat{m}', \overline{x}^s)) $; that is
	\[
	m + x^s_0 + x^u_0 \in H(t)^{-1} \circ H(t_0)^{-1} (\phi(\widehat{m}_2) + \overline{x}^s_2 + \overline{x}^u_2) = H(t_0)^{-1} \circ H(t)^{-1} (\phi(\widehat{m}_2) + \overline{x}^s_2 + \overline{x}^u_2),
	\]
	and so there are $ \widetilde{x}^{\kappa} \in X^{\kappa}_{t_0(m)} $, $ \kappa = s, c, u $, such that
	\[
	\begin{cases}
	t_0(m) + \widetilde{x}^{s} + \widetilde{x}^{c} + \widetilde{x}^{u} \in H(t_0) (m + x^s_0 + x^u_0), \\
	t_0(m) + \widetilde{x}^{s} + \widetilde{x}^{c} + \widetilde{x}^{u} \in  H(t)^{-1} (\phi(\widehat{m}_2) + \overline{x}^s_2 + \overline{x}^u_2).
	\end{cases}
	\]
	By assumption (A3) (c) about $ H(t_0) $ and $ x^s_0 \in X^{s}_{m} (\sigma_1) $, we have $ \widetilde{x}^{\kappa} \in X^{\kappa}_{t_0(m)}(c_1 \sigma_1) $, $ \kappa = s, c $, and $ \widetilde{x}^{u} \in X^{u}_{t_0(m)}(c_1 \varrho_1) $. From the detailed construction of tubular neighborhoods around $ K $ of $ \Sigma $ in $ X $ (see \cite{Che18b}), we can write
	\[
	t_0(m) + \widetilde{x}^{s} + \widetilde{x}^{c} + \widetilde{x}^{u} = \phi(\widehat{m}_1) + \overline{x}^s_1 + \overline{x}^{u}_1,
	\]
	where $ \overline{x}^s_1 \in X^s_{\phi(\widehat{m}_1)}(\sigma_1) $, $ \overline{x}^u_1 \in X^u_{\phi(\widehat{m}_1)} (\varrho_1) $, and $ \widehat{m}_1 \in \widehat{U}_{\widehat{t}_0(\widehat{m})} (\varepsilon) $. Note that from assumption (A2) (ii) and
	\[
	\begin{split}
	|\phi(\widehat{m}_2) - t(t_0(m))| = |\phi(\widehat{m}_2) - t_0(\phi(\widehat{m}')) + t_0(\phi(\widehat{m}')) - t_0(m_{t}) |, \\
	|t(\phi(\widehat{m}_1)) - \phi(\widehat{m}_2)| = |t(\phi(\widehat{m}_1)) - t(t_0(m)) + t(t_0(m)) - \phi(\widehat{m}_2)|,
	\end{split}
	\]
	we know $ |t(\phi(\widehat{m}_1)) - \phi(\widehat{m}_2)| $ can be made small if $ \varepsilon $ and $ \xi, \xi_1 $ are small, and so
	\[
	\phi(\widehat{m}_2) + \overline{x}^s_2 + \overline{x}^u_2 = t(\phi(\widehat{m}_1)) + \hat{x}^s + \hat{x}^c + f_{\widehat{t}(\widehat{m}_1)}(\hat{x}^s, \hat{x}^c) \in H(t)(\phi(\widehat{m}_1) + \overline{x}^s_1 + \overline{x}^{u}_1).
	\]
	Since $ |\overline{x}^{\kappa}_{i}| \leq O(\chi_{*} \varepsilon) $, $ i = 1,2 $, $ \kappa = s, u $, we can further get $ \widehat{m}_2 \in \widehat{U}_{\widehat{t}(\widehat{m}_1)} (\varepsilon) $ when $ \varepsilon, \chi(\varepsilon), \eta $ is small (in order to make $ \chi_{*} $ small). Therefore, $ \phi(\widehat{m}_2) + \overline{x}^s_2 + \overline{x}^u_2 \in H(t)^{-1} (\graph h_0) $ is constructed as ($ \blacklozenge \blacklozenge $). This completes the proof of $ W^{cs}_{\sigma_1}(\widehat{K}_{\varepsilon}) \subset H(t)^{-1} W^{cs}_{loc}(\widehat{K}_{2\varepsilon}) \subset H(t)^{-1} W^{cs}_{loc}(\Sigma) $ for $ 0 < t < t_0 $.

	As $ W^{cs}_{loc}(\Sigma) \subset H(t_0)^{-1} W^{cs}_{loc}(\widehat{K}_{\varepsilon}) $, one has (a) $ W^{cs}_{\sigma_1}(\Sigma) \subset H(t)^{-1} W^{cs}_{loc}(\widehat{K}_{\varepsilon}) $ for all $ t \geq t_0 $ and (b) $ W^{cs}_{\sigma_1}(\widehat{K}_{\varepsilon}) \subset H(t)^{-1} W^{cs}_{loc}(\Sigma) $ for all $ t \geq 0 $.
	Now for any $ z_0 = (\widehat{m}_0, x^s_0, x^u_0) \in W^{cs}_{\sigma_1}(\widehat{K}_{\varepsilon}) $, there are unique $ z_{nt_0} = (\widehat{m}_{nt_0}, x^s_{nt_0}, x^u_{nt_0}) \triangleq \phi(\widehat{m}_{nt_0}) + x^s_{nt_0} + x^u_{nt_0} \in W^{cs}_{\sigma_1}(\widehat{K}_{\varepsilon}) $ such that $ z_{nt_0} \in H(t_0)(z_{(n-1)t_0}) $, $ n = 1,2,\ldots $, where $ \widehat{m}_{nt_0} \in \widehat{U}_{\widehat{t}_0(\widehat{m}_{(n-1)t_0})} (\varepsilon) $, $ x^\kappa_{nt_0} \in X^{\kappa}_{\phi(\widehat{m}_{nt_0})} $, $ \widehat{m}_{nt_0} \in \widehat{K}_{\varepsilon} $, $ \kappa = s, u $; moreover, for $ (n-1)t_0 < t < nt_0 $, there are unique $ z_{t} = (\widehat{m}_{t}, x^s_{t}, x^u_{t}) \triangleq \phi(\widehat{m}_{t}) + x^s_{t} + x^u_{t} \in W^{cs}_{loc}(\widehat{K}_{2\varepsilon}) $ such that $ z_{t} \in H(t - (n-1)t_0)(z_{(n-1)t_0}) $, where $ \widehat{m}_{t} \in \widehat{U}_{\widehat{s}(\widehat{m}_{(n-1)t_0})} (\varepsilon) $ and $ s = t - (n-1)t_0 $. So $ \{z_{t}\}_{t \geq 0} $ is a ($ \sigma, \varrho, \varepsilon $)-\emph{forward orbit} of $ H $; we can let $ \varphi^t(z_0) = z_{t} $. The proof is complete.
\end{proof}

For the existence of strong stable foliation, we need the following assumption.
\begin{enumerate}[(A5)]
	\item (strong stable foliation condition) \textbf{(a)}
	Suppose $ \widehat{H} \approx (\widehat{F}^{s}, \widehat{G}^{s}) $ satisfies (A$ '_1 $)($ \alpha_{cu} $, $ \lambda_{cu} $; $ c $) (B$ _0 $)($ \beta_s $; $ \beta'_s $, $ \lambda_{s} $; $ c $) condition in $ s $-direction.
	Moreover,

	\noindent\textbf{(i)} (angle condition) $ \sup_m \alpha_{cu}(m) \beta'_s(m) < 1/2 $, $ \inf_m \{\beta_s(m) - \beta'_s(m)\} > 0 $;

	\noindent\textbf{(i$ ' $)} furthermore, $ \alpha_{cu}(m) \leq 1 $, $ \beta'(m) \leq 1 $, $ \beta'(m) \leq \beta'_s(m) $, $ m \in \Sigma $;

	\noindent\textbf{(ii)} (spectral gap condition) $ \sup_{m} \lambda_s(m) < 1 $, $ \sup_m \lambda_{s}(m)\lambda_{cu}(m) < 1 $;

	\noindent\textbf{(iii)} $ \alpha_{cu}(\cdot) $, $ \beta_{s}(\cdot) $, $ \beta'_{s}(\cdot) $, $ \lambda_s(\cdot) $, $ \lambda_{cu}(\cdot) $ are \emph{bounded}, $ \xi $-almost continuous and $ \xi $-almost uniform continuous around $ K $ (in the immersed topology).

	\noindent\textbf{(b)} (smooth condition)  Assume for every $ m \in \Sigma $, $ \widehat{F}^{s}_{t_0,m}(\cdot) $, $ \widehat{G}^{s}_{t_0,m}(\cdot) $ are $ C^1 $, and (A4) holds.
\end{enumerate}

In the following, we use the {notation}: $ a_n \lesssim b_n $, $ n \to \infty $ ($ a_n \geq 0 $, $ b_n > 0 $), meaning that $ \sup_{n\geq 0}b^{-1}_n a_n <\infty $. \label{notation1}

The following theorem is a corresponding result of \cite[Theorem II]{Che18b} for the  discrete version. The invariance of the strong stable foliation about $ H(t) $ (i.e. the following conclusion (ii) (d) (e)) can be proved as the same way as \autoref{thm:A}, so the details are omitted.

\begin{thm}[strong stable foliation]\label{thm:B}
	In the context of \autoref{thm:A}, let (A5) (a) hold.
	\begin{enumerate}[(1)]
		\item For possibly further smaller $ \eta, \epsilon, \chi(\epsilon) $, there are $ \sigma_0, \sigma_1 > 0 $ ($ \sigma_0 < \sigma_1 < \sigma $), an open $ V \subset W^{cs}_{loc}(\Sigma) $ (in the immersed topology) and a foliation $ \mathcal{W}^{ss} $ of $ V $ (called the \textbf{strong stable foliation}) such that $ W^{cs}_{\sigma_1}(\Sigma) \subset V $, and each leaf $ W^{ss}(z_0) $ and each small plaque $ W^{ss}_{\sigma_{z_0}}(z_0) $ of $ \mathcal{W}^{ss} $ (with diameter $ \sigma_{z_0} $) through $ z_0 \in V $ have the following properties:
		\begin{enumerate}[(i)]
			\item (Lipschitz representation)
			$ W^{ss}_{\sigma_{z_0}} (z_0) $ can be represented as a Lipschitz graph of $ X^{s}_{\phi(\widehat{m})} (\sigma_0) \to X^{cu}_{\phi(\widehat{m})} $ with a Lipschitz constant less than $ \approx \beta_{s}(\phi(\widehat{m})) $ and $ \sigma_{z_0} = \sigma_0 $, where $ z_0 = (\widehat{m}, x^s, x^u) \in W^{cs}_{\sigma_1}(\widehat{K}_{\varepsilon}) $.

			\item (invariance)
			(a) $ z_0 \in W^{ss}_{\sigma_{z_0}} (z_0) $, $
			\mathcal{H} (W^{ss}_{\sigma_{z_0}} (z_0)) \subset W^{ss}_{\sigma_0} (\mathcal{H}(z_0)) $, $ \mathcal{H} (W^{ss}(z_0)) \subset W^{ss}(\mathcal{H}(z_0)) $, $ z_0 \in V $; (b) $ \mathcal{H}(W^{cs}_{loc}(\Sigma)) \subset W^{cs}_{\sigma_1} (\widehat{K}_\varepsilon) $; (c) $ W^{ss} (z_0) = \bigcup_{z \in W^{ss} (z_0) } W^{ss}_{\sigma_{z}} (z) $;
			(d) $ \varphi^{t}(W^{ss}(z_0)) \subset W^{ss}(\varphi^{t}(z_0)) $ for $ t \geq t_0 $, $ z_0 \in V $; (e) $ \varphi^{t} (W^{ss}_{\sigma_0}(z)) \subset W^{ss}_{\sigma_0}(\varphi^{t}(z)) $ for $ t \geq 0 $, $ z \in W^{cs}_{\sigma_1} (\widehat{K}_\varepsilon) $.

			\item (characterization) if $ z \in W^{ss} (z_0) $, then $ |\mathcal{H}^n(z) - \mathcal{H}^n(z_0)| \lesssim \varepsilon^{(n)}_{s} (z_0) $, $ n \to \infty $;
			if $ z \in V $ and $ |\mathcal{H}^n(z) - \mathcal{H}^n(z_0)| \lesssim \varepsilon^{(n)}_{s} (z_0) $, $ n \to \infty $, then $ z \in W^{ss}(z_0) $, where $ z_0 = (\widehat{m}, x^s, x^u) \in V $, the function $ \varepsilon_s(\cdot) $ satisfies $ \lambda_s(\phi(\widehat{m})) + \varsigma < \varepsilon_s(z_0) < \lambda^{-1}_{cu}(\phi(\widehat{m})) - \varsigma $ for small $ \varsigma > 0 $ depending on $ \epsilon, \chi(\epsilon), \eta $ such that $ \varsigma \to 0 $ as $ \epsilon, \chi(\epsilon), \eta \to 0 $ and $ \sup \varepsilon_{s}(\cdot) < 1 $,
			\begin{gather*}
			\varepsilon^{(n)}_{s} (z_0) = \varepsilon_{\kappa} ( (\mathcal{H})^{n-1} (z_0)) \cdots \varepsilon_{\kappa} ( \mathcal{H} (z_0)) \varepsilon_{\kappa} ( z_0 ).
			\end{gather*}
			In fact, if $ z, z_0 \in W^{cs}_{\sigma_1} (\widehat{K}_\varepsilon) $, then $ z \in W^{ss}(z_0) $ if and only if $ |\varphi^t(z) - \varphi^t(z_0)| \lesssim \varepsilon^{t-r}_{s} (\varphi^r(z_0)) $ as $ t \to \infty $ for a large $ r > 0 $.

			\item $ \mathcal{W}^{ss} $ is a $ C^0 $ foliation; in fact, $ W^{cs}_{\sigma_1}(\widehat{K}_{\varepsilon}) \ni z_0 \mapsto W^{ss}_{\sigma_0} (z_0) $ is uniformly (locally) H\"older.

			\item \label{q11} In addition, let (A5) (b) hold. Then each small plaque $ W^{ss}_{\sigma_{z_0}}(z_0) $ is $ C^1 $.

			\item Under (v) with additional assumption that $ D\widehat{F}^{\kappa}_{t_0,m}(\cdot) $, $ D\widehat{G}^{\kappa}_{t_0,m}(\cdot) $, $ m \in \Sigma $, $ \kappa = cs, s $, are equicontinuous, it holds that $ W^{cs}_{\sigma_1}(\widehat{K}_{\varepsilon}) \ni z_0 \mapsto W^{ss}_{\sigma_{0}}(z_0) $ is uniformly $ C_0 $ in $ C^1 $-topology in bounded subsets (in the immersed topology).
		\end{enumerate}

		\item (invariant case) Assume $ \eta = 0 $.
		\begin{enumerate}[(i)]
			\item Then for $ z_0 \in W^{cs}_{loc}(\Sigma) $, $ \mathcal{H}^n(z_0) \to \Sigma $. In fact, the convergence is uniform for $ z_0 $; that is $ \mathcal{H}^n(W^{cs}_{loc}(\Sigma)) \to \Sigma $ as $ n \to \infty $. Also, $ \varphi^t(W^{cs}_{\sigma_1}(\widehat{K}_{\varepsilon})) \to \Sigma $ as $ t \to \infty $.
			\item Let (A5) (b) hold. Further, assume for all $ m \in \Sigma $,
			\[
			D_{x^{cs}}\widehat{G}^{cs}_{t_0,m}( 0, 0, 0 ) = 0,~
			D_{x^{s}}\widehat{G}^{s}_{t_0,m}( 0, 0, 0 ) = 0,
			\]
			then $ T_{m}W^{ss}_{\sigma_{m}}(\widehat{m}) = \widehat{X}^s_{m} $, where $ m = \phi(\widehat{m}) \in \Sigma $.

			\item In addition, assume that (a) $ z \mapsto \widehat{F}^s_{t_0,m}(z), \widehat{G}^s_{t_0,m}(z) $ are $ C^{1,1} $ uniform for $ m \in \Sigma $, that (b) $ \Sigma \in C^{1} $, and $ m \mapsto \Pi^{\kappa}_m $, $ \kappa = s, c, u $ are $ C^1 $ in the immersed topology, and that (c) $ \sup_m\lambda_{cs}(m) \lambda_{s}(m) \lambda_{cu}(m) < 1 $. Then $ \mathcal{W}^{ss} $ is a $ C^1 $ foliation (or more precisely, a $ C^1 $ bundle over $ \widehat{\Sigma} $).
		\end{enumerate}
	\end{enumerate}
\end{thm}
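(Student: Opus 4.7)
The plan is to reduce to the discrete setting of \cite[Theorem II]{Che18b} by choosing $\mathcal{H}=H(t_0)$ as the generator and then promoting the resulting structures to continuous time. First, I would verify that the hypotheses of the discrete strong stable foliation theorem are met: assumption (A1) (A2) give the submanifold and inflowing structure, (A3) gives approximately $cs$-normal hyperbolicity for $\mathcal{H}$, and (A5)(a)(i)--(iii) together with (A5)(b) (when needed) give exactly the stable-direction (A$'_1$)(B$_0$) condition plus the smoothness and spectral-gap hypotheses required by the discrete version. Invoking that theorem yields $\sigma_0<\sigma_1<\sigma$, an open set $V\subset W^{cs}_{loc}(\Sigma)$, and the foliation $\mathcal{W}^{ss}$ with the Lipschitz graph representation (i), the $\mathcal{H}$-invariance statements in (ii)(a)(b)(c), the decay characterization (iii), and the H\"older/$ C^1 $ regularity (iv)--(vi). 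At this stage I already have everything in the theorem except (ii)(d)(e) for the continuous semiflow $\varphi^t$ and the continuous-time consequences in (2)(i).

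Next I would upgrade to $\varphi^t$ for $0\le t\le t_0$, following verbatim the strategy used in the proof of \autoref{thm:A}. Fix $0<t<t_0$ and $z_0\in W^{cs}_{\sigma_1}(\widehat K_\varepsilon)$. By (2) of \autoref{thm:A}, $\varphi^t(z_0)$ is the unique continuation of $z_0$ inside $W^{cs}_{loc}(\widehat K_{2\varepsilon})$ under $H(t)$, and $\varphi^{t_0}=\mathcal{H}$ on this set. For $z\in W^{ss}_{\sigma_0}(z_0)$, applying $H(t)$ produces a unique point $\varphi^t(z)\in W^{cs}_{loc}(\widehat K_{2\varepsilon})$ by the same Lipschitz-graph argument (\autoref{lem:bas1}) used in \autoref{thm:A}, and by the cocycle property $\mathcal{H}^n(\varphi^t(z))=\varphi^t(\mathcal{H}^n(z))$. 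Using the characterization (iii) together with $|\varphi^t(\cdot)-\varphi^t(\cdot)|\lesssim 1$ uniformly on small balls (from the Lipschitz graph representation of $W^{cs}_{loc}$), I obtain $|\mathcal{H}^n(\varphi^t(z))-\mathcal{H}^n(\varphi^t(z_0))|\lesssim\varepsilon_s^{(n)}(\varphi^t(z_0))$, so $\varphi^t(z)\in W^{ss}(\varphi^t(z_0))$. A diameter estimate shows that $\varphi^t(z)$ actually sits in the local plaque $W^{ss}_{\sigma_0}(\varphi^t(z_0))$ provided $\sigma_0,\varepsilon,\chi(\varepsilon),\eta$ are small enough, giving (ii)(d)(e); invariance for $t\ge t_0$ is then immediate by writing $t=nt_0+r$ with $r\in[0,t_0)$ and composing.

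For the invariant case (2)(i), with $\eta=0$ the image $\mathcal{H}^n(W^{cs}_{loc}(\Sigma))$ stays inside $W^{cs}_{\sigma_1}(\widehat K_\varepsilon)$ after one step, and the discrete part of (iii) together with the $s$-contraction in (A3)(c) yields $\sup_{z\in W^{cs}_{loc}(\Sigma)}\mathrm{dist}(\mathcal{H}^n(z),\Sigma)\to 0$ geometrically; the continuous statement $\varphi^t(W^{cs}_{\sigma_1}(\widehat K_\varepsilon))\to\Sigma$ then follows by interpolating with uniformly bounded one-step dynamics. The tangent-space identification in (2)(ii), namely $T_mW^{ss}_{\sigma_m}(\widehat m)=\widehat X^s_m$, is a direct transfer of the corresponding discrete statement in \cite{Che18b} once one checks that the vanishing conditions $D_{x^{cs}}\widehat G^{cs}_{t_0,m}(0)=0$ and $D_{x^s}\widehat G^s_{t_0,m}(0)=0$ are exactly what the discrete proof uses. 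Finally, (2)(iii) (the $C^1$ foliation conclusion) follows by invoking the regularity portion of \cite[Theorem II]{Che18b} under the strengthened spectral-gap condition $\sup_m\lambda_{cs}(m)\lambda_s(m)\lambda_{cu}(m)<1$ and the $C^1$ assumption on $\Sigma$ and the projections.

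The main obstacle is the $0<t<t_0$ invariance step: $H(t)$ is a correspondence rather than a map, so one cannot simply transport leaves by applying $H(t)$. The argument must simultaneously (i) choose the correct branch of $H(t)$ using the Lipschitz graph structure of $W^{cs}_{loc}$ and the angle estimate $\alpha\beta'<1/2$, (ii) verify that the chosen branch lands in the target plaque rather than some nearby one (this uses smallness of $\varepsilon,\chi(\varepsilon),\eta$ together with the diameter bounds on $W^{ss}_{\sigma_0}$), and (iii) verify the resulting trajectory satisfies the decay rate $\varepsilon_s^{(n)}$ \emph{uniformly} as $t$ varies in $[0,t_0]$. These are essentially the same technical ingredients as in \autoref{thm:A}, but applied in the finer stable-direction splitting instead of the $cs$/$u$ splitting, and with the decay characterization (iii) replacing the pure graph uniqueness.
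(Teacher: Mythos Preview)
Your proposal is correct and follows essentially the same approach as the paper: invoke \cite[Theorem II]{Che18b} for the discrete map $\mathcal{H}=H(t_0)$ to obtain all conclusions except the continuous-time invariance (ii)(d)(e) and the continuous part of (2)(i), then upgrade to $\varphi^t$ by the method of \autoref{thm:A}. The paper's own proof is in fact terser than yours---it simply states that the theorem is the continuous analogue of \cite[Theorem II]{Che18b} and that (ii)(d)(e) ``can be proved as the same way as \autoref{thm:A}, so the details are omitted.'' Your use of the decay characterization (iii) together with the semiflow commutation $\mathcal{H}^n\circ\varphi^t=\varphi^t\circ\mathcal{H}^n$ is a clean way to carry out that omitted step; the paper's \autoref{thm:A} argument instead works via graph uniqueness, but either route is adequate here.
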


See \cite{Che18b} for more results about the regularity of the strong stable foliation.

Next we consider the manifold being both approximately (strictly) inflowing and overflowing and approximately (full) normal hyperbolicity. We need the following assumptions.
\begin{enumerate}[({B}1)]
	\item Let (A1) hold but $ K = \Sigma $.

	\item \textbf{(i)} Assume $ t: \Sigma \to \Sigma $ is a $ C_0 $ flow (in the immersed topology) and let $ \xi > 0 $ be small and $ \bm{t_0 > 0} $.
	\textbf{(ii)} There are functions $ v^{t} : \Sigma \to X $, $ t \in [-t_0,t_0] $, which are {$ \xi $-almost equicontinuous} (in the immersed topology), and a (small) $ \xi_1 > 0 $ such that $ \sup_{t\in [-t_0,t_0]}\sup_{m \in \Sigma}|v^{t}(m) - t(m)| \leq \xi_1 $.

	\item $ \Sigma $ satisfies the following `\emph{approximately (full) normal hyperbolicity}' condition with respect to $ H $.
	Assume there is a small $ \xi_2 > 0 $ such that $ \sup_{m \in \Sigma}|\widehat{\Pi}^\kappa_m - \Pi^\kappa_m| \leq \xi_2 $, $ \kappa = s, c, u $.
	\begin{enumerate}[(a)]
		\item ((A) (B) condition)
		Let $ \kappa_1 = cs $, $ \kappa_2 = u $, $ \kappa = cs $, or $ \kappa_1 = s $, $ \kappa_2 = cu $, $ \kappa = cu $.
		Suppose $ \widehat{H} \approx (\widehat{F}^{\kappa}, \widehat{G}^{\kappa}) $ satisfies (A$ _0 $) ($ {\alpha}_{\kappa_2} $; ${\alpha}_{\kappa_2}' $, $ {\lambda}_{\kappa_2} $; $ c $) (B$ _0 $) ($ {\beta}_{\kappa_1} $; $ {\beta}_{\kappa_1}' $, $ {\lambda}_{\kappa_1} $; $ c $) condition in $ \kappa_1 $-direction.
		Moreover,

		\textbf{(i)} (angle condition) $ \sup_m \alpha'_{\kappa_2}(m) \beta'_{\kappa_1}(m) < 1/2 $,
		$ \inf_m\{ \alpha_{\kappa_2}(m) - \alpha'_{\kappa_2}(m) \} > 0 $, $ \inf_m\{ \beta_{\kappa_1}(m) - \beta'_{\kappa_1}(m) \} > 0 $;

		\textbf{(i$ ' $)} $ \sup_{m}\alpha'_{u}(m)\beta'_{s}(m) < 1 $, $ \alpha'_{cu}(m) \leq 1 $, $ \beta'_{cs}(m) \leq 1 $, $  \beta'_{cs}(m) \leq \beta'_{s}(m) $, $ \alpha'_{cu}(m) \leq \alpha'_{u}(m) $, $ m \in \Sigma $;

		\textbf{(ii)} (spectral condition) $ \sup_m \lambda_{s}(m) < 1 $, $ \sup_m \lambda_{u}(m) < 1 $, $ \sup_m \lambda_{\kappa_1}(m) \lambda_{\kappa_2}(m) < 1 $;

		\textbf{(iii)} $ \alpha_{\kappa_2}(\cdot) $, $ \alpha'_{\kappa_2}(\cdot) $, $ \beta_{\kappa_1}(\cdot) $, $ \beta'_{\kappa_1}(\cdot) $, $ \lambda_{\kappa_2}(\cdot) $, $ \lambda_{\kappa_1}(\cdot) $ are \emph{bounded} and $ \xi $-almost uniform continuous (in the immersed topology).

		\item (approximation) There is a small $ \eta > 0 $ such that for $ \kappa = cs, cu $,
		\begin{gather*}
		\sup_{t\in[0,t_0]}\sup_{m \in \Sigma} |\widehat{F}^{\kappa}_{t,m}(0,0)| \leq \eta, ~
		\sup_{t\in[0,t_0]}\sup_{m \in \Sigma} |\widehat{G}^{\kappa}_{t,m}(0,0)| \leq \eta.
		\end{gather*}

		\item
		($ s $-contraction and $ u $-expansion)
		For some small $ r_0 > 0 $ and any $ (\hat{x}^s_i, \hat{x}^c_i, \hat{x}^{u}_i) \times (\tilde{x}^s_i, \tilde{x}^{c}_i, \tilde{x}^{u}_{i}) \in \graph H(t_0, m) \cap \{\widehat{X}^{s}_{m}(r_{0}) \oplus \widehat{X}^{c}_{m}(r_{0}) \oplus \widehat{X}^{u}_{m}(r_{0}) \times \widehat{X}^{s}_{t_0(m)}(r_{0}) \oplus \widehat{X}^{c}_{t_0(m)}(r_{0}) \oplus \widehat{X}^{u}_{t_0(m)} (r_{0})\} $, $ i = 1,2 $, $ m \in \Sigma $,
		\begin{enumerate}[(i)]
			\item if $ \tilde{x}^s_1 = \tilde{x}^s_2 = 0 $ and $ |\tilde{x}^{u}_{1} - \tilde{x}^{u}_{2}| \leq B |\hat{x}^s_1 - \hat{x}^s_2| $, then $ |\tilde{x}^{s}_{1} - \tilde{x}^{s}_{2}| \leq \lambda^*_s  |\hat{x}^s_1 - \hat{x}^s_2| $;
			\item if $ \tilde{x}^{u}_{1} = \tilde{x}^{u}_{2} = 0 $ and $ |\hat{x}^{s}_{1} - \hat{x}^{s}_{2}| \leq B |\tilde{x}^u_1 - \tilde{x}^u_2| $, then $ |\hat{x}^{u}_{1} - \hat{x}^{u}_{2}| \leq \lambda^*_u  |\tilde{x}^u_1 - \tilde{x}^u_2| $,
		\end{enumerate}
		where $ B > \max \{\sup_{m \in M}c(\omega)\lambda^{t_0}_{cs}(m) \beta_{cs}(m), \sup_{m \in M}c(\omega)\lambda^{t_0}_{cu}(m) \alpha_{cu}(m) \} $ is some (large) constant and $ \lambda^*_s < 1, \lambda^*_u < 1 $.

	\end{enumerate}

	\item (smooth condition) Assume for every $ m \in \Sigma $, $ \kappa = cs, cu $, $ \widehat{F}^{\kappa}_{t_0,m}(\cdot) $, $ \widehat{G}^{\kappa}_{t_0,m}(\cdot) $ are $ C^1 $.
\end{enumerate}

We say a submanifold $ \Sigma $ of $ X $ satisfying assumption (B1) is \emph{approximately invariant} and \emph{approximately (full) normally hyperbolic} with respect to $ H $ if the assumptions (B2) (B3) hold. The following result is a corollary of \autoref{thm:A} and \autoref{thm:B} by using the notion of \emph{dual correspondence} (see \autoref{dual}); see also \cite[Corollary III]{Che18b}

\begin{cor}[trichotomy case]\label{cor:tri}
	Let (B1) (B2) (B3) hold. If $ \xi, \xi_1, \xi_2, \eta $ are small, and $ \chi(\epsilon) $ is small when $ \epsilon $ is small, then there are positive $ \varepsilon $, $ \sigma $, $ \varrho $ small such that the following hold.

	\begin{enumerate}[(1)]
		\item In $ X^s_{\widehat{\Sigma}} \oplus X^u_{\widehat{\Sigma}} $, there are three sets $ W^{cs}_{loc}(\Sigma) $, $ W^{cu}_{loc}(\Sigma) $, $ \Sigma^c $, called the \textbf{local center-stable, local center-unstable, local center manifold} of $ \Sigma $, respectively, which are defined and characterized by
		\begin{alignat*}{2}
		W^{cs}_{loc}(\Sigma) & =
		\{ z \in X^s_{\widehat{\Sigma}}(\sigma) \oplus X^u_{\widehat{\Sigma}} (\varrho): \exists && \{ z_n \}_{n\geq 0} \text{ such that } z_0 = z,  \\
		& && \text{ and it is a ($ \sigma,\varrho,\varepsilon $)-forward orbit of } H(t_0) \}
		\\
		& = \{ z \in X^s_{\widehat{\Sigma}}(\sigma) \oplus X^u_{\widehat{\Sigma}} (\varrho): \exists && \text{ a ($ \sigma,\varrho,\varepsilon $)-forward orbit $ \{ z_n \}_{n\geq 0} $ of } H(t_0) \\
		& && \text{ such that } z_0 = z, d(z_n, \Sigma^c) \to 0, n \to \infty \},\\
		W^{cu}_{loc}(\Sigma) & =
		\{ z \in X^s_{\widehat{\Sigma}}(\varrho) \oplus X^u_{\widehat{\Sigma}} (\sigma): \exists && \{ z_{-n} \}_{n\geq 0} \text{ such that } z_0 = z,  \\
		& && \text{ and it is a ($ \varrho,\sigma,\varepsilon $)-backword orbit of } H(t_0) \}
		\\
		& = \{ z \in X^s_{\widehat{\Sigma}}(\varrho) \oplus X^u_{\widehat{\Sigma}} (\sigma): \exists && \text{ a ($ \varrho,\sigma,\varepsilon $)-backword orbit $ \{ z_{-n} \}_{n\geq 0} $ of } H(t_0) \\
		& && \text{ such that } z_0 = z, d(z_{-n}, \Sigma^c) \to 0, n \to \infty \}, \\
		\Sigma^c & = W^{cs}_{loc}(\Sigma) \cap W^{cu}_{loc}(\Sigma) \\
		& = \{ z \in X^s_{\widehat{\Sigma}}(\sigma) \oplus X^u_{\widehat{\Sigma}} (\sigma): \exists && \{ z_{-n} \}_{n \in \mathbb{Z}} \text{ such that } z_0 = z,  \\
		& && \text{ and it is a ($ \sigma,\sigma,\varepsilon $)-orbit of } H(t_0) \}.
		\end{alignat*}

		Moreover, they have the following properties.
		\begin{enumerate}[(i)]
			\item $ W^{cs}_{loc}(\Sigma) \subset H(t_0)^{-1} W^{cs}_{loc}(\Sigma) $, $ W^{cu}_{loc}(\Sigma) \subset H(t_0) W^{cu}_{loc}(\Sigma) $, $ \Sigma^{c} \subset H(t_0)^{\pm1} \Sigma^c $.
			\item $ W^{cs}_{loc}(\Sigma) $, $ W^{cu}_{loc}(\Sigma) $, $ \Sigma^c $ can be represented as graphs of maps respectively. That is there are maps $ h^{\kappa}_0 $, $ \kappa = cs, cu, c $, such that for $ \widehat{m} \in \widehat{\Sigma} $,
			\begin{gather*}
			h^{cs}_0(\widehat{m}, \cdot): X^s_{\phi(\widehat{m})} (\sigma) \to X^u_{\phi(\widehat{m})}(\varrho), \\
			h^{cu}_0(\widehat{m}, \cdot): X^u_{\phi(\widehat{m})} (\sigma) \to X^s_{\phi(\widehat{m})}(\varrho), \\
			h^{c}_0 (\widehat{m}) \in X^s_{\phi(\widehat{m})}(\sigma) \oplus X^u_{\phi(\widehat{m})}(\sigma),
			\end{gather*}
			and $ W^{cs}_{loc}(\Sigma) = \graph h^{cs}_0 $, $ W^{cu}_{loc}(\Sigma) = \graph h^{cu}_0 $, $ \Sigma^{c} = \graph h^c_0 $.
			$ h^{\kappa}_0 $, $ \kappa = cs, cu, c $, are Lipschitz in the following sense. There are functions $ \mu_{\kappa}(\cdot) $, $ \kappa = cs, cu, c $,
			such that $ \mu_{cs}(m) = (1+\chi_{*}) \beta'_{cs}(m) + \chi_{*} $, $ \mu_{cu}(m) = (1+\chi_{*}) \alpha'_{cu}(m) + \chi_{*} $ and $ \mu_{c} = \max\{ \mu_{cs}, \mu_{cu} \} $, with $ \chi_{*} \to 0_+ $ as $ \epsilon,\chi(\epsilon),\eta \to 0 $, and for every $ m \in \Sigma $, it holds for $ \kappa = s, u $,
			\begin{gather*}
			|\Pi^{su-\kappa}_{m}( h^{c\kappa}_0(\widehat{m}_1, x^\kappa_1) - h^{c\kappa}_0(\widehat{m}_2, x^{\kappa}_2)  )| \leq \mu_{c\kappa}(m) \max \{| \Pi^c_{m}(  \phi(\widehat{m}_1) - \phi(\widehat{m}_2)  ) |, | \Pi^\kappa_{m}( x^\kappa_1 - x^\kappa_2 ) |\},\\
			\max\{|\Pi^u_{m}( h^{c}_0(\widehat{m}_1) - h^{c}_0(\widehat{m}_2)  )|, |\Pi^s_{m}( h^{c}_0(\widehat{m}_1) - h^{c}_0(\widehat{m}_2) )|\} \leq \mu_{c}(m) | \Pi^c_{m}(  \phi(\widehat{m}_1) - \phi(\widehat{m}_2)  ) |,
			\end{gather*}
			where $ \widehat{m}_i \in \widehat{U}_{\widehat{m}}(\varepsilon) $, $ x^{\kappa}_i \in X^{\kappa}_{\phi(\widehat{m}_i)} (\sigma) $, $ \widehat{m} \in \phi^{-1}(m) $.
			\item Moreover, the maps $ h^{\kappa}_0 $, $ \kappa = cs, cu, c $, satisfying (i) (ii), are unique. Also, if $ \eta = 0 $, then $ \Sigma^c = \Sigma = W^{cs}_{loc}(\Sigma) \cap W^{cu}_{loc}(\Sigma) $.
			\item There is a positive constant $ \sigma_1 $ less than $ \sigma $ such that $ W^{cs}_{\sigma_1}(\Sigma) \subset H(t)^{-1} W^{cs}_{loc}(\Sigma) $, $ W^{cu}_{\sigma_1}(\Sigma) \subset H(t)W^{cu}_{loc}(\Sigma) $, $ \Sigma^{c} \subset H(t)^{\pm1} \Sigma^c $ for all $ t \in \mathbb{R}_+ $.
		\end{enumerate}

		\item $ H: W^{cs}_{\sigma_1}(\Sigma) \to W^{cs}_{loc}(\Sigma) $, $ H^{-1}: W^{cu}_{\sigma_1}(\Sigma) \to W^{cu}_{loc}(\Sigma) $, and $ H: \Sigma^{c} \to \Sigma^{c} $ induce semiflows $ \varphi^t_{cs}, \varphi^{t}_{cu}, \varphi^t_{c} $ respectively, with $ \varphi^t_{c} $ being a flow; that is, for any $ z^{cs}, z^{cu}, z^{c} $ belonging to $ W^{cs}_{\sigma_1}(\Sigma) $, $ W^{cu}_{\sigma_1}(\Sigma) $, $ \Sigma^{c} $, respectively, there are a forward orbit $ \{z^{cs}_{t}\}_{t \geq 0} \subset W^{cs}_{loc}(\Sigma) $, a backward orbit $ \{z^{cu}_{t}\}_{t \leq 0} \subset W^{cu}_{loc}(\Sigma) $, an orbit $ \{z^{c}_{t}\}_{t \in \mathbb{R}} \subset \Sigma^c $ of $ H $, respectively, such that $ z^{\kappa}_0 = z^{\kappa} $, $ \kappa = cs, cu, c $; now let $ \varphi^t_{\kappa} = z^{\kappa}_{t} $, $ \kappa = cs, cu, c $.
		Furthermore, $ \varphi^t_{cs} (W^{cs}_{\sigma_1}(\Sigma)) \to \Sigma^c $, $ \varphi^t_{cu} (W^{cu}_{\sigma_1}(\Sigma)) \to \Sigma^c $, as $ t \to \infty $. Set $ \varphi^{t_0}_{cs} = \mathcal{H}^{cs}_0 $, $ \varphi^{t_0}_{cu} = \mathcal{H}^{-cu}_0 $, $ \varphi^{t_0}_{c} = \mathcal{H}^c_0 $.

		\item In addition, let (B4) hold. Then $ W^{cs}_{loc}(\Sigma) $, $ W^{cu}_{loc}(\Sigma) $, $ \Sigma^c $ are $ C^1 $ immersed submanifolds of $ X $. Let $ \widetilde{X}^{\kappa}_{m_c} \triangleq T_{m_c}W^{\kappa}_{loc}(\Sigma) $, $ \kappa = cs, cu $, $ \widetilde{X}^{c}_{m_c} \triangleq T_{m_c} \Sigma^c $, $ m_c \in \Sigma^c $, then $ \widetilde{X}^{cs}_{m_c} \cap \widetilde{X}^{cu}_{m_c} = \widetilde{X}^{c}_{m_c} $. Moreover, $ D\varphi^{t}_{cs}(m_c) \widetilde{X}^{cs}_{m_c} \subset \widetilde{X}^{cs}_{\varphi^{t}_{cs}(m_c)} $, $ D\varphi^{t}_{cu}(m_c) \widetilde{X}^{cu}_{m_c} \subset \widetilde{X}^{cu}_{\varphi^{t}_{cu}(m_c)} $, $ D\varphi^{t}_{c}(m_c) \widetilde{X}^{c}_{m_c} = \widetilde{X}^{c}_{\varphi^{t}_{c}(m_c)} $, $ m_c \in \Sigma^c $.

		\item There is a $ C^0 $ foliation $ \mathcal{W}^{ss} $ (resp. $ \mathcal{W}^{uu} $) of $ W^{cs}_{loc}(\Sigma) $ (resp. $ W^{cu}_{loc}(\Sigma) $), with leaves $ W^{ss}(m_c) $ (resp. $ W^{uu}(m_c) $), $ m_c \in \Sigma^c $, called \textbf{strong stable foliation} (resp. \textbf{strong unstable foliation}) such that the following properties hold. Set $ m_c = (\widehat{m}, x^s, x^u) \in \Sigma^c $.
		\begin{enumerate}[(i)]
			\item $ \mathcal{W}^{ss} $ (resp. $ \mathcal{W}^{uu} $) is invariant under $ \varphi^t_{cs} $ (resp. $ \varphi^{t}_{cu} $), meaning that $ \varphi^t_{cs} (W^{ss}(m_c) \cap W^{cs}_{\sigma_1}(\Sigma)) \subset W^{ss}(\varphi^t_{c}(m_c)) $, $ t \geq 0 $ (resp. $ \varphi^t_{cu} (W^{uu}(m_c) \cap W^{cu}_{\sigma_1}(\Sigma)) \subset W^{uu}(\varphi^t_{c}(m_c)) $, $ t \leq 0 $).
			\item The leaves are characterized by the following. $ z \in W^{ss}(m_c) $ (resp. $ z \in W^{uu}(m_c) $) if and only if $ z \in W^{cs}_{loc}(\Sigma) $ (resp. $ z \in W^{cu}_{loc}(\Sigma) $) and $ |(\mathcal{H}^{cs}_0)^{n}(z) - (\mathcal{H}^{cs}_0)^{n}(m_c)| \lesssim \varepsilon^{(n)}_{s}(m_c) $ (resp. $ |(\mathcal{H}^{-cu}_0)^{n}(z) - (\mathcal{H}^{-cu}_0)^{n}(m_c)| \lesssim \varepsilon^{(n)}_{u}(m_c) $), $ n \to \infty $, where $ \lambda_s(\phi(\widehat{m})) + \varsigma < \varepsilon_s(m_c) < \lambda^{-1}_{cu}(\phi(\widehat{m})) - \varsigma $ (resp. $ \lambda_u(\phi(\widehat{m})) + \varsigma < \varepsilon_u(m_c) < \lambda^{-1}_{cs}(\phi(\widehat{m})) - \varsigma $) for small $ \varsigma > 0 $ depending on $ \epsilon, \chi(\epsilon), \eta $ such that $ \varsigma \to 0 $ as $ \epsilon, \chi(\epsilon), \eta \to 0 $. Here $ \sup \{\varepsilon_{s}(\cdot), \varepsilon_{u}(\cdot) \} < 1 $,
			\begin{gather*}
			\varepsilon^{(n)}_{s} (m_c) = \varepsilon_{\kappa} ( (\mathcal{H}^{c}_0)^{n-1} (m_c)) \cdots \varepsilon_{\kappa} ( \mathcal{H}^{c}_0 (m_c)) \varepsilon_{\kappa} ( m_c ),\\
			\varepsilon^{(n)}_{u} (m_c) = \varepsilon_{\kappa} ( (\mathcal{H}^{c}_0)^{-n+1} (m_c)) \cdots \varepsilon_{\kappa} ( (\mathcal{H}^{c}_0)^{-1} (m_c)) \varepsilon_{\kappa} ( m_c ).
			\end{gather*}
			In fact, for $ \kappa = s, u $, if $ z, z_0 \in W^{c\kappa}_{\sigma_1} (\Sigma) $, then $ z \in W^{\kappa\kappa}(z_0) $ if and only if $ |\varphi^t_{c\kappa}(z) - \varphi^t_{c\kappa}(z_0)| \lesssim \varepsilon^{t-r}_{\kappa} (\varphi^r_{c\kappa}(z_0)) $ as $ t \to \infty $ for a large $ r > 0 $.

			\item There is a small $ \sigma_0 > 0 $ such that each small plaque $ W^{ss}_{\sigma_0}(m_c) $ (resp. $ W^{uu}_{\sigma_0}(m_c) $) of $ \mathcal{W}^{ss} $ (resp. $ \mathcal{W}^{uu} $) (with diameter $ \sigma_0 $) through $ m_c \in \Sigma^c $ is a Lipschitz graph of $ X^{s}_{\phi(\widehat{m})} (\sigma_0) \to X^{cu}_{\phi(\widehat{m})} $ (resp. $ X^{u}_{\phi(\widehat{m})} (\sigma_0) \to X^{cs}_{\phi(\widehat{m})} $) with Lipschitz constant less than approximately $  \beta'_{s} (\phi(\widehat{m})) $ (resp. $ \alpha'_{u} (\phi(\widehat{m})) $).

			\item The holonomy maps for $ \mathcal{W}^{ss} $ and $ \mathcal{W}^{uu} $ are uniformly (locally) H\"older, or equivalently $ \mathcal{W}^{ss} $ and $ \mathcal{W}^{uu} $ are uniformly (locally) H\"older foliations in the immersed topology.

			\item In addition, let (B4) hold. Then each leaf of $ \mathcal{W}^{ss} $ and $ \mathcal{W}^{uu} $ is a $ C^1 $ immersed submanifold of $ X $. Let $ T_{m_c} W^{ss}(m_c) = \widetilde{X}^{s}_{m_c} $ and $ T_{m_c} W^{uu}(m_c) = \widetilde{X}^{u}_{m_c} $. Then $ \widetilde{X}^{\kappa_1\kappa_2}_{m_c} = \widetilde{X}^{\kappa_1}_{m_c} \oplus \widetilde{X}^{\kappa_2}_{m_c} $, $ \kappa_1 = c $, $ \kappa_2 = s, u $, $ X = \widetilde{X}^{s}_{m_c} \oplus \widetilde{X}^{c}_{m_c} \oplus \widetilde{X}^{u}_{m_c} $ and, $ m_c \mapsto \widetilde{X}^{\kappa}_{m_c} $ is continuous (in the immersed topology), $ \kappa = s, c, u $. Moreover, under additional assumption that $ D\widehat{F}^{\kappa}_{t_0,m}(\cdot) $, $ D\widehat{G}^{\kappa}_{t_0,m}(\cdot) $, $ m \in \Sigma $, $ \kappa = cs, cu $, are equicontinuous, it holds that $ \Sigma^{c} \ni m_c \mapsto W^{c\kappa}_{\sigma_0}(m_c) $ is uniformly $ C^0 $ in $ C^1 $-topology in bounded sets (in the immersed topology), $ \kappa = s, u $.

			\item In fact, $ W^{c\kappa}_{loc}(\Sigma) $ is a $ C^0 $ bundle over $ \Sigma^{c} $ with fibers $ W^{\kappa\kappa}(m_c) $, $ m_c \in \Sigma^c $, $ \kappa = s, u $.

			\item \label{regF} Assume that (a) $ z \mapsto \widehat{F}^{\kappa}_{t_0, m}(z), \widehat{G}^{\kappa}_{t_0, m}(z) $ are $ C^{1,1} $ uniform for $ m \in \Sigma $, $ \kappa = cs, cu $, that (b) $ \Sigma \in C^{1} $, and $ m \mapsto \Pi^{\kappa}_m $, $ \kappa = s, c, u $, are $ C^1 $ in the immersed topology, and that (c) $ \sup_m\lambda_{cs}(m) \lambda_{s}(m) \lambda_{cu}(m) < 1 $, $ \sup_m \lambda_{cu}(m) \lambda_{u}(m) \lambda_{cs}(m) < 1 $. Then $ \mathcal{W}^{ss} $ and $ \mathcal{W}^{uu} $ are $ C^1 $ foliations (or more precisely, $ C^1 $ bundles over $ {\Sigma}^c $).
		\end{enumerate}
		\item The following exponential tracking property of $ W^{cs}_{loc}(\Sigma) $ and $ W^{cu}_{loc}(\Sigma) $ holds. If $ \{ z_{-t} \}_{t \geq 0} $ is a $ (\sigma, \varrho, \varepsilon) $-backward orbit of $ H $, then there is a unique $ (\sigma, \varrho, \varepsilon) $-orbit $ \{ \overline{z}_{t} \}_{t \in \mathbb{R}} \subset \Sigma^c $ of $ H $ such that $ |z_{-t} - \overline{z}_{-t}| \lesssim \varepsilon^{t-r}_{u}(\varphi^{r}_{c}(\overline{z}_0)) $ as $ t \to \infty $ for a large $ r > 0 $; similarly, if $ \{ z'_{t} \}_{t \geq 0} $ is a $ (\sigma, \varrho, \varepsilon) $-forward orbit of $ H $, then there is a unique $ (\sigma, \varrho, \varepsilon) $-orbit $ \{ \overline{z}'_{t} \}_{t \in \mathbb{R}} \subset \Sigma^c $ of $ H $, such that $ |z'_{t} - \overline{z}'_{t}| \lesssim \varepsilon^{t-r}_{s}(\varphi^{r}_{c}(\overline{z}'_0)) $ as $ t \to \infty $ for a large $ r > 0 $.
	\end{enumerate}
\end{cor}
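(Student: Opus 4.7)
The plan is to derive this trichotomy structure from \autoref{thm:A} and \autoref{thm:B} by invoking them once directly (to obtain the center-stable side) and once through the \emph{dual correspondence} construction of \autoref{dual} (to obtain the center-unstable side), and then to build $\Sigma^{c}$ as the transverse intersection of the two graphs. First I would apply \autoref{thm:A} to $H$ under the $cs$-direction data in (B3)(a) (specialized to $\kappa_{1}=cs$, $\kappa_{2}=u$); assumptions (A1)--(A3) reduce verbatim to (B1)--(B3) in that case, yielding $W^{cs}_{loc}(\Sigma)=\graph h^{cs}_{0}$ with Lipschitz constant $\mu_{cs}(m)\approx\beta'_{cs}(m)$, the invariance $\graph h^{cs}_{0}\subset H(t_{0})^{-1}\graph h^{cs}_{0}$, and the induced semiflow $\varphi^{t}_{cs}$. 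Since $t$ is a flow by (B2)(i), I then form the dual cocycle correspondence $\widetilde{H}$ of $H$ over $-t$ along the splitting $\widehat{X}^{cu}\oplus\widehat{X}^{s}$; the $cu$-direction data in (B3)(a) (with $\kappa_{1}=s$, $\kappa_{2}=cu$) translate, via the duality exchange $\alpha\leftrightarrow\beta$ and $\lambda_{s}\leftrightarrow\lambda_{u}$, into the $cs$-direction data for $\widetilde{H}$, so \autoref{thm:A} applied to $\widetilde{H}$ produces the graph that reads back as $W^{cu}_{loc}(\Sigma)=\graph h^{cu}_{0}$ together with the backward semiflow $\varphi^{t}_{cu}$.

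Next I would build $\Sigma^{c}$ by intersecting these two graphs. Because $h^{cs}_{0}$ is Lipschitz in $x^{s}$ with constant $\approx\beta'_{cs}\leq\beta'_{s}$ and $h^{cu}_{0}$ is Lipschitz in $x^{u}$ with constant $\approx\alpha'_{cu}\leq\alpha'_{u}$, the angle condition (B3)(a)(i$'$), $\sup_{m}\alpha'_{u}(m)\beta'_{s}(m)<1$, turns the combined equation $(x^{s},x^{u})=(h^{cu}_{0}(\widehat{m},x^{u}),h^{cs}_{0}(\widehat{m},x^{s}))$ into a uniform contraction in $x^{s}\oplus x^{u}$ for each $\widehat{m}$, producing $h^{c}_{0}$ with $\Sigma^{c}=\graph h^{c}_{0}$ and $\mu_{c}=\max\{\mu_{cs},\mu_{cu}\}$. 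The orbit characterization of $\Sigma^{c}$ then follows by splicing: any $z\in\Sigma^{c}$ admits a forward $(\sigma,\varrho,\varepsilon)$-orbit in $W^{cs}_{loc}(\Sigma)$ and a backward one in $W^{cu}_{loc}(\Sigma)$, which glue to a full $(\sigma,\sigma,\varepsilon)$-orbit; conversely any full orbit of $H(t_{0})$ of bounded size is simultaneously forward and backward, hence lies in both graphs. The flow $\varphi^{t}_{c}$ arises by restricting $\varphi^{t}_{cs}$ (or, equivalently by uniqueness, the dual of $\varphi^{-t}_{cu}$) to $\Sigma^{c}$, and the attraction $\varphi^{t}_{cs}(W^{cs}_{\sigma_{1}}(\Sigma))\to\Sigma^{c}$ follows from the $s$-contraction (B3)(c)(i) together with the observation that forward $\omega$-limit points must also admit a backward orbit, hence lie in $W^{cu}_{loc}(\Sigma)$.

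The strong foliations $\mathcal{W}^{ss}$ and $\mathcal{W}^{uu}$ are obtained by applying \autoref{thm:B} to $H$ (for the $s$-direction data in (B3)(a), but with $\Sigma$ replaced by the genuinely invariant $\Sigma^{c}$, so that the $\eta=0$ scenario in part (2) of \autoref{thm:B} applies) and to $\widetilde{H}$ (for the $cu$-case), respectively; the characterization (4)(ii) and the Hölder holonomies (4)(iv) transfer directly from the corresponding statements of \autoref{thm:B}. The $C^{1}$ conclusions in (3), (4)(v), (5)(v) follow from the smoothness parts of \autoref{thm:A}(3) and \autoref{thm:B}(1)(v) via (B4), while the $C^{1}$ foliation result (5)(vii) is the strong-bunching statement (2)(iii) of \autoref{thm:B} together with its dual, which applies thanks to the extra spectral gap $\sup_{m}\lambda_{cs}\lambda_{s}\lambda_{cu}<1$ and its symmetric counterpart. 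The exponential tracking in (5) is then read off from (4)(ii): given a backward orbit $\{z_{-t}\}$ we use that $z_{0}\in W^{cu}_{loc}(\Sigma)$ to locate the unique leaf $W^{uu}(\overline{z}_{0})$ of $\mathcal{W}^{uu}$ through $z_{0}$, and apply the characterization in the reverse time direction.

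The main obstacle I expect is the rigorous verification of the transversality $\widetilde{X}^{cs}_{m_{c}}\cap\widetilde{X}^{cu}_{m_{c}}=\widetilde{X}^{c}_{m_{c}}$ together with the global matching $X=\widetilde{X}^{s}_{m_{c}}\oplus\widetilde{X}^{c}_{m_{c}}\oplus\widetilde{X}^{u}_{m_{c}}$ on $\Sigma^{c}$: since $W^{cs}_{loc}(\Sigma)$ and $W^{cu}_{loc}(\Sigma)$ are produced by two independent graph constructions, one must show that their tangent subbundles stay $C^{1}$-close, uniformly along $\Sigma^{c}$, to the reference subbundles $\widehat{X}^{cs}$ and $\widehat{X}^{cu}$ so that the angle condition (B3)(a)(i$'$) propagates to the tangent level. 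A secondary subtlety is that \autoref{thm:A} and \autoref{thm:B} were stated for inflowing $\Sigma$ with a semiflow, whereas the foliations here must be anchored on the smaller set $\Sigma^{c}$ only available \emph{after} both center-(un)stable manifolds have been produced; this requires an additional bootstrap in which $\Sigma$ is replaced by $\Sigma^{c}$ and the hypotheses of \autoref{thm:B} are re-verified on this refined invariant set before extracting the leaves.
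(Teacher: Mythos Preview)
Your proposal is correct and matches the paper's approach: the paper gives no detailed proof for this corollary, stating only that it follows from \autoref{thm:A} and \autoref{thm:B} via the dual correspondence of \autoref{dual} (with a pointer to \cite[Corollary III]{Che18b}), which is precisely the scheme you outline. Your write-up is in fact considerably more detailed than what the paper provides, and the two concerns you flag at the end (tangent-level transversality on $\Sigma^{c}$ and the bootstrap replacing $\Sigma$ by $\Sigma^{c}$ before invoking \autoref{thm:B}) are genuine technical points that the paper defers to \cite{Che18b} rather than resolving in the text.
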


\subsection{application I: general $ C_0 $ cocycle case}\label{generalA}
\begin{enumerate}[$ \bullet $]
	\item Let $ M $ be a topology space. $ t: M \to M $ is a $ C_0 $ semiflow, i.e. $ \mathbb{R}_+ \times M \to M, (t, \omega) \mapsto t\omega $ is continuous and $ 0\omega = \omega $, $ (t+s)\omega = t(s\omega) $ for all $ t, s \in \mathbb{R}_+ $, $ \omega \in M $.
\end{enumerate}

Many concrete well-posed and ill-posed differential equations can be reformulated as the abstract differential equation \eqref{equ:main}, i.e.
\begin{equation*}
\dot{z}(t) = \mathcal{C}(t\omega)z(t) + f(t\omega)z(t), \\
\end{equation*}
where $ \mathcal{C}(\omega): D(\mathcal{C}(\omega)) \subset Z \to Z $, $ \omega \in M $, are a family of closed linear operators, $ Z $ is a Banach space and $ f $ is a nonlinear map.

\begin{exa}\label{exa:classic}
	We first give two classical results to show how our results can be applied to differential equations.
	\begin{asparaenum}[(a)]
		\item In \cite{Yi93, CY94}, the authors studied the (global) invariant manifolds of \eqref{equ:main} in the setting that $ \mathcal{C}(\omega) \in L(Z, Z) $, $ Z = \mathbb{R}^n $, $ M $ is a smooth compact manifold, $ t: M \to M $ is a smooth flow and $ f \in C^k_b(M \times Z, Z) $ with sufficient smallness of $ \sup_\omega\sup_x\|D_xf(\omega)(x)\| $ by using the Lyapunov-Perron method. The uniform dichotomy on $ \mathbb{R} $ of the smooth cocycle $ \{ T_0(t,\omega) \} $ generated by $ \{A(\omega)\} $ is described simply in the absolute sense, i.e. $ \lambda_s $, $ \lambda_u $ are constant in the assumption (UD) in \autopageref{UD}. The existence of the invariant manifolds of \eqref{equ:main} for this case is the direct consequence of \autoref{thmBc} or \autoref{thmAc} (for the case $ \varepsilon_1(\cdot) \equiv 1 $), and the $ C^{1,1} $ smoothness of those invariant manifolds follows from the corresponding regularity results for the bounded section case, i.e. $ \sup_{\omega}\eta(t,\omega) < \infty $ (see \cite[Section 6.10]{Che18a} for details). Here note that under $ C^{1,1} $ smoothness of $ \omega \mapsto \mathcal{C}(\omega) $ with corresponding spectral gap condition, the (spectral) projections appeared in uniform dichotomy condition (i.e. \autoref{def:ud+}) also depend on $ \omega \in M $ in a $ C^{1,1} $ fashion and so are $ T_{1}, S_1 $; this fact combining with $ C^{1,1} $ smoothness of $ f $ gives the desired regularity condition (needed in \cite[Section 6.10]{Che18a}) on the cocycle generated by the nonlinear equation \eqref{equ:main}. (Higher regularity can be proved in the same way.) The existence of the invariant foliations for the cocycle given in \cite{CY94} is the corollary of \cite[Theorem 7.6]{Che18a} or \autoref{thmAc} for the case $ \eta(\cdot,\cdot) \equiv 0 $, and the $ C^1 $ smoothness of those invariant foliations now follows from \autoref{thm:Regc}; see also \cite[Theorem 7.6]{Che18a}.

		\item In \cite{CL97}, the authors also developed the theory of invariant manifolds for cocycles in Banach spaces. In the first part of their paper, the authors studied the (global) center manifolds of the non-autonomous differential equation \eqref{equ:main} in the setting that $ M = \mathbb{R} $, $ t(s) = t+s $, $ Z $ is a Banach space, $ \{A(t)\} $ generates a $ C_0 $ evolution family $ \{ T_0(t,s) \}_{t \geq s} $ in $ Z $, and $ f \in C^{k,1} = C^{k,1}(\mathbb{R} \times Z, Z) $ with sufficient smallness of $ \sup_{t \in \mathbb{R}}\sup_x\|D_{x}f(t,x)\| $. In \cite{CL97}, the uniform dichotomy on $ \mathbb{R} $ of $ \{T_0(t,s)\} $ is also described simply in the absolute sense, i.e. $ \lambda_s $, $ \lambda_u $ are constant in the assumption (UD) in \autopageref{UD}.
		$ f \in C^{k,1} $ here means that $ (t,x) \mapsto D^i_xf(t,x) $, $ i = 0, 1, \ldots, k $, are continuous and
		\[
		|f|_{C^{k,1}} = \max\{ \sup_t\lip_x D^k_x f(t,x), \max \{\sup_{(t,x)}\|D^i_xf(t,x)\|: i = 0,1,\ldots, k \}  \} < \infty.
		\]
		It seems that in their definition of $ C^{k,1} $ (see \cite[Page 363]{CL97}), $ \sup_{(t,x)} |f(t,x)| < \infty $ is missing. If this additional assumption removes, $ C^{k,1} $ is no longer a Banach space. In fact, the authors also technically assumed $ \sup_{(t,x)} |f(t,x)| < \infty $ in their proof (see e.g. \cite[Theorem 2.7]{CL97}).
		Moreover, the assumption $ \sup_{(t,x)} |f(t,x)| < \infty $ can be replaced by $ f(t,0) = 0 $ for all $ t \in \mathbb{R} $. No matter in what situation, the existence of the center manifold is a consequence of \autoref{thmAc} (if $ \sup_{(t,x)} |f(t,x)| < \infty $ using the case $ \varepsilon_1(\cdot) \equiv 1 $ and if $ f(t,0) = 0 $ for $ t \in \mathbb{R} $ the case $ \varepsilon_1(\cdot) \equiv 0 $). The $ C^{k-1,1} \cap C^{k} $ regularity of this center manifold follows from \autoref{lem:leaf1}.

		In the second part of their paper, the authors further studied the existence of center manifolds for cocycles (or skew-product flows) generated by \eqref{ggg*1} (see \autoref{genccc}). The authors reduced this case to the non-autonomous differential equation by a `\emph{lifting}' method; for details consult their paper. However, no smoothness result about center manifolds was given. This can be done from our general results; see \autoref{continuous case} and \cite{Che18a}. However, unlike case (a), the regularity condition on spectral projections as well as $ T_{1}, S_1 $ in uniform dichotomy condition (i.e. \autoref{def:ud+}) should be assumed directly. Using \autoref{thm:gencocycleAB}, our results in \autoref{continuous case} as well as \cite[Section 7.2]{Che18a} can be applied to \eqref{equ:main} for this case.
	\end{asparaenum}
\end{exa}

Let us consider the existence and regularity of the invariant manifolds of \eqref{equ:main} in more general settings where $ \mathcal{C} $, $ L $, $ f $ are given by one of (type I) $ \sim $ (type III) listed in \autoref{overview}; for (type III), in fact we study the integral equation \eqref{equ:IE}, and as a matter of convenience, \emph{we identify integral equation \eqref{equ:IE} and differential equation \eqref{equ:main}}. We focus on two situations below:
\begin{enumerate}[($ \circ $1)]
	\item $ f(\omega)(0) = 0 $, $ \omega \in M $;
	\item $ \sup_{t \geq 0} \sup_{z} |f(t\omega)(z)| < \infty $, $ \omega \in M $.
\end{enumerate}

\begin{thm}\label{thm:equI}
	(Case I). Let ($ \circ $1) and the conditions in one of the following cases hold with $ c > 1 $: (i) \autoref{thm:gencocycleAB} (2) with $ \inf_{\omega}\{ \mu_u(\omega) - \mu_s(\omega) - (1+c) \varepsilon_{m}(\omega) \} > 0 $; (ii) \autoref{thm:biAB} or \autoref{them:spec} (1) with $ \inf_{\omega}\{ \mu_u(\omega) - \mu_s(\omega) - (1+c) \varepsilon'(\omega) \} > 0 $; (iii) \autoref{them:spec} (2) with $ \inf_{\omega}\{ \mu_u(\omega) - \mu_s(\omega) - (1+c) \eta(\omega) \} > 0 $. Then there is a unique set $ \mathcal{M} = \bigcup_{\omega \in M} (\omega, \mathcal{M}_{\omega}) \subset M \times Z $ such that the following (1) $ \sim $ (3) hold.
	\begin{enumerate}[(1)]
		\item $ 0 \in \mathcal{M}_{\omega} $;
		\item $ \mathcal{M}_{\omega} = \graph \Psi_{\omega} $, a Lipschitz graph of $ \Psi_{\omega}: X_{\omega} \to Y_{\omega} $ with $ \lip \Psi_{\omega} \leq \beta(\omega) $;
		\item $ \mathcal{M} $ is positively invariant under \eqref{equ:main}, meaning for each $ (\omega,z) \in \mathcal{M} $, there is a (mild) solution $ u(t) $ ($ t\geq 0 $) of \eqref{equ:main} with $ u(0) = z $ such that $ u(t) \in \mathcal{M}_{t\omega} $ for all $ t \geq 0 $.
		\item If $ z \mapsto f(\omega)z $ is $ C^1 $ for each $ \omega \in M $, so is $ x \mapsto \Psi_{\omega}(x) $.
	\end{enumerate}

	(Case II). Let ($ \circ $2) hold. Assume one of the above cases (i) $ \sim $ (iii) holds with an additional spectral condition in each corresponding case: (i) $ \inf_{\omega}\{\mu_u(\omega) - \varepsilon_{m}(\omega)\} > 0 $; (ii) $ \inf_{\omega}\{\mu_u(\omega) - \varepsilon'(\omega)\} > 0 $; (iii) $ \inf_{\omega}\{\mu_u(\omega) - \eta(\omega)\} > 0 $.
	Then there is a unique set $ \mathcal{M} = \bigcup_{\omega \in M} (\omega, \mathcal{M}_{\omega}) \subset M \times Z $ such that (2) (3) hold with $ \sup_{t \geq 0} |\Psi_{t\omega}(0)| < \infty $ for each $ \omega \in M $; moreover this set also satisfies (4). In addition, if all the functions $ \mu_{s}, \mu_{u} $ and $ \varepsilon_{m}(\cdot) $ (or $ \varepsilon'(\cdot) $, $ \eta(\cdot) $) are bounded and $ \sup_{\omega} \sup_{z} |f(\omega)(z)| < \infty $, then $ \sup_{\omega} |\Psi_{\omega}(0)| < \infty $.
\end{thm}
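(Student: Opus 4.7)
The plan is to reduce the theorem to the abstract invariant graph result \autoref{thmAc} by applying the dichotomy-to-(A)(B) correspondence of Section \ref{diffCocycle}, and then read off the fiberwise regularity from \autoref{lem:leaf1}. In each of the three listed cases, the corresponding theorem \autoref{thm:gencocycleAB}, \autoref{thm:biAB} (or \autoref{them:spec} (1)), or \autoref{them:spec} (2) produces a $ C_0 $ cocycle correspondence $ H $ over $ t $ with generating cocycle $ (F_{t,\omega}, G_{t,\omega}) $ satisfying (A$'_1$)($\alpha,\lambda_u;c$) (B$_1$)($\beta;\beta',\lambda_s;c$) condition in the sense described just before \autoref{thmAc}. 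The hypothesis $ \inf_\omega\{\mu_u(\omega) - \mu_s(\omega) - (1+c)\varepsilon(\omega)\} > 0 $ with $ c > 1 $ is precisely what those theorems require in order to take $\alpha, \beta, \beta'$ (e.g. $ \alpha = \beta = 1/c $), $ k_\alpha, k_\beta $ as bounded constants less than $1$ together with $ \sup_\omega \lambda_u(\omega)\lambda_s(\omega) < 1 $, giving the angle condition (ii)(a) and the `stable-unstable' part of the spectral condition (ii)(b) in \autoref{thmAc}.

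For Case I, the assumption $ f(\omega)(0) = 0 $ implies that $ u \equiv 0 $ is the unique mild solution of \eqref{equ:main} whenever the initial data vanish, so the zero section $ i \equiv 0 $ is truly invariant, i.e.~$ F_{t,\omega}(0,0) = 0 $ and $ G_{t,\omega}(0,0) = 0 $ for all $ t, \omega $; in particular $ \eta \equiv 0 $, and the auxiliary function $ \varepsilon_1 $ in \autoref{thmAc} is irrelevant. Apply \autoref{thmAc} to obtain a unique bundle map $ \Psi : X \to Y $ over $ \id $ with $ \lip \Psi_\omega \leq \beta(\omega) $, $ \Psi_\omega(0) = 0 $, and $ \graph \Psi \subset H^{-1}\graph \Psi $. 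Unwinding the definition of $ H $ through \eqref{equ:IE}, \eqref{*06}, \eqref{**0}, or \eqref{ggg*1} (cf.~\autoref{lem:dich00}, \autoref{lem:vcf1}, \autoref{lem:gencocycle}) translates the graph invariance into the existence of a mild solution $ u(\cdot) $ of \eqref{equ:main} with $ u(0) = z $ and $ u(t) \in \mathcal{M}_{t\omega} $ for $ t \geq 0 $, which is (3).

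For Case II, I keep $ i \equiv 0 $ but now the defect is non-zero: since $ \sup_{t \geq 0}\sup_z |f(t\omega)z| = M_0(\omega) < \infty $, the integral equations give $ |F_{t,\omega}(0,0)|, |G_{t,\omega}(0,0)| \leq \eta_0(\omega) $ with $ \eta_0(\omega) $ a finite multiple of $ M_0(\omega) $ (the multiplier coming from the bounds on $ T_1, S_1 $ and the projections), \emph{independent of $ t $}. Setting $ \eta(t,\omega) \equiv \eta_0(\omega) $ and $ \varepsilon(\omega) \equiv \varepsilon_1(\omega) \equiv 1 $ makes the compatibility $ \eta(t,r\omega) \leq \varepsilon^r(\omega)\eta(t,\omega) $ trivial, while the additional spectral hypothesis in each case gives exactly $ \sup_\omega \lambda_u \varepsilon_1 = \sup_\omega \lambda_u < 1 $, which is condition (ii)(b) of \autoref{thmAc} (the $ \lambda_u\lambda_s < 1 $ part is already provided by the original spectral gap). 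The resulting $ \Psi $ satisfies $ |\Psi_\omega(0)| \leq K \inf_{r \geq t_0}\eta(r,\omega) = K\eta_0(\omega) $, whence $ \sup_{t \geq 0}|\Psi_{t\omega}(0)| \leq K\eta_0(\omega) < \infty $; uniform boundedness of $\mu_s, \mu_u, \varepsilon_m$ (or $\varepsilon', \eta$) together with a globally bounded $ f $ makes $ \eta_0 $ a constant, yielding $ \sup_\omega|\Psi_\omega(0)| < \infty $.

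The regularity conclusion (4) is an application of \autoref{lem:leaf1}: when $ z \mapsto f(\omega)z $ is $ C^1 $, the parameter-dependent contraction mapping theorem applied to the integral equations \eqref{equ:IE}, \eqref{**0}, \eqref{ggg*1} (cf.~\autoref{lem:00}, \autoref{lem:vcf1} (2), \autoref{lem:gencocycle} (1)) makes $ F_{t,\omega}(\cdot), G_{t,\omega}(\cdot) $ jointly $ C^1 $, and all spectral functions in the (A$'_1$) (B$_1$) condition are (by choice of constants above) uniformly bounded. The main technical obstacle I anticipate is the bookkeeping in case (iii)/Case II: the function $ \eta(\cdot) $ appearing in \autoref{them:spec} (2) is defined on $ M $, whereas \autoref{thmAc} wants a function on $ \mathbb{R}_+ \times M $, and one must invoke $ \eta(\cdot) \in \mathcal{E}_1(t) $ to reconcile the two; moreover, in Case II/(iii) the additional spectral hypothesis is only $ \inf_\omega\{\mu_u - \eta\} > 0 $ rather than the stronger $ \mu_u - (1+c)\eta > 0 $, so one must carefully verify that the estimate chain from \autoref{them:spec} (2) still closes with $ \varepsilon_1 \equiv 1 $.
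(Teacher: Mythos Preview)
Your treatment of Case I is correct and matches the paper exactly: the zero section is genuinely invariant ($\eta\equiv 0$), so \autoref{thmAc} with $\varepsilon_1\equiv 0$ gives $\Psi$, and \autoref{lem:leaf1} gives the $C^1$ regularity.

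In Case II there is a genuine gap. You assert that the integral equations give $|F_{t,\omega}(0,0)|,|G_{t,\omega}(0,0)|\le \eta_0(\omega)$ with a multiplier \emph{independent of $t$}. This is false in general: from \eqref{*06} with $x(0)=0$, $y(t)=0$ one has
\[
|F_{t,\omega}(0,0)|=|x(t)|\le C_1 M_0(\omega)\int_0^t e^{\mu_s(\omega)(t-s)}\,ds,
\]
and since the hypotheses only give $\mu_u-\mu_s>0$ and $\mu_u>0$, the sign of $\mu_s$ is unrestricted; when $\mu_s(\omega)>0$ (the center--stable situation) this bound grows exponentially in $t$. Your choice $\eta(t,\omega)\equiv\eta_0(\omega)$ therefore does not satisfy hypothesis (i) of \autoref{thmAc}. (For the MR case \eqref{**0} the same phenomenon occurs via \autoref{lem:prelem}.)

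The paper's fix is simple but essential: it \emph{allows} the defect to depend on $t$, taking $\eta(r,s\omega)\le C_r(\omega)=K(\omega)e^{\lambda(\omega)r}$, uniform in $s\ge 0$ but growing in $r$. With $\varepsilon\equiv\varepsilon_1\equiv 1$ the compatibility $\eta(t,r\omega)\le\eta(t,\omega)$ still holds, and the additional spectral condition $\sup_\omega\lambda_u<1$ still gives (ii)(b). The conclusion of \autoref{thmAc} then reads $|\Psi_\omega(0)|\le K\inf_{r\ge t_0}\eta(r,\omega)=K\,C_{t_0}(\omega)<\infty$, which is precisely $\sup_{t\ge 0}|\Psi_{t\omega}(0)|<\infty$. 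So your overall strategy is right, but the specific claim of $t$-independence must be dropped and replaced by this $t$-dependent $\eta$.

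Your closing worry about the clash of notation is a red herring: the $\eta$ in \autoref{them:spec} (2) (a spectral gap function on $M$) and the $\eta$ in \autoref{thmAc} (the defect function on $\mathbb{R}_+\times M$) are unrelated objects that merely share a letter.
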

Note that for the case of \autoref{them:spec}, $ \mathcal{M}_{\omega} \subset \overline{D(A)} $. If $ t $ is a flow and \eqref{equ:linear} satisfies \emph{uniform trichotomy} condition (see e.g. \autopageref{def:ut} below) and similar spectral gap condition as \autoref{thm:equI}, then from \autoref{thm:equI}, we see there are three sets which are \emph{invariant}, positively invariant and negatively invariant about \eqref{equ:main} under case ($ \circ $1), and there is an invariant set of \eqref{equ:main} if $ \sup_{t \in \mathbb{R}} \sup_{z} |f(t\omega)(z)| < \infty $, $ \omega \in M $; the precise statement is left to the reader (see also \autoref{thm:triH}).

\begin{proof}
	(Case I). The unique existence of $ \mathcal{M} $ such that (1) $ \sim $ (3) hold directly follows from \autoref{thmAc} with the section $ i = 0 $ and the function $ \eta(\cdot,\cdot) $ thereof satisfying $ \eta(\cdot,\cdot) \equiv 0 $ (i.e. $ \varepsilon_1(\cdot) \equiv 0 $) as ($ \circ $1) holds. The $ C^1 $ smoothness of $ \Psi_{\omega}(\cdot) $ is a consequence of \autoref{lem:leaf1}.

	(Case II). Note that under ($ \circ $2), the cocycle correspondence induced by \eqref{equ:main} fulfills \autoref{thmAc} with the function $ \eta(\cdot,\cdot) $ thereof satisfying $ \sup_{s\geq 0}\eta(t,s\omega) < \infty $ for each $ (t,\omega) \in \mathbb{R}_+ \times M $ and the section $ i = 0 $. Indeed, for any $ 0 \leq t_1 < t_2 $, let $ z(t) = (x(t), y(t)) \in X_{t\omega} \oplus Y_{t\omega} $ ($ t_1 \leq t \leq t_2 $) satisfy \eqref{**0} or \eqref{*06} with $ x(t_1) = 0 $, $ y(t_2) = 0 $, and then $ |x(t_2)| \leq C_{r}(\omega) $ and $ |y(t_1)| \leq C_{r}(\omega) $, where $ t_2 - t_1 = r $; so we can take $ \sup_{s\geq0}\eta(r,s\omega) \leq C_{r}(\omega) $. This can be seen as follows. If $ z(\cdot) $ satisfies \eqref{*06}, then obviously there are $ \lambda(\omega) > 0 $ and $ K(\omega) > 0 $ such that $ |x(t_2)| \leq K(\omega) e^{\lambda(\omega)(t_2 - t_1)} $ and $ |y(t_1)| \leq K(\omega) e^{\lambda(\omega)(t_2 - t_1)} $. If $ z(\cdot) $ satisfies \eqref{**0}, then from \autoref{lem:prelem}, we also have $ \lambda(\omega) > 0 $ and $ K(\omega) > 0 $ such that $ |x(t_2)| \leq K(\omega) e^{\lambda(\omega)(t_2 - t_1)} $ and $ |y(t_1)| \leq K(\omega) e^{\lambda(\omega)(t_2 - t_1)} $. Moreover, if all the functions $ \mu_{s}, \mu_{u} $ and $ \varepsilon_{m}(\cdot) $ (or $ \varepsilon'(\cdot) $, $ \eta(\cdot) $) are bounded and $ \sup_{\omega} \sup_{z} |f(\omega)(z)| < \infty $, then we can choose $ C_{r}(\omega) $ independent of $ \omega \in M $. The proof is complete.
\end{proof}

Recall \autoref{def:ud+}, where we have assumed \eqref{equ:linear} satisfies uniform dichotomy on $ \mathbb{R}_+ $. So we have $ Z_0 = X_{\omega} \oplus Y_{\omega} $ associated with projections $ P_{\omega}, P^{c}_{\omega} = \id - P_{\omega} $, and functions $ \mu_{s}, \mu_{u}: M \to \mathbb{R} $.  Here $ Z_0 = \overline{D(A)} $ if \autoref{them:spec} holds and $ Z_0 = Z $ otherwise. Let $ \hat{\varepsilon}(\cdot) $ be equal to $ \varepsilon_{m}(\cdot) $, $ \varepsilon'(\cdot) $, or $ \eta(\cdot) $ according to case (i) $ \sim $ (iii) in \autoref{thm:equI}.

\begin{thm}\label{thm:equII}
	Assume $ \mu_{s}, \mu_{u}, \hat{\varepsilon} $ are bounded and $ \xi $-almost continuous; the latter means that
	\[
	\limsup_{\omega' \to \omega}\max\{|\mu_{\kappa}(\omega') - \mu_{\kappa}(\omega)|, |\hat{\varepsilon}(\omega') - \hat{\varepsilon}(\omega)|\} \leq \xi, \text{ for each } \omega \in M,
	\]
	where $ \kappa = s, u $.
	Then for small $ \xi > 0 $, the set $ \mathcal{M} $ obtained in \autoref{thm:equI} further satisfies the following:
	(1) $ (\omega,x) \mapsto \Psi_{\omega}(P_{\omega}x) $ is $ C^0 $;
	(2) if, in addition, $ (\omega,z) \mapsto D_zf(\omega)z $ is $ C^0 $, so is $ (\omega,x) \mapsto D_x\Psi_{\omega}(P_{\omega}x) $.
\end{thm}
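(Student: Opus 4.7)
My plan is to obtain \autoref{thm:equII} as a direct application of \autoref{thm:Regc}(1) to the cocycle correspondence $H$ on $M \times Z$ induced by \eqref{equ:IE} (equivalently, by \eqref{*06} or \eqref{**0}). The first step will be to pass to the discrete-time system $\widehat{H}(\omega) = H(t_0,\omega)$ for some large fixed $t_0 > 0$, exactly as in the proof of \autoref{thmAc}, so that $\widehat{H}$ becomes a bundle correspondence with generating bundle map $(F_{t_0,\omega},G_{t_0,\omega})$ on the spectral decomposition $X_\omega \oplus Y_{t_0\omega} \to X_{t_0\omega} \oplus Y_\omega$, and $\Psi$ is recovered as the unique invariant-graph map produced by \autoref{thmAc}.

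Next, I would verify each hypothesis needed for \autoref{thm:Regc}(1). The joint continuity of $(\omega,z_1,z_2) \mapsto F_{t_0,\omega}(P_\omega z_1, P^c_{t_0\omega} z_2)$ and $G_{t_0,\omega}(P_\omega z_1, P^c_{t_0\omega} z_2)$ --- and, under the extra assumption in (2), of their derivatives in $z$ --- is exactly what is supplied by \autoref{lem:00} in case (type I), by \autoref{lem:vcf1} together with \autoref{them:spec} in case (type II), and by \autoref{thm:gencocycleAB}(1) in case (type III); the strong continuity of $P_\omega, P^c_\omega$ is built into \autoref{def:ud+}. Boundedness of the $(\mathrm{A'_1})(\mathrm{B_1})$ functions $\alpha,\beta,\beta',\lambda_s,\lambda_u,c$ is already part of the hypotheses of \autoref{thm:equI}; their $C\xi$-almost continuity (needed by (E5) of \autoref{thm:Regc}) will follow from the explicit formulas in \autoref{thm:biAB}, \autoref{thm:gencocycleAB}(2), and \autoref{them:spec}(1), which express these quantities as continuous functions of $\mu_s, \mu_u, \hat{\varepsilon}$, so the assumed $\xi$-almost continuity of the latter propagates with a multiplicative constant depending only on the spectral gap.

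With these verifications in place, \autoref{thm:Regc}(1) will yield continuity of the bundle map $\Psi$ itself and, under the extra smoothness assumption, of $(\omega,x) \mapsto D_x \Psi_\omega(x)$. Composing with the strongly continuous trivialization $(\omega,x) \mapsto (\omega, P_\omega x)$ then gives continuity of $(\omega,x) \mapsto \Psi_\omega(P_\omega x)$ and of $(\omega,x) \mapsto D_x \Psi_\omega(P_\omega x)$, which is precisely the two conclusions of the theorem.

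The main obstacle I anticipate is matching \autoref{thm:Regc}, whose hypotheses (E1)--(E4) are stated for $C^1$-Finsler base spaces, to the present setting where $M$ is merely a topology space. However, only the base-$C^0$ portion of \autoref{thm:Regc}(1) is actually needed, and its underlying mechanism is a parameter-dependent fixed-point argument (cf.\ \cite[Appendix D.1]{Che18a}) applied to the graph transform determined by $(F_{t_0,\omega},G_{t_0,\omega})$; this mechanism requires no differentiable structure on $M$. The smallness of $\xi$ enters only through (E5), precisely to keep the contraction rate of the graph transform uniformly bounded below $1$ in $\omega$, and the lack of base differentiability is immaterial because the target regularity is only $C^0$.
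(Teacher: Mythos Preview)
Your plan to invoke \autoref{thm:Regc}(1) does not go through, and the paper is explicit about why. In the first sentence of its proof the author writes that the $C^0$ continuity of $\Psi_{(\cdot)}(\cdot)$ and $D_x\Psi_{(\cdot)}(\cdot)$ ``is not the case considered in \autoref{thm:Regc} or even \cite{Che18a} as maybe the spectral subbundles $\bigsqcup_{\omega} X_{\omega}, \bigsqcup_{\omega} Y_{\omega}$ \ldots\ don't have $C^0$ topology compatible with the bundle structure, [and] should [be proved] directly.'' You correctly flagged that (E1)--(E4) demand a $C^1$ Finsler base, but you missed the more serious failure of (E2): the projections $\omega\mapsto P_\omega$ are only \emph{strongly} continuous (see \autoref{def:ud+} and \autoref{rmk:cocycle}), so $\bigsqcup_\omega X_\omega$ need not even be a $C^0$ vector bundle, let alone admit a $C^1$ normal bundle atlas. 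Without such an atlas there is no ambient space in which the graph transform is a uniform contraction on a function space carrying the $\omega$-continuity you want; the ``underlying mechanism'' you allude to cannot be run as a black box.

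What the paper does instead is a direct inter-fiber comparison: it estimates $\limsup_{\omega'\to\omega}|\Psi_{\omega'}(P_{\omega'}x)-\Psi_\omega(P_\omega x)|$ by expanding both sides through the graph-transform identities $G_{t_0,\omega}(x,\Psi_{t_0\omega}(x_{t_0,\omega}(x)))=\Psi_\omega(x)$, inserting cross terms such as $G_{t_0,\omega'}(P_{\omega'}x,\,P^c_{t_0\omega'}\Psi_{t_0\omega}(x_{t_0,\omega}(P_\omega x)))$, and using the (A$'$)(B) Lipschitz bounds together with the continuity of $(\omega,z_1,z_2)\mapsto G_{t_0,\omega}(P_\omega z_1, P^c_{t_0\omega}z_2)$ supplied by \autoref{lem:00}. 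This produces an inequality of the form
\[
\sup_\omega\sup_x\limsup_{\omega'\to\omega}\frac{|\Psi_{\omega'}(P_{\omega'}x)-\Psi_\omega(P_\omega x)|}{|P_\omega x|}\le \theta''\cdot(\text{same quantity}),\qquad \theta''<1,
\]
forcing the limsup to vanish. The essential new ingredient, absent from your proposal, is the systematic use of $P_{\omega'}, P^c_{t_0\omega'}$ to transport elements between the fibers $X_\omega$ and $X_{\omega'}$ so that the comparison can be carried out inside the fixed ambient space $Z$ rather than inside an (unavailable) bundle chart.
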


\begin{proof}
	We mention that, the $ C^0 $ continuity of $ \Psi_{(\cdot)}(\cdot) $ and $ D_{x}\Psi_{(\cdot)}(\cdot) $, which is not the case considered in \autoref{thm:Regc} or even \cite{Che18a} as maybe the spectral subbundles $ \bigsqcup_{\omega} X_{\omega}, \bigsqcup_{\omega} Y_{\omega} $ (in \autoref{def:ud+}) don't have $ C^0 $ topology compatible with the bundle structure, should prove directly, but the strategy is almost the same as \cite{Che18a}. Let $ H \sim (F, G) $ be the cocycle correspondence induced by \eqref{equ:main}. Note that under the spectral condition given in \autoref{thm:equI}, the functions $ \alpha, \beta $ in (A$ '_1 $)($ \alpha, \lambda_u; k $) (B$ _1 $)($ \beta; \beta', \lambda_s; c $) condition can be chosen as constants; also, the functions $ \lambda_s, \lambda_u $ can be chosen as $ C\xi $-almost continuous for certain constant $ C > 0 $.
	Take
	$ \hat{c} = \sup_{\omega} k(\omega) $ if case (iii) holds and $ \hat{c} = 1 $ otherwise, and $ \lambda''_{\kappa}(\omega) = \lambda''_{\kappa}(\omega) + C\xi $ where $ \kappa = s, u $, $ \beta''(\omega) = \beta $, $ \alpha''(\omega) = \alpha $. Then we can let $ t_0 > 0 $ be large and $ \xi $ small such that
	\[
	\theta'' \triangleq \sup_{\omega} \frac{ \hat{c}^2 {\lambda''_{s}}^{t_0}(\omega) {\lambda''_{u}}^{t_0}(\omega) }{1 - \alpha''(\omega) \beta''(\omega)} < 1.
	\]
	The map $ \Psi_{\omega} $ is constructed through the following equation
	\[
	\begin{cases}
	F_{t_0,\omega}(x, \Psi_{t_0\omega}(x_{t_0,\omega}(x))) = x_{t_0,\omega}(x),\\
	G_{t_0,\omega}(x, \Psi_{t_0\omega}(x_{t_0,\omega}(x))) = \Psi_{\omega}(x).
	\end{cases}
	\]
	Consider for fixed $ x \in Z_0 $,
	\begin{align*}
	& \limsup_{\omega' \to \omega} |\Psi_{\omega'}(P_{\omega'}x) - \Psi_{\omega}(P_{\omega}x)| \\
	\leq & \limsup_{\omega' \to \omega}| G_{t_0,\omega'}(P_{\omega'}x, \Psi_{t_0\omega'}(x_{t_0,\omega'}(P_{\omega'}x))) - G_{t_0,\omega}(P_{\omega}x, \Psi_{t_0\omega}(x_{t_0,\omega}(P_{\omega}x))) | \\
	\leq & \limsup_{\omega' \to \omega} | G_{t_0,\omega'}(P_{\omega'}x, \Psi_{t_0\omega'}(x_{t_0,\omega'}(P_{\omega'}x))) - G_{t_0,\omega'}(  P_{\omega'}x, P^c_{t_0\omega'} \Psi_{t_0\omega}(  x_{t_0,\omega}(  P_{\omega}x  )  )  ) | \\
	& + \limsup_{\omega' \to \omega} |G_{t_0,\omega'}( P^c_{\omega'}x, P_{t_0\omega'}\Psi_{t_0\omega}(x_{t_0,\omega}(x)) ) - G_{t_0,\omega}(P_{\omega}x, \Psi_{t_0\omega}(x_{t_0,\omega}(P_{\omega}x)))| \\
	\leq & \hat{c}{\lambda''_{u}}^{t_0}(\omega) \limsup_{\omega' \to \omega} | \Psi_{t_0\omega'}(x_{t_0,\omega'}(P_{\omega'}x)) - \Psi_{t_0\omega}(x_{t_0,\omega}(P_{\omega}x)) | + 0\\
	\leq & \hat{c}{\lambda''_{u}}^{t_0}(\omega) \beta''(\omega) \limsup_{\omega' \to \omega}  | x_{t_0,\omega}(P_{\omega'}x) - x_{t_0,\omega}(P_{\omega}x) | \\
	& + \hat{c}{\lambda''_{u}}^{t_0}(\omega)\limsup_{\omega' \to \omega} | \Psi_{t_0\omega'}(P_{t_0\omega'}\hat{x}) - \Psi_{t_0\omega}(P_{t_0\omega}\hat{x}) |,
	\end{align*}
	where $ \hat{x} = x_{t_0,\omega}(P_{\omega}x) $, and similarly,
	\begin{multline*}
	\limsup_{\omega' \to \omega}  | x_{t_0,\omega}(P_{\omega'}x) - x_{t_0,\omega}(P_{\omega}x) |
	\leq \alpha''(\omega) \beta''(\omega) \limsup_{\omega' \to \omega}  | x_{t_0,\omega}(P_{\omega'}x) - x_{t_0,\omega}(P_{\omega}x) | \\
	+ \alpha''(\omega)\limsup_{\omega' \to \omega} | \Psi_{t_0\omega'}(P_{t_0\omega'}\hat{x}) - \Psi_{t_0\omega}(P_{t_0\omega}\hat{x}) |,
	\end{multline*}
	which yields
	\[
	\limsup_{\omega' \to \omega} |\Psi_{\omega'}(P_{\omega'}x) - \Psi_{\omega}(P_{\omega}x)| \leq \frac{\hat{c}{\lambda''_{u}}^{t_0}(\omega)}{1 - \alpha''(\omega) \beta''(\omega)} \limsup_{\omega' \to \omega} | \Psi_{t_0\omega'}(P_{t_0\omega'}\hat{x}) - \Psi_{t_0\omega}(P_{t_0\omega}\hat{x}) |.
	\]
	Therefore,
	\begin{multline*}
	\sup_{x}\limsup_{\omega' \to \omega} \frac{|\Psi_{\omega'}(P_{\omega'}x) - \Psi_{\omega}(P_{\omega}x)|}{|P_{\omega}x|} \\
	\leq \frac{\hat{c}{\lambda''_{u}}^{t_0}(\omega)}{1 - \alpha''(\omega) \beta''(\omega)} \sup_{x}\limsup_{\omega' \to \omega} \frac{| \Psi_{t_0\omega'}(P_{t_0\omega'}\hat{x}) - \Psi_{t_0\omega}(P_{t_0\omega}\hat{x}) |}{|x_{t_0,\omega}(P_{\omega}x)|} \frac{|x_{t_0,\omega}(P_{\omega}x)|}{|P_{\omega}x|},
	\end{multline*}
	and so,
	\begin{align*}
	\sup_{\omega}\sup_{x}\limsup_{\omega' \to \omega} \frac{|\Psi_{\omega'}(P_{\omega'}x) - \Psi_{\omega}(P_{\omega}x)|}{|P_{\omega}x|}
	\leq & \theta'' \sup_{\omega} \sup_{x}\limsup_{\omega' \to \omega} \frac{| \Psi_{t_0\omega'}(P_{t_0\omega'}\hat{x}) - \Psi_{t_0\omega}(P_{t_0\omega}\hat{x}) |}{|x_{t_0,\omega}(P_{\omega}x)|} \\
	\leq & \theta'' \sup_{\omega} \sup_{x}\limsup_{\omega' \to \omega} \frac{|\Psi_{\omega'}(P_{\omega'}x) - \Psi_{\omega}(P_{\omega}x)|}{|P_{\omega}x|},
	\end{align*}
	Note that
	\[
	\sup_{\omega}\sup_{x}\limsup_{\omega' \to \omega} \frac{|\Psi_{\omega'}(P_{\omega'}x) - \Psi_{\omega}(P_{\omega}x)|}{|P_{\omega}x|} \leq 2\hat{c}\sup_{\omega}\lambda_{s}(\omega) < \infty,
	\]
	and this shows that $ \limsup_{\omega' \to \omega} |\Psi_{\omega'}(P_{\omega'}x) - \Psi_{\omega}(P_{\omega}x)| = 0 $. And because of $ \sup_{\omega}\lip \Psi_{\omega}(\cdot) < \infty $, we see $ (\omega,x) \mapsto \Psi_{\omega}(P_{\omega}x) $ is $ C^0 $.

	For the proof of $ C^0 $ continuity of $ D_{x}\Psi_{(\cdot)}(\cdot) $, first observe that $ K^1_{\omega}(x) = D_x\Psi_{\omega}(x) $ satisfies the following `variant' equations:
	\[
	\begin{cases}
	DF_{t_0,\omega}(x, \Psi_{t_0\omega}(x_{t_0,\omega}(x))) ( \id, K^1_{t_0\omega}(x_{t_0,\omega}(x)) D_{x}x_{t_0,\omega}(x) ) = D_{x}x_{t_0,\omega}(x),\\
	DG_{t_0,\omega}(x, \Psi_{t_0\omega}(x_{t_0,\omega}(x))) ( \id, K^1_{t_0\omega}(x_{t_0,\omega}(x)) D_{x}x_{t_0,\omega}(x) ) = K^1_{\omega}(x).
	\end{cases}
	\]
	Now the same argument in the proof of $ C^0 $ continuity of $ \Psi_{(\cdot)}(\cdot) $ can be applied, which is omitted here.
\end{proof}

Here, we do not give more results about the regularity of $ \Psi_{\omega} $ with respect to $ \omega \in M $; this can be done, for example, if the regularity condition respecting base points on spectral projections and $ T_{1}, S_1 $ in the uniform dichotomy condition (i.e. \autoref{def:ud+}) satisfied by \eqref{equ:main} is assumed directly, which could be induced from the regularity of $ \omega \mapsto L(\omega) $ if $ \{\mathcal{C}(\omega)\} $ has the special form $ \mathcal{C}(\omega) = A + L(\omega) $, $ \omega \in M $.

Finally, let's introduce the following class of non-autonomous differential equations which can be reformulated as \eqref{equ:main} and so our results in \autoref{continuous case} can be applied; see also \cite{CL99}.

\begin{exa}\label{exa:non-auto}
	Consider the following non-autonomous linear differential equation:
	\begin{equation}\label{non*1}
	\dot{z}(t) = a(t)Az(t),
	\end{equation}
	where $ A $ is a generator of a $ C_0 $ semigroup $ T $ and $ a \in L^{\infty}(\mathbb{R}, \mathbb{R}_+\backslash\{0\}) $ (the all measurable bounded functions such that $ \inf a (\cdot) \geq r > 0 $). Let $ M $ be the closure of $ \{ a_t = a(t+\cdot): t \in \mathbb{R}_+ \} $ in $ L^{\infty}(\mathbb{R}, \mathbb{R}) $.
	Set
	\[
	T_0(t,\omega)x = T\left( \int_{0}^{t} \omega(s) ~\mathrm{ d } s \right) x, ~x \in Z,~\omega \in M,
	\]
	Then \eqref{non*1} generates the cocycle $ \{T_0(t,\omega)\} $ on $ Z $ over $ t: M \to M $, where $ (t\omega)(s) \triangleq \omega(t+s) $. Let $ A(\omega) = \omega(0)A $ and $ f: Z \to Z $. One can study the invariant manifolds of \eqref{equ:main} for this type of differential equations.
	If $ A $ is a generator of $ C_0 $ bi-semigroup, then similarly \eqref{non*1} generates cocycle correspondence; see also \autoref{genccc}. Here, note that since we do not assume $ a \in C(\mathbb{R}, \mathbb{R}_+) $, in general, $ \frac{\mathrm{d}}{\mathrm{d}t}|_{t=0}T_0(t,\omega)x $ may not exist except for $ x = 0 $.
\end{exa}

\begin{exa}
	Consider the following non-autonomous linear differential equation:
	\begin{equation}\label{non*2}
	\dot{z}(t) = Az(t) + a_1(t) L_1z(t) + a_1(t) L_2z(t) + \cdots + a_n(t) L_n z(t),
	\end{equation}
	where (i) $ A : D(A) \subset Z \to Z $ is one class of (type $ \bullet $a) $ \sim $ (type $ \bullet $c) listed in \autoref{overview}, and $ Z_0, Z_{-1} $ as well, (ii) $ L_i \in L(Z_0, Z_{-1}) $, and (iii) $ a_i \in L^{\infty}(\mathbb{R}, \mathbb{R}) $, $ i = 1,2,\ldots,n $. Let $ M $ be the closure of
	\[
	\{ (a_1, a_2, \cdots, a_n)_t = (a_1(t+\cdot), a_2(t+\cdot), \cdots, a_n(t+\cdot)): t \in \mathbb{R}_+ \}
	\]
	in $ L^{\infty}(\mathbb{R}, \mathbb{R}^n) $. Define
	\[
	L(\omega)x = \sum_{i=1}^{n}\omega_i(0)L_ix: M \times Z_0 \to Z_{-1}, ~\omega = (\omega_1, \omega_2, \cdots, \omega_n) \in M.
	\]
	The semiflow $ t $ on $ M $ is defined by $ (t\omega)(s) = \omega(t+s) $. Let $ \mathcal{C}(\omega) = A + L(\omega) $ and $ f: M \times Z_0 \to Z_{-1} $. Then one can study the invariant manifolds of \eqref{equ:main} for this type of differential equations. Some concrete examples are the following.
	\begin{enumerate}[(i)]
		\item $ \partial_tz = \partial_{ss}z - a(t)z + f(z) $, $ s \in (0,1) $. Take $ X = L^2(0,1) $, $ Ax = \ddot{x} $, $ x \in D(A) = H^2(0,1) \cap H^1_0(0,1) $.
		\item $ \partial_tz = \partial_{s}z + a(t)z + f(z) $, $ s \in (0,1) $. Take $ X = C[0,1] $, $ Ax = \dot{x} $, $ x \in D(A) = C^1_0[0,1] $. (Note that $ \overline{D(A)} \neq X $.)
		\item Let $ f_i \in \lip(Z,Z) $, $ i = 1,2 $.
		\[
		\dot{z}(t) =
		\begin{cases}
		f_1(z), ~n \leq t < n+1, \\
		f_2(z), ~n+1 \leq t < n+2,
		\end{cases}
		~n=\pm1, \pm3, \ldots.
		\]
		In this case $ f(\omega,z) = \omega_1(0) f_1(z) + \omega_2(0) f_2(z) $, $ A(\omega) = 0 $, where $ \omega = (\omega_1, \omega_2) = (\omega_1, 1-\omega_1) \in M $. $ n = 2 $. $ a_1(t) = 1 $ if $ 2k-1 \leq t < 2k $, $ a_1(t) = 0 $ otherwise. $ a_2 = 1 - a_1 $.
	\end{enumerate}
\end{exa}

\subsection{application II: autonomous different equations around some invariant sets} \label{generalB}

In applications, equation \eqref{equ:main} will arise naturally when we study the following autonomous differential equation around some invariant set. \textbf{($ \bullet $I)} Consider
\begin{equation}\label{2**1}
\begin{cases}
\dot{z}(t) = Az(t) + g(z(t)), \\
z(0) = z_0 \in Z_0,
\end{cases}
\end{equation}
where $ g \in C^{1}(Z_0, Z_{-1}) $, and $ A: D(A) \subset Z \to Z $, $ Z_0, Z_{-1} $ are one of (type $ \bullet $a) $ \sim $ (type $ \bullet $c) listed in \autoref{overview}. Note that $ Z_0 \hookrightarrow D(A^{-\alpha}) $. For some concrete examples of \eqref{2**1}, see \autoref{examples}.

We say a set $ M $ is \emph{positively invariant} under \eqref{2**1} (resp. for time $ t > t_0 $), if for every $ z_0 \in M $, there is a mild solution $ u(t) $ ($ t \geq 0 $) of \eqref{2**1} with $ u(0) = z_0 $ satisfying $ u(t) \in M $ for all $ t \geq 0 $ (resp. $ t > t_0 $). Similar notion of negatively invariant (or invariant) set can be defined as well.

\textbf{($ \bullet $II)} Assume there is a set $ M $ positively invariant under \eqref{2**1} such that it induces a natural semiflow $ t $ in $ M $ by $ t\omega = u(t) $ where $ u(t) $ ($ t \geq 0 $) is  the unique mild solution of \eqref{2**1} in $ M $ with $ u(0) = \omega $.
The invariant $ M $ usually can be taken as equilibriums, (a-)periodic orbit, several orbits or with their closure (including e.g. homoclinic orbits, heteroclinic orbits), or the global compact attractor, etc.

\textbf{($ \bullet $III)} The linearized equation \eqref{2**1} along $ M $ is given by
\begin{equation}\label{2**2}
\dot{z}(t) = Az(t) + L(t\omega)z(t),
\end{equation}
where $ L(\omega) = Dg(\omega): Z_0 \to Z_{-1} $, $ \omega \in M $; in addition, assume that for every $ \omega \in M $, $ \sup_{t \geq 0}|L(t\omega)| = \tau(\omega) < \infty $, and $ \omega \mapsto \tau(\omega) $ is locally bounded (i.e. (D1) in \autopageref{d1L} holds) if $ Z_0 = Z $, and $ \sup_{\omega \in M} |L(\omega)| < \infty $ otherwise.
Now by studying equation \eqref{equ:main} with
\[
f(\omega)z = g(z + \omega) - L(\omega)z - g(\omega): ~Z_0 \to Z_{-1},
\]
i.e.
\begin{equation}\label{2**3}
\dot{z}(t) = Az(t) + L(t\omega)z(t) + f(t\omega)z(t), \\
\end{equation}
one can give some dynamical results about \eqref{2**1} around $ M $, e.g. stability, persistence or bifurcation; see \cite{Hen81,DPL88,Wig94,Tem97,BLZ98,BLZ99,BLZ08,MR09a,ElB12,Zel14}.

The results about abstract dynamical systems in \autoref{continuous case} (or see \cite{Che18a} in detail) with \autoref{them:spec} and \autoref{thm:gencocycleAB} can be applied to \eqref{2**3} directly to obtain different types of invariant manifolds and foliations such as the (un)stable, center-(un)stable and pseudo-(un)unstable manifolds for an equilibrium and strong (un)stable foliations; see \autoref{generalA}. The reader can find more results about invariant foliations in \cite{Che18a} for the discrete case. In the following, let us apply the results in \autoref{normalH} to \eqref{2**1} under ($ \bullet $I) $ \sim $ ($ \bullet $III) and the normal hyperbolicity of $ M $.

\begin{lem}
	$ u(t) $ is a mild solution of \eqref{2**1} if and only if $ z(t) = u(t) - t\omega $ is a mild solution of \eqref{2**3}.
\end{lem}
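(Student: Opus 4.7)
The plan is a direct computation exploiting the algebraic identity
\[
L(\omega)z + f(\omega)z = Dg(\omega)z + g(z+\omega) - Dg(\omega)z - g(\omega) = g(z+\omega) - g(\omega),
\]
which recasts \eqref{2**3} in the equivalent form $\dot z(t) = Az(t) + g(z(t)+t\omega) - g(t\omega)$. In this form the content of the lemma is transparent: subtracting the ``reference trajectory'' $t\omega$ from a solution of \eqref{2**1} should produce a solution of the deviation equation, and vice versa.

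The crucial preliminary observation I would make first is that by hypothesis ($ \bullet $II) the orbit $s \mapsto s\omega$ is itself a mild solution of \eqref{2**1} with initial datum $\omega$; in particular it satisfies the integral identity of \autoref{def:mild} (with $L \equiv 0$ and nonlinearity $g$), namely
\[
t\omega = \omega + A \int_0^t s\omega \, \mathrm{d}s + \int_0^t g(s\omega) \, \mathrm{d}s, \qquad \int_0^t s\omega \, \mathrm{d}s \in D(A).
\]
This reference identity is the hinge of the whole argument.

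For the ``only if'' direction I would write down the integral identity of \autoref{def:mild} for $u$ as a mild solution of \eqref{2**1}, subtract the reference identity for $t\omega$, and use linearity together with closedness of $A$ on $D(A)$ to conclude that $z(t) = u(t) - t\omega$ satisfies the integral identity characterizing mild solutions of \eqref{2**3}. The required inclusion $\int_a^t z(s)\,\mathrm{d}s \in D(A)$ is automatic as the difference of two elements of $D(A)$, and the nonlinear integrand $g(u(s)) - g(s\omega)$ rewrites as $L(s\omega)z(s) + f(s\omega)z(s)$ via the identity above. The ``if'' direction is symmetric: add the reference identity for $t\omega$ to the mild-solution identity for $z$ and read off the integral identity for $u := z + t\omega$.

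The ``main obstacle'' is really not an obstacle at all: no dichotomy, spectral, or regularity input is needed, and the argument is pure bookkeeping with integrals. The only point that genuinely merits attention is that $t\omega$ must itself be a bona fide mild solution of \eqref{2**1}, which is precisely the way the semiflow on $M$ was defined in ($ \bullet $II); moreover, in the three settings (type $ \bullet $a)--(type $ \bullet $c) one reads \autoref{def:mild} uniformly (interpreting $A$ according to the case at hand and, for (type $ \bullet $b), working in the appropriate extrapolation space so that $g(u(\cdot)) \in C([a,b], Z_{-1})$ is integrable), so a single translation argument covers all three cases at once.
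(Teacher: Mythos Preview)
The paper states this lemma without proof, treating it as self-evident from the definitions. Your proposal supplies exactly the natural argument the author omitted: subtract (respectively add) the mild-solution integral identity for the reference trajectory $s\mapsto s\omega$ from that of $u$, invoking the algebraic identity $L(s\omega)z + f(s\omega)z = g(z+s\omega) - g(s\omega)$ and the fact that $D(A)$ is a linear subspace. This is correct and is presumably what the author had in mind.
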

Set
\[
\mathbb{B}_{\epsilon}(M) = \{ x \in Z_0: d(x, M) < \epsilon \}.
\]
For a map $ h: \mathbb{B}_{r}(M) \to \mathfrak{M} $, where $ \mathfrak{M} $ is a metric space with metric $ d $, the \emph{amplitude} of $ h $ in $ \mathbb{B}_{r}(M) $, as usual, is defined by
\begin{equation*}
\mathfrak{A}_{h|_{\mathbb{B}_{r}(M)}} \triangleq \limsup_{\epsilon \to 0} \{ d(h(m), h(m_0)): |m - m_0| \leq \epsilon, m, m_0 \in \mathbb{B}_{r}(M) \}.
\end{equation*}
For example, if $ h|_{\mathbb{B}_{r}(M)} $ is uniformly continuous if and only if $ \mathfrak{A}_{h|_{\mathbb{B}_{r}(M)}} = 0 $; if $ M $ is precompact in $ Z_0 $, then $ \mathfrak{A}_{h|_{\mathbb{B}_{r}(M)}} $ can be sufficiently small when $ r $ is small. For a $ C^0 $ (resp. $ C^1 $) map $ h: \mathbb{B}_{r}(M) \to B $ where $ B $ is a Banach space, we use the notation $ |h|_{C^0(\mathbb{B}_{r}(M))} = \sup_{z \in \mathbb{B}_{r}(M)}|h(z)| $ (resp. $ |h|_{C^1(\mathbb{B}_{r}(M))} = \max\{|h|_{C^0(\mathbb{B}_{r}(M))}, |Dh(\cdot)|_{C^0(\mathbb{B}_{r}(M))}\} $).

\vspace{.5em}
\noindent \textbf{Settings A}.
\textbf{(AI)} (submanifold). Let $ \Sigma = M \subset Z_0 $ with $ K \subset \Sigma $ satisfy assumption (A1) in \autoref{normalH} with $ \chi(\epsilon) $ sufficiently small as $ \epsilon \to 0 $; so we have projections $ \Pi^{\kappa}_{\omega} $, $ \omega \in M $, $ \kappa = s, c, u $.

\noindent\textbf{(AII)} (semiflow). Assume the semiflow $ t $ is $ C^0 $ in the immersed topology and satisfies the following. \textbf{(i)} $ \exists t_0 > 0 $ such that $ t_0 (M) \subset K $;
\textbf{(ii)} $ t : M \to M $, $ 0 \leq t \leq t_0 $, considered as maps of $ M \to Z_0 $, are $ \xi $-almost equicontinuous around $ K $ (in the immersed topology); see \autopageref{almost}.

Condition \textbf{(i)}, roughly, means the semiflow $ t $ (uniformly) crosses the `boundary' of $ M $ transversally;
condition \textbf{(ii)} is redundant if $ K $ is precompact in $ Z_0 $; if $ t_0 $ is small and $ \sup_{t \in [0,t_0]} \sup_{m\in \Sigma} |t(m) - m| \leq \xi $, then condition \textbf{(ii)} is also satisfied (see \cite{BLZ99, BLZ08} in this case as well).

\noindent\textbf{(AIII)}
Suppose the linear equation \eqref{2**2} satisfies uniform dichotomy on $ \mathbb{R}_+ $ (see \autoref{def:ud+} or \autopageref{UD+}); so we have $ Z_0 = X_{\omega} \oplus Y_{\omega} $ associated with projections $ P_{\omega}, P^{c}_{\omega} = \id - P_{\omega} $, and functions $ \mu_{u}, \mu_{s}: M \to \mathbb{R} $. Take $ C_1 > 1 $ such that $ |P_{\omega}|, |P^{c}_{\omega}| \leq C_1 $. We assume $ \mu_{u}, \mu_{s} $ are bounded, $ \xi $-almost continuous and $ \xi $-almost uniformly continuous around $ K $ (in the immersed topology).

\noindent\textbf{(AIV)} (a) Suppose $ \mathfrak{A}_{Dg|_{\mathbb{B}_{r}(M)}} \leq \chi $ when $ r $ is small. (b) Let $ \widehat{\Pi}^s_{\omega} = 0 $, $ \widehat{\Pi}^{c}_{\omega} = P_{\omega} $, $ \widehat{\Pi}^{u}_{\omega} = P^{c}_{\omega} $, $ \omega \in M $. Assume there is a small $ \xi_2 > 0 $ such that $ \sup_{\omega \in K}|\widehat{\Pi}^\kappa_\omega - \Pi^\kappa_\omega| \leq \xi_2 $, $ \kappa = s, c, u $.

By \autoref{thm:A}, we can give a persistence result about $ M $.
\begin{thm}\label{thm:app1}
	Under above \textbf{Settings A}, let $ \widetilde{g} \in C^1(\mathbb{B}_{r}(M), Z_0) $ such that $ \sup_{z \in \mathbb{B}_{r}(M)}\{ |\widetilde{g}(z) - g(z)| \} \leq \eta $.
	\begin{enumerate}[(1)]
		\item If $ A $ is a generator of a $ C_0 $ semigroup or a $ C_0 $ bi-semigroup, or a Hille-Yosida operator, assume $ \mathfrak{A}_{D\widetilde{g}|_{\mathbb{B}_{r}(M)}} \leq \chi $,
		\[
		\sup_{t \geq 0}\{|D\widetilde{g}(t\omega) - Dg(t\omega)|\} \leq \varepsilon(\omega), ~\omega \in M,
		\]
		and $ \omega \mapsto \varepsilon(\omega) $ is bounded, $ \xi $-almost continuous and $ \xi $-almost uniformly continuous around $ K $ (in the immersed topology).
		If $ \xi,\xi_2, \chi, \eta, r > 0 $ are small, and there is a constant $ c > \sqrt{2} $ such that
		\[
		\inf_{\omega} \{ \mu_{u}(\omega) - \mu_{s}(\omega) - (1 + c) 2C_1\varepsilon(\omega) \} > 0, ~\inf_{\omega} \{ \mu_{u}(\omega) - 2C_1\varepsilon(\omega) \} > 0,
		\]
		then for the following perturbed equation about \eqref{2**1},
		\begin{equation*}
		\begin{cases}
		\dot{z}(t) = Az(t) + \widetilde{g}(z(t)), \\
		z(0) = z_0 \in Z_0,
		\end{cases} \tag{$ \divideontimes $}
		\end{equation*}
		there is a $ C^1 $ immersed submanifold $ \widetilde{M} $ in $ \mathbb{B}_{r}(M) $ such that it is homeomorphic to $ M $ and positively invariant under above equation for time $ t > t_0 $. Also, $ M $ is a $ C^1 $ immersed submanifold. For more properties of $ \widetilde{M} $, see \autoref{thm:A}.

		\item If $ A $ is an MR operator (see the assumption (MR) in \autopageref{MR}), assume $ \sup_{z \in \mathbb{B}_{r}(M)}\{ |D\widetilde{g}(z) - Dg(z)| \} \leq \varepsilon_1 $. If $ \xi,\xi_2, \chi, \eta, r, \varepsilon_1 > 0 $ are small and
		\[
		\inf_{\omega} \mu_{u}(\omega) > 0, ~\inf_{\omega} \{ \mu_{u}(\omega) - \mu_{s}(\omega) \} > 0, \tag{$ \circledast $}
		\]
		then ($ \divideontimes $) also has a $ C^1 $ immersed submanifold $ \widetilde{M} $ in $ \mathbb{B}_{r}(M) $ such that it is homeomorphic to $ M $ and positively invariant under above equation for time $ t > t_0 $; furthermore $ M $ is a $ C^1 $ immersed submanifold, and $ \widetilde{M} \to M $, $ T\widetilde{M} \to TM $ as $ \chi, \eta, \varepsilon_1 \to 0 $.
	\end{enumerate}
\end{thm}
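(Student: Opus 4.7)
My plan is to reduce Theorem \ref{thm:app1} to Theorem \ref{thm:A} (the center–stable manifold theorem) applied to the cocycle correspondence generated by the perturbed equation, using the existence and (A) (B) estimates established in Sections \ref{diffCocycle}. Because the assumption $\widehat{\Pi}^s_{\omega}=0$ in (AIV)(b) forces $\widehat{X}^s_{\omega}=0$ for every $\omega\in M$, the ``center–stable'' graph of Theorem \ref{thm:A} collapses to a pure center graph, which will play the role of $\widetilde{M}$.

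First I would rewrite the perturbed equation in a cocycle frame along $M$. For $\omega\in M$ the orbit $t\omega$ of the unperturbed equation is available by (AII); setting $z(t)=t\omega+u(t)$ and using that $t\omega$ is a mild solution of \eqref{2**1} with nonlinearity $g$, one finds that $u$ satisfies
\[
\dot u(t)=Au(t)+L(t\omega)u(t)+\hat f(t\omega)u(t)+R(t\omega),
\]
where $L(t\omega)=Dg(t\omega)$, $R(t\omega)=\widetilde g(t\omega)-g(t\omega)$ and $\hat f(t\omega)u=\widetilde g(t\omega+u)-\widetilde g(t\omega)-L(t\omega)u$. Collecting $\hat f$ and $R$ into a single nonlinear term $\widehat F(t\omega,u)$, the Lipschitz constant of $u\mapsto \widehat F(t\omega,u)$ on $\mathbb{B}_r$ is bounded by $\mathfrak A_{D\widetilde g|_{\mathbb B_r(M)}}+|D\widetilde g(t\omega)-Dg(t\omega)|\le \chi+\varepsilon(\omega)$ in Case (1), and by $\chi+\varepsilon_1$ in Case (2); the ``amplitude'' of $\widehat F(\cdot,0)=R$ is bounded by $\eta$. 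Thus \eqref{equ:main} with this $\hat f$ and $L$ fits the three types (I)/(II)/(III) of Section \ref{diffCocycle}, and by Lemma \ref{lem:dich00}, Lemma \ref{lem:gencocycle} (or Lemma \ref{lem:vcf1} in the Hille–Yosida/MR case), together with Lemma \ref{lem:mild}, the equation generates a $C_0$ cocycle correspondence $H$ over $t:M\to M$ with continuous generating cocycle, whose cocycle for nearby orbits encodes the perturbed dynamics of ($\divideontimes$).

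Next I would invoke Theorem \ref{thm:biAB} (for the $C_0$ (bi-)semigroup case), Theorem \ref{thm:gencocycleAB} (for the general $C_0$ case), and Theorem \ref{them:spec} (for the MR case) to verify the $(\mathrm{A}'_1)(\alpha,\lambda_u;c)\,(\mathrm{B}_1)(\beta;\beta',\lambda_s;c)$ condition of (A3)(a). In Case (1) the spectral gap hypothesis $\inf_\omega\{\mu_u-\mu_s-(1+c)2C_1\varepsilon\}>0$ with $c>\sqrt 2$ lets us choose the constants $\alpha(\omega),\beta'(\omega)<1/c$, so the angle condition $\sup\alpha\beta'<1/2$ in (A3)(a)(i) holds; the second hypothesis $\inf\{\mu_u-2C_1\varepsilon\}>0$ gives $\sup\lambda_u<1$ (i.e.\ (A3)(a)(ii)). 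The almost continuity / almost uniform continuity of $\mu_s,\mu_u,\varepsilon$ from (AIII) and the assumed regularity of $\widetilde g$ yield (A3)(a)(iii). Hypothesis (AIV)(a) together with $\sup_{\mathbb B_r(M)}|\widetilde g-g|\le\eta$ supplies the approximation (A3)(b), and (AIV)(b) gives the closeness $|\widehat\Pi^\kappa_\omega-\Pi^\kappa_\omega|\le\xi_2$ in (A3). The $s$-contraction (A3)(c) is trivial since $\widehat X^s_\omega=0$. Case (2) is identical with $\varepsilon'$ replaced by $\eta$ in Theorem \ref{them:spec} (2), where the weaker spectral assumption $\inf\mu_u>0,\ \inf\{\mu_u-\mu_s\}>0$ suffices to secure $\tilde\lambda_u,\tilde\lambda_s\tilde\lambda_u<1$ and a sufficiently small $\beta_1,\alpha_1$ once $\varepsilon_1$ is small.

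Now (A1) is (AI), (A2) is (AII) with $v^{t}=t$, (A4) follows from the $C^1$ regularity of $g,\widetilde g$ (which transfers to $C^1$ regularity of $\widehat F^{cs}_{t_0,\omega}$ via Lemma \ref{lem:00} / Lemma \ref{lem:vcf1}) together with (H0), which is implied by our almost continuity of $\omega\mapsto P_\omega$ in (AIII). Theorem \ref{thm:A} then delivers a unique Lipschitz graph
\[
W^{cs}_{loc}(\Sigma)=\graph h_0,\qquad h_0(\widehat m,\cdot):\widehat X^c_{\phi(\widehat m)}(\sigma)\to\widehat X^u_{\phi(\widehat m)}(\varrho),
\]
which, because $\widehat X^s=0$, is exactly a center graph $\widetilde M=\{\phi(\widehat m)+h_0(\widehat m):\widehat m\in\widehat\Sigma\}\subset\mathbb B_r(M)$; $\widetilde M$ is homeomorphic to $\widehat\Sigma\simeq M$ and $C^1$ immersed by Theorem \ref{thm:A}(3). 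The invariance statement $W^{cs}_{\sigma_1}\subset H(t)^{-1}W^{cs}_{loc}$ for $t\ge t_0$ (Theorem \ref{thm:A}(1)(iii)) translates back, via $z=t\omega+u$, into positive invariance of $\widetilde M$ under ($\divideontimes$) for $t>t_0$. Finally, the convergence $\widetilde M\to M$ and $T\widetilde M\to TM$ as $\chi,\eta,\varepsilon_1\to 0$ will come from the uniform Lipschitz estimate $\mu_{cs}(m)=(1+\chi_*)\beta'(m)+\chi_*$ with $\chi_*\to 0$ recorded in Theorem \ref{thm:A}(1)(i), together with the $C^1$ conclusion (A4)(ii) applied to the zero–perturbation limit.

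The main obstacle will be the accounting in the third paragraph: carefully translating the spectral/Lipschitz thresholds of Theorems \ref{thm:biAB} and \ref{them:spec} into the angle–plus–spectrum conditions (A3)(a)(i)--(iii), and in particular showing that the awkward constant $c>\sqrt 2$ arises precisely as the quantitative price of $\sup\alpha\beta'<1/2$. A secondary, mostly bookkeeping difficulty is to check that all the approximate (almost) continuity hypotheses of (A3)(a)(iii) and (A4) survive after passing from $L,\varepsilon,\mu_{\kappa}$ on $M$ to the cocycle-correspondence data; for the MR case there is the additional nuisance that $\hat f$ takes values in $Z_{-1}$, so the estimate $\lip\hat f(t\omega)\le \chi+\varepsilon_1$ must be performed in the $Z_{-1}$–norm and fed into Theorem \ref{them:spec}(2) via the $S_0\Diamond$-estimate \eqref{s01}.
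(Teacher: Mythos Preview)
Your overall plan is correct and coincides with the paper's own approach: rewrite $(\divideontimes)$ as a cocycle equation along $M$, feed it into the (A)(B) machinery of \autoref{diffCocycle}, and then invoke \autoref{thm:A} with $\widehat X^s_\omega=0$ so that the ``center--stable'' graph is in fact a center graph $\widetilde M$. Your identification of the role of $c>\sqrt{2}$ (forcing $\alpha,\beta'<1/c$ and hence $\sup\alpha\beta'<1/2$) and of the triviality of (A3)(c) is right.

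There is, however, one technical step you have not addressed and which the paper treats explicitly. The Lipschitz bound you derive for $u\mapsto \widehat F(t\omega,u)$ holds only on $\mathbb B_\epsilon$, whereas \autoref{thm:biAB}, \autoref{thm:gencocycleAB} and \autoref{them:spec} all assume the global condition (D2). In Case~(2) the paper first extends $\widetilde f(\omega)=\widehat F(\omega,\cdot)$ to all of $Z_0$ by composition with the radial retraction $r_\epsilon$, obtaining a globally Lipschitz $\widehat f$ to which \autoref{them:spec}(2) applies; one then checks that for the correspondence $H$ of the original equation $(\divideontimes)$ the generating maps agree with those of the extended system on the relevant balls $X_\omega(r_{t,1})\oplus Y_{t\omega}(r'_{t,2})$, and that the approximation bound \eqref{FG00} survives. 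In Case~(1) the paper avoids the cutoff altogether: it observes that the proofs of \autoref{thm:biAB}/\autoref{them:spec}(1) actually establish the (A)(B) inequalities for any two trajectories that remain in $X_{t\omega}(\epsilon/2)\oplus Y_{t\omega}(\epsilon/2)$ for $t\in[t_1,t_2]$, and combines this with a direct short-time estimate (via \eqref{**0} or \eqref{*06}) showing that the orbit through $(0,0)$ stays $O(\eta)$-small on $[0,t_0]$, which gives both (A3)(b) and the local (A$_0$)(B$_0$) structure needed in \eqref{HHAB}. Without one of these two devices your sentence ``Thus \eqref{equ:main} with this $\hat f$ and $L$ fits the three types \ldots'' is not justified, because (D2) fails for $\hat f$ as written.

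A minor point: (H0) concerns the projections $\Pi^\kappa_\omega$ coming from the manifold data in (A1), not the spectral projections $P_\omega$; it is already implied by (H2), not by the strong continuity in (AIII).
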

\begin{proof}
	We will apply \autoref{thm:A}.
	First consider (2). Let $ |\widetilde{g} - g|_{ C^1(\mathbb{B}_r(M)) } \leq \eta + \varepsilon_1 \triangleq \varepsilon_0 $. Since $ \mathfrak{A}_{Dg|_{\mathbb{B}_{r}(M)}} \leq \chi $, we have $ \mathfrak{A}_{D\widetilde{g}|_{\mathbb{B}_{r}(M)}} \leq \chi + 2\varepsilon_0 $. In particular, there is a small $ \epsilon > 0 $ ($ \epsilon < r $) such that for $ \widetilde{f}(\omega)(z) = \widetilde{g}(z+\omega) - L(\omega)z - g(\omega) $, it holds $ \lip\widetilde{f}(\omega)|_{\mathbb{B}_{\epsilon}} < 2(\chi + 2\varepsilon_0) \triangleq \delta_{\epsilon} $, where $ \mathbb{B}_{\epsilon} = \{ x \in Z_0: |x| < \epsilon \} $. So by using the radial retraction, i.e.
	\begin{equation*}
	r_\varepsilon(x) =
	\begin{cases}
	x, &~ \text{if}~ |x| \leq \varepsilon, \\
	\varepsilon x / |x|, & ~ \text{if}~ |x| \geq \varepsilon,
	\end{cases}
	\end{equation*}
	we assume there is a map $ \widehat{f} $ such that $ \lip\widehat{f}(\omega) < 2\delta_{\epsilon} $ and $ \widehat{f}(\omega)|_{\mathbb{B}_{\epsilon}} = \widetilde{f}(\omega) $. Applying \autoref{them:spec} (2) to the following equation if $ \delta_{\epsilon} $ is small:
	\begin{equation*}
	\dot{z}(t) = Az(t) + L(\omega)z(t) + \widehat{f}(\omega)z(t),
	\end{equation*}
	we see that the cocycle correspondence $ \widehat{H} \sim (\widehat{F}, \widehat{G}) $ induced by this equation satisfies (A$ _0 $)($ \alpha; \alpha_1, \lambda_{cs};k $) (B$ _0 $)($ \alpha; \alpha_1, \lambda_{u};k $) condition (see \autopageref{defi:ABk}), where $ \alpha, \alpha_1, k $ are constant functions such that $ 0 < \alpha_1 < \alpha < 1/2 $, $ k > 1 $, and $ \lambda_{cs}(\cdot), \lambda_{u}(\cdot) $ are bounded and $ C\xi $-almost continuous and $ C\xi $-almost uniformly continuous around $ K $ (in the immersed topology) for some constant $ C > 0 $ such that $ \sup_{\omega} \lambda_{cs}(\omega) \lambda_{u}(\omega) < 1 $, $ \sup_{\omega} \lambda_{u}(\omega) < 1 $; also note that we can take $ \alpha \to 0 $ as $ \delta_{\epsilon} \to 0 $. Note that when $ \eta $ is sufficiently smaller than $ \epsilon $, it holds
	\begin{equation}\label{FG00}
	\sup_{t\in [0,t_0]}\sup_{\omega \in M}\{|\widehat{F}_{t,\omega}(0,0)|, |\widehat{G}_{t,\omega}(0, 0)|\} \leq C_2\eta,
	\end{equation}
	where $ C_2 > 0 $ only depends on small $ t_0 > 0 $, as $ |\widetilde{g} - g|_{ C^0(\mathbb{B}_r(M)) } \leq \eta $.
	Thus, for the semiflow $ H $ generated by equation ($ \divideontimes $), we have
	\begin{multline} \label{HHAB}
	\widetilde{H}(t,\omega)(\cdot) \triangleq H(t)(\cdot + \omega) - t(\omega) \sim (\widehat{F}_{t,\omega}, \widehat{G}_{t,\omega}): \\
	X_{\omega}(r_{t,1}) \oplus Y_{\omega}(r_{t,2}) \to X_{t(\omega)}(r'_{t,1}) \oplus Y_{t(\omega)} (r'_{t,2}),~ \omega \in M,
	\end{multline}
	where $ r_{t, i}, r'_{t,i} $, $ i = 1,2 $, are taken such that they satisfy
	\[
	k\hat{\lambda}^{t}_{cs} r_{t,1} + r'_{t,2} + C_2\eta \leq r'_{t,1} \leq \epsilon / 2, ~
	k r'_{t,2} + r_{t,1} + C_2\eta \leq r_{t,2} \leq \epsilon / 2,
	\]
	and $ \hat{\lambda}_{cs} = \max \{\sup_{\omega}\lambda_{cs}(\omega), 1\} $; for instance, if $ t \in [0,b] $, then $ r_{t,1} \leq (c\hat{\lambda}^{b}_{cs} )^{-1}  \epsilon / 8 $, $ r'_{t,2} \leq \epsilon / 8 $, and $ \eta $ is assumed to be small such that $ C_2\eta \leq \epsilon / 8 $. As $ \widetilde{g} \in C^1(\mathbb{B}_{r}(M), Z_0) $ and $ \widehat{f}(\omega)|_{\mathbb{B}_{\epsilon}} = \widetilde{f}(\omega) $, we know $ \widehat{F}_{t,\omega}(\cdot,\cdot), \widehat{G}_{t,\omega}(\cdot,\cdot) $ are $ C^1 $ in $ X_{\omega}(r_{t,1}) \times Y_{t(\omega)} (r'_{t,2}) $. Therefore, all the assumptions in \autoref{thm:A} are fulfilled if $ \xi,\xi_2, \chi, \eta, r, \varepsilon_1 $ are small, and then the conclusion (2) follows. (Here note that $ \widetilde{M} $ is constructed by the graph of $ h_0 $, where $ h_0(m) \in X^{u}(\varrho) $, $ m \in M $; in addition, $ T_{m}\widetilde{M} = \graph Df^0_{m}(0,0) $, where $ h_0(m') + m' = m + f^0_{m}(x^c) + x^c $, $ x^c \in X^{c}_{m}(\sigma) $, $ f^0_{m}(x^c) \in X^{u}_{m} $ and $ m' $ belongs to a component of $ M \cap \mathbb{B}_{\epsilon_{m}}(m) $ for small $ \epsilon_{m} > 0 $; $ |Df^0_{m}(x^c)| < \alpha $; $ \varrho \to 0 $ as $ \eta \to 0 $. This induces $ \widetilde{M} \to M $, $ T\widetilde{M} \to TM $ as $ \eta, \varepsilon_1 \to 0 $.)

	Proof of (1). This is essentially the same as (2), where the difference is $ \widetilde{g} $ might be a `large' perturbation. Define $ \widetilde{f} $ as in (1). Since $ \mathfrak{A}_{D\widetilde{g}|_{\mathbb{B}_{r}(M)}} \leq \chi $ (for this case, in fact $ \mathfrak{A}_{D{g}|_{\mathbb{B}_{r}(M)}} \leq \chi $ being useless), there is a small $ \epsilon > 0 $ ($ \epsilon < r $) such that $ \lip\widetilde{f}(\omega)|_{\mathbb{B}_{\epsilon}} \leq 2\chi + \varepsilon(\omega) $ and $ \lip f(\omega)|_{\mathbb{B}_{\epsilon}} \leq \chi $. In the proof of (2), the radial retraction is used to truncate $ \widetilde{f} $, but this is unnecessary, for we can consider the following equation directly:
	\[
	\dot{z}(t) = Az(t) + L(\omega)z(t) + \widetilde{f}(\omega)z(t), \tag{$ \circledcirc $}
	\]
	or \eqref{**0} (if $ A $ is a Hille-Yosida operator), or \eqref{*06} (if $ A $ is a generator of a $ C_0 $ semigroup or a $ C_0 $ bi-semigroup), where $ f(\omega)(\cdot) $ thereof is replaced by $ \widetilde{f}(\omega)(\cdot) $. Note that $ \widetilde{f}(\omega)(\cdot) = f(\omega)(\cdot) + \widetilde{g}(\cdot + \omega) - g(\cdot + \omega) $ and $ f(\omega)(0) = 0 $. Let $ z(t) = (x(t), y(t)) \in X_{t\omega}(\epsilon/2) \oplus Y_{t\omega}(\epsilon/2) $ satisfy \eqref{**0} or \eqref{*06} with $ x(t_1) = 0 $, $ y(t_2) = 0 $. Then $ |x(t)| \leq C_2 \eta $ and $ |y(t)| \leq C_2 \eta $ for $ t \in [t_1, t_2] $ where $ t_2 - t_1 = t_0 $ is small and $ C_2 $ depends on $ t_0 $ but not $ \eta $. Indeed, from \eqref{**0} or \eqref{*06}, we see
	\[
	\max\{|x|_{[t_1, t_2]}, |y|_{[t_1, t_2]}\} \leq\max\{ 1, e^{\mu_st_0}, e^{-\mu_ut_0} \}  C_1 \delta_1(t_0) \{(\chi + \varepsilon(m))|z|_{[t_1, t_2]} + \eta \},
	\]
	where $ |x|_{t_1, t_2} = \sup_{t\in [t_1, t_2]}|x(t)| $ (similar for $ |y|_{[t_1, t_2]}, |z|_{[t_1, t_2]} $) and $ \delta_1(t_0) \to 0 $ as $ t_0 \to 0 $; here if $ A $ is a generator of a $ C_0 $ semigroup or a $ C_0 $ bi-semigroup, then $ \delta_1(t_0) = t_0 $; for the case $ Z_0 \neq Z $, we need $ \sup_{\omega}|L(\omega)| < \infty $ (in \textbf{($ \bullet $III)}) to give a uniform estimate on $ \delta_1(t_0) $ (see \autoref{lem:coycle0} \eqref{integral}). In the proof of \autoref{thm:biAB} or \autoref{them:spec}, we certainly prove the fact that for any given $ t_1 < t_2 $ and $ \omega \in M $, if $ (x(t),y(t)), (x'(t),y'(t)) \in X_{t\omega}(\epsilon/2) \oplus Y_{t\omega}(\epsilon/2) $, $ t_1 \leq t \leq t_2 $, satisfy \eqref{**0} or \eqref{*06}, and $ |\hat{x}(t_1)| \leq \alpha(\omega) |\hat{y}(t_1)| $, then $ |\hat{x}(t_2)| \leq \alpha(\omega) |\hat{y}(t_2)| $ (or $ |\hat{x}(t_2)| \leq k_{\alpha}(\omega,\epsilon_1)\alpha(\omega) |\hat{y}(t_2)| $) and $ |\hat{y}(t_1)| \leq \lambda^{t_2 - t_1}_u(\omega) |\hat{y}(t_2)| $, where $ \hat{x}(t) = x(t) - x'(t) $ and $ \hat{y}(t) = y(t) - y'(t) $.

	Combining with two facts above, we obtain (i) the correspondence $ H $ induced by ($ \divideontimes $) satisfies \eqref{HHAB} with $ 0 < \sup\{t_{t, i}, t'_{t,i}: i = 1,2, t\in [0,b] \} < \epsilon / 2 $, where $ \{ \widetilde{H}(t,\omega) \} $ is the cocycle correspondence induced by equation ($ \circledcirc $); (ii) \eqref{FG00} holds; (iii) moreover, \eqref{HHAB} satisfies (A)($ \alpha_1, \lambda^t_{cs}(\omega) $) (B)($ \alpha_1, \lambda^t_{u}(\omega) $) condition and if $ t \geq \epsilon_1 > 0 $, (A)($ \alpha; \alpha_1, \lambda^t_{cs}(\omega) $) (B)($ \alpha; \alpha_1, \lambda^t_{u}(\omega) $) condition, where $ 0 < \alpha_1 < \alpha < 1/c $, and $ \lambda_{cs}(\omega) = e^{\mu_{s}(\omega)+2C_1\varepsilon(\omega)} $, $ \lambda_{u}(\omega) = e^{-\mu_{u}(\omega)+2C_1\varepsilon(\omega)} $, $ \sup_{\omega} \lambda_{cs}(\omega) \lambda_{u}(\omega) < 1 $, $ \sup_{\omega} \lambda_{u}(\omega) < 1 $. So all the assumptions in \autoref{thm:A} are fulfilled if $ \xi,\xi_2, \chi, \eta, r $ are small, then the conclusion (1) follows. The proof is complete.
\end{proof}

Instead of assuming $ \widehat{\Pi}^s_\omega = 0 $, we consider the following.
\begin{enumerate}[($ \star $)]
	\item Let $ X_\omega = \widehat{X}^s_{\omega} \oplus \widehat{X}^c_{\omega} $ associated with projections $ \widehat{\Pi}^s_\omega, \widehat{\Pi}^c_\omega $ such hat $ R(\widehat{\Pi}^s_\omega) = \widehat{X}^s_{\omega} $, $ R(\widehat{\Pi}^c_\omega) = \widehat{X}^c_{\omega} $. In addition, suppose for $ T_1(t,\omega) $ in \autoref{def:ud+} \eqref{udd} (in \autopageref{def:ud+}), it further has
	\[
	T_1(t,\omega) = (T_{1,s}(t,\omega), T_{1,1}(t,\omega)) : \widehat{X}^s_{\omega} \oplus \widehat{X}^c_{\omega} \to \widehat{X}^s_{t\omega} \oplus \widehat{X}^c_{t\omega},
	\]
	with $ |T_1(t,r\omega)| \leq e^{\mu_{ss}(\omega)t} $ for all $ t,r \geq 0 $ and $ \omega \in \Sigma $, where $ T_{1,s}(t,\omega): \widehat{X}^s_{\omega} \to \widehat{X}^s_{t\omega} $ and $ T_{1,1}(t,\omega): \widehat{X}^s_{\omega} \oplus \widehat{X}^c_{\omega} \to \widehat{X}^c_{t\omega} $. Also, $ \mu_{ss}(\cdot) $ is bounded, $ \xi $-almost continuous and $ \xi $-almost uniformly continuous around $ K $ (in the immersed topology) and $ \sup_{\omega}\mu_{ss}(\omega) < 0 $.
\end{enumerate}

\begin{thm}\label{thm:app2} 
	Under above \textbf{Settings A} and ($ \star $), if $ \xi, \xi_2, \chi, r $ are small and $ (\circledast) $ holds, then there is a local center-stable invariant manifold $ W^{cs}_{loc}(M) \subset \mathbb{B}_{r}(M) $ of $ M $. $ W^{cs}_{loc}(M) $ is a $ C^1 $ immersed submanifold of $ Z_0 $ and is positively invariant under \eqref{2**1} for time $ t > t_0 $. Moreover, if a mild solution $ u(t) $ ($ t \geq 0 $) of \eqref{2**1} always `stays' in $ \mathbb{B}_{r}(M) $, then it must belong to $ W^{cs}_{loc}(M) $ (i.e. $ u(t) \in W^{cs}_{loc}(M) $). For more properties of $ W^{cs}_{loc}(M) $, see \autoref{thm:A} and \autoref{thm:B}. The above results are persistent under small $ C^1 $ perturbation of \eqref{2**1}.
\end{thm}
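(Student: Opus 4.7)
The plan is to reduce Theorem \ref{thm:app2} to \autoref{thm:A} applied to the cocycle correspondence associated with \eqref{2**3} (and its $C^1$-perturbation), using the refined splitting given by $(\star)$ to supply the stable direction that was absent in \autoref{thm:app1}. First I will set up the perturbation $\widetilde{f}(\omega)(z)=g(z+\omega)-L(\omega)z-g(\omega)$ (with $g$ replaced by $\widetilde{g}$ for the persistence statement), note that $\widetilde{f}(\omega)(0)=0$ and $\lip\widetilde{f}(\omega)|_{\mathbb{B}_\epsilon}$ is small once $\chi$ and $\epsilon$ are small thanks to $\mathfrak{A}_{Dg|_{\mathbb{B}_r(M)}}\leq\chi$; then I will solve \eqref{**0} or \eqref{*06} directly (as in the proof of \autoref{thm:app1} (1)) to obtain a cocycle correspondence $\widetilde{H}(t,\omega)\sim(\widehat{F}_{t,\omega},\widehat{G}_{t,\omega})$ on $X_\omega(r_{t,1})\oplus Y_\omega(r_{t,2})\to X_{t\omega}(r'_{t,1})\oplus Y_{t\omega}(r'_{t,2})$, $C^1$ in its fiber variables because $g\in C^1$, with $|\widehat{F}_{t,\omega}(0,0)|,|\widehat{G}_{t,\omega}(0,0)|\leq C_2\eta$ on $[0,t_0]$.

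Next, I will install on $X_\omega\oplus Y_\omega$ the refined projections $\widehat{\Pi}^s_\omega=\widehat{\Pi}^s_\omega P_\omega$, $\widehat{\Pi}^c_\omega$ given by $(\star)$, and $\widehat{\Pi}^u_\omega=P^c_\omega$, so that $\widehat{X}^{cs}_\omega=X_\omega$ and $\widehat{X}^u_\omega=Y_\omega$; condition (A3) (iii)–type closeness $|\widehat{\Pi}^\kappa_\omega-\Pi^\kappa_\omega|\leq\xi_2$ is then exactly the hypothesis. Applying \autoref{them:spec} (or \autoref{thm:gencocycleAB}) to $\widetilde{H}$ with the splitting $X_\omega\oplus Y_\omega$ and the linear rates $\mu_s,\mu_u$ yields (A$'_1$)($\alpha,\lambda_u;c$) (B$_0$)($\beta;\beta',\lambda_{cs};c$) condition in the $cs$-direction, where $\alpha\to0$ and $\lambda_{cs}(\omega)\to e^{\mu_s(\omega)}$, $\lambda_u(\omega)\to e^{-\mu_u(\omega)}$ as the Lipschitz defect $\chi+2\varepsilon_0\to0$. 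The spectral condition $\sup_\omega\lambda_u<1$ follows from $\inf_\omega\mu_u>0$ in $(\circledast)$, and the angle/continuity conditions (A3) (a) (i)–(iii) come from the chosen $\alpha,\beta,\beta'$ and the $\xi$-almost (uniform) continuity assumed on $\mu_s,\mu_u$ in Settings A.

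The real novelty, and the main obstacle, is verifying the $s$-contraction condition (A3) (c): whenever $(\hat x^s_i,\hat x^u_i)\times(\tilde x^c_i,\tilde x^s_i,\tilde x^u_i)\in\graph \widetilde{H}(t_0,\omega)$ with $|\tilde x^u_1-\tilde x^u_2|\leq B|\hat x^s_1-\hat x^s_2|$, one needs $|\tilde x^s_1-\tilde x^s_2|\leq\lambda^*_s|\hat x^s_1-\hat x^s_2|$ with $\lambda^*_s<1$. I will derive this by re-projecting \eqref{**0}/\eqref{*06} with the further splitting $P_\omega=\widehat{\Pi}^s_\omega+\widehat{\Pi}^c_\omega$ supplied by $(\star)$ and using the refined cocycle $T_{1,s}$ with bound $e^{\mu_{ss}(\omega)t}$; because the $(cs,u)$-part already provides $|\hat x^c_1-\hat x^c_2|+|\hat x^u_1-\hat x^u_2|\leq(1+B)$ control by the $cs$-hyperbolicity just established, a sublemma identical in spirit to \autoref{slem:11} combined with $\sup_\omega\mu_{ss}(\omega)<0$ gives the required contraction with $\lambda^*_s$ close to $e^{\mu_{ss}t_0}$, once $\chi,\xi,r$ are small. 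Smoothness of the $C^1$ fiber maps $\widehat F,\widehat G$ (from $g\in C^1$) makes (A4) hold with $\xi_0$ small, so \autoref{thm:A} produces the $C^1$-immersed $W^{cs}_{loc}(M)$; positive invariance for $t>t_0$ is conclusion (1)(iii)(a)(b), and the exponential-tracking characterization—that any mild solution staying in $\mathbb{B}_r(M)$ must lie in $W^{cs}_{loc}(M)$—follows from the uniqueness part of (1) together with the characterization of $\graph h_0$ via backward use of the orbit definition on \autopageref{defi:orbit}. Persistence under small $C^1$ perturbation is obtained by repeating the reduction with $g$ replaced by $\widetilde{g}$, noting that $\widetilde{f}-f$ has small Lipschitz and $C^0$ size, so all the smallness parameters $\chi,\eta,\xi_2$ degrade only mildly and \autoref{thm:A} applies uniformly.
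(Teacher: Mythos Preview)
Your proposal is correct and follows the same route as the paper: one reduces to \autoref{thm:A} exactly as in the proof of \autoref{thm:app1}~(2), and the only new point is the verification of the $s$-contraction condition (A3)(c), which you correctly propose to obtain by projecting the variational equation with $\widehat{\Pi}^s_\omega$ and using the refined rate $\mu_{ss}<0$ from $(\star)$.

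One small remark: the paper's verification of (A3)(c) is even more elementary than what you sketch. Rather than invoking an iterated estimate in the spirit of \autoref{slem:11}, the paper simply takes $t_0$ small and uses the short-time bound from \autoref{slem:small}~\eqref{small}, namely $\max\{|\hat{x}|_{[t_1,t_2]},|\hat{y}|_{[t_1,t_2]}\}\leq\hat{\sigma}_1(t_0)\max\{|x_s-x'_s|,|x_u-x'_u|\}$ with $\hat{\sigma}_1(t_0)\to 1^+$. Plugging this directly into the $\widehat{\Pi}^s$-projected integral equation gives $|\hat{x}_s(t_2)|\leq\{e^{\mu_{ss}(\omega)t_0}+\widehat{C}\delta_1(t_0)\hat{\sigma}_1(t_0)(1+B)\}|x_s-x'_s|$, and since $\sup_\omega\mu_{ss}(\omega)<0$ and $\delta_1(t_0)\to 0$, the bracket is $<1$ for $t_0$ small. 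This avoids any iteration and is slightly cleaner than tracking the $(cs,u)$-hyperbolicity separately, though your description would also work.
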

A mild solution $ u(t) $ ($ t \geq 0 $) of \eqref{2**1} always `stays' in $ \mathbb{B}_{r}(M) $ meaning $ \{ u(t) \}_{t \geq 0} $ is a $ (\sigma, \varrho, \varepsilon) $-forward orbit of the correspondence $ H $ induced by \eqref{2**1}; see \autopageref{defi:orbit}.

\begin{proof}
	Since $ \widehat{\Pi}^s_\omega \neq 0 $, we need to verify the assumption (A3) (c) in \autoref{thm:A} (s-contraction). This is simple as shown in the following. For any given $ t_1 < t_2 $ and $ \omega \in M $, let $ (x(t),y(t)), (x'(t),y'(t)) \in X_{t\omega}(\epsilon/2) \oplus Y_{t\omega}(\epsilon/2) $, $ t_1 \leq t \leq t_2 $, satisfy \eqref{**0} or \eqref{*06} with $ x(t_1) = (x_s, 0), x'(t_1) = (x'_s, 0) $, $ y(t_2) = x_{u}, y'(t_2) = x'_{u} $, and $ \hat{x}(t) = x(t) - x'(t) $ and $ \hat{y}(t) = y(t) - y'(t) $. Then
	\[
	\max\{ |\hat{x}|_{[t_1, t_2]}, |\hat{y}|_{[t_1, t_2]} \} \leq \hat{\sigma}_1(t_0) \max\{|x_s - x'_s|, |x_{u} - x'_{u}|\},
	\]
	where $ t_0 = t_2 - t_1 $ and $ \hat{\sigma}_1(t_0) \to 1_+ $ as $ t_0 \to 0 $; see e.g. the proof of \autoref{slem:small} \eqref{small}. So for $ \hat{x}_s(t) = \widehat{\Pi}^s_{t\omega}\hat{x}(t) $, when $ |x_{u} - x'_{u}| \leq B |x_{s} - x'_{s}| $, we see
	\begin{align*}
	|\hat{x}_s(t_2)| & \leq e^{\mu_{ss}(\omega)t_0}|x_s - x'_s| + \widehat{C}\delta_1(t_0) \max\{ |\hat{x}|_{[t_1, t_2]}, |\hat{y}|_{[t_1, t_2]} \} \\
	& \leq \{e^{\mu_{ss}(\omega)t_0} + \widehat{C}\delta_1(t_0) \hat{\sigma}_1(t_0) (1+B)\} |x_s - x'_s|,
	\end{align*}
	where $ \widehat{C} > 0 $ and $ \delta_1(t_0) \to 0 $ as $ t_0 \to 0 $. Now the conclusion follows from above.
\end{proof}

\vspace{.5em}
\noindent\textbf{Settings B}. \textbf{(BI)} Under \textbf{(AI)}, let $ M $ be an invariant set of \eqref{2**1} and $ K = M $.

\noindent\textbf{(BII)} Let $ t: M \to M $ be a $ C_0 $ flow in the immersed topology and $ \exists t_ 0 >0 $ such that $ t : M \to M $, $ -t_0 \leq t \leq t_0 $, considered as maps of $ M \to Z_0 $, are $ \xi $-almost equicontinuous (in the immersed topology); see \autopageref{almost}.

\noindent\textbf{(BIII)} Suppose the linear equation \eqref{2**2} satisfies the following \textbf{uniform trichotomy} condition. \label{def:ut}
\begin{enumerate}[(a)]
	\item Assume $ Z_0 = \widehat{X}^{s}_{\omega} \oplus \widehat{X}^{c}_{\omega} \oplus \widehat{X}^{u}_{\omega} $, $ \omega \in M $ associated with projections $ P^s_{\omega} $, $ P^c_{\omega} $, $ P^u_{\omega} = \id - P^{s}_{\omega} - P^{c}_{\omega} $ such that $ R(P^{\kappa}_{\omega}) = \widehat{X}^{\kappa}_{\omega} $, $ \kappa = s, c, u $. $ (\omega, z) \mapsto P^{\kappa}_{\omega}z $ is continuous.

	\item There are three $ C_0 $ linear cocycles $ T_s $, $ T_{c} $, $ T_{u} $ such that $ T_\kappa(t,\omega): \widehat{X}^{\kappa}_{\omega} \to \widehat{X}^{\kappa}_{t\omega} $ for all $ (t, \omega) \in \mathbb{R} \times M $ if $ \kappa = c $, $ (t, \omega) \in \mathbb{R}_+ \times M $ if $ \kappa = s $, and $ (t, \omega) \in -\mathbb{R}_+ \times M $ if $ \kappa = u $.  Write $ T_{\kappa_1\kappa_2} = T_{\kappa_1} \oplus T_{\kappa_2} $, $ \kappa_1 \neq \kappa_2 $.
	Let $ z_1(t) = (T_{s}(t-t_1, t_1\omega)x_s, T_{c}(t-t_1, t_1\omega)x_c, T_{u}(t-t_2, t_2\omega)x_u) \in \widehat{X}^{s}_{t\omega} \oplus \widehat{X}^{c}_{t\omega} \oplus \widehat{X}^{u}_{t\omega} $ ($ t_1 \leq t \leq t_2 $), then it is the \emph{unique} mild solution of \eqref{2**2} with
	$ P^{cs}_{\omega} z_{1}(t_1) = x_s + x_c $ and $ P^{u}_{\omega} z_{1}(t_2) = x_u $. Also let $ z_2(t) = (T_{s}(t-t_1, t_1\omega)x_s, T_{c}(t-t_2, t_2\omega)x_c, T_{u}(t-t_2, t_2\omega)x_u) \in \widehat{X}^{s}_{t\omega} \oplus \widehat{X}^{c}_{t\omega} \oplus \widehat{X}^{u}_{t\omega} $ ($ t_1 \leq t \leq t_2 $), then it is the \emph{unique} mild solution of \eqref{2**2} with
	$ P^{s}_{\omega} z_{2}(t_1) = x_s $ and $ P^{cu}_{\omega} z_{2}(t_2) = x_c + x_u $.

	\item There is a constant $ C_1 > 0 $ such that $ \sup_{\omega}|P^{\kappa}_{\omega}| \leq C_1 $, $ \kappa = s, c, u $.

	\item There are functions $ \mu_s, \mu_u, \mu_{cs}, \mu_{cu} $ of $ M \to \mathbb{R} $, such that
	\begin{equation*}
	|T_s(t,r\omega)| \leq e^{\mu_s(\omega) t},~
	|T_{cs}(t,r\omega)| \leq e^{\mu_{cs}(\omega) t},~|T_{cu}(-t,r\omega)| \leq e^{-\mu_{cu}(\omega) t},~
	|T_u(-t,r\omega)| \leq e^{-\mu_u(\omega) t},
	\end{equation*}
	for all $ t, r \geq 0 $ and $ \omega \in M $.
\end{enumerate}

Assume $ \mu_{\kappa} $, $ \kappa = s, cs, cu, u $, are bounded and $ \xi $-almost uniformly continuous (in the immersed topology). Moreover,
\[
\sup_{\omega} \mu_{s}(\omega) < 0, ~\inf_{\omega} \mu_{u}(\omega) > 0, ~
\inf_{\omega} \{ \mu_{u}(\omega) - \mu_{cs}(\omega) \} > 0, ~
\inf_{\omega} \{ \mu_{cu}(\omega) - \mu_{s}(\omega) \} > 0.
\]

\noindent\textbf{(BIV)} $ \mathfrak{A}_{Dg|_{\mathbb{B}_{r}(M)}} \leq \chi $ when $ r $ is small. Assume there is a small $ \xi_2 > 0 $ such that $ \sup_{\omega \in \Sigma}|P^{\kappa}_{\omega} - \Pi^\kappa_\omega| \leq \xi_2 $, $ \kappa = s, c, u $.

The following theorem is a consequence of \autoref{cor:tri} by using the same argument in the proof of \autoref{thm:app1} and \autoref{thm:app2}.

\begin{thm}\label{thm:triH}
	Under above \textbf{Settings B}, if $ \xi, \xi_2, \chi, r $ are small, then there are local center-stable and local center-unstable manifolds $ W^{cs}_{loc}(M), W^{cu}_{loc}(M) \subset \mathbb{B}_{r}(M) $ of $ M $, which are $ C^1 $ immersed submanifolds of $ Z_0 $. There is a positive constant $ r' < r $ such that for any $ z_0 \in W^{cs}_{loc}(M) \cap \mathbb{B}_{r'}(M) $, there is a mild solution $ \{ u(t) \}_{t \geq 0} \subset W^{cs}_{loc}(M) $ (resp. $ \{ u(t) \}_{t \leq 0} \subset W^{cu}_{loc}(M) $) of \eqref{2**1} with $ u(0) = z_0 $.
	Moreover, if a mild solution $ \{u(t)\}_{t \geq 0} $ (resp. $ \{u(t)\}_{t \leq 0} $) of \eqref{2**1} always `stays' in $ \mathbb{B}_{r}(M) $, then it must belong to $ W^{cs}_{loc}(M) $ (resp. $ W^{cu}_{loc}(M) $), and there is certain $ \omega \in M $ such that $ |u(t) - t\omega| \to 0 $ (resp. $ |u(-t) - (-t)\omega| \to 0 $) exponentially as $ t \to \infty $. For more properties of $ W^{cs}_{loc}(M), W^{cu}_{loc}(M) $, see \autoref{cor:tri}.

	The above results are persistent under small $ C^1 $ perturbation of \eqref{2**1}; i.e. if $ |\widetilde{g} - g|_{ C^1(\mathbb{B}_r(M)) } $ is small when $ r $ is small, then corresponding results also hold for the equation ($ \divideontimes $). Moreover, there is a local center manifold $ \widetilde{M} $ which is $ C^1 $ immersed in $ \mathbb{B}_{r}(M) $, homeomorphic (in fact $ C^1 $ diffeomorphic) to $ M $ and invariant under equation ($ \divideontimes $); also $ W^{cs}_{loc}(\widetilde{M}) \cap W^{cu}_{loc}(\widetilde{M}) = \widetilde{M} $, and $ \widetilde{M} \to M $, $ T\widetilde{M} \to TM $ as $ |\widetilde{g} - g|_{ C^1(\mathbb{B}_r(M)) } \to 0 $ and $ \chi \to 0 $.
\end{thm}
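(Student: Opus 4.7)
The plan is to reduce to the abstract trichotomy corollary \autoref{cor:tri} applied to the (translated) cocycle correspondence induced by \eqref{2**1} on a neighbourhood of $M$. First I would set $f(\omega)z = g(z+\omega) - L(\omega)z - g(\omega)$ as in \textbf{($ \bullet $III)}, so that $f(\omega)(0)=0$, $Df(\omega)(0)=0$, and $\lip f(\omega)|_{\mathbb{B}_\epsilon} \lesssim \mathfrak{A}_{Dg|_{\mathbb{B}_r(M)}} + o(1)$ as $\epsilon\to 0$. The flow $H(t)$ of \eqref{2**1} then satisfies $\widetilde{H}(t,\omega)(\cdot)\triangleq H(t)(\cdot+\omega)-t\omega$, and using the trichotomy decomposition $Z_0 = \widehat{X}^s_\omega \oplus \widehat{X}^c_\omega \oplus \widehat{X}^u_\omega$ from (BIII), I would write two variants of constants formulae: a $cs$-version with $T_{cs}, T_u$ giving $\widetilde{H}\approx(\widehat{F}^{cs},\widehat{G}^{cs})$, and a $cu$-version with $T_s, T_{cu}$ giving $\widetilde{H}\approx(\widehat{F}^{cu},\widehat{G}^{cu})$, exactly analogous to \eqref{*06}/\eqref{**0}.

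Next, I would apply \autoref{them:spec} and \autoref{thm:gencocycleAB} (depending on which operator class $A$ belongs to) in each of the two splittings. Under the spectral gaps $\inf_\omega(\mu_u-\mu_{cs})>0$ and $\inf_\omega(\mu_{cu}-\mu_s)>0$ from (BIII), and the smallness of $\lip f(\omega)$ coming from small $\chi$ and $r$, both $(F^{cs},G^{cs})$ and $(F^{cu},G^{cu})$ satisfy (A$_0$)(B$_0$) conditions with constants $\alpha_u,\alpha'_u,\alpha_{cu},\alpha'_{cu},\beta_{cs},\beta'_{cs},\beta_s,\beta'_s$ that can be made arbitrarily small as $\chi,r\to 0$, and with $\lambda_\kappa(\omega) = e^{\mu_\kappa(\omega)+O(\chi)}$. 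The $\xi$-almost uniform continuity of all the $\mu_\kappa$ (transported to $\lambda_\kappa$) and the smallness of $\xi_2$ in (BIV) together yield the angle, spectral, and continuity hypotheses (B3)(a) with $\widehat{\Pi}^\kappa_\omega = P^\kappa_\omega$. The approximation (B3)(b) holds trivially with $\eta=0$ since $M$ is \emph{invariant} under \eqref{2**1}, so the zero section is a genuine invariant section. The $s$-contraction / $u$-expansion (B3)(c) follow from $\sup_\omega \mu_s<0$ and $\inf_\omega \mu_u>0$ via the same Gronwall-type estimate used in the proof of \autoref{thm:app2}, namely testing solutions of the variant of constants formulae with pure $s$- or $u$-initial data over a single time step $t_0$.

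For (B1)(B2), the submanifold assumption (AI) and the flow property in (BII) give exactly what is needed; the $\xi$-almost equicontinuity over $[-t_0,t_0]$ transfers to the map $v^t(\omega) = t\omega$ so that (B2) is met. Assumption (B4) follows because $g\in C^1$ gives $\widehat{F}^\kappa_{t_0,\omega}(\cdot)$ and $\widehat{G}^\kappa_{t_0,\omega}(\cdot)$ $C^1$ in fibers (using the parameter-dependent fixed point theorem applied to the defining integral equations). I can then directly invoke \autoref{cor:tri} to produce $W^{cs}_{loc}(M), W^{cu}_{loc}(M)$ as $C^1$ immersed submanifolds with $W^{cs}_{loc}(M)\cap W^{cu}_{loc}(M)=M$ (the last using $\eta=0$), the invariance $\graph h^{cs}_0 \subset H(t)^{-1}\graph h^{cs}_0$ for all $t\geq 0$, and the exponential tracking from \autoref{cor:tri} (5) combined with the characterisation of $W^{ss}, W^{uu}$ leaves. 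The statement that solutions staying in $\mathbb{B}_r(M)$ must belong to $W^{cs}_{loc}(M)$ (resp.\ $W^{cu}_{loc}(M)$) is a direct rephrasing of the graph characterisation in \autoref{cor:tri} (1) and the definition of forward (resp.\ backward) orbit on \autopageref{defi:orbit}.

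For persistence, I would follow the strategy of \autoref{thm:app1}: write $\widetilde{f}(\omega)z = \widetilde{g}(z+\omega)-L(\omega)z-g(\omega)$, truncate using a radial retraction at radius $\epsilon<r$ (or, as in \autoref{thm:app1}(1), work directly in $\mathbb{B}_\epsilon$ without truncation when $A$ is a $C_0$ (bi-)semigroup or Hille–Yosida operator), and observe that smallness of $|\widetilde{g}-g|_{C^1(\mathbb{B}_r(M))}$ makes the new generating cocycle $C^1$-close to the original on the relevant ball, so (A$_0$)(B$_0$) conditions are preserved with only slightly perturbed constants and the approximation bound $\eta \lesssim |\widetilde g - g|_{C^0}$ stays small. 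Reapplying \autoref{cor:tri} yields $W^{cs}_{loc}(\widetilde M), W^{cu}_{loc}(\widetilde M), \widetilde M = W^{cs}_{loc}(\widetilde M)\cap W^{cu}_{loc}(\widetilde M)$, and the convergence $\widetilde M\to M$, $T\widetilde M\to TM$ follows from the graph representation $\widetilde M = \graph h^c_0$ together with $|h^c_0|\leq \mu_c(\omega)\cdot O(\eta)$ and $|Dh^c_0|\to 0$ as $\eta,\chi\to 0$. The main obstacle I anticipate is the careful verification of (B3)(c) (the $s$-contraction / $u$-expansion on a uniform scale $r_0$) for the \emph{ill-posed} cases, where the variant of constants formula is \eqref{**0} rather than a semigroup convolution; this requires the pre-lemma \autoref{lem:prelem} together with the time-step estimates from the proof of \autoref{slem:small}, chosen so that $r_0$ is independent of $\omega$—this in turn forces $t_0$ to be chosen small enough that the implicit constants $\delta_1(t_0,\omega)$ (which are uniformly bounded by (BIII) and (AII)) give the required strict contraction/expansion.
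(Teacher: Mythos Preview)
Your proposal is correct and follows essentially the same approach as the paper: the paper's proof is literally the one-sentence remark that \autoref{thm:triH} is a consequence of \autoref{cor:tri} ``by using the same argument in the proof of \autoref{thm:app1} and \autoref{thm:app2}'', and you have accurately unpacked those arguments (verification of (B3)(a) via \autoref{them:spec}/\autoref{thm:gencocycleAB}, $\eta=0$ from invariance of $M$, the $s$-contraction/$u$-expansion via the short-time estimate of \autoref{thm:app2}, and persistence via the truncation-and-perturbation scheme of \autoref{thm:app1}).
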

Here, a mild solution $ \{u(t)\}_{t \geq 0} $ (resp. $ \{u(t)\}_{t \leq 0} $) of \eqref{2**1} always `stays' in $ \mathbb{B}_{r}(M) $ meaning $ \{ u(t) \}_{t \geq 0} $ (resp. $ \{u(t)\}_{t \leq 0} $) is a $ (\sigma, \varrho, \varepsilon) $-forward orbit (resp. $ (\sigma, \varrho, \varepsilon) $-backward orbit) of the correspondence $ H $ induced by \eqref{2**1}; see \autopageref{defi:orbit}.

Note that by \autoref{cor:tri}, if $ z_0 \in W^{cs}_{loc}(M) $, then \eqref{2**1} has a (mild) solution $ u \in C([0,\infty), Z_0) $ (in $ W^{cs}_{loc}(M) $) such that $ u(0) = z_0 $, and this gives a semiflow in $ W^{cs}_{loc}(M) $. In other words, equation \eqref{2**1} is well-posed in $ W^{cs}_{loc}(M) $ which shows that this equation describes well the physical situation in $ W^{cs}_{loc}(M) $ although it might be ill-posed when $ A $ is a generator of a $ C_0 $ bi-semigroup. A similar result holds for $ W^{cu}_{loc}(M) $.

\begin{rmk}[periodic orbit case]
	When $ M $ consists of an isolated periodic orbit of \eqref{2**1} with period $ L > 0 $, intuitively, this shows if $ M $ is a normally hyperbolic (with respect to \eqref{2**1}), then this orbit is persistent under small $ C^1 $ perturbation of \eqref{2**1}; i.e. if $ |\widetilde{g} - g|_{ C^1(\mathbb{B}_r(M)) } $ is small when $ r $ is small, then equation ($ \divideontimes $) also exists a periodic orbit in $ \mathbb{B}_r(M) $. In the well-posed case, $ M $ is normally hyperbolic if the time-$ L $ solution map $ \mathcal{P} $ of \eqref{2**2} is compact (or quasi-compact meaning $ \sigma_{ess}(\mathcal{P}) < 1 $) and $ \sigma(\mathcal{P}) \cap \mathbb{S}^1 $ consists of only a simple point spectrum $ 1 $. For some characterizations about the hyperbolicity of $ M $ in the ill-posed case, see e.g. \cite{LP08, SS99, HVL08}.
\end{rmk}

\begin{rmk}['large' perturbation]\label{rmk:largeP}
	In some cases, we may not require $ |\widetilde{g} - g|_{ C^1(\mathbb{B}_r(M)) } $ is small as we do in \autoref{thm:app1}. For example, (i) when $ A $ is a generator of a $ C_0 $ semigroup or a $ C_0 $ bi-semigroup (and so $ Z_0 = Z_{-1} $), assume $ \sup_{z \in \mathbb{B}_{r}(M)}\{ |\widetilde{g}(z) - g(z)| \} \leq \eta $ and $ \mathfrak{A}_{D\widetilde{g}|_{\mathbb{B}_{r}(M)}} \leq \chi $,
	\begin{gather*}
	\sup\{|P^{\nu}_{t\omega}(D\widetilde{g}(t\omega) - Dg(t\omega))P^{\kappa}_{t\omega}(\cdot)|: t \geq 0, \kappa = s, c, u\} \leq \varepsilon^{\nu}_{m}(\omega), ~\omega \in M, ~\nu = s, c, u,
	\end{gather*}
	and $ \omega \mapsto \varepsilon^{\nu}_{m}(\omega) $ is bounded and $ \xi $-almost uniformly continuous (in the immersed topology), $ \nu = s, c, u $. Set
	$ \varepsilon_{m}(\omega) = \max \{ \varepsilon^{s}_{m}(\omega), \varepsilon^{c}_{m}(\omega), \varepsilon^{u}_{m}(\omega) \} $.
	There is a constant $ c > \sqrt{2} $ such that
	\begin{gather*}
	\inf_{\omega} \{ \mu_{cu}(\omega) - \mu_{s}(\omega) - (1+c) \varepsilon_{m}(\omega) \} > 0, ~
	\inf_{\omega} \{ \mu_{u}(\omega) - \mu_{cs}(\omega) - (1+c) \varepsilon_{m}(\omega) \} > 0, \\
	\sup_{\omega} \{ \mu_{s}(\omega) + \varepsilon^{s}_{m}(\omega) \} < 0, ~
	\inf_{\omega} \{ \mu_{u}(\omega) - \varepsilon^{u}_{m}(\omega) \} > 0.
	\end{gather*}
	If $ \xi,\xi_2, \chi, \eta, r > 0 $ are small, then the results in \autoref{thm:triH} also hold; see also \autoref{rmk:detail}. Also, $ \widetilde{M} \to M $, $ T\widetilde{M} \to TM $ as $ \eta, \chi, \sup_{\omega}\{ \varepsilon_{m}(\omega) \} \to 0 $.
	(ii) If $ A $ is a Hille-Yosida operator, let
	\[
	\sup\{|(D\widetilde{g}(t\omega) - Dg(t\omega))P^{\kappa}_{t\omega}(\cdot)|: t \geq 0, \kappa = s, c, u\} \leq \varepsilon'(\omega), ~\omega \in M,
	\]
	and use $ C_1\varepsilon'(\omega) $ instead of $ \varepsilon_{m}(\omega) $ and $ \varepsilon^{\nu}_{m}(\omega) $; the same result also holds for this case.
	Furthermore, if one only focuses on the existence result, $ \widetilde{g} $ can be non-smooth but satisfies
	\[
	\max \{ \sup_{z \in \mathbb{B}_{r}(M)}|\widetilde{g}(z) - g(z)|, \lip (\widetilde{g}(\cdot) - g(\cdot) )|_{\mathbb{B}_{r}(M)} \} \leq \eta.
	\]
\end{rmk}

\begin{appendices}
	\setcounter{equation}{0}
	\renewcommand{\theequation}{\Alph{section}.\arabic{equation}}
	
\section{Appendix. a little background from operator semigroup theory} \label{operator}

For readers' convenience, in this appendix, we collect some basic definitions and notations taken from operator semigroup theory. For more details, see \cite{EN00,ABHN11,vdMee08}.

Let $ X $ be a Banach space. A linear operator $ A : D(A) \subset X \to X $ with domain $ D(A) $ is densely-defined if $ \overline{D(A)} = X $. $ A $ is closed if $ \graph A $ is closed in $ X \times X $. Set $ R(\lambda, A) = (\lambda - A)^{-1} $ the resolvent of $ A $ at $ \lambda \in \rho(A) $. Let $ Y \hookrightarrow X $, i.e. $ Y \subset X $ is a Banach space such that it continuously embeds in $ X $. For example $ D(A) \hookrightarrow X $ if $ A $ is closed where $ D(A) $ is equipped with graph norm $ \|\cdot\|_A $, i.e. $ \|x\|_A = \|x\| + \|Ax\| $. The part of $ A $ in $ Y $ denoted by $ A_Y $, is defined by
\[
A_Y x = Ax,~ x \in D(A_Y) = \{ x \in Y \cap D(A): Ax \in Y \}.
\]
\begin{enumerate}[(a)]
	\item $ A $ is a \textbf{generator of a $ C_0 $ semigroup $ T $} if $ T: \mathbb{R}_+ \to L(X,X) $ is strongly continuous and there is a constant $ \omega \in \mathbb{R} $ such that $ (\omega, \infty) \subset \rho(A) $ and
	\[
	R(\lambda, A) = \int_{0}^{\infty} e^{-\lambda t}T(t) ~\mathrm{ d }t,~\lambda > \omega.
	\]
	Note that is this case $ T(0) = \id $, $ T(t+s) = T(t)T(s) $, $ t,s \geq 0 $ and $ \overline{D(A)} = X $. And we also say $ A $ generates a $ C_0 $ semigroup $ T $. For a more classical definition and the basic properties, see \cite{EN00}.
	\item $ A $ is a \textbf{generator of a once (exponentially bounded) integrated semigroup $ S $} if $ S: \mathbb{R}_+ \to L(X,X) $ is strongly continuous and exponentially bounded, and there is a constant $ \omega \in \mathbb{R} $ such that $ (\omega, \infty) \subset \rho(A) $ and
	\[
	R(\lambda, A) = \lambda \int_{0}^{\infty} e^{-\lambda t}S(t) ~\mathrm{ d }t,~\lambda > \omega.
	\]
	In this case, we also say $ A $ generates the (once) integrated semigroup $ S $. See \cite[Chapter 3]{ABHN11} for basic properties and some characterizations.
	\item $ A $ is a \textbf{Hille-Yosida (HY) operator} if $ A $ generates the (once) integrated semigroup $ S $ with $ S $ being locally Lipschitz. An equivalent definition is the following (see \cite{DPS87}). There are $ \omega > 0 $ and $ M_1 \geq 1 $ such that $ (\omega, \infty) \subset \rho(A) $ and
	\[
	\|R(\lambda, A)^n\| \leq \frac{M_1}{(\lambda - \omega)^n}, ~\forall \lambda > \omega,~ n \in \mathbb{N}.
	\]
	See \cite[Section 3.5]{ABHN11} for a proof of this equivalence.
	\item See \cite[Section 3.7]{ABHN11} for the different equivalent definitions of \textbf{sectorial operators}, and \cite[Section 3.7]{ABHN11} for the definition of \textbf{fractional powers}: $ A^{-\alpha} $.
	\item $ A $ is called a \textbf{generator of a $ C_0 $ bi-semigroup $ E $} if $ A = A_1 \oplus (-A_2) $ in the decomposition $ X = X_1 \oplus X_2 $ with $ X_i $ closed, where $ A_i : D(A_i) \subset X_i \to X_i $ is the generator of the $ C_0 $ semigroup $ T_i $, $ i = 1,2 $. Let $ T_1(t) = 0 $, $ T_2(t) = 0 $ if $ t < 0 $. For this case $ E(t) = T_1(t) \oplus T_2(-t) $ is called a $ C_0 $ \textbf{bi-semigroup}. Note that for this case $ \overline{D(A)} = X $. See \cite{vdMee08} and references therein for some characterizations of $ A $ such that $ T_i $, $ i = 1,2 $ are all exponentially stable, i.e. $ \|T_i(t)\| \leq C_0 e^{-\mu t}  $, $ \forall t \geq 0 $ for some constant $ \mu > 0 $, $ C_0 \geq 1 $, where the author called $ A $ an \textbf{exponentially dichotomous operator}. See \cite{LP08} (or \autoref{examples}) for some concrete examples about $ A $.
	\item See \cite[Section 1.4.1]{vdMee08} for a definition of a bi-sectorial (and  densely-defined) operator which is also a generator of a $ C_0 $ bi-semigroup.
\end{enumerate}

\section{Appendix. a fixed point equation and a smooth result}

Follow the notations in \autoref{HYccc}. Particularly, let \textnormal{(MR) (D1)} hold.
Let $ \mathcal{L} : [0, a] \to L(Z, Z) $ be strongly continuous. Set $ |\mathcal{L}| = \sup_{t \in [0,a]} |\mathcal{L}(t)|~ (< \infty) $.
Define $ \mathfrak{B} $ as follows,
\[
(\mathfrak{B} u)(t) = (S \Diamond( \mathcal{L}(\cdot)u(\cdot) ) )(t), ~u \in C([0,a], Z).
\]
Note that $ (\mathfrak{B} u)(t) \in \overline{D(A)} \subset Z $.
If $ \delta(a) |\mathcal{L}| < 1 $, then $ \mathfrak{B}: C([0,a], Z) \to C([0,a], Z)  $ is a contraction mapping with $ \lip \mathfrak{B} \leq \delta(a) |\mathcal{L}| $.
Consider the following fixed point equation which is frequently used in \autoref{HYccc},
\begin{equation}\label{fpe0}
u = v + \mathfrak{B} u.
\end{equation}

\begin{lem}\label{lem:dd}
	Suppose $ \mathcal{L}(\cdot) $ is strongly $ C^1 $, i.e. for every $ x \in Z $, $ t \mapsto \mathcal{L}(t)x: [0,a] \to Z $ is $ C^1 $.
	\begin{enumerate}[(1)]
		\item If $ v \in C^1([0,a], Z) $ and $ \mathcal{L}(0)v(0) \in \overline{D(A)} $, then $ \mathfrak{B}^k v \in C^1 $, $ k = 1,2,3,\cdots $.
		\item Let $ v $ satisfy the condition in \textnormal{(1)}, then the unique point $ u $ of \eqref{fpe0} also belongs to $  C^1([0,a], Z) $.
	\end{enumerate}
\end{lem}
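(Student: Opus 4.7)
My plan is to prove part (1) by induction on $k$, using an integration-by-parts identity to extract an explicit formula for $(\mathfrak{B} v)'$, and then to derive part (2) by bounding the derivatives of the Picard iterates.

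For the base case $k = 1$ of part (1), I would set $f(s) \triangleq \mathcal{L}(s) v(s)$. Since $\mathcal{L}$ is strongly $C^1$ and $v \in C^1$, a direct computation (together with the Banach-Steinhaus theorem, which yields $\sup_{s \in [0,a]} |\mathcal{L}'(s)| < \infty$ from the pointwise continuity of $s \mapsto \mathcal{L}'(s) x$) shows that $f \in C^1([0, a], Z)$ with $f'(s) = \mathcal{L}'(s) v(s) + \mathcal{L}(s) v'(s)$, and $f(0) = \mathcal{L}(0) v(0) \in \overline{D(A)}$ by hypothesis. The key step is the integration-by-parts identity in $s$, using the primitive $-P(t-s)$ of $S(t-s)$, where $P(u) \triangleq \int_0^u S(\sigma)\,d\sigma$:
\[
\int_0^t S(t-s) f(s)\,ds = P(t) f(0) + \int_0^t P(t-s) f'(s)\,ds.
\]
Differentiating in $t$ (using $P(0) = 0$ and $P'(u) = S(u)$) gives the explicit representation
\[
(\mathfrak{B} v)(t) = (S \Diamond f)(t) = S(t) f(0) + \int_0^t S(t-s) f'(s)\,ds.
\]
Each summand is then $C^1$: since $f(0) \in \overline{D(A)}$, the classical integrated-semigroup identity $S(t) x = \int_0^t T_0(\tau) x\,d\tau$ (for $x \in \overline{D(A)}$) shows $t \mapsto S(t) f(0) \in C^1$ with derivative $T_0(t) f(0)$; and since $(S \Diamond g)$ is well-defined and continuous for $g \in C^0$ (by the density extension in (MR)), and since the identity $(S*g)(t) \triangleq \int_0^t S(t-s) g(s)\,ds = \int_0^t (S \Diamond g)(\sigma)\,d\sigma$ holds for $g \in C^1$ (by the very definition of $S \Diamond g$) and extends to $g \in C^0$ by uniform approximation using $|S \Diamond g|_{[0,t]} \leq \delta(t) |g|_{[0,t]}$, taking $g = f'$ shows that $\int_0^t S(t-s) f'(s)\,ds$ is $C^1$ with derivative $(S \Diamond f')(t)$. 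Altogether,
\[
(\mathfrak{B} v)'(t) = T_0(t) \mathcal{L}(0) v(0) + (S \Diamond (\mathcal{L}'(\cdot) v(\cdot) + \mathcal{L}(\cdot) v'(\cdot)))(t).
\]
For the inductive step, I apply the base case with $\mathfrak{B}^{k-1} v$ in place of $v$: the required condition $\mathcal{L}(0) (\mathfrak{B}^{k-1} v)(0) \in \overline{D(A)}$ is trivially met for $k \geq 2$ since $(\mathfrak{B}^{k-1} v)(0) = 0$.

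For part (2), the Picard iterates $u_n \triangleq \sum_{k=0}^n \mathfrak{B}^k v$ converge uniformly to the unique fixed point $u$ at geometric rate $\lambda \triangleq \delta(a) |\mathcal{L}| < 1$. Using the derivative formula above applied to $\mathfrak{B}^{k-1} v$ (whose value at $0$ vanishes for $k \geq 2$), I get the recursion
\[
|(\mathfrak{B}^k v)'|_{C^0} \leq \delta(a) |\mathcal{L}'| \cdot |\mathfrak{B}^{k-1} v|_{C^0} + \delta(a) |\mathcal{L}| \cdot |(\mathfrak{B}^{k-1} v)'|_{C^0}, \qquad k \geq 2,
\]
which together with $|\mathfrak{B}^{k-1} v|_{C^0} \leq \lambda^{k-1} |v|_{C^0}$ yields the summable bound $|(\mathfrak{B}^k v)'|_{C^0} \leq \lambda^{k-1} (|(\mathfrak{B} v)'|_{C^0} + (k-1) \delta(a) |\mathcal{L}'| \cdot |v|_{C^0})$. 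Hence $(u_n')$ converges uniformly, so $u \in C^1([0,a], Z)$ with $u'$ equal to the uniform limit.

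The main obstacle is justifying that $\int_0^t S(t-s) f'(s)\,ds$ is $C^1$ in $t$: since the integrated semigroup $S$ of a general MR operator need not even be locally Lipschitz, one cannot differentiate the integral by simply passing $\partial_t$ past $S$. The decisive observation is that this convolution coincides with the antiderivative $\int_0^t (S \Diamond f')(\sigma)\,d\sigma$ of the continuous function $(S \Diamond f')$, which circumvents any need for pointwise differentiability of $S$ itself.
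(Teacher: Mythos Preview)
Your proof is correct and follows essentially the same route as the paper: both establish the derivative formula $(\mathfrak{B} v)'(t) = T_0(t)\mathcal{L}(0)v(0) + (S \Diamond (\mathcal{L}v)')(t)$ via an integration-by-parts identity, then induct using $(\mathfrak{B}^{k-1}v)(0)=0$ for $k\geq 2$. For part~(2) you bound $|(\mathfrak{B}^k v)'|_{C^0}$ directly by the summable sequence $\lambda^{k-1}(|(\mathfrak{B} v)'|_{C^0} + (k-1)\delta(a)|\mathcal{L}'||v|_{C^0})$, whereas the paper reorganizes the series algebraically, observing that $\sum_k (\mathfrak{B}^k v)' = \sum_k \mathfrak{B}^k \hat v$ with $\hat v = v' + T_0(\cdot)\mathcal{L}(0)v(0) + S\Diamond(\mathcal{L}'(\cdot)u(\cdot))$, which is the Neumann series for $(I-\mathfrak{B})^{-1}\hat v$; this is slicker but leaves the justification of the termwise manipulations implicit, while your direct estimate is more self-contained.
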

\begin{proof}
	In the following, we will frequently use the fact that on any bounded subsets of $ L(Z, Z) $, the strong operator topology coincides with the topology of uniform convergence on any relatively compact subsets of $ Z $ (see e.g. \cite[Proposition A.3]{EN00}).
	Since $ v \in C^1([0,a], Z) $ and $ \mathcal{L}(0)v(0) \in \overline{D(A)} $, we see that $ t \mapsto \mathcal{L}(t)v(t) $ is $ C^1 $,
	\[
	(\mathfrak{B} v)(t) = S(t)\mathcal{L}(0)v(0) + \int_{0}^{t} S(s) (\mathcal{L}(t-s) v(t-s))' ~\mathrm{d} s
	\]
	is $ C^1 $, and $ \frac{\mathrm{d}}{\mathrm{d} t} (\mathfrak{B} v)(t) = T(t)\mathcal{L}(0)v(0) + (S \Diamond (\mathcal{L}v)') (t) $. Since $ (\mathfrak{B} v)(0) = 0 $, we get $ \mathfrak{B}  (\mathfrak{B} v) \in C^1 $. By induction, complete the proof of (1).
	We will use the notation $ \mathcal{L}(t)'x \triangleq \frac{\mathrm{d}}{\mathrm{d} t} (\mathcal{L}(t)x) $.
	For (2),
	note that $ u = \sum\limits_{k=0}^{\infty} \mathfrak{B}^k v $; let $ \hat{v} = v' + T(\cdot)\mathcal{L}(0)v(0) + S \Diamond (\mathcal{L}'(\cdot)u(\cdot)) $. Then we get
	\[
	\sum\limits_{k=0}^{\infty} \frac{\mathrm{d}}{\mathrm{d} t} \mathfrak{B}^k v = \sum\limits_{k=0}^{\infty} \mathfrak{B}^k \hat{v},
	\]
	where we use $ \mathcal{L}'(t)u(t) = \sum_{k=0}^{\infty} \mathcal{L}'(t) (\mathfrak{B}^k v)(t) $. Due to all the convergences are uniform we conclude (2).
\end{proof}

\begin{lem}\label{lem:diff1}
	Let \autoref{lem:coycle0} hold with additional assumption that for every $ \omega \in M $, $ t \mapsto L(t\omega) $ is strongly $ C^1 $. Then $ \frac{\mathrm{d}}{\mathrm{d} t}|_{t=0} T_0(t, \omega)x $ exists if and only if $ x \in D(A_0(\omega)) $.
\end{lem}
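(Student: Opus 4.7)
The plan is to split the equivalence and treat each direction separately. The necessity direction does not actually require the new strong $C^{1}$ hypothesis: applying \autoref{slem:diff}(1) to $u(t)=T_{0}(t,\omega)x$ with $h(t)=L(t\omega)u(t)$, existence of $\dot{u}(0)$ forces $x\in D(A)$ together with $Ax+L(\omega)x\in\overline{D(A)}$, which is exactly $x\in D(A_{0}(\omega))$. All the interest therefore lies in the sufficiency direction.

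For sufficiency my plan is to reduce everything to the time-autonomous case. Specializing \autoref{lem:coycle0}(1) to the singleton base $\{\omega\}$ with the constant perturbation $L(\omega)$ produces a genuine $C_{0}$ semigroup $T_{0}^{(\omega)}$ on $\overline{D(A)}$ whose generator is $A_{0}(\omega)$. By the standard theory of $C_{0}$ semigroups, $T_{0}^{(\omega)}(t)x$ is differentiable at $t=0$ with derivative $A_{0}(\omega)x$ exactly when $x\in D(A_{0}(\omega))$, with no auxiliary hypothesis on $\delta$. The whole task is therefore to show
\[
P(t)\triangleq T_{0}(t,\omega)x-T_{0}^{(\omega)}(t)x=o(t)\quad\text{as}\ t\to 0^{+},
\]
after which differentiability of $T_{0}(\cdot,\omega)x$ at $0$ is equivalent to that of $T_{0}^{(\omega)}(\cdot)x$.

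Both functions satisfy variant-of-constants identities through $T_{0}$ and $S$ (\autoref{lem:coycle0}(1)(a)). Subtracting these identities and splitting $L(s\omega)T_{0}(s,\omega)x-L(\omega)T_{0}^{(\omega)}(s)x$ as $L(s\omega)P(s)+[L(s\omega)-L(\omega)]T_{0}^{(\omega)}(s)x$ yields the fixed-point identity
\[
P=(S\Diamond k)+\mathfrak{B}P,\qquad k(s)\triangleq[L(s\omega)-L(\omega)]\,T_{0}^{(\omega)}(s)x,
\]
where $\mathfrak{B}v=S\Diamond(L(\cdot\omega)v(\cdot))$ as in Appendix B. The main obstacle will be producing a linear bound $|k(s)|\le C_{1}s$ on a short interval. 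For this I would invoke the uniform boundedness principle twice: once on the difference quotients of $L(\cdot\omega)y$ (for each fixed $y\in\overline{D(A)}$) to conclude the strong derivative $L'(r\omega)$ is a bounded linear operator $\overline{D(A)}\to Z$, and once on the family $\{L'(r\omega):r\in[0,a]\}$ (using strong continuity of $L'(\cdot\omega)$) to extract $C_{0}\triangleq\sup_{r\in[0,a]}|L'(r\omega)|<\infty$. The fundamental theorem of calculus then gives $|L(s\omega)y-L(\omega)y|\le C_{0}s|y|$ uniformly in $y$, and boundedness of $T_{0}^{(\omega)}(\cdot)x$ on $[0,a]$ yields $|k(s)|\le C_{1}s$.

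Once the linear bound on $k$ is in hand the remainder is routine. Applying the (MR)(b) estimate $|(S\Diamond f)|_{[0,t]}\le\delta(t)|f|_{[0,t]}$ to both terms in $P=(S\Diamond k)+\mathfrak{B}P$ and absorbing the $\mathfrak{B}P$ term (using $\delta(t)\to 0$ as $t\to 0^{+}$) gives $|P(t)|\le 2C_{1}\delta(t)t$ on a short interval, so $P(t)/t\to 0$. This closes the sufficiency direction and thereby the lemma. I expect the uniform-boundedness step for $L'(r\omega)$ to be the only genuinely nontrivial piece, since the strong $C^{1}$ hypothesis is pointwise whereas the estimate on $k$ must be uniform in $y$; crucially this is the one and only place where the failure of $\delta(t)=O(t)$ is circumvented by the new regularity of $L$.
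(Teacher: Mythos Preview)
Your argument is correct, and it takes a genuinely different route from the paper. The paper subtracts the constant $x$, i.e.\ sets $w(t)=T_0(t,\omega)x-x$, obtains from \eqref{equ00} the fixed-point identity
\[
w(t)=S(t)(Ax+L(\omega)x)+\bigl(S\Diamond(L(\cdot\omega)x-L(\omega)x)\bigr)(t)+\bigl(S\Diamond(L(\cdot\omega)w(\cdot))\bigr)(t),
\]
and then invokes \autoref{lem:dd}(2) to conclude that $w$ is $C^1$ on a whole interval $[0,a]$. Your approach instead subtracts the \emph{frozen-coefficient} semigroup $T_0^{(\omega)}(\cdot)x$ and shows the remainder $P(t)$ is $o(t)$ by the elementary estimate $|P|_{[0,t]}\le 2\delta(t)|k|_{[0,t]}\le 2C_1\delta(t)t$, deferring the actual differentiability to the standard $C_0$-semigroup fact for $T_0^{(\omega)}$. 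What the paper buys is a stronger conclusion (differentiability on a neighborhood, not only at $t=0$), at the cost of the auxiliary \autoref{lem:dd}. What you buy is a shorter, more self-contained argument that uses only the (MR)(b) bound and Banach--Steinhaus, and makes transparent exactly where the strong $C^1$ hypothesis enters (the linear bound $|k(s)|\le C_1 s$). Both are valid; your reduction to the autonomous problem is arguably the more conceptual of the two.
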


\begin{proof}
	The `only if' part is easy. Consider the `if' part.
	Let $ x \in D(A_0(\omega)) $, i.e. $ x \in D(A) $ and $ Ax + L(\omega)x \in \overline{D(A)} $. Set $ w(t) = T_0(t, \omega)x - x $. Then $ w(0) = 0 $ and it satisfies
	\[
	w(t) = S(t)(Ax + L(\omega)x) + (S \Diamond ( L(\cdot\omega)x - L(\omega)x ))(t) + (S \Diamond (L(\cdot\omega)w(\cdot)))(t),
	\]
	see \eqref{equ00} in the proof of \autoref{slem:diff}. Now by \autoref{lem:dd} we have $ w(\cdot) $ is $ C^1 $ in $ [0, a] $ for small $ a > 0 $ (and hence for all $ a > 0 $).
\end{proof}

\section{Appendix. some concrete examples}\label{examples}

We give some classical concrete examples of equation \eqref{2**1}. The \emph{ill-posed} differential equations are \autoref{ex:cylinder} (a) and (b) (ii), \autoref{ex:bou} (a), \autoref{ex:spatial}, and \autoref{ex:nondiss} (c).
\begin{exa}\label{ex:classical}
	Assume $ f $ is smooth in all cases. Consider the following nonlinear dissipative parabolic PDEs taken from \cite{Tem97}, where the readers can find more details thereof.
	\begin{enumerate}[(a)]
		\item \label{rde} Consider the following reaction-diffusion equation:
		\[
		\partial_t u - \Delta_x u + \alpha u = f(x,u), ~x \in \Omega = [0,1],
		\]
		endowed by e.g. the Dirichlet boundary condition, where $ \alpha > 0 $.
		(i) Take $ X = L^2(\Omega) $, $ D(A) = H^2(\Omega) \cap H^1_0(\Omega) $, $ Au = \Delta_x u - \alpha u $, $ (g(u))(x) = f(x, u(x)) $. (ii) Take $ X = C(\Omega) $, $ D(A) = C^2_0(\Omega) $, $ A $ and $ g $ the same as in (i). For the two cases, it is well known that $ A $ is a sectorial operator but in (i) $ \overline{D(A)} = X $ and in (ii) $ \overline{D(A)} \neq X $. The readers might consider the third case: $ X = C^{0,\gamma}(\Omega) $, $ D(A) = C^{2,\gamma}_0(\Omega) $, $ A $ and $ g $ the same as in (i). For this case $ A $ is not a sectorial operator even not a Hille-Yosida operator but is an MR operator (see \autopageref{MR}) with $ A_{C^{\gamma}_0(\Omega)} $ a sectorial operator. (The general $ \Omega $ can be taken as a bounded open set of $ \mathbb{R}^n $ with smooth boundary or smooth compact Riemannian manifold without boundary.)
		\item Consider the following Cahn-Hilliard equation endowed by the periodic boundary condition, e.g. $ \Omega = \mathbb{S}^1 $:
		\[
		\partial_t u = \Delta_x ( -\Delta_x u + f(x,u)) , ~x \in \Omega.
		\]
		$ f $ is taken as e.g. $ f(x,u) = -\alpha u + \beta u^3 $ with $ \alpha, \beta > 0 $. Take $ X = L^2(\Omega) $, $ D(A) = H^4(\Omega) $, $ A = -\Delta^2_x $, (then $ D(A^{-1/2}) = H^{-2}(\Omega) $), $ (g(u))(x) = \Delta_x f(x,u(x)): L^2(\Omega) \to H^{-2}(\Omega) $. $ A $ is a densely-defined sectorial operator.
		\item Consider the following Kuramoto-Sivashinksy equation in one dimension:
		\[
		\partial_t u + \partial^4_x u + 2a \partial^2_x u = \partial_x(u^2), ~x \in \Omega = \mathbb{S}^1, a \in \mathbb{R}.
		\]
		Take $ H = L^2(\Omega) $, $ D(A) = H^4(\Omega) $, $ A = -\partial^4_x - 2a \partial^2_x $, (then $ D(A^{-1/4}) = H^{-1}(\Omega) $), $ (g(u))(x) = \partial_x(u^2)(x) : L^2(\Omega) \to H^{-1}(\Omega) $. $ A $ is a densely-defined sectorial operator.
	\end{enumerate}
\end{exa}

\begin{exa}\label{ex:cylinder}
	\begin{enumerate}[(a)]
		\item \label{cya11} Consider the following elliptic problem on the cylinder $ \mathbb{R} \times \Omega $, for brevity assuming $ \Omega = [0,1] $:
		\[
		\partial^2_t u + \Delta_x u + f(x,u, \nabla_x u) = 0,~(t,x) \in \mathbb{R} \times \Omega,
		\]
		endowed by e.g. the Dirichlet boundary condition, where $ f $ is smooth. Take $ A_0 = -\Delta_x $, $ X = H^1_0(\Omega) \times L^2(\Omega) $, $ D(A) = (H^2(\Omega) \cap H^1_0(\Omega)) \times H^1_0(\Omega) $,
		\[
		A = \left(  \begin{matrix}
		0 & 1 \\
		A_0 & 0
		\end{matrix} \right),
		~g \left( \begin{matrix}
		u \\
		v
		\end{matrix} \right)(x) = \left(  \begin{matrix}
		0  \\
		- f(x,u(x), \nabla_x u(x))
		\end{matrix} \right): X \to X.
		\]
		$ A $ is a bi-sectorial operator (i.e. $ A $ generates an analytic bi-semigroup) as $ A_0 $ is self-adjoint and $ \sigma(A_0) \subset \mathbb{R}_+ $. See also \cite{CMS93,ElB12} for more details, where the dynamical method was applied to study the elliptic problem in the cylindrical domains.

		One can consider another case: $ X = C^1_0(\Omega) \times C(\Omega) $ and $ D(A) = C^2_0(\Omega) \times C^1_0(\Omega) $. For this case, $ A $ is also a bi-sectorial operator but $ \overline{D(A)} \neq X $, which makes our general results in \autoref{diffCocycle} can not apply. One needs to show directly that this equation generates continuous correspondence which satisfies (A) (B) condition. We believe this is true but until now we have no proof.

		\item Consider the following semi-linear wave equation in one dimension:
		\[
		\partial^2_t u - a\partial_t u - \partial^2_x u - b \partial_x u + g(u) + f(x,u) = 0, ~t \in \mathbb{R}, ~ x \in \Omega,
		\]
		endowed by the Dirichlet boundary condition or periodic boundary condition (e.g. $ \Omega = \mathbb{S}^1 $), where $ a = 0 $ or $ 1 $, $ b \in \mathbb{R} $. Rewrite it as an abstract differential equation. Take $ A_0 = \partial^2_x + b \partial_x $, $ X = \hat{H}^1 \times L^2(\Omega) $, $ D(A) = \hat{H}^2 \times \hat{H}^1 $,
		\[
		A = \left(  \begin{matrix}
		0 & 1 \\
		A_0 & -a
		\end{matrix} \right),
		~g \left( \begin{matrix}
		u \\
		v
		\end{matrix} \right)(x) = \left(  \begin{matrix}
		0  \\
		-g(u(x)) - f(x,u(x))
		\end{matrix} \right),
		\]
		where for Dirichlet boundary condition, take $ \hat{H}^1 = H^1_0(\Omega) $, $ \hat{H}^2 = H^2(\Omega) \cap H^1_0(\Omega) $, and for periodic boundary condition, take $ \hat{H}^1 = H^1(\Omega) $, $ \hat{H}^2 = H^2(\Omega) $. In most cases, $ g $ is taken as $ g(u) = -\gamma u +  \lambda|u|^\gamma u $ (Klein-Gordon equation), or $ g(u) = c\sin u $ (sin-Gordon equation), where $ \gamma, c > 0 $, $ \lambda \in \mathbb{R} $. See \cite{NS11,Tem97}.
		\begin{enumerate}[(i)]
			\item If $ b = 0 $, then $ A $ generates a $ C_0 $ group as $ A_0 $ is self-adjoint and $ \sigma(A_0) \subset \mathbb{R}_- $. Note that the Decoupling Theorem in \cite{Che18a} can be applied to this model nearby the equilibrium.
			\item If $ b \neq 0 $, then $ A $ generates a non-analytic bi-semigroup (but not a semigroup); see e.g. \cite{LP08}. So in this case the wave equation is not well-posed in general.
		\end{enumerate}
	\end{enumerate}
\end{exa}

\begin{exa}\label{ex:bou}
	\begin{enumerate}[(a)]
		\item Consider the following (good) Boussinesq equation in one dimension:
		\[
		\partial^2_t u  - \partial^2_x u - \alpha\partial^4_x u + \partial^2_x (u^2)  = 0, ~t \in \mathbb{R}, ~ x \in \Omega, \tag{gBou}
		\]
		endowed by the periodic boundary condition i.e. $ \Omega = \mathbb{S}^1 $, where $ \alpha > 0 $. Take $ A_0 = \partial^2_x + \alpha\partial^4_x $, $ X = H^2(\Omega) \times L^2(\Omega) $, $ D(A) = H^4(\Omega) \times H^2(\Omega) $,
		\[
		A = \left(  \begin{matrix}
		0 & 1 \\
		A_0 & 0
		\end{matrix} \right),
		~g \left( \begin{matrix}
		u \\
		v
		\end{matrix} \right)(x) = \left(  \begin{matrix}
		0  \\
		-\partial^2_x (u^2)
		\end{matrix} \right): X \to X.
		\]
		$ A $ is a bi-sectorial operator (i.e. $ A $ generates an analytic bi-semigroup), and hence (gBou) in general is not well-posed; see e.g. \cite{LP08, dlLla09} and the references therein for more details.

		\item \label{ex:bou2} Consider another situation. A periodic traveling wave of (gBou) is of the form $ u(x,t) = u_{c,a}(x-ct) \neq 0 $ satisfying
		\[
		\partial^2_x u_{c,a} + (1-c^2)u_{c,a} - u^2_{c,a} = a,~ x \in \mathbb{S}^1,
		\]
		for some (real) constant $ a $, where $ c^2 < 1 $. Consider the Boussinesq equation in the traveling frame $ (x-ct, t) $, i.e.
		\[
		(\partial_t - c\partial_x)^2 u  - \partial^2_x u - \alpha\partial^4_x u + \partial^2_x (u^2)  = 0. \tag{tBou}
		\]
		Linearizing the above equation at the equilibrium $ u_{c,a} $, one gets
		\[
		(\partial_t - c\partial_x)^2 u  - \partial^2_x u - \alpha\partial^4_x u + \partial^2_x (2u_{c,a} u)  = 0.
		\]
		Rewrite it as an abstract equation (let $ v_x = (\partial_t - c\partial_x) u $). Take $ X = H^1(\Omega) \times L^2(\Omega) $, $ \mathcal{L}_0 = \alpha\partial^2_x + 1 - c^2 - 2u_{c,a} $,
		\[
		J = \left(  \begin{matrix}
		0 & \partial_x \\
		\partial_x & 0
		\end{matrix} \right),
		~L = \left(  \begin{matrix}
		\mathcal{L}_0 + c^2 & c \\
		c & 1
		\end{matrix} \right),
		\]
		$ D(A) = H^3(\Omega) \times H^2(\Omega) $, $ A = JL $. Then the above linear equation is equivalent to $ \partial_t z = Az $, where $ z = (u,v)^\top $, which is a Hamiltonian system with $ J $ unbounded and $ \dim \ker J = 2 $ (due to the periodic boundary condition). Moreover, $ JL $ generates a $ C_0 $ group in $ X $ with \emph{finite} dimensional stable and unstable subspaces, and \emph{infinite} dimensional center subspace, and (tBou) is well-posed around $ u_{c,a} $. We refer the readers to see \cite{LZ17} for more details, where the readers can find very unified analysis of the instability, index theorem, and (exponential) trichotomy of $ A $ and other classes of more general wave equations admitting Hamiltonian structures with finite Morse index.
	\end{enumerate}
\end{exa}

\begin{exa}\label{ex:spatial}
	Consider the following reaction-diffusion equation in one dimension:
	\[
	\partial_t u = d \partial^2_x u + c \partial_x u + f(u), ~ t, x \in \mathbb{R},
	\]
	where $ d > 0 $. $ q: \mathbb{R} \times \mathbb{R} \to \mathbb{R} $ is called a \emph{modulated wave} if it satisfies above equation and $ q(x,t) = q(x,t+ T) $ for some $ T > 0 $ (see \cite{SS01}). Let us consider the dynamic generated by the above equation in the spatial variable $ x $, called the \emph{spatial dynamic} (which might be first introduced by K. Kirchg\"assner). Take $ X = H^{1/2}(\Omega) \times L^2(\Omega) $, $ D(A) = H^1(\Omega) \times H^{1/2}(\Omega) $,
	\[
	A = \left(  \begin{matrix}
	0 & 1 \\
	d^{-1}\partial_t & -c d^{-1}
	\end{matrix} \right),
	~g \left( \begin{matrix}
	u \\
	v
	\end{matrix} \right)(t) = \left(  \begin{matrix}
	0 \\
	d^{-1}f(u(t))
	\end{matrix} \right): X \to X.
	\]
	$ A $ is a bi-sectorial operator; see e.g. \cite{LP08}. So the spatial dynamic is not well-posed. This dynamic is interesting since the study of the stability of the traveling wave $ u_c $ (or the modulated wave $ q $) relies on the uniform dichotomy on $ \mathbb{R}_+ $ and $ \mathbb{R}_- $ of linearized \eqref{2**1} along $ u_c $ (see e.g. \cite{SS01}), and to find modulated wave $ q $, one can study the Hopf bifurcation of this system when restricting $ t \in \mathbb{S}^1 $ (see e.g. \cite{SS99}). (Note that the spectral theory for this class of the dynamic does not contain in \cite{LZ17}.)

	For this $ A $, it has \emph{infinite} dimensional stable and unstable subspaces, and \emph{finite} dimensional center subspace. By our existence results (or see \cite{dlLla09}), equation \eqref{2**1} in this case has a finite-dimensional $ C^k $ center manifold in the neighborhood of $ 0 $ if $ f \in C^k $. Furthermore, if $ f $ depends smoothly on a extra parameter, then our regularity results also say that the center manifold also depends smoothly on the extra parameter. One can apply Hopf bifurcation theorem in the finite-dimensional setting to get the existence of the non-trivial periodic orbits depending on parameter for the restricted dynamic on the center manifold. Now returning to the original system, one also obtains the non-trivial periodic orbits. See \cite{SS99} for more details. This idea was also used in \cite{MR09a} to deduce their Hopf bifurcation theorem in the age structured models in $ L^1 $. Note that this argument will fail if the center manifold is infinite-dimensional (see e.g. \autoref{ex:bou} \eqref{ex:bou2}).

	The reader can consider more spatial dynamics generated by equations in \autoref{ex:classical} and Swift-Hohenberg equation, Korteweg-de Vries equation, etc, which are all ill-posed in general.
\end{exa}

\begin{exa}\label{ex:nondiss}
	\begin{enumerate}[(a)]
		\item \label{asm} Consider the following age structured model:
		\[
		\begin{cases}
		\partial_t u - d \partial^2_x u + \partial_x u = M(x,u), \\
		u(t,0) = G(u(t,\cdot)),~
		u(0,x) = \phi(x),
		\end{cases}
		\]
		where $ d \geq 0 $, $ M: \mathbb{R}_+ \times \mathbb{R}_+ \to \mathbb{R}_+ $ is smooth, $ G: L^p(\mathbb{R}_+) \to \mathbb{R}_+ $ is smooth and $ \phi(\cdot) \in L^p(\mathbb{R}_+) $. $ 1 \leq p < \infty $. Rewrite it as an abstract equation. Take $ A_0 = d\partial^2_x - \partial_x $, $ X = \mathbb{R} \times L^p(\mathbb{R}_+) $, $ D(A) = \{0\} \times W_p $,
		\[
		A = \left(  \begin{matrix}
		0 & \delta_0 \\
		A_0  & 0
		\end{matrix} \right),
		~g \left( \begin{matrix}
		0 \\
		v
		\end{matrix} \right) = \left(  \begin{matrix}
		G(v(\cdot))  \\
		M(\cdot,v(\cdot))
		\end{matrix} \right): \{0\} \times L^p(\mathbb{R}_+) \to X,
		\]
		where $ W_p = W^{1,p}(\mathbb{R}_+) $ if $ d = 0 $, and $ W_p = W^{2,p}(\mathbb{R}_+) $ if $ d > 0 $. The Dirac measure $ \delta_0: W_p \to \mathbb{R} $ is defined by $ v \mapsto v(0) $. $ M(\cdot,v(\cdot))(x) = M(x,v(x)) $. The property of $ A $ is the following: (i) $ A $ is a Hille-Yosida operator if and only if $ p = 1 $; (ii) For $ p > 1 $, $ A $ is an MR operator (see \autopageref{MR}); (iii) $ A_{\overline{D(A)}} $ is a sectorial operator if and only if $ d > 0 $. See e.g. \cite{MR07, MR09a} and the references therein for more details. Note that $ \overline{D(A)} \neq X $.

		\item \label{ade} Consider the following (abstract) delay equation:
		\[
		\begin{cases}
		\dot{u}(t) = \mathcal{A} u(t) + f(u_t), ~ t \geq 0, \\
		u(\theta) = \phi(\theta), ~\theta \in [-r,0].
		\end{cases} \tag{ADE}
		\]
		The settings are (i) $ \mathcal{A}: D(\mathcal{A}) \to \mathcal{X} $ is an operator of the class (a) $ \sim $ (c) listed in \autoref{generalA} (for instance $ \mathcal{A} $ can be any operator in \autoref{ex:classical});
		(ii) $ f : \mathcal{C}_{\mathcal{A}} \to \mathcal{X} $ is smooth;
		(iii) $ 0 < r < \infty $,
		where $ \mathcal{X} $ is a Banach space, $ \mathcal{C} = C([-r,0],\mathcal{X}) $, and $ \mathcal{C}_{\mathcal{A}} = \{ \phi \in \mathcal{C}: \phi(0) \in \overline{D(\mathcal{A})} \} $. $ u_t(\theta) \triangleq u(t+\theta) $, $ \theta \in [-r,0] $, $ t \geq 0 $. Let us rewrite it as an abstract equation. Take $ X = \mathcal{X} \times \mathcal{C} $,
		\[
		D(A) = \{0\} \times \{ \phi \in C^1([-r,0], \mathcal{C}): \phi(0) \in D(\mathcal{A}) \},
		\]
		\[
		A = \left(  \begin{matrix}
		0 & \mathcal{A}\delta_0 - \delta'_0 \\
		0  & \partial_\theta
		\end{matrix} \right),
		~ g \left( \begin{matrix}
		0 \\
		\phi
		\end{matrix} \right) = \left(  \begin{matrix}
		f(\phi) \\
		0
		\end{matrix} \right) : \{0\} \times \mathcal{C}_{\mathcal{A}} \to X,
		\]
		where $ \mathcal{A}\delta_0 - \delta'_0 : \phi \mapsto \mathcal{A}\phi(0) - \dot{\phi}(0) $, $ (0, \phi) \in D(A) $. Note that $ \overline{D(A)} = \mathcal{C}_{\mathcal{A}} $ (see e.g. \cite[Section 3.4]{Che18f} for details). The property of $ A $ is the following: (i) If $ \mathcal{A} $ is a Hille-Yosida operator, then so is $ A $ (even $ \mathcal{A} = 0 $); (ii) If $ \mathcal{A} $ is an MR operator (see \autopageref{MR}), so is $ A $. The case (i) is well known, see e.g. \cite{EA06}. For the case (ii), we can not find the proof in the published papers, however case (ii) can be proved in the same way as case (i). The relation between \eqref{2**1} for this case and (ADE) is not so obvious as the previous examples, which we state in the following.
		\begin{enumerate}[$ \bullet $]
			\item  A continuous function $ u $ with $ u_0 = \phi \in \mathcal{C}_{\mathcal{A}} $ is a mild solution of (ADE) if and only if $ z(t) = (0,u_t)^\top $ is a mild solution of \eqref{2**1} for this case; all the mild solution $ z(t) = (0,z_1(t))^{\top} $ of \eqref{2**1} for this case has the form $ z_1(t) =  u_t $, $ t \in [0,b] $, $ b > 0 $, where $ u : [-r,b] \to \overline{D(\mathcal{A})} $ is continuous.
		\end{enumerate}
		For a proof, see e.g. \cite{EA06}. So (ADE) is well-posed and one only needs to study the dynamic generated by \eqref{2**1} which can reflect the property of (ADE). 

		\item \label{mixed} Consider the following mixed delay equation:
		\[
		\begin{cases}
		\dot{u}(t) = u(t-1) + u(t+1) + f(u_t), ~|t| \geq 1,\\
		u(\theta) = \phi(\theta), ~\theta \in [-1,1].
		\end{cases} \tag{MDE}
		\]
		Take $ X = C([-1,1], \mathbb{C}) $,
		\[
		Ax = \dot{x},~x \in D(A) = \{ x \in C^1([-1,1], \mathbb{C}): \dot{x}(0) = x(-1) + x(1) \}.
		\]
		It is known that (MDE) is not well-posed in general, and $ A : D(A) \subset X \to X $ generates a $ C_0 $ bi-semigroup on $ X $ which is induced by mild solutions of (MDE) (for $ f = 0 $). That is, there is a decomposition of $ X $ as $ X = X_+ \oplus X_- $ with projections $ P_{\pm} $ such that $ R(P_{\pm}) = X_{\pm} $ and $ P_+ + P_- = \id $, and $ \pm A_{\pm} \triangleq \pm A_{X_{\pm}} $ generate $ C_0 $ semigroups $ T_{\pm} $ respectively. For any mild solution $ u $ of (MDE) (for $ f = 0 $) has the form $ u_t = (T_+(t-t_1)P_{+}u_{t_1}, T_{-}(t_2-t)P_{-}u_{t_2}) $, for $ t_1 \leq t \leq t_2 $. See \cite{Mal99,vdMee08} for a proof. However, even the nonlinear (MDE) can rewrite as \eqref{2**1}, it is not the case we study in \autoref{bi-semigroup}, so we can not apply \autoref{thm:biAB}. The dynamical results obtained in \autoref{continuous case} and \autoref{normalH} can be applied to this model and a direct proof of the (A) (B) condition satisfied by this model is needed which we will give elsewhere.
	\end{enumerate}
\end{exa}

\end{appendices}

\bibliographystyle{amsalpha}

\begin{bibdiv}
\begin{biblist}

\bib{ABHN11}{book}{
      author={Arendt, Wolfgang},
      author={Batty, Charles J.~K.},
      author={Hieber, Matthias},
      author={Neubrander, Frank},
       title={Vector-valued {L}aplace transforms and {C}auchy problems},
     edition={Second},
      series={Monographs in Mathematics},
   publisher={Birkh\"auser/Springer Basel AG, Basel},
        date={2011},
      volume={96},
        ISBN={978-3-0348-0086-0},
         url={https://doi.org/10.1007/978-3-0348-0087-7},
      review={\MR{2798103}},
}

\bib{Ama15}{incollection}{
      author={Amann, Herbert},
       title={Uniformly regular and singular {R}iemannian manifolds},
        date={2015},
   booktitle={Elliptic and parabolic equations},
      series={Springer Proc. Math. Stat.},
      volume={119},
   publisher={Springer, Cham},
       pages={1\ndash 43},
         url={https://doi.org/10.1007/978-3-319-12547-3_1},
      review={\MR{3375165}},
}

\bib{Ama95}{book}{
      author={Amann, Herbert},
       title={Linear and quasilinear parabolic problems. {V}ol. {I}},
      series={Monographs in Mathematics},
   publisher={Birkh\"auser Boston, Inc., Boston, MA},
        date={1995},
      volume={89},
        ISBN={3-7643-5114-4},
         url={https://doi.org/10.1007/978-3-0348-9221-6},
        note={Abstract linear theory},
      review={\MR{1345385}},
}

\bib{Are04}{incollection}{
      author={Arendt, Wolfgang},
       title={Semigroups and evolution equations: functional calculus,
  regularity and kernel estimates},
        date={2004},
   booktitle={Evolutionary equations. {V}ol. {I}},
      series={Handb. Differ. Equ.},
   publisher={North-Holland, Amsterdam},
       pages={1\ndash 85},
      review={\MR{2103696}},
}

\bib{BC16}{article}{
      author={Bonatti, Christian},
      author={Crovisier, Sylvain},
       title={Center manifolds for partially hyperbolic sets without strong
  unstable connections},
        date={2016},
        ISSN={1474-7480},
     journal={J. Inst. Math. Jussieu},
      volume={15},
      number={4},
       pages={785\ndash 828},
         url={https://doi.org/10.1017/S1474748015000055},
      review={\MR{3569077}},
}

\bib{BJ89}{incollection}{
      author={Bates, Peter~W.},
      author={Jones, Christopher K. R.~T.},
       title={Invariant manifolds for semilinear partial differential
  equations},
        date={1989},
   booktitle={Dynamics reported, {V}ol.\ 2},
      series={Dynam. Report. Ser. Dynam. Systems Appl.},
      volume={2},
   publisher={Wiley, Chichester},
       pages={1\ndash 38},
      review={\MR{1000974}},
}

\bib{BLZ00}{article}{
      author={Bates, Peter~W.},
      author={Lu, Kening},
      author={Zeng, Chongchun},
       title={Invariant foliations near normally hyperbolic invariant manifolds
  for semiflows},
        date={2000},
        ISSN={0002-9947},
     journal={Trans. Amer. Math. Soc.},
      volume={352},
      number={10},
       pages={4641\ndash 4676},
         url={http://dx.doi.org/10.1090/S0002-9947-00-02503-4},
      review={\MR{1675237}},
}

\bib{BLZ08}{article}{
      author={Bates, Peter~W.},
      author={Lu, Kening},
      author={Zeng, Chongchun},
       title={Approximately invariant manifolds and global dynamics of spike
  states},
        date={2008},
        ISSN={0020-9910},
     journal={Invent. Math.},
      volume={174},
      number={2},
       pages={355\ndash 433},
         url={http://dx.doi.org/10.1007/s00222-008-0141-y},
      review={\MR{2439610}},
}

\bib{BLZ98}{article}{
      author={Bates, Peter~W.},
      author={Lu, Kening},
      author={Zeng, Chongchun},
       title={Existence and persistence of invariant manifolds for semiflows in
  {B}anach space},
        date={1998},
        ISSN={0065-9266},
     journal={Mem. Amer. Math. Soc.},
      volume={135},
      number={645},
       pages={viii+129},
         url={http://dx.doi.org/10.1090/memo/0645},
      review={\MR{1445489}},
}

\bib{BLZ99}{article}{
      author={Bates, Peter~W.},
      author={Lu, Kening},
      author={Zeng, Chongchun},
       title={Persistence of overflowing manifolds for semiflow},
        date={1999},
        ISSN={0010-3640},
     journal={Comm. Pure Appl. Math.},
      volume={52},
      number={8},
       pages={983\ndash 1046},
  url={http://dx.doi.org/10.1002/(SICI)1097-0312(199908)52:8<983::AID-CPA4>3.3.CO;2-F},
      review={\MR{1686965}},
}

\bib{Cha04}{article}{
      author={Chaperon, Marc},
       title={Stable manifolds and the {P}erron-{I}rwin method},
        date={2004},
        ISSN={0143-3857},
     journal={Ergodic Theory Dynam. Systems},
      volume={24},
      number={5},
       pages={1359\ndash 1394},
         url={http://dx.doi.org/10.1017/S0143385703000701},
      review={\MR{2104589}},
}

\bib{Cha08}{article}{
      author={Chaperon, Marc},
       title={The {L}ipschitzian core of some invariant manifold theorems},
        date={2008},
        ISSN={0143-3857},
     journal={Ergodic Theory Dynam. Systems},
      volume={28},
      number={5},
       pages={1419\ndash 1441},
         url={http://dx.doi.org/10.1017/S0143385707000910},
      review={\MR{2449535}},
}

\bib{Che18f}{manual}{
      author={Chen, Deliang},
       title={{A} note on {M}achado-{B}ishop theorem in weighted spaces with
  applications},
        date={2018},
        note={submitted},
}

\bib{Che18d}{manual}{
      author={Chen, Deliang},
       title={{E}xistence and regularity of invariant graphs for cocycles in
  bundles: non-uniformly partial hyperbolicity case},
        date={2018},
        note={in preparation},
}

\bib{Che18b}{manual}{
      author={Chen, Deliang},
       title={{I}nvariant manifolds of approximately normally hyperbolic
  manifolds in {B}anach spaces},
        date={2018},
        note={submitted},
}

\bib{Che18}{manual}{
      author={Chen, Deliang},
       title={{I}nvariant manifolds of partially normally hyperbolic invariant
  manifolds in {B}anach spaces},
        date={2018},
        note={in preparation},
}

\bib{Che18a}{article}{
      author={Chen, Deliang},
       title={{E}xistence and regularity of invariant graphs for cocycles in
  bundles: partial hyperbolicity case},
        date={2019-03},
     journal={arXiv e-prints},
      eprint={1903.07483},
}

\bib{CHT97}{article}{
      author={Chen, Xu-Yan},
      author={Hale, Jack~K.},
      author={Tan, Bin},
       title={Invariant foliations for {$C^1$} semigroups in {B}anach spaces},
        date={1997},
        ISSN={0022-0396},
     journal={J. Differential Equations},
      volume={139},
      number={2},
       pages={283\ndash 318},
         url={http://dx.doi.org/10.1006/jdeq.1997.3255},
      review={\MR{1472350}},
}

\bib{CL88}{article}{
      author={Chow, Shui-Nee},
      author={Lu, Kening},
       title={Invariant manifolds for flows in {B}anach spaces},
        date={1988},
        ISSN={0022-0396},
     journal={J. Differential Equations},
      volume={74},
      number={2},
       pages={285\ndash 317},
         url={http://dx.doi.org/10.1016/0022-0396(88)90007-1},
      review={\MR{952900}},
}

\bib{CL97}{article}{
      author={Chicone, C.},
      author={Latushkin, Y.},
       title={Center manifolds for infinite-dimensional nonautonomous
  differential equations},
        date={1997},
        ISSN={0022-0396},
     journal={J. Differential Equations},
      volume={141},
      number={2},
       pages={356\ndash 399},
         url={http://dx.doi.org/10.1006/jdeq.1997.3343},
      review={\MR{1488358}},
}

\bib{CL99}{book}{
      author={Chicone, Carmen},
      author={Latushkin, Yuri},
       title={Evolution semigroups in dynamical systems and differential
  equations},
      series={Mathematical Surveys and Monographs},
   publisher={American Mathematical Society, Providence, RI},
        date={1999},
      volume={70},
        ISBN={0-8218-1185-1},
         url={http://dx.doi.org/10.1090/surv/070},
      review={\MR{1707332}},
}

\bib{CLL91}{article}{
      author={Chow, Shui-Nee},
      author={Lin, Xiao-Biao},
      author={Lu, Kening},
       title={Smooth invariant foliations in infinite-dimensional spaces},
        date={1991},
        ISSN={0022-0396},
     journal={J. Differential Equations},
      volume={94},
      number={2},
       pages={266\ndash 291},
      review={\MR{1137616}},
}

\bib{CLY00a}{article}{
      author={Chow, Shui-Nee},
      author={Liu, Weishi},
      author={Yi, Yingfei},
       title={Center manifolds for invariant sets},
        date={2000},
        ISSN={0022-0396},
     journal={J. Differential Equations},
      volume={168},
      number={2},
       pages={355\ndash 385},
         url={http://dx.doi.org/10.1006/jdeq.2000.3890},
        note={Special issue in celebration of Jack K. Hale's 70th birthday,
  Part 2 (Atlanta, GA/Lisbon, 1998)},
      review={\MR{1808454}},
}

\bib{CLY00}{article}{
      author={Chow, Shui-Nee},
      author={Liu, Weishi},
      author={Yi, Yingfei},
       title={Center manifolds for smooth invariant manifolds},
        date={2000},
        ISSN={0002-9947},
     journal={Trans. Amer. Math. Soc.},
      volume={352},
      number={11},
       pages={5179\ndash 5211},
         url={http://dx.doi.org/10.1090/S0002-9947-00-02443-0},
      review={\MR{1650077}},
}

\bib{CMS93}{article}{
      author={Calsina, \'Angel},
      author={Mora, Xavier},
      author={Sol\`a-Morales, Joan},
       title={The dynamical approach to elliptic problems in cylindrical
  domains, and a study of their parabolic singular limit},
        date={1993},
        ISSN={0022-0396},
     journal={J. Differential Equations},
      volume={102},
      number={2},
       pages={244\ndash 304},
         url={https://doi.org/10.1006/jdeq.1993.1030},
      review={\MR{1216730}},
}

\bib{CY94}{article}{
      author={Chow, Shui-Nee},
      author={Yi, Yingfei},
       title={Center manifold and stability for skew-product flows},
        date={1994},
        ISSN={1040-7294},
     journal={J. Dynam. Differential Equations},
      volume={6},
      number={4},
       pages={543\ndash 582},
         url={https://doi.org/10.1007/BF02218847},
      review={\MR{1303274}},
}

\bib{dlLla09}{article}{
      author={de~la Llave, Rafael},
       title={A smooth center manifold theorem which applies to some ill-posed
  partial differential equations with unbounded nonlinearities},
        date={2009},
        ISSN={1040-7294},
     journal={J. Dynam. Differential Equations},
      volume={21},
      number={3},
       pages={371\ndash 415},
         url={http://dx.doi.org/10.1007/s10884-009-9140-y},
      review={\MR{2538946}},
}

\bib{DPL88}{article}{
      author={Da~Prato, G.},
      author={Lunardi, A.},
       title={Stability, instability and center manifold theorem for fully
  nonlinear autonomous parabolic equations in {B}anach space},
        date={1988},
        ISSN={0003-9527},
     journal={Arch. Rational Mech. Anal.},
      volume={101},
      number={2},
       pages={115\ndash 141},
         url={https://doi.org/10.1007/BF00251457},
      review={\MR{921935}},
}

\bib{DPS87}{article}{
      author={Da~Prato, G.},
      author={Sinestrari, E.},
       title={Differential operators with nondense domain},
        date={1987},
        ISSN={0391-173X},
     journal={Ann. Scuola Norm. Sup. Pisa Cl. Sci. (4)},
      volume={14},
      number={2},
       pages={285\ndash 344 (1988)},
         url={http://www.numdam.org/item?id=ASNSP_1987_4_14_2_285_0},
      review={\MR{939631}},
}

\bib{EA06}{incollection}{
      author={Ezzinbi, K.},
      author={Adimy, M.},
       title={The basic theory of abstract semilinear functional differential
  equations with nondense domain},
        date={2006},
   booktitle={Delay differential equations and applications},
      series={NATO Sci. Ser. II Math. Phys. Chem.},
      volume={205},
   publisher={Springer, Dordrecht},
       pages={347\ndash 407},
         url={https://doi.org/10.1007/1-4020-3647-7_9},
      review={\MR{2337821}},
}

\bib{ElB12}{article}{
      author={ElBialy, Mohamed~Sami},
       title={Stable and unstable manifolds for hyperbolic bi-semigroups},
        date={2012},
        ISSN={0022-1236},
     journal={J. Funct. Anal.},
      volume={262},
      number={5},
       pages={2516\ndash 2560},
         url={http://dx.doi.org/10.1016/j.jfa.2011.11.031},
      review={\MR{2876413}},
}

\bib{Eld13}{book}{
      author={Eldering, Jaap},
       title={Normally hyperbolic invariant manifolds},
      series={Atlantis Studies in Dynamical Systems},
   publisher={Atlantis Press, Paris},
        date={2013},
      volume={2},
        ISBN={978-94-6239-002-7; 978-94-6239-003-4},
         url={https://doi.org/10.2991/978-94-6239-003-4},
        note={The noncompact case},
      review={\MR{3098498}},
}

\bib{EN00}{book}{
      author={Engel, Klaus-Jochen},
      author={Nagel, Rainer},
       title={One-parameter semigroups for linear evolution equations},
      series={Graduate Texts in Mathematics},
   publisher={Springer-Verlag, New York},
        date={2000},
      volume={194},
        ISBN={0-387-98463-1},
        note={With contributions by S. Brendle, M. Campiti, T. Hahn, G.
  Metafune, G. Nickel, D. Pallara, C. Perazzoli, A. Rhandi, S. Romanelli and R.
  Schnaubelt},
      review={\MR{1721989}},
}

\bib{EW91}{article}{
      author={Eckmann, J.-P.},
      author={Wayne, C.~E.},
       title={Propagating fronts and the center manifold theorem},
        date={1991},
        ISSN={0010-3616},
     journal={Comm. Math. Phys.},
      volume={136},
      number={2},
       pages={285\ndash 307},
         url={http://projecteuclid.org/euclid.cmp/1104202352},
      review={\MR{1096117}},
}

\bib{Fen72}{article}{
      author={Fenichel, Neil},
       title={Persistence and smoothness of invariant manifolds for flows},
        date={1971/1972},
        ISSN={0022-2518},
     journal={Indiana Univ. Math. J.},
      volume={21},
       pages={193\ndash 226},
      review={\MR{0287106}},
}

\bib{Fen74}{article}{
      author={Fenichel, Neil},
       title={Asymptotic stability with rate conditions},
        date={1973/74},
        ISSN={0022-2518},
     journal={Indiana Univ. Math. J.},
      volume={23},
       pages={1109\ndash 1137},
      review={\MR{0339276}},
}

\bib{Fen77}{article}{
      author={Fenichel, Neil},
       title={Asymptotic stability with rate conditions. {II}},
        date={1977},
        ISSN={0022-2518},
     journal={Indiana Univ. Math. J.},
      volume={26},
      number={1},
       pages={81\ndash 93},
      review={\MR{0426056}},
}

\bib{Gal93}{article}{
      author={Gallay, Th.},
       title={A center-stable manifold theorem for differential equations in
  {B}anach spaces},
        date={1993},
        ISSN={0010-3616},
     journal={Comm. Math. Phys.},
      volume={152},
      number={2},
       pages={249\ndash 268},
         url={http://projecteuclid.org/euclid.cmp/1104252409},
      review={\MR{1210168}},
}

\bib{Hen81}{book}{
      author={Henry, Daniel},
       title={Geometric theory of semilinear parabolic equations},
      series={Lecture Notes in Mathematics},
   publisher={Springer-Verlag, Berlin-New York},
        date={1981},
      volume={840},
        ISBN={3-540-10557-3},
      review={\MR{610244}},
}

\bib{HPS77}{book}{
      author={Hirsch, M.~W.},
      author={Pugh, C.~C.},
      author={Shub, M.},
       title={Invariant manifolds},
      series={Lecture Notes in Mathematics, Vol. 583},
   publisher={Springer-Verlag, Berlin-New York},
        date={1977},
      review={\MR{0501173}},
}

\bib{HVL08}{article}{
      author={Hupkes, H.~J.},
      author={Verduyn~Lunel, S.~M.},
       title={Center manifolds for periodic functional differential equations
  of mixed type},
        date={2008},
        ISSN={0022-0396},
     journal={J. Differential Equations},
      volume={245},
      number={6},
       pages={1526\ndash 1565},
         url={http://dx.doi.org/10.1016/j.jde.2008.02.026},
      review={\MR{2436452}},
}

\bib{JLZ17}{article}{
      author={Jin, Jiayin},
      author={Lin, Zhiwu},
      author={Zeng, Chongchun},
       title={Invariant manifolds of traveling waves of the 3{D}
  {G}ross-{P}itaevskii equation in the energy space},
        date={2018},
        ISSN={0010-3616},
     journal={Comm. Math. Phys.},
      volume={364},
      number={3},
       pages={981\ndash 1039},
         url={https://doi.org/10.1007/s00220-018-3189-6},
      review={\MR{3875821}},
}

\bib{KNS15}{article}{
      author={Krieger, Joachim},
      author={Nakanishi, Kenji},
      author={Schlag, Wilhelm},
       title={Center-stable manifold of the ground state in the energy space
  for the critical wave equation},
        date={2015},
        ISSN={0025-5831},
     journal={Math. Ann.},
      volume={361},
      number={1-2},
       pages={1\ndash 50},
         url={https://doi.org/10.1007/s00208-014-1059-x},
      review={\MR{3302610}},
}

\bib{LL10}{article}{
      author={Lian, Zeng},
      author={Lu, Kening},
       title={Lyapunov exponents and invariant manifolds for random dynamical
  systems in a {B}anach space},
        date={2010},
        ISSN={0065-9266},
     journal={Mem. Amer. Math. Soc.},
      volume={206},
      number={967},
       pages={vi+106},
         url={https://doi.org/10.1090/S0065-9266-10-00574-0},
      review={\MR{2674952}},
}

\bib{LMSW96}{article}{
      author={Li, Y.},
      author={McLaughlin, David~W.},
      author={Shatah, Jalal},
      author={Wiggins, S.},
       title={Persistent homoclinic orbits for a perturbed nonlinear
  {S}chr\"odinger equation},
        date={1996},
        ISSN={0010-3640},
     journal={Comm. Pure Appl. Math.},
      volume={49},
      number={11},
       pages={1175\ndash 1255},
  url={https://doi.org/10.1002/(SICI)1097-0312(199611)49:11<1175::AID-CPA2>3.3.CO;2-B},
      review={\MR{1406663}},
}

\bib{LP08}{article}{
      author={Latushkin, Yuri},
      author={Pogan, Alin},
       title={The dichotomy theorem for evolution bi-families},
        date={2008},
        ISSN={0022-0396},
     journal={J. Differential Equations},
      volume={245},
      number={8},
       pages={2267\ndash 2306},
         url={http://dx.doi.org/10.1016/j.jde.2008.01.023},
      review={\MR{2446192}},
}

\bib{Lu91}{article}{
      author={Lu, Kening},
       title={A {H}artman-{G}robman theorem for scalar reaction-diffusion
  equations},
        date={1991},
        ISSN={0022-0396},
     journal={J. Differential Equations},
      volume={93},
      number={2},
       pages={364\ndash 394},
         url={http://dx.doi.org/10.1016/0022-0396(91)90017-4},
      review={\MR{1125224}},
}

\bib{LW97}{book}{
      author={Li, Charles},
      author={Wiggins, Stephen},
       title={Invariant manifolds and fibrations for perturbed nonlinear
  {S}chr\"odinger equations},
      series={Applied Mathematical Sciences},
   publisher={Springer-Verlag, New York},
        date={1997},
      volume={128},
        ISBN={0-387-94925-9},
         url={https://doi.org/10.1007/978-1-4612-1838-8},
      review={\MR{1475929}},
}

\bib{LYZ13}{article}{
      author={Lian, Zeng},
      author={Young, Lai-Sang},
      author={Zeng, Chongchun},
       title={Absolute continuity of stable foliations for systems on {B}anach
  spaces},
        date={2013},
        ISSN={0022-0396},
     journal={J. Differential Equations},
      volume={254},
      number={1},
       pages={283\ndash 308},
         url={https://doi.org/10.1016/j.jde.2012.08.021},
      review={\MR{2983052}},
}

\bib{LZ17}{article}{
      author={{Lin}, Z.},
      author={{Zeng}, C.},
       title={{Instability, index theorem, and exponential trichotomy for
  Linear Hamiltonian PDEs}},
        date={2017-03},
     journal={ArXiv e-prints},
      eprint={1703.04016},
}

\bib{Mcc91}{article}{
      author={Miklav{\v c}i{\v c}, Milan},
       title={A sharp condition for existence of an inertial manifold},
        date={1991},
        ISSN={1040-7294},
     journal={J. Dynam. Differential Equations},
      volume={3},
      number={3},
       pages={437\ndash 456},
         url={https://doi.org/10.1007/BF01049741},
      review={\MR{1118343}},
}

\bib{Man83}{incollection}{
      author={Ma\~n\'e, Ricardo},
       title={Lyapounov exponents and stable manifolds for compact
  transformations},
        date={1983},
   booktitle={Geometric dynamics ({R}io de {J}aneiro, 1981)},
      series={Lecture Notes in Math.},
      volume={1007},
   publisher={Springer, Berlin},
       pages={522\ndash 577},
         url={https://doi.org/10.1007/BFb0061433},
      review={\MR{730286}},
}

\bib{Mal99}{article}{
      author={Mallet-Paret, John},
       title={The {F}redholm alternative for functional-differential equations
  of mixed type},
        date={1999},
        ISSN={1040-7294},
     journal={J. Dynam. Differential Equations},
      volume={11},
      number={1},
       pages={1\ndash 47},
         url={http://dx.doi.org/10.1023/A:1021889401235},
      review={\MR{1680463}},
}

\bib{MS88}{article}{
      author={Mallet-Paret, John},
      author={Sell, George~R.},
       title={Inertial manifolds for reaction diffusion equations in higher
  space dimensions},
        date={1988},
        ISSN={0894-0347},
     journal={J. Amer. Math. Soc.},
      volume={1},
      number={4},
       pages={805\ndash 866},
         url={http://dx.doi.org/10.2307/1990993},
      review={\MR{943276}},
}

\bib{MR07}{article}{
      author={Magal, Pierre},
      author={Ruan, Shigui},
       title={On integrated semigroups and age structured models in {$L^p$}
  spaces},
        date={2007},
        ISSN={0893-4983},
     journal={Differential Integral Equations},
      volume={20},
      number={2},
       pages={197\ndash 239},
      review={\MR{2294465}},
}

\bib{MR09a}{article}{
      author={Magal, Pierre},
      author={Ruan, Shigui},
       title={Center manifolds for semilinear equations with non-dense domain
  and applications to {H}opf bifurcation in age structured models},
        date={2009},
        ISSN={0065-9266},
     journal={Mem. Amer. Math. Soc.},
      volume={202},
      number={951},
       pages={vi+71},
         url={https://doi.org/10.1090/S0065-9266-09-00568-7},
      review={\MR{2559965}},
}

\bib{MR09}{article}{
      author={Magal, Pierre},
      author={Ruan, Shigui},
       title={On semilinear {C}auchy problems with non-dense domain},
        date={2009},
        ISSN={1079-9389},
     journal={Adv. Differential Equations},
      volume={14},
      number={11-12},
       pages={1041\ndash 1084},
      review={\MR{2560868}},
}

\bib{MY90}{incollection}{
      author={McIntosh, Alan},
      author={Yagi, Atsushi},
       title={Operators of type {$\omega$} without a bounded {$H_\infty$}
  functional calculus},
        date={1990},
   booktitle={Miniconference on {O}perators in {A}nalysis ({S}ydney, 1989)},
      series={Proc. Centre Math. Anal. Austral. Nat. Univ.},
      volume={24},
   publisher={Austral. Nat. Univ., Canberra},
       pages={159\ndash 172},
      review={\MR{1060121}},
}

\bib{NP00}{article}{
      author={Nagel, Rainer},
      author={Poland, Jan},
       title={The critical spectrum of a strongly continuous semigroup},
        date={2000},
        ISSN={0001-8708},
     journal={Adv. Math.},
      volume={152},
      number={1},
       pages={120\ndash 133},
         url={https://doi.org/10.1006/aima.1998.1893},
      review={\MR{1762122}},
}

\bib{NS11}{book}{
      author={Nakanishi, Kenji},
      author={Schlag, Wilhelm},
       title={Invariant manifolds and dispersive {H}amiltonian evolution
  equations},
      series={Zurich Lectures in Advanced Mathematics},
   publisher={European Mathematical Society (EMS), Z\"urich},
        date={2011},
        ISBN={978-3-03719-095-1},
         url={https://doi.org/10.4171/095},
      review={\MR{2847755}},
}

\bib{NS12}{article}{
      author={Nakanishi, K.},
      author={Schlag, W.},
       title={Invariant manifolds around soliton manifolds for the nonlinear
  {K}lein-{G}ordon equation},
        date={2012},
        ISSN={0036-1410},
     journal={SIAM J. Math. Anal.},
      volume={44},
      number={2},
       pages={1175\ndash 1210},
         url={https://doi.org/10.1137/11082720X},
      review={\MR{2914265}},
}

\bib{Paz83}{book}{
      author={Pazy, A.},
       title={Semigroups of linear operators and applications to partial
  differential equations},
      series={Applied Mathematical Sciences},
   publisher={Springer-Verlag, New York},
        date={1983},
      volume={44},
        ISBN={0-387-90845-5},
         url={https://doi.org/10.1007/978-1-4612-5561-1},
      review={\MR{710486}},
}

\bib{PS01}{article}{
      author={Pliss, Victor~A.},
      author={Sell, George~R.},
       title={Perturbations of normally hyperbolic manifolds with applications
  to the {N}avier-{S}tokes equations},
        date={2001},
        ISSN={0022-0396},
     journal={J. Differential Equations},
      volume={169},
      number={2},
       pages={396\ndash 492},
         url={https://doi.org/10.1006/jdeq.2000.3905},
        note={Special issue in celebration of Jack K. Hale's 70th birthday,
  Part 4 (Atlanta, GA/Lisbon, 1998)},
      review={\MR{1808472}},
}

\bib{PS70}{article}{
      author={Pugh, Charles},
      author={Shub, Michael},
       title={Linearization of normally hyperbolic diffeomorphisms and flows},
        date={1970},
        ISSN={0020-9910},
     journal={Invent. Math.},
      volume={10},
       pages={187\ndash 198},
         url={https://doi.org/10.1007/BF01403247},
      review={\MR{0283825}},
}

\bib{PSW12}{article}{
      author={Pugh, Charles},
      author={Shub, Michael},
      author={Wilkinson, Amie},
       title={H\"older foliations, revisited},
        date={2012},
        ISSN={1930-5311},
     journal={J. Mod. Dyn.},
      volume={6},
      number={1},
       pages={79\ndash 120},
         url={http://dx.doi.org/10.3934/jmd.2012.6.79},
      review={\MR{2929131}},
}

\bib{Rom93}{article}{
      author={Romanov, A.~V.},
       title={Sharp estimates for the dimension of inertial manifolds for
  nonlinear parabolic equations},
        date={1993},
        ISSN={1607-0046},
     journal={Izv. Ross. Akad. Nauk Ser. Mat.},
      volume={57},
      number={4},
       pages={36\ndash 54},
         url={https://doi.org/10.1070/IM1994v043n01ABEH001557},
      review={\MR{1243350}},
}

\bib{Rue82}{article}{
      author={Ruelle, David},
       title={Characteristic exponents and invariant manifolds in {H}ilbert
  space},
        date={1982},
        ISSN={0003-486X},
     journal={Ann. of Math. (2)},
      volume={115},
      number={2},
       pages={243\ndash 290},
         url={https://doi.org/10.2307/1971392},
      review={\MR{647807}},
}

\bib{Sch02}{incollection}{
      author={Schnaubelt, Roland},
       title={Well-posedness and asymptotic behaviour of non-autonomous linear
  evolution equations},
        date={2002},
   booktitle={Evolution equations, semigroups and functional analysis
  ({M}ilano, 2000)},
      series={Progr. Nonlinear Differential Equations Appl.},
      volume={50},
   publisher={Birkh\"{a}user, Basel},
       pages={311\ndash 338},
      review={\MR{1944170}},
}

\bib{Shu92}{article}{
      author={Shubin, M.~A.},
       title={Spectral theory of elliptic operators on noncompact manifolds},
        date={1992},
        ISSN={0303-1179},
     journal={Ast\'erisque},
      number={207},
       pages={5, 35\ndash 108},
        note={M\'ethodes semi-classiques, Vol. 1 (Nantes, 1991)},
      review={\MR{1205177}},
}

\bib{SS01}{article}{
      author={Sandstede, Bj{\"o}rn},
      author={Scheel, Arnd},
       title={On the structure of spectra of modulated travelling waves},
        date={2001},
        ISSN={0025-584X},
     journal={Math. Nachr.},
      volume={232},
       pages={39\ndash 93},
  url={http://dx.doi.org/10.1002/1522-2616(200112)232:1<39::AID-MANA39>3.3.CO;2-X},
      review={\MR{1871473}},
}

\bib{SS94}{article}{
      author={Sacker, Robert~J.},
      author={Sell, George~R.},
       title={Dichotomies for linear evolutionary equations in {B}anach
  spaces},
        date={1994},
        ISSN={0022-0396},
     journal={J. Differential Equations},
      volume={113},
      number={1},
       pages={17\ndash 67},
         url={https://doi.org/10.1006/jdeq.1994.1113},
      review={\MR{1296160}},
}

\bib{SS99}{article}{
      author={Sandstede, B.},
      author={Scheel, A.},
       title={Essential instability of pulses and bifurcations to modulated
  travelling waves},
        date={1999},
        ISSN={0308-2105},
     journal={Proc. Roy. Soc. Edinburgh Sect. A},
      volume={129},
      number={6},
       pages={1263\ndash 1290},
         url={https://doi.org/10.1017/S0308210500019387},
      review={\MR{1728529}},
}

\bib{Tem97}{book}{
      author={Temam, Roger},
       title={Infinite-dimensional dynamical systems in mechanics and physics},
     edition={Second},
      series={Applied Mathematical Sciences},
   publisher={Springer-Verlag, New York},
        date={1997},
      volume={68},
        ISBN={0-387-94866-X},
         url={https://doi.org/10.1007/978-1-4612-0645-3},
      review={\MR{1441312}},
}

\bib{vdMee08}{book}{
      author={van~der Mee, Cornelis},
       title={Exponentially dichotomous operators and applications},
      series={Operator Theory: Advances and Applications},
   publisher={Birkh\"auser Verlag, Basel},
        date={2008},
      volume={182},
        ISBN={978-3-7643-8731-0},
         url={https://doi.org/10.1007/978-3-7643-8732-7},
        note={Linear Operators and Linear Systems},
      review={\MR{2411810}},
}

\bib{Wig94}{book}{
      author={Wiggins, Stephen},
       title={Normally hyperbolic invariant manifolds in dynamical systems},
      series={Applied Mathematical Sciences},
   publisher={Springer-Verlag, New York},
        date={1994},
      volume={105},
        ISBN={0-387-94205-X},
         url={http://dx.doi.org/10.1007/978-1-4612-4312-0},
        note={With the assistance of Gy{\"o}rgy Haller and Igor Mezi{\'c}},
      review={\MR{1278264}},
}

\bib{Yi93}{article}{
      author={Yi, Yingfei},
       title={A generalized integral manifold theorem},
        date={1993},
        ISSN={0022-0396},
     journal={J. Differential Equations},
      volume={102},
      number={1},
       pages={153\ndash 187},
         url={http://dx.doi.org/10.1006/jdeq.1993.1026},
      review={\MR{1209981}},
}

\bib{Zel14}{article}{
      author={Zelik, Sergey},
       title={Inertial manifolds and finite-dimensional reduction for
  dissipative {PDE}s},
        date={2014},
        ISSN={0308-2105},
     journal={Proc. Roy. Soc. Edinburgh Sect. A},
      volume={144},
      number={6},
       pages={1245\ndash 1327},
         url={http://dx.doi.org/10.1017/S0308210513000073},
      review={\MR{3283067}},
}

\bib{Zen00}{article}{
      author={Zeng, Chongchun},
       title={Homoclinic orbits for a perturbed nonlinear {S}chr\"odinger
  equation},
        date={2000},
        ISSN={0010-3640},
     journal={Comm. Pure Appl. Math.},
      volume={53},
      number={10},
       pages={1222\ndash 1283},
  url={https://doi.org/10.1002/1097-0312(200010)53:10<1222::AID-CPA2>3.3.CO;2-6},
      review={\MR{1768814}},
}

\end{biblist}
\end{bibdiv}

\end{document}